\newcommand{\Z}{\mathbb Z}
\newcommand{\Q}{\mathbb Q}
\newcommand{\F}{\mathbb F}
\newcommand{\id}{\mathrm{id}}
\DeclareMathOperator{\Hom}{Hom}
\DeclareMathOperator{\im}{im}
\DeclareMathOperator{\Spec}{Spec}
\DeclareMathOperator{\Spf}{Spf}
\DeclareMathOperator{\Sat}{Sat}
\newcommand{\DC}{\mathbf{DC}}
\newcommand{\DA}{\mathbf{DA}}
\newcommand{\sat}{\mathrm{sat}}
\newcommand{\str}{\mathrm{str}}
\newcommand{\W}{\mathcal{W}}
\newcommand{\TD}{\mathbf{TD}}
\DeclareMathOperator{\Alg}{\mathrm{Alg}}
\DeclareMathOperator{\CAlg}{\mathrm{CAlg}}
\newcommand{\dR}{\mathrm{dR}}
\newcommand{\red}{\mathrm{red}}
\newcommand{\Ab}{\mathbf{Ab}}
\DeclareMathOperator{\Gr}{\mathbf{Gr}}
\newcommand{\cris}{\mathrm{cris}}
\newcommand{\Cris}{\mathrm{Cris}}
\newcommand{\zar}{\mathrm{zar}}
\newcommand{\AffEt}{\mathrm{Aff\acute{E}t}}
\newcommand{\dRWM}{\mathrm{dRWM}}
\newcommand{\dRWLM}{\mathrm{dRWLM}}
\newcommand{\Frob}{\mathrm{Frob}}
\newcommand{\op}{\mathrm{op}}
\newcommand{\brackets}{[ \ ]}
\newcommand{\lmodDC}{{\operatorname{\mathbf{-mod}_{\DC}}}}
\newcommand{\lmodstr}{{\operatorname{\mathbf{-mod}_{\str}}}}
\newcommand{\rmodDC}{{\operatorname{\mathbf{mod}_{\DC}-}}}
\newcommand{\rmodstr}{{\operatorname{\mathbf{mod}_{\str}-}}}
\newcommand{\ldgmod}{{\operatorname{\mathbf{-dgmod}}}}
\newcommand{\lalg}{{\operatorname{\mathbf{-alg}}}}
\newcommand{\colim@}[2]{%
  \vtop{\m@th\ialign{##\cr
    \hfil$#1\operator@font colim$\hfil\cr
    \noalign{\nointerlineskip\kern1.5\ex@}#2\cr
    \noalign{\nointerlineskip\kern-\ex@}\cr}}
}
\newcommand{\colim}{%
  \mathop{\mathpalette\colim@{\rightarrowfill@\scriptscriptstyle}}\nmlimits@
}
\newcommand{\lim@}[2]{%
  \vtop{\m@th\ialign{##\cr
    \hfil$#1\operator@font lim$\hfil\cr
    \noalign{\nointerlineskip\kern1.5\ex@}#2\cr
    \noalign{\nointerlineskip\kern-\ex@}\cr}}
}
\newcommand{\limarrow}{%
  \mathop{\mathpalette\lim@{\leftarrowfill@\scriptscriptstyle}}\nmlimits@
}
\renewcommand{\varprojlim}{%
  \mathop{\mathpalette\varlim@{\leftarrowfill@\scriptscriptstyle}}\nmlimits@
}
\renewcommand{\varinjlim}{%
  \mathop{\mathpalette\varlim@{\rightarrowfill@\scriptscriptstyle}}\nmlimits@
}
\newcounter{ctr}[subsection]
\theoremstyle{plain}
\newtheorem{theorem}[ctr]{Theorem}
\newtheorem{proposition}[ctr]{Proposition}
\newtheorem{lemma}[ctr]{Lemma}
\newtheorem{corollary}[ctr]{Corollary}
\theoremstyle{definition}
\newtheorem{mydef}[ctr]{Definition}
\newtheorem{example}[ctr]{Example}
\newtheorem{remark}[ctr]{Remark}
\newtheorem{para}[ctr]{}
\newtheorem{construction}[ctr]{Construction}
\newtheorem{situation}[ctr]{Situation}
\newtheorem{altdef}[ctr]{Alternative Definition}
\numberwithin{ctr}{subsection}
\numberwithin{equation}{ctr}
\title{Saturated de Rham--Witt complexes with unit-root coefficients}
\author{Ravi Fernando -- \texttt{ravif@illinois.edu}}
\date{}
\begin{document}

\maketitle

\begin{abstract}
The saturated de Rham--Witt complex, introduced by Bhatt--Lurie--Mathew, is a variant of the classical de Rham--Witt complex which provides a conceptual simplification of the construction and which is expected to produce better results for non-smooth varieties.  In this paper, we introduce a generalization of the saturated de Rham--Witt complex which allows coefficients in a unit-root $F$-crystal.  We define our complex by a universal property in a category of so-called de Rham--Witt modules.  We prove a number of results about it, including existence, quasicoherence, and comparisons to the de Rham--Witt complex of Bhatt--Lurie--Mathew and (in the smooth case) to crystalline cohomology and the classical de Rham--Witt complex with coefficients.
\end{abstract}

\tableofcontents

%%%%%%%%%%%%%%%%%%%%%%%%%%%%%%%%%%%%
%%%%%%%%%%%%%%%%%%%%%%%%%%%%%%%%%%%%
%%%%%%%%%%%%%%%%%%%%%%%%%%%%%%%%%%%%

\section{Introduction}
\label{ch:intro}

\subsection{The classical de Rham--Witt complex}
\label{sec:intro_classical_dRW}

Suppose $X$ is a variety over a perfect field $k$ of characteristic $p$.  A central object of study is the crystalline cohomology $H^*_{\cris}(X/W(k))$, conceived by Grothendieck and developed by Berthelot using the crystalline site.  Crystalline cohomology is a Weil cohomology theory; it serves as a characteristic--0 lift of algebraic de Rham cohomology in characteristic $p$, and it fills the gap among $\ell$-adic \'etale cohomology theories at $\ell = p$.

Following ideas of Bloch and Deligne, Illusie's classic paper \cite{illusie} associates to $X$ a pro-complex $(W_r \Omega^*_X)_r$, called the \emph{de Rham--Witt pro-complex} of $X$, whose inverse limit $W\Omega^*_X$ is called the \emph{de Rham--Witt complex} of $X$.  These are $p$-adic lifts of the de Rham complex $\Omega^*_{X/k} = W_1 \Omega^*_X$, and when $X/k$ is smooth, they compute crystalline cohomology in the same sense that the de Rham complex computes algebraic de Rham cohomology.  More precisely, Illusie shows that these complexes are representatives of the derived pushforward of the crystalline structure sheaf to the Zariski site, and therefore that their hypercohomology recovers crystalline cohomology:

\begin{theorem} \label{thm:illusie_thm}
(\cite[II, Th\'eor\`eme 1.4]{illusie})
Suppose $X/k$ is smooth.  Then for each $r > 0$, we have an isomorphism
$$
Ru_{X/W_r *} \mathcal O_{X/W_r} \overset{\sim}{\to} W_r\Omega^*_X
$$
in the derived category $D(X_{\zar}, W_r(k))$ of sheaves of $W_r(k)$-modules on $X$.
Passing to hypercohomology, this induces isomorphisms
\begin{align*}
H^*_{\cris}(X/W_r) & \overset{\sim}{\to} \mathbb H^*(X, W_r \Omega^*_X) := R^* \Gamma(X_{\zar}, W_r \Omega^*_X) \text{ and} \\
H^*_{\cris}(X/W) & \overset{\sim}{\to} \mathbb H^*(X, W \Omega^*_X) := R^* \Gamma(X_{\zar}, W \Omega^*_X).
\end{align*}
\end{theorem}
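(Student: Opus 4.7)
The plan is to construct a natural morphism $Ru_{X/W_r *}\mathcal{O}_{X/W_r} \to W_r\Omega^*_X$ in $D(X_{\zar},W_r(k))$, verify it is a quasi-isomorphism Zariski-locally, and then deduce the hypercohomology statement formally. To produce the map globally, I would use the universal property characterizing $W_r\Omega^*_X$ as the initial pro-complex equipped with a suitable lift of $\mathcal O_X$ together with operators $F$, $V$, $d$. For each affine open $U=\Spec A \subseteq X$ admitting a smooth $W_r(k)$-lift $\tilde A$ of $A$ (which exists Zariski-locally by smoothness of $X/k$), Berthelot's theorem identifies $Ru_{X/W_r *}\mathcal{O}_{X/W_r}|_U$ with the de Rham complex $\Omega^*_{\tilde A/W_r(k)}$. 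Smoothness of $\tilde A$, combined with the fact that $W_r(A)\twoheadrightarrow A$ has nilpotent kernel, produces a canonical ring map $\tilde A \to W_r(A)$ lifting the quotient; extending multiplicatively to differentials yields a local comparison $\Omega^*_{\tilde A/W_r}\to W_r\Omega^*_A$. Naturality in $U$ plus the universal property allow these to be glued into a global morphism in the derived category.

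The heart of the proof is showing this local comparison is a quasi-isomorphism, which I would do by induction on $r$. The base case $r=1$ is tautological. For the inductive step, I would match compatible two-step filtrations on both sides: on the source, the filtration $p^{r-1}\Omega^*_{\tilde A/W_r}\hookrightarrow \Omega^*_{\tilde A/W_r}$ with quotient $\Omega^*_{\tilde A/W_{r-1}}$; on the target, the filtration by the subcomplex $V^{r-1}W_1\Omega^* + dV^{r-1}W_1\Omega^{*-1}$ with quotient $W_{r-1}\Omega^*_A$. The comparison map respects these filtrations: the inductive hypothesis handles the quotients, and on the subcomplexes the structural relations $FV=VF=p$ and $FdV=d$, together with the Cartier isomorphism, identify both sides with copies of $\Omega^*_{A/k}$ in a compatible way. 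The five-lemma then closes the induction.

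The main obstacle is precisely the identification of the graded pieces of the $V$-adic filtration on $W_r\Omega^*_A$ with $\Omega^*_{A/k}$, and the verification that the comparison is filtration-compatible; this is the technical crux of Illusie's argument and rests on a careful analysis of the universal relations defining the de Rham--Witt complex, ultimately going back to the Cartier isomorphism in characteristic $p$. Once the derived-category quasi-isomorphism is established for each $r$, applying $R\Gamma(X_{\zar},-)$ and invoking the identity $R\Gamma(X_{\zar},Ru_{X/W_r *}\mathcal{O}_{X/W_r}) = R\Gamma((X/W_r)_{\cris},\mathcal{O}_{X/W_r})$ yields the first hypercohomology statement. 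The $W$-version follows by passing to the inverse limit in $r$, using Mittag--Leffler on the surjective transitions $W_{r+1}\Omega^i_X \to W_r\Omega^i_X$ to kill the $R\varprojlim$ obstruction.
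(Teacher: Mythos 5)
The paper does not prove Theorem~\ref{thm:illusie_thm}: it is recalled as classical background from Illusie \cite[II, Th\'eor\`eme 1.4]{illusie}, with no argument supplied. So there is no ``paper's proof'' to match your sketch against; I will instead evaluate it on its own merits. As a high-level outline of Illusie's classical argument it is reasonable, but two steps are imprecise enough to deserve comment.

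First, the assertion that smoothness of $\tilde A$ together with the nilpotence of $\ker(W_r(A) \twoheadrightarrow A)$ yields a \emph{canonical} ring map $\tilde A \to W_r(A)$ is not correct as stated. Smoothness guarantees existence of a lift through the successive square-zero extensions, but never uniqueness; the lift is canonical only once one fixes a Frobenius lift $\phi$ on $\tilde A$, so that $\tilde A$ becomes a $\delta$-ring and the universal property of $W(A)$ (exactly the content of the paper's Lemma~\ref{lift_to_witt}) singles out a unique $\delta$-ring map. Without that extra datum one must argue separately that the induced morphism in $D(X_{\zar}, W_r(k))$ is independent of the chosen lift, which you do not do. Second, the claim that the subcomplexes on each side of the filtration are ``identified with copies of $\Omega^*_{A/k}$'' conflates two different kinds of statements. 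On the source, $p^{r-1}\Omega^*_{\tilde A/W_r} \cong \Omega^*_{A/k}$ is a literal isomorphism of complexes, by flatness. On the target, $\mathrm{Fil}^{r-1}W_r\Omega^*_A = V^{r-1}\Omega^*_A + dV^{r-1}\Omega^{*-1}_A$ is \emph{not} isomorphic to $\Omega^*_{A/k}$; it is only quasi-isomorphic, and that quasi-isomorphism is precisely Illusie's structure theorem for $\mathrm{gr}^\bullet W_\bullet\Omega$ (describing the graded pieces in terms of the sheaves $B_n\Omega^i$ and $Z_n\Omega^i$ built from iterated Cartier operators). You correctly flag this as the technical crux, but the proposal names the obstacle rather than overcoming it, so the inductive step as written is incomplete. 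The surrounding scaffolding—reduction to the local case, filtration compatibility (which does hold, since $p^{r-1} = V^{r-1}F^{r-1}$ puts $p^{r-1}W_r\Omega^*$ inside $\mathrm{Fil}^{r-1}$), the five-lemma on long exact sequences, and the Mittag--Leffler argument for the limit—is all sound.
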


In the decades since the introduction of the de Rham--Witt complex, there has been a wealth of further work
to generalize its construction, understand its structure, and study its implications for crystalline cohomology and characteristic-$p$ algebraic geometry more broadly.
To give just a few examples:  we now have de Rham--Witt complexes with coefficients (\cite{etesse}) and over a nontrivial base (\cite{L-Z}); a good understanding (\cite{illusie}, \cite{I-R}) of the two spectral sequences computing crystalline cohomology in terms of de Rham--Witt, and proofs of the K\"unneth and duality formulas for crystalline cohomology via de Rham--Witt (\cite{ekedahl2}, \cite{ekedahl1}).

%%%%%%%%%%%%%%%%%%%%%%%%%%%%%%%%%%%%

\subsection{The saturated de Rham--Witt complex}
\label{sec:intro_sat_dRW}

Recently, Bhatt--Lurie--Mathew (\cite{BLM}) introduced a variant called the \emph{saturated} de Rham--Witt complex.  This is denoted $\W\Omega^*_X$, with a calligraphic $\W$ to distinguish it from the classical de Rham--Witt complex $W\Omega^*_X$.
The construction of $\W\Omega^*_X$ provides a new perspective that simplifies the classical picture in several ways.
Moreover, although the saturated de Rham--Witt complex agrees with the classical one when $X$ is smooth over some $k$, it is expected to have better properties in non-smooth situations---for example, Ogus (\cite{ogus}) has studied $\W\Omega^*_X$ for $X$ with toroidal singularities, showing that it enjoys good finiteness properties, and that in certain situations the quotient $\W_1 \Omega^*_X$ recovers a previously studied subcomplex of the log de Rham complex of $X$.

Let us recall the general outline of Bhatt--Lurie--Mathew's approach.  The construction is affine-local, and most of the work happens in the category of Dieudonn\'e complexes, so we begin by recalling the main features of this category.

\begin{mydef}
(\cite[Definition 2.1.1]{BLM})
A \emph{Dieudonn\'e complex} is a complex $(X^*, d)$ of abelian groups, equipped with a graded group endomorphism $F$ such that we have $dF = pFd$ as morphisms $X^i \to X^{i+1}$ for each $i$.
Dieudonn\'e complexes form a category $\DC$, with the obvious definition of morphisms.
\end{mydef}

The category $\DC$ has two important full subcategories
$$
\DC_{\str} \hookrightarrow \DC_{\sat} \hookrightarrow \DC,
$$
called the categories of \emph{strict} and \emph{saturated} Dieudonn\'e complexes respectively.  Both inclusions have left adjoints, called the \emph{saturation} $\Sat \colon \DC \to \DC_{\sat}$ and the \emph{completion} $\W \colon \DC_{\sat} \to \DC_{\str}$.  (Thus the composition $\W\Sat \colon \DC \to \DC_{\str}$ is left-adjoint to the inclusion $\DC_{\str} \to \DC$; we will call $\W\Sat$ the \emph{strictification}.)  Roughly speaking, a Dieudonn\'e complex is saturated if it is $p$-torsionfree and $F$ is a bijection onto its ``expected image''; the saturation is a suitable colimit which forces this to be so.  If $X^*$ is saturated, it can then be endowed with a Verschiebung operator $V \colon X^i \to X^i$ for each $i$, satisfying $FV = VF = p$ among other identities.  We then call a Dieudonn\'e complex strict if it is saturated and is complete with respect to the filtration given by $(\im V^r + \im dV^r)_r$; $\W$ is the corresponding completion functor.

An especially pleasant feature of Dieudonn\'e complexes is that they provide a very clean dictionary between $p$-power torsion and torsion-free objects.  Namely, suppose $X^*$ is a saturated Dieudonn\'e complex, and let $\W_r X^* = X^*/(\im V^r + \im dV^r)$.  These objects form a pro-complex, where each $\W_r X^*$ is killed by $p^r$.  The Frobenius and Verschiebung operators on $X^*$ pass to the tower, giving it the structure of a \emph{strict Dieudonn\'e tower} (\cite[Definition 2.6.1]{BLM}).  One then shows that the category $\TD$ of strict Dieudonn\'e towers is equivalent to $\DC_{\str}$, where the forward equivalence is given by taking the limit and the inverse is $(\W_r (-))_r$.

Bhatt--Lurie--Mathew also introduces \emph{Dieudonn\'e algebras}, which are commutative algebra objects in $\DC$ satisfying a few extra hypotheses.  Dieudonn\'e algebras form a category $\DA$, and the subcategory of Dieudonn\'e algebras that are strict as Dieudonn\'e complexes is called $\DA_{\str}$.

With this setup, it is quite simple to define the saturated de Rham--Witt complex:
\begin{mydef} \label{BLM_definition}
(\cite[Definition 4.1.1]{BLM})  
If $R$ is an $\F_p$-algebra, a \emph{saturated de Rham--Witt complex associated to $R$} is a strict Dieudonn\'e algebra $A^*$ equipped with a ring map
$$
f \colon R \to \W_1 A^0 = A^0/VA^0
$$
that is initial among such collections of data; that is, such that every ring map $R \to \W_1 B^0$ for a strict Dieudonn\'e algebra $B^*$ factors as $\W_1(g) \circ f$ for a unique morphism $g \colon A^* \to B^*$ in $\DA_{\str}$.
\end{mydef}

Assuming existence, this is clearly unique up to unique isomorphism, and the functor
$$
\W\Omega^*_{(-)} \colon \F_p\lalg \to \DA_{\str}
$$
is left-adjoint to the functor
$$
\W_1 (-)^0 = (-)^0/V(-)^0 \colon \DA_{\str} \to \F_p\lalg.
$$
Bhatt--Lurie--Mathew provides two explicit constructions of $\W\Omega^*_R$:  one completely general construction built from Witt vectors, and a second construction under the hypothesis that $R$ admits a $p$-torsionfree lift with Frobenius.  Both constructions begin with the following observation.

\begin{proposition} \label{prop:BLM_dR_as_a_DC}
(\cite[Proposition 3.2.1]{BLM})
Suppose $A$ is a $p$-torsionfree ring equipped with a homomorphism $\phi \colon A \to A$ lifting the absolute Frobenius endomorphism of $A/pA$.  Then there is a unique graded ring homomorphism $F \colon \Omega^*_A \to \Omega^*_A$ which extends the Frobenius $\phi \colon A \to A$ in degree 0 and satisfies the identity
$$
F(dx) = x^{p-1} dx + d \left(\frac{\phi(x) - x^p}{p} \right)
$$
for all $x \in A$.  Moreover, this $F$ gives $\Omega^*_A$ the structure of a Dieudonn\'e algebra.
\end{proposition}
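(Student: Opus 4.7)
The plan is to exploit the fact that as a graded ring $\Omega^*_A$ is generated by $\Omega^0_A = A$ together with $dA \subset \Omega^1_A$; any graded ring endomorphism of $\Omega^*_A$ is therefore determined by its restrictions to these, which are prescribed by the hypothesis. This gives uniqueness at once. For existence I will construct $F$ first in degrees $0$ and $1$, then extend multiplicatively to all higher degrees, and finally verify the Dieudonn\'e axioms on generators.

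\textbf{Existence.} Since $\phi$ lifts the Frobenius modulo $p$, we have $\phi(x) - x^p \in pA$ for every $x \in A$, and by $p$-torsionfreeness of $A$ the element $\delta(x) := \frac{\phi(x) - x^p}{p} \in A$ is well-defined. Define $D \colon A \to \Omega^1_A$ by $D(x) := x^{p-1}\,dx + d\delta(x)$. The main task is to check that $D$ is a $\phi$-twisted derivation: $D$ is additive and satisfies $D(xy) = \phi(x) D(y) + \phi(y) D(x)$. The additivity hinges on the integrality of $\tfrac{1}{p}\bigl((x+y)^p - x^p - y^p\bigr) = -\sum_{i=1}^{p-1} \tfrac{1}{p}\binom{p}{i} x^{i} y^{p-i}$, combined with a direct comparison against the two summands defining $D$; the Leibniz identity is a similar, more involved, direct computation using the multiplicativity of $\phi$. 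Once $D$ is known to be a $\phi$-twisted derivation, the universal property of K\"ahler differentials yields a unique $\phi$-semilinear map $F \colon \Omega^1_A \to \Omega^1_A$ satisfying $F(dx) = D(x)$. Setting $F|_A := \phi$ and extending by $F(\omega_1 \wedge \cdots \wedge \omega_i) := F(\omega_1) \wedge \cdots \wedge F(\omega_i)$ on $\Omega^*_A = \bigwedge^*_A \Omega^1_A$ then gives a graded ring endomorphism; well-definedness on the exterior powers follows from the functoriality of $\bigwedge^*$ applied to the $\phi$-semilinear map $F$ on $\Omega^1$.

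\textbf{Dieudonn\'e algebra axioms and main obstacle.} It remains to verify the axioms of a Dieudonn\'e algebra: concentration in non-negative degrees (clear), the congruence $F(x) \equiv x^p \pmod p$ for $x \in \Omega^0_A = A$ (immediate since $\phi$ lifts Frobenius), and the key identity $dF = pFd$. In degree $0$, this reduces to the one-line calculation $pF(dx) = px^{p-1}dx + d(\phi(x) - x^p) = d\phi(x) = dF(x)$. In higher degrees, both $dF$ and $pFd$ are graded additive maps that satisfy a graded Leibniz rule with respect to the cdga structure on $\Omega^*_A$, so the identity extends from the generators $A$ and $dA$ (where we also use $d^2 = 0$). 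The principal obstacle throughout is the verification that $D$ is a $\phi$-twisted derivation; once this has been established, the remainder of the argument is essentially formal manipulation on generators.
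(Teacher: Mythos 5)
Your proof is correct, and it follows the standard route: establish uniqueness from the fact that $\Omega^*_A$ is generated by $A$ and $dA$, verify that the prescribed formula defines a $\phi$-twisted derivation $A \to \Omega^1_A$ so that the universal property of K\"ahler differentials produces $F$ in degree $1$, extend multiplicatively, and check $dF = pFd$ on generators. The paper does not reproduce a proof here — it cites \cite[Proposition 3.2.1]{BLM} directly — and your argument is essentially the one given there, so there is nothing to contrast. (One cosmetic slip: the displayed identity for $\tfrac{1}{p}\bigl((x+y)^p - x^p - y^p\bigr)$ should carry a plus sign, not a minus; this does not affect the integrality point you are making.)
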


\begin{remark}
One can easily verify that the endomorphism $F$ of Proposition \ref{prop:BLM_dR_as_a_DC} satisfies the identity
$$
p^n F(x \cdot dx_1 \wedge \cdots \wedge dx_n) = \phi(x) \cdot d\phi(x_1) \wedge \cdots \wedge d\phi(x_n);
$$
that is, $p^n F$ agrees with the map $\phi^*$ which induced on $\Omega^n_A$ by the functoriality of $\phi$.  For this reason, we call $F$ a \emph{divided Frobenius}.  Note however that we cannot simply define $F$ as $\frac{\phi^*}{p^n}$ in degree $n$, as $\Omega^*_A$ may have $p$-torsion even when $A$ does not.
\end{remark}

The constructions are as follows:

\begin{construction} \label{con:BLM_construction_W}
(\cite[Proposition 4.1.4]{BLM}) Let $R$ be an $\F_p$-algebra, $R_{\red}$ its reduction, and $W(R_{\red})$ the ring of $p$-typical Witt vectors.  Then $W(R_{\red})$ is $p$-torsionfree and comes equipped with a Frobenius endomorphism $F$, so Proposition \ref{prop:BLM_dR_as_a_DC} gives the (naive) de Rham complex $\Omega^*_{W(R_{\red})} = \Omega^*_{W(R_{\red})/\Z}$ the structure of a Dieudonn\'e algebra.
Then the saturated de Rham--Witt complex functor is given by $R \mapsto \W\Sat(\Omega^*_{W(R_{\red})})$.
\end{construction}

\begin{construction} \label{con:BLM_lifted_construction}
(\cite[Variant 3.3.1 and Corollary 4.2.3]{BLM}) Suppose $A$ is a $p$-torsionfree ring equipped with a lift $\phi$ of the absolute Frobenius endomorphism of $R := A/pA$.  Then the $p$-adically completed de Rham complex $\widehat{\Omega}^*_A$ has the structure of a Dieudonn\'e algebra, and its strictification $\W\Sat(\widehat{\Omega}^*_A)$ is a saturated de Rham--Witt complex associated to $R$.
\end{construction}

\begin{remark} \label{rmk:BLM_nilpotent_invariance}
Implicit in Construction \ref{con:BLM_construction_W} is the fact that for any $\F_p$-algebra $R$, the natural morphism $\W\Omega^*_R \to \W\Omega^*_{R_{\red}}$ is an isomorphism.  This follows from the definition of $\W\Omega^*_R$ in light of \cite[Lemma 3.6.1]{BLM} and does not require the construction.
\end{remark}

Bhatt--Lurie--Mathew proves a number of results about the saturated de Rham--Witt complex, of which the following few are the most relevant to us.

\begin{theorem} \label{thm:BLM_comparison}
(\cite[Corollary 4.4.11, Theorem 4.4.12]{BLM})
For any $\F_p$-algebra $R$, we have canonical maps\footnote{These maps are called $\gamma_r$ and $\gamma$ in the original; we have changed the notation to avoid any potential confusion later with divided power structures which are denoted $\gamma$.}
\begin{align*}
\zeta_r \colon W_r \Omega^*_R & \overset{\sim}{\to} \W_r \Omega^*_R \text{ for all } r, \text{ and} \\
\zeta \colon W\Omega^*_R & \overset{\sim}{\to} \W\Omega^*_R,
\end{align*}
where $W\Omega^*_R = \limarrow_r W_r\Omega^*_R$ is the classical de Rham--Witt complex of \cite{illusie}.
If $R$ is a regular noetherian $\F_p$-algebra, then the maps $\zeta_r$ and $\zeta$ are isomorphisms.
\end{theorem}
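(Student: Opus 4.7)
My strategy is twofold. First, I would construct the canonical maps $\zeta_r$ and $\zeta$ via a universal property argument using Illusie's characterization of the classical de Rham--Witt pro-complex. Second, I would establish that these maps are isomorphisms in the regular noetherian case by reducing to the smooth situation via Popescu's theorem and exploiting the concrete description from Construction \ref{con:BLM_lifted_construction}.

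\textbf{Construction of $\zeta_r$ and $\zeta$.} The strict Dieudonn\'e algebra $\W\Omega^*_R$ gives rise to a tower $(\W_r\Omega^*_R)_r$ equipped with Frobenius $F$, Verschiebung $V$, and differential $d$ satisfying $FV = VF = p$ and $FdV = d$, together with the structure map $R \to \W_1\W\Omega^0_R$. I would verify that this tower, together with its Teichm\"uller-compatible unit, satisfies Illusie's axioms of a Witt complex over $R$ in the sense of \cite{illusie}. Since the classical pro-complex $(W_r\Omega^*_R)_r$ is initial among Witt complexes, its universal property then produces canonical morphisms $\zeta_r \colon W_r\Omega^*_R \to \W_r\Omega^*_R$, compatible in $r$, and $\zeta$ upon passing to the inverse limit.

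\textbf{The isomorphism for regular noetherian $R$.} I would first reduce to the smooth case by Popescu's theorem, writing $R$ as a filtered colimit of smooth $\F_p$-algebras $R_i$ and using that both $W_r\Omega^*$ and $\W_r\Omega^*$ commute with filtered colimits of $\F_p$-algebras at each finite level. For $R$ smooth, the statement is Zariski-local on $\Spec R$, so I may assume $R$ admits a $p$-torsionfree lift $A$ over $\Z_p$ with Frobenius lift $\phi$. Construction \ref{con:BLM_lifted_construction} then identifies $\W\Omega^*_R$ with $\W\Sat(\widehat{\Omega}^*_A)$, while classical results of Illusie identify $W\Omega^*_R$ with $\widehat{\Omega}^*_A$ equipped with the divided Frobenius of Proposition \ref{prop:BLM_dR_as_a_DC}. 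The comparison then follows once one shows that for smooth $A$ the Dieudonn\'e algebra $\widehat{\Omega}^*_A$ is already strict, so $\W\Sat$ acts trivially and the two descriptions coincide, necessarily compatibly with $\zeta$ by the universal property.

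\textbf{Main obstacle.} The technical heart of the argument is the claim that $\widehat{\Omega}^*_A$ is saturated for smooth $A$: that is, $F$ is injective with image exactly $\{\omega : d\omega \in p\,\widehat{\Omega}^*_A\}$. This ultimately rests on the Cartier isomorphism for $R = A/pA$ transported to the lift via $\phi$, and is the most delicate point in the proof. Strictness then follows readily from the $p$-adic completeness of $\widehat{\Omega}^*_A$.
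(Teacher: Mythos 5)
This statement is a cited result (\cite[Corollary 4.4.11, Theorem 4.4.12]{BLM}); the paper does not supply its own proof, so I am comparing your proposal against the argument in \cite{BLM}.

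Your construction of $\zeta_r$ and $\zeta$ via the universal property of Illusie's V-pro-complexes is sound and essentially matches the strategy in \cite{BLM}: one checks that $(\W_r\Omega^*_R)_r$, with its $F$, $V$, $d$, and structure map from $R$, satisfies Illusie's axioms, and the initiality of $(W_r\Omega^*_R)_r$ produces the comparison map.

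The second half, however, contains a fatal error. You assert that for $A$ a smooth $p$-torsionfree lift with Frobenius, $\widehat{\Omega}^*_A$ is \emph{saturated} (hence strict), so that $\W\Sat$ acts trivially and $\W\Omega^*_R = \widehat{\Omega}^*_A$. This is false. Take $A = \Z_p\langle x\rangle$ with $\phi(x) = x^p$. In degree $0$, the condition $d\omega \in p\widehat{\Omega}^1_A$ picks out all $f = \sum a_n x^n$ with $a_n \in p\Z_p$ whenever $p \nmid n$; in particular $px$ qualifies. But the image of $F = \phi$ in degree $0$ consists only of power series supported on exponents divisible by $p$, so $px$ is not hit, and $F$ fails to be surjective onto $\{\omega : d\omega \in p\widehat{\Omega}^*_A\}$. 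Thus $\widehat{\Omega}^*_A$ is not saturated. The correct notion here is that $\widehat{\Omega}^*_A$ is of \emph{Cartier type} (the Cartier map is an isomorphism), which guarantees the strictification map $\rho \colon \widehat{\Omega}^*_A \to \W\Omega^*_R$ is a quasi-isomorphism after reduction mod $p^r$, but \emph{not} an isomorphism of complexes. Relatedly, Illusie does not identify $W\Omega^*_R$ with $\widehat{\Omega}^*_A$; he only constructs a quasi-isomorphism $\Omega^*_{A_r} \to W_r\Omega^*_R$ (cf.\ Theorem \ref{thm:illusie_thm}). The object that \emph{is} strict for $R$ regular noetherian is the \emph{classical} de Rham--Witt complex $W\Omega^*_R$ itself, which is a strictly larger complex than $\widehat{\Omega}^*_A$. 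The argument in \cite{BLM} proceeds by proving that $W\Omega^*_R$ is a strict Dieudonn\'e algebra in this case (using $p$-torsion-freeness, $V$-adic completeness, and the Cartier isomorphism), after which the two universal properties force $\zeta$ to be an isomorphism. Your Popescu and \'etale-localization reductions are fine, but the core claim about $\widehat{\Omega}^*_A$ must be replaced by a strictness result for $W\Omega^*_R$ and a universal-property comparison, or by a separate argument showing the quasi-isomorphism $\rho$ upgrades to an isomorphism $\zeta$ after accounting for the difference between $\widehat{\Omega}^*_A$ and $W\Omega^*_R$.
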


\begin{theorem} \label{BLM_sheaf}
(\cite[Theorem 5.3.7, Remark 5.2.3]{BLM}) Let $X$ be an arbitrary $\F_p$-scheme.  The functors $\Spec R \mapsto \W\Omega^*_R$ and $\Spec R \mapsto \W_r \Omega^*_R$ define sheaves for the \'etale topology on $X$.  Moreover, the latter is a quasicoherent sheaf of $W_r \mathcal O_X$-modules.
\end{theorem}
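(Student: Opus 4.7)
The plan is to derive all three assertions (the étale sheaf property for both $\W_r\Omega^*$ and $\W\Omega^*$, and quasicoherence of $\W_r\Omega^*$) from a single étale base change statement. Namely, I aim to establish that for every étale morphism $R \to R'$ of $\F_p$-algebras and every $r, n$, the natural map
$$
W_r(R') \otimes_{W_r(R)} \W_r \Omega^n_R \to \W_r \Omega^n_{R'}
$$
is an isomorphism. This would yield quasicoherence directly, the étale sheaf property of $\W_r\Omega^*$ by étale descent for quasicoherent modules over $W_r \mathcal O_X$, and the sheaf property of $\W\Omega^*$ by passage to the inverse limit $\W\Omega^*_R = \limarrow_r \W_r \Omega^*_R$ (which makes sense since $\W\Omega^*_R$ is strict by construction).

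The essential input is the classical fact (due to van der Kallen and Langer--Zink) that an étale morphism $R \to R'$ of $\F_p$-algebras lifts uniquely to an étale morphism $W_r(R) \to W_r(R')$, compatibly with the Frobenius and Verschiebung operators, and likewise in the limit for $W(R) \to W(R')$. To establish the base change isomorphism, I would work Zariski-locally on $\Spec R$ so that $R$ admits a $p$-torsionfree lift $A$ with a Frobenius lift $\phi$. The unique étale lift of Witt vectors then provides a corresponding lift $A \to A'$, again étale and with a compatible Frobenius $\phi'$. By Construction \ref{con:BLM_lifted_construction} we have $\W\Omega^*_R = \W\Sat(\widehat{\Omega}^*_A)$ and $\W\Omega^*_{R'} = \W\Sat(\widehat{\Omega}^*_{A'})$, and by étaleness of $A \to A'$ we have $\widehat{\Omega}^*_{A'} \simeq \widehat{\Omega}^*_A \widehat{\otimes}_A A'$.

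The crux of the argument, and what I expect to be the principal obstacle, is to show that the functors $\Sat$ and $\W$ commute with this kind of étale base change in an appropriate sense. The saturation is a filtered colimit built from the divided Frobenius $F$, while the completion $\W$ is formed along the filtration by $\im V^r + \im dV^r$; one must carefully verify that both operations interact well with tensoring up along $W_r(R) \to W_r(R')$, using the flatness of this map together with the compatibility of $F$ and $V$ with the étale lift. Once the base change formula is established, the remainder of the argument is formal: quasicoherence of $\W_r\Omega^n$ as a $W_r \mathcal O_X$-module is the special case of the base change isomorphism for principal opens, étale descent for quasicoherent modules then promotes the presheaf $\Spec R \mapsto \W_r\Omega^*_R$ to an étale sheaf, and inverse limits of sheaves of abelian groups are sheaves, yielding the corresponding statement for $\W\Omega^*$.
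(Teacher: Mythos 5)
Your organizational strategy is the right one: deduce everything from the étale base change isomorphism $W_r(R') \otimes_{W_r(R)} \W_r\Omega^n_R \overset{\sim}{\to} \W_r\Omega^n_{R'}$, using the van der Kallen/Langer--Zink étaleness of $W_r(R) \to W_r(R')$ as the key external input. The formal deductions are all correct: quasicoherence is immediate, the étale sheaf property for $\W_r\Omega^*$ follows from descent of quasicoherent $W_r\mathcal O_X$-modules, and the sheaf property for $\W\Omega^*$ follows by taking the inverse limit over $r$.

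The flaw is the reduction to the Frobenius-lifted situation. To glue the statements for the pieces $(R_f, R'_f)$ of a Zariski cover of $\Spec R$ back together, you would need to know that $\W_r\Omega^n_{R_f}$ is the localization $W_r(R_f) \otimes_{W_r(R)} \W_r\Omega^n_R$, and likewise over $R'$ --- but that is the open-immersion special case of the very base change isomorphism you are trying to establish, so the reduction is circular as stated. Bhatt--Lurie--Mathew's actual argument never passes to a Frobenius-lifted model: it introduces the notion of a $V$-adically étale morphism of strict Dieudonné algebras, proves directly (the content of \cite[Corollary 5.4.9]{BLM}, the shape of which is reflected in Proposition~\ref{prop:strict_tower} of this paper) that the system $(W_r(R') \otimes_{W_r(R)} \W_r\Omega^*_R)_r$ inherits the structure of a strict Dieudonné tower, and then identifies its inverse limit with $\W\Omega^*_{R'}$ by comparing universal properties. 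The ``principal obstacle'' you flag --- compatibility of $\Sat$ and $\W$ with étale base change --- is the real mathematical content, but it is handled there intrinsically at the Witt-vector level, which is precisely what sidesteps the localization circularity; if you wish to salvage your route, you would first have to treat Zariski localizations by a separate direct argument, which is essentially as hard as the general case.
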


We denote these sheaves $\W\Omega^*_X$ and $\W_r\Omega^*_X$, and call them the \emph{de Rham--Witt complex} and the \emph{de Rham--Witt pro-complex} of $X$ respectively.
Bhatt--Lurie--Mathew then shows that under reasonable hypotheses, the saturated de Rham--Witt complex of $X$ agrees with the classical de Rham--Witt complex and computes crystalline cohomology:

\begin{theorem} \label{thm:BLM_classical_comparison}
(\cite[Corollary 4.4.11, Theorem 4.4.12]{BLM})
Let $R$ be an $\F_p$-algebra.  Then there is a canonical morphism of differential graded algebras from the classical de Rham--Witt complex $W\Omega^*_R$
to the saturated de Rham--Witt complex $\W\Omega^*_R$.
If $R$ is regular Noetherian, then this map is an isomorphism.
\end{theorem}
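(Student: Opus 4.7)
The plan is to construct the comparison map from the universal property of the classical de Rham--Witt complex, and then to verify that it is an isomorphism in the regular Noetherian case by reducing to a local Frobenius lift.

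To build the map, I would transport $\W\Omega^*_R$ through the equivalence $\DC_{\str} \cong \TD$ and verify that the resulting strict Dieudonné tower $(\W_r\Omega^*_R)_{r \geq 1}$ carries all the data and relations of a Langer--Zink $V$-pro-complex over $R$: a pro-system of commutative DGAs with Frobenius $F$, Verschiebung $V$, and Teichmüller map, satisfying $FV = VF = p$, $V(x \cdot Fy) = Vx \cdot y$, $Fd[a] = [a]^{p-1}d[a]$ for $a \in R$, and the remaining standard identities. Every one of these is already built into the axioms of a (strict) Dieudonné algebra or of a strict Dieudonné tower; the ring maps $W_r(R) \to \W_r\Omega^0_R$ arise from $R \to \W_1\Omega^0_R$ by functoriality of Witt vectors combined with strictness (together with Remark \ref{rmk:BLM_nilpotent_invariance} to pass to the reduction). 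The initial property of $W\Omega^*_R$ then supplies a canonical morphism of pro-DGAs $W_r\Omega^*_R \to \W_r\Omega^*_R$, and passing to the limit gives the map in the statement.

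For the isomorphism when $R$ is regular Noetherian, I would use the sheaf property to work locally. Both $\W\Omega^*_R$ (by Theorem \ref{BLM_sheaf}) and the classical $W\Omega^*_R$ are sheaves on $\Spec R$, so it suffices to check isomorphy étale-locally, or equivalently after completing at a maximal ideal. After such completion, the Cohen structure theorem writes $R$ as a power series ring $k\llbracket t_1, \ldots, t_n \rrbracket$ for some field $k$, which admits the $p$-torsionfree, $p$-adically complete lift $A = W(k)\llbracket t_1, \ldots, t_n \rrbracket$ (or with $W(k)$ replaced by a Cohen ring if $k$ is imperfect) equipped with the obvious Frobenius $\phi(t_i) = t_i^p$. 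Construction \ref{con:BLM_lifted_construction} identifies $\W\Omega^*_R$ with $\W\Sat(\widehat{\Omega}^*_A)$, while the classical theorem of Illusie and Langer--Zink identifies $W\Omega^*_R$ with the analogous classical strictification of $\widehat{\Omega}^*_A$. Comparing the two strictifications then forces the canonical map to be an isomorphism, and étale descent extends this to all regular Noetherian $R$.

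I expect the main obstacle to be that last identification: showing that BLM's $\W\Sat$ applied to $\widehat{\Omega}^*_A$ coincides with the classical Langer--Zink strictification. The two are assembled rather differently---BLM's as a colimit in $\DC$ forcing bijectivity of $F$, followed by completion along $V$, whereas the classical version imposes explicit relations forcing the de Rham--Witt axioms---and I would bridge them by exhibiting each as the initial object for the same universal property among pro-DGAs over $R$ arising from a Frobenius lift. A secondary but genuine subtlety is the passage from the complete (or smooth) local case back to arbitrary regular Noetherian $R$, which should follow from faithful flatness of completion together with the étale sheaf property of Theorem \ref{BLM_sheaf}.
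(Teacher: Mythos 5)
The first half of your plan (constructing the comparison map via the Langer--Zink/Illusie universal property of a $V$-pro-complex, applied to the tower $(\W_r\Omega^*_R)_r$) is essentially what \cite[Corollary~4.4.11]{BLM} does, and it is correct. The second half, however, has two genuine gaps.

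First, your reduction from a regular Noetherian $R$ to the complete local case does not go through. Theorem~\ref{BLM_sheaf} gives an \'etale sheaf property, but the completion $R \to \widehat{R}_{\mathfrak m}$ is faithfully flat, not \'etale, so ``it suffices to check \dots after completing at a maximal ideal'' is unjustified. To use this reduction you would additionally need to show that both $W\Omega^*_R$ and $\W\Omega^*_R$ commute with such completions and that the comparison map can be detected after the (faithfully flat) base change; neither of these is automatic, especially since these complexes are not finitely generated over $R$ or $W_r(R)$. The route actually taken by Bhatt--Lurie--Mathew avoids completion altogether: they prove the statement for $\F_p$-algebras with a $p$-basis (in particular smooth $\F_p$-algebras) using the Frobenius-lifted construction, and then pass to general regular Noetherian $\F_p$-algebras by writing $R$ as a filtered colimit of smooth $\F_p$-algebras via N\'eron--Popescu desingularization and using that both functors commute with filtered colimits. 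That colimit reduction is a genuinely different--and more robust--argument than completion-and-descent.

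Second, and more fundamentally, the bridge you flag as the ``main obstacle'' is mischaracterized. The classical $W\Omega^*_R$ is \emph{not} a ``Langer--Zink strictification of $\widehat{\Omega}^*_A$''; it is defined by its own universal property among $F$-$V$-procomplexes, and Illusie's comparison with $\widehat{\Omega}^*_A$ is only a \emph{quasi-isomorphism}, not an isomorphism of complexes. There is no second ``strictification'' construction here to compare with $\W\Sat$. What is actually proved in \cite{BLM} (and is the real content) is that for $R$ admitting a $p$-basis, the classical $W\Omega^*_R$ is \emph{already} a strict Dieudonn\'e algebra (this uses the Cartier isomorphism / saturation in the sense of \cite[I, 3.4]{illusie}); once one knows this, the universal property of $\W\Omega^*_R$ forces the canonical map in Corollary~4.4.11 to be an isomorphism. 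As written, your plan would ask you to exhibit ``both'' strictifications as initial objects for the same universal property, but only one of the two objects arises as a strictification, so this step cannot be carried out as stated.
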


\begin{theorem} \label{BLM_cohomology_comparison}
(\cite[Theorem 10.1.1 and surrounding remarks]{BLM})
Let $k$ be a perfect field of characteristic $p$, and $X$ a smooth $k$-scheme.  Then there is a canonical isomorphism of cohomology rings
$$
H^*_{\cris}(X/W(k)) \simeq \mathbb H^*(X, \W\Omega^*_X).
$$
\end{theorem}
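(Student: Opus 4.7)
The natural approach is to bootstrap off Illusie's classical comparison (Theorem \ref{thm:illusie_thm}) via Bhatt--Lurie--Mathew's comparison to the classical complex (Theorem \ref{thm:BLM_classical_comparison}). First I would observe that since $X/k$ is smooth, it is Zariski-locally of the form $\Spec R$ with $R$ a smooth, and in particular regular Noetherian, $\F_p$-algebra. For each such affine open, Theorem \ref{thm:BLM_classical_comparison} yields an isomorphism $W\Omega^*_R \overset{\sim}{\to} \W\Omega^*_R$ of differential graded algebras. Using Theorem \ref{BLM_sheaf} together with Illusie's analogous sheafiness/quasicoherence results for the classical de Rham--Witt pro-complex, one promotes these local isomorphisms to an isomorphism of sheaves of differential graded algebras $W\Omega^*_X \overset{\sim}{\to} \W\Omega^*_X$ on $X_{\zar}$.

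Taking Zariski hypercohomology on both sides and invoking Illusie's Theorem \ref{thm:illusie_thm}, which supplies the canonical isomorphism
$$
H^*_{\cris}(X/W(k)) \overset{\sim}{\to} \mathbb H^*(X, W\Omega^*_X),
$$
then yields the desired comparison $H^*_{\cris}(X/W(k)) \simeq \mathbb H^*(X, \W\Omega^*_X)$. Canonicity follows from the canonicity of the two ingredients.

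The main obstacle in this plan is verifying that the local comparison maps of Theorem \ref{thm:BLM_classical_comparison} are sufficiently functorial and compatible with restriction to glue into a genuine morphism of sheaves, and that the resulting morphism remains an isomorphism after passing to the inverse limit in $r$ (since $\W\Omega^*_X$ is defined via strictification, whereas $W\Omega^*_X$ is defined by an inverse limit of $W_r\Omega^*_X$). An alternative and perhaps more self-contained route would be to work directly via Construction \ref{con:BLM_lifted_construction}: Zariski-locally choose a smooth $p$-torsionfree lift $A$ of $R$ with Frobenius lift $\phi$, so that $\W\Omega^*_R = \W\Sat(\widehat{\Omega}^*_A)$; then use the classical fact that $\widehat{\Omega}^*_A$ computes $R\Gamma_{\cris}(\Spec R/W(k))$ to reduce the theorem to showing that the canonical map $\widehat{\Omega}^*_A \to \W\Sat(\widehat{\Omega}^*_A)$ is a quasi-isomorphism, which can be verified via the structural theory of saturation and strictification developed in \cite{BLM} (saturation is known to be a quasi-isomorphism in the smooth case, and strictification is a $p$-adic completion which preserves quasi-isomorphisms on suitably finite-type objects).
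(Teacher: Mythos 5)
Your proposal is correct and matches the paper's own approach: the remark immediately following the theorem observes precisely that it follows by combining Theorem \ref{thm:BLM_classical_comparison} (the isomorphism $W\Omega^*_R \simeq \W\Omega^*_R$ for $R$ regular Noetherian) with Illusie's Theorem \ref{thm:illusie_thm}, which is your main route. The gluing concern you flag is already handled in \cite{BLM}, where the comparison maps $\zeta_r$ and $\zeta$ of Theorem \ref{thm:BLM_comparison} are constructed functorially (hence sheaf-theoretically) rather than ad hoc on affines; and your alternative route via the lifted construction is essentially the direct argument of \cite[\textsection 10]{BLM} and \cite{ogus} that the paper also mentions.
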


Of course Theorem \ref{BLM_cohomology_comparison} follows from Theorems \ref{thm:BLM_classical_comparison} and \ref{thm:illusie_thm}, but \cite[\textsection 10]{BLM} proves it directly as well.  Ogus (\cite[Corollary 5.4]{ogus}) has given an alternative proof.

%%%%%%%%%%%%%%%%%%%%%%%%%%%%%%%%%%%%

\subsection{Summary of main results}
\label{sec:intro_summary_of_results}

Our goal in this paper is to generalize the approach of Bhatt--Lurie--Mathew in order to define and construct saturated de Rham--Witt complexes with coefficients in a unit-root $F$-crystal $(\mathcal E, \phi_{\mathcal E})$.
As in \cite{BLM}, we work affine-locally; thus, suppose we are given a perfect field $k$ of characteristic $p$, a $k$-algebra $R$, and a unit-root $F$-crystal $\mathcal E$ on $\Cris(\Spec R/W(k))$.

To begin, we must say what category our saturated de Rham--Witt complexes with coefficients live in.  This is as follows (see Definition \ref{def:dRW_module} for details):

\begin{mydef} \label{def:dRW_module_intro}
A \emph{de Rham--Witt module over $(R, \mathcal E)$} is a collection of the following data:  a left $\W\Omega^*_R$-module $M^*$ in $\DC_{\str}$, equipped with $W_r(R)$-linear maps $\iota_r \colon \mathcal E(W_r(R)) \to \W_r M^0$ for each $r$, compatible with quotient and Frobenius maps, and compatible with connections in a suitable sense.
\end{mydef}

Within this category, our de Rham--Witt complexes are defined by a concise universal property:

\begin{mydef} \label{def:saturated_dRW_intro}
A \emph{saturated de Rham--Witt complex associated to $\mathcal E$ over $R$} is an initial object in the category of de Rham--Witt modules over $(R, \mathcal E)$.  Such an object is unique up to unique isomorphism if it exists; we will denote it by $\W\Omega^*_{R, \mathcal E}$.
\end{mydef}

Since we define our de Rham--Witt complexes by universal property, we had better prove that they exist:

\begin{theorem} \label{thm:existence}
(See Theorem \ref{thm:general_construction}.)
Suppose $R$ is a $k$-algebra, and $\mathcal E$ is a unit-root F-crystal on $\Cris(\Spec R/W)$.  Then there exists a saturated de Rham--Witt complex $\W\Omega^*_{R, \mathcal E}$ associated to $(R, \mathcal E)$.
\end{theorem}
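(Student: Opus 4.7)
The plan is to build $\W\Omega^*_{R, \mathcal E}$ by generalizing Construction \ref{con:BLM_construction_W}. Let $E := \mathcal E(W(R_{\red}))$; this is a finite projective $W(R_{\red})$-module, and the unit-root $F$-crystal structure on $\mathcal E$ equips $E$ with a semilinear Frobenius $\phi_E$ (an isomorphism after linearization, since $\mathcal E$ is unit-root) and an integrable connection $\nabla_E \colon E \to \Omega^1_{W(R_{\red})} \otimes_{W(R_{\red})} E$ compatible with $\phi_E$. Form the tensor product $M_0^* := \Omega^*_{W(R_{\red})} \otimes_{W(R_{\red})} E$, equipped with the differential obtained from $\nabla_E$ and the Leibniz rule, and with the Frobenius $F_{M_0}$ obtained by tensoring the divided Frobenius on $\Omega^*_{W(R_{\red})}$ from Proposition \ref{prop:BLM_dR_as_a_DC} with the linearization of $\phi_E$. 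A direct calculation using Frobenius--connection compatibility on $\mathcal E$ shows that $(M_0^*, d, F_{M_0})$ is a Dieudonn\'e complex, naturally a left $\Omega^*_{W(R_{\red})}$-module in $\DC$.

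Next, I will extend the saturation and completion functors of \cite{BLM} from algebras to left-module objects over Dieudonn\'e algebras: given a Dieudonn\'e algebra $A^*$ and a left $A^*$-module $N^*$ in $\DC$, one should be able to form $\Sat(N^*)$ as the colimit along the divided Frobenius and then $\W(\Sat(N^*))$ as the completion along the filtration by $\im V^r + \im dV^r$, with the module structure passing to a left $\W\Sat(A^*)$-module structure on the output. Applied to $A^* = \Omega^*_{W(R_{\red})}$ and $N^* = M_0^*$, this yields a strict Dieudonn\'e module $M^* := \W\Sat(M_0^*)$ over $\W\Sat(\Omega^*_{W(R_{\red})}) = \W\Omega^*_{R_{\red}} = \W\Omega^*_R$, the last equality by Remark \ref{rmk:BLM_nilpotent_invariance}. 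The maps $\iota_r \colon \mathcal E(W_r(R)) \to \W_r M^0$ then come from the identification $\mathcal E(W_r(R)) \simeq \W_r E$ (for the analogous Verschiebung filtration on $E$) and the natural map $E \to M_0^0 \to \W_r M^0$; the compatibilities with quotients and Frobenius are built into the construction, and compatibility with connections follows from the Leibniz rule used to define $d$ on $M_0^*$.

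For the universal property, suppose $(N^*, \iota^N_r)$ is any de Rham--Witt module over $(R, \mathcal E)$. Since $N^*$ is strict, it is $p$-adically complete and $N^0 = \varprojlim_r \W_r N^0$; passing to the limit in the $\iota^N_r$ and identifying $E \simeq \varprojlim_r \mathcal E(W_r(R))$ produces a $W(R_{\red})$-linear map $E \to N^0$ commuting with $\phi_E$. This extends uniquely to a morphism $M_0^* \to N^*$ of $\Omega^*_{W(R_{\red})}$-modules in $\DC$: Frobenius-equivariance is forced by the Frobenius-compatibility of the $\iota^N_r$, while compatibility with the differential is exactly the connection-compatibility clause in Definition \ref{def:dRW_module_intro}. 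By the module-theoretic universal property of $\W\Sat$, this descends uniquely to a map $M^* \to N^*$ of strict $\W\Omega^*_R$-modules, giving the required initiality.

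The main obstacle I anticipate is the module-theoretic development of saturation and strictification. The BLM framework treats these functors at the level of Dieudonn\'e algebras, and one must verify (or establish from scratch) the correct left-module analogs with compatible universal properties — in particular, that $\W\Sat$ of an $A^*$-module is canonically a $\W\Sat(A^*)$-module, and that the functor is left-adjoint to the inclusion of strict modules into $A^*$-modules in $\DC$. A secondary difficulty is tracing the interaction between the crystal-theoretic connection data on $\mathcal E$ and the de Rham-style differential on $M_0^*$, so that the connection-compatibility clause of Definition \ref{def:dRW_module_intro} is precisely what is needed for the unique extension $M_0^* \to N^*$ to respect $d$.
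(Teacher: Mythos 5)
Your construction fails at the very first step: the crystal structure on $\mathcal E$ does \emph{not} supply a connection $\nabla_E \colon E \to \Omega^1_{W(R_{\red})} \otimes_{W(R_{\red})} E$, where $\Omega^1_{W(R_{\red})}$ is the naive module of K\"ahler differentials of the uncompleted ring $W(R_{\red})$. The crystal supplies, for each $r$, a connection $\nabla_r \colon \mathcal E(W_r(R)) \to \mathcal E(W_r(R)) \otimes_{W_r(R)} \Omega^1_{W_r(R),\gamma}$ valued in PD-differentials of the $p^r$-torsion ring $W_r(R)$. Taking the limit yields a map into $\limarrow_r \bigl(\mathcal E(W_r(R)) \otimes \Omega^1_{W_r(R),\gamma}\bigr)$, which is a very different target: since $W(R_{\red})/p^rW(R_{\red}) \neq W_r(R_{\red})$, one has $\Omega^1_{W_r(R)} \neq \Omega^1_{W(R_{\red})}/p^r$, and there is no natural lift of the $\nabla_r$ to a connection valued in $\Omega^1_{W(R_{\red})} \otimes E$. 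This is precisely the obstruction identified in Remarks \ref{rmk:no_obvious_initial_dRWM} and \ref{rmk:PD_dR_over_Witt_vectors}: the completed PD-de Rham complex over the Witt tower carries the crystal connection but has no divided Frobenius, while $\Omega^*_{W(R_{\red})}$ carries a divided Frobenius but has no natural crystal connection. Without a differential on $M_0^*$, the entire construction --- the Dieudonn\'e complex, the strictification, and the $\iota_r$ maps --- cannot get off the ground.

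By contrast, the concern you raise as the ``main obstacle'' --- developing $\W\Sat$ for module objects in $\DC$ --- is not actually a gap; Lemma \ref{lem:WSat_module} handles it directly. The genuine difficulty is the one above, and the paper's Theorem \ref{thm:general_construction} gets around it by abandoning the Witt vector model entirely: one chooses any $p$-torsionfree surjection $A \twoheadrightarrow R$ with Frobenius lift (a polynomial ring will do), forms the $p$-adically completed PD-envelope $B$ and its torsion-free quotient $\widetilde B$, and observes that $\widetilde B$ is a $p$-torsionfree lift with Frobenius of $\widetilde B_1 = \widetilde B/p$ whose reduction is $R$ (Lemmas \ref{lem:ogus_5.1}--\ref{lem:ker_fsharp_nilpotent}). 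One then pulls $\mathcal E$ back to an $F$-crystal $\mathcal F$ on $\Spec \widetilde B_1$ via the divided-Frobenius morphism $g$ of Lemma \ref{lem:construction_of_g}, and uses the chain of equivalences $\dRWM_{R,\mathcal E} \simeq \dRWM_{\widetilde B_1, \mathcal F} \simeq \dRWLM_{\widetilde B, \mathcal F}$ (Propositions \ref{prop:dRWM_equiv_reduction} and \ref{prop:dRWM_dRWLM_equivalence}) to reduce to the Frobenius-lifted situation, where the divided Frobenius and the connection \emph{do} coexist on $\widehat{\dR}(\mathcal F(\widetilde B))$ because $\Omega^*_{\widetilde B_r} = \Omega^*_{\widetilde B}/p^r$. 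This is exactly the analogue of Construction \ref{con:BLM_lifted_construction}, not of Construction \ref{con:BLM_construction_W}.
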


The proof is much more complicated than in the case of trivial coefficients, since we have no analogue of the Witt vector construction \ref{con:BLM_construction_W}.  Roughly speaking, we will prove an analogue of the lifted construction \ref{con:BLM_lifted_construction}, and then reduce to the case where it applies.

We also prove an analogue of Theorem \ref{BLM_sheaf}:

\begin{proposition} \label{prop:sheaf}
(See Proposition \ref{prop:dRW_is_qcoh}.)
Let $X$ be a $k$-scheme, and $\mathcal E$ a unit-root $F$-crystal on $\Cris(X/W(k))$.  The functors
\begin{align*}
\Spec R & \mapsto \W\Omega^*_{R, \mathcal E} \text{ and} \\
\Spec R & \mapsto \W_r \Omega^*_{R, \mathcal E} \text{ for } r > 0
\end{align*}
define sheaves for the \'etale topology on $X$.  The latter is a quasicoherent sheaf of $W_r \mathcal O_X$-modules.
\end{proposition}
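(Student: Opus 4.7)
The plan is to mirror the proof of Theorem~\ref{BLM_sheaf} for the constant-coefficient case, concentrating the new work in handling the crystal datum under étale localization. The central input will be an étale base-change formula of the form
$$
\W_r\Omega^*_{R', \mathcal E|_{R'}} \;\cong\; W_r(R') \otimes_{W_r(R)} \W_r\Omega^*_{R, \mathcal E}
$$
for every étale map $R \to R'$ of $k$-algebras. Once this is in place, quasicoherence of $\W_r\Omega^*_{(-), \mathcal E}$ as a sheaf of $W_r \mathcal O_X$-modules is immediate, and the étale sheaf condition for both $\W_r\Omega^*_{(-),\mathcal E}$ and its inverse limit $\W\Omega^*_{(-),\mathcal E}$ follows from the general fact that quasicoherent modules on affines satisfy étale descent.

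To establish the base-change formula, I would use the universal property of Definition~\ref{def:saturated_dRW_intro}. Recall the classical fact that if $R \to R'$ is étale, then so is $W_r(R) \to W_r(R')$, and this étale extension is uniquely compatible with Frobenius and Verschiebung. On the crystal side, the restriction of a unit-root $F$-crystal along an étale map is computed by $\mathcal E(W_r(R')) = W_r(R') \otimes_{W_r(R)} \mathcal E(W_r(R))$, using the functorial behavior of $F$-crystals from section~\ref{sec:crystals}. It follows that $W_r(R') \otimes_{W_r(R)} \W_r\Omega^*_{R, \mathcal E}$ inherits in a canonical way the full package of data required by Definition~\ref{def:dRW_module_intro} for the pair $(R', \mathcal E|_{R'})$: graded-commutative multiplication, divided Frobenius and Verschiebung, a map $\iota_r$ from $\mathcal E(W_r(R'))$ into degree zero, and the compatibility with connections. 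Assembling these over $r$ and strictifying gives a de Rham--Witt module over $(R',\mathcal E|_{R'})$, so the universal property produces a map from $\W\Omega^*_{R',\mathcal E|_{R'}}$; functoriality of $\W\Omega^*$ in the pair $(R,\mathcal E)$ produces a map in the other direction, and the two compositions are identities by a second application of the universal property.

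With the base-change formula in hand, quasicoherence of $\W_r\Omega^*_{(-), \mathcal E}$ over $W_r \mathcal O_X$ reduces to checking it on distinguished opens $\Spec R_f \hookrightarrow \Spec R$, which is the special case of the formula for the étale localization $R \to R_f$ (noting $W_r(R_f) = W_r(R)_{[f]}$ via the Teichmüller lift). Since $W_r \mathcal O_X$ is itself an étale sheaf of rings (this is the same argument applied to the structure sheaf, i.e.\ the constant-coefficient case $\mathcal E = \mathcal O$), quasicoherent modules over it automatically satisfy étale descent, so $\Spec R \mapsto \W_r\Omega^*_{R, \mathcal E}$ is an étale sheaf. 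The sheaf condition for $\W\Omega^*_{R, \mathcal E} = \varprojlim_r \W_r\Omega^*_{R, \mathcal E}$ then follows because the sheaf axiom is itself a limit condition and commutes with the inverse limit in $r$.

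The main obstacle I anticipate is the étale base-change formula. Two delicate points stand out. First, verifying that the restriction $\mathcal E|_{R'}$ really is computed as the naïve tensor product at the Witt-vector level uses the unit-root hypothesis in an essential way and must be extracted carefully from section~\ref{sec:crystals}; for a general $F$-crystal this would fail. Second, strictness of $W_r(R') \otimes_{W_r(R)} \W_r\Omega^*_{R, \mathcal E}$ as a Dieudonné complex is a completeness condition that is not obviously preserved under tensor product, and one must use the flatness of $W_r(R) \to W_r(R')$ together with the explicit form of the defining filtration $(\mathrm{im}\, V^r + \mathrm{im}\, dV^r)_r$ to see that strictness and saturation survive base change. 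Modulo these verifications the remainder of the argument is a formal unwinding of Definition~\ref{def:dRW_module_intro}.
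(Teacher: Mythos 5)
Your overall plan—an étale base-change formula
$\W_r\Omega^*_{S, f^*_{\cris}\mathcal E} \cong W_r(S) \otimes_{W_r(R)} \W_r\Omega^*_{R, \mathcal E}$, proved by constructing a candidate de Rham--Witt module over $(S, f^*_{\cris}\mathcal E)$ and matching universal properties, then reading off quasicoherence and the sheaf axiom—is the same as the paper's.  The paper packages the candidate as the strictified tensor product $\W\Omega^*_S \otimes^{\str}_{\W\Omega^*_R}\W\Omega^*_{R,\mathcal E}$, equips it with the $\iota^*_r$ maps via base change (Remark \ref{rmk:PD_dR_functorialities}, Remark \ref{rmk:PD_dR_dRW_etale_base_change}), and verifies the universal property essentially as you describe.

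The real content, which you correctly flag as a "delicate point" but do not resolve, is the claim that $\varprojlim_r\bigl(W_r(S)\otimes_{W_r(R)}\W_r\Omega^*_{R,\mathcal E}\bigr)$ is strict and agrees with the strictified tensor product.  Your proposed route—flatness of $W_r(R)\to W_r(S)$ plus a direct analysis of the filtration $(\im V^r + \im dV^r)_r$—is not how the paper handles this, and it is not at all clear it can be made to work directly: the issue is not merely that strictness is a completeness condition, but that one must put $F$, $V$, $R$, and $d$ on each $W_r(S)\otimes_{W_r(R)}\W_r\Omega^*_{R,\mathcal E}$ compatibly in the first place.  The operators $F$, $V$, and $d$ on $\W_r\Omega^*_{R,\mathcal E}$ are only semilinear over $W_r(R)$ in various senses, so they do not base-change naïvely.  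The paper instead invokes \cite[Corollary 5.4.9]{BLM}, which says that in the $V$-adically étale situation each of $R$, $F$, $V$ extends \emph{uniquely} across the base change; this uniqueness is what makes all the required identities hold.  That, together with a Frobenius-twist argument for the exactness axioms (Proposition \ref{prop:strict_tower}), shows the system $(W_r(S)\otimes_{W_r(R)}\W_r\Omega^*_{R,\mathcal E})_r$ is a strict Dieudonné \emph{tower}, and the equivalence $\DC_{\str}\simeq\TD$ is then used (Proposition \ref{V_etale}) to identify its limit with $\W\Omega^*_S\otimes^{\str}_{\W\Omega^*_R}\W\Omega^*_{R,\mathcal E}$.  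So your sketch is correct in outline, but the step you defer to "flatness plus the explicit filtration" is the bulk of the proof, and it goes through the strict-tower formalism rather than a filtration calculation.

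Two smaller points.  First, you speak of strictness of "$W_r(R')\otimes_{W_r(R)}\W_r\Omega^*_{R,\mathcal E}$ as a Dieudonné complex," but this is a $p^r$-torsion object; strictness is a property of the limit, or equivalently of the whole tower.  Second, once the finite-level identification is established the paper does not need a separate descent argument: $\Spec S \mapsto \W_r\Omega^*_{S, f^*_{\cris}\mathcal E}$ is by construction the quasicoherent sheaf $\widetilde{\W_r\Omega^*_{R,\mathcal E}}$ on $\AffEt(\Spec R)$, so the sheaf axiom at level $r$ is automatic, and the sheaf axiom for the limit follows as you say.
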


Let $\W\Omega^*_{X, \mathcal E}$ and $\W_r \Omega^*_{X, \mathcal E}$ denote the sheaves produced by the proposition.
Our next main result relates these sheaves to the cohomology of $\mathcal E$ provided that $X/k$ is smooth:

\begin{theorem} \label{thm:cohomology}
(See Corollary \ref{cor:cohomology_comparison}.)
If $X$ is a smooth $k$-scheme and $(\mathcal E, \phi_{\mathcal E})$ is a unit-root F-crystal on $\Cris(X/W(k))$, then there are canonical isomorphisms
$$
\mathbb H^i(X_{\zar}, \W_r \Omega^*_{X, \mathcal E}) \simeq H^i((X/W_r)_{\cris}, \mathcal E)
$$
for each $r > 0$, and
$$
\mathbb H^i(X_{\zar}, \W\Omega^*_{X, \mathcal E}) \simeq H^i((X/W)_{\cris}, \mathcal E)
$$
satisfying various compatibilities as described in Proposition \ref{prop:compatibilities_of_cohomology_comparison}.
\end{theorem}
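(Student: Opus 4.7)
The plan is to work Zariski-locally, reduce to the case where $R$ is smooth over $k$ and admits a smooth lift $A/W(k)$ with a Frobenius lift $\phi$, give an explicit representative for $\W\Omega^*_{R,\mathcal E}$ in terms of $(A, \phi, \mathcal E(A))$, and then invoke the classical comparison between crystalline cohomology of a crystal and the de Rham cohomology of its evaluation on such a lift.

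First, by Proposition \ref{prop:sheaf}, each $\W_r \Omega^i_{X,\mathcal E}$ is a quasicoherent sheaf of $W_r\mathcal O_X$-modules, so its hypercohomology can be computed on any affine (or \'etale) cover by \v{C}ech methods. After sufficient localization we may assume $X = \Spec R$ with $R$ smooth over $k$, admitting a $p$-adically complete, $p$-torsionfree lift $A$ over $W(k)$ equipped with a Frobenius lift $\phi$; such a lift exists \'etale-locally and the \'etale sheaf property of Proposition \ref{prop:sheaf} handles the gluing.

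Next I would establish an analogue of Construction \ref{con:BLM_lifted_construction} with coefficients. Let $M := \mathcal E(A)$; as $\mathcal E$ is a unit-root $F$-crystal, $M$ is a finitely generated projective $A$-module equipped with an integrable connection $\nabla$ and a $\phi$-semilinear isomorphism $\phi_M \colon M \overset{\sim}{\to} M$. One endows $\widehat{\Omega}^*_A \otimes_A M$ with the structure of a de Rham--Witt module in the sense of Definition \ref{def:dRW_module_intro}: the differential is $\nabla$ extended by Leibniz; the Frobenius is the divided Frobenius of Proposition \ref{prop:BLM_dR_as_a_DC} combined with $\phi_M$; and the $\widehat{\Omega}^*_A$-module structure is the obvious one. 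I would then show that the strictification represents $\W\Omega^*_{R,\mathcal E}$, i.e.\ there is an isomorphism
$$
\W\Omega^*_{R, \mathcal E} \overset{\sim}{\to} \W\Sat(\widehat{\Omega}^*_A \otimes_A M)
$$
in the category of de Rham--Witt modules over $(R, \mathcal E)$, to be verified via the universal property of Definition \ref{def:saturated_dRW_intro}. Because $A$ is smooth over $W(k)$ and $M$ is projective over $A$, the complex $\widehat{\Omega}^*_A \otimes_A M$ is $p$-torsionfree and $p$-adically complete, and unit-rootness guarantees that its Frobenius is surjective onto its ``expected image'', so the strictification essentially agrees with the original complex. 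At level $r$ one thereby obtains $\W_r \Omega^*_{R,\mathcal E} \simeq \Omega^*_{A/W_r(k)} \otimes_A M$. Combining this with Berthelot's classical comparison
$$
H^i((X/W_r)_{\cris}, \mathcal E) \simeq \mathbb H^i(X_{\zar}, \Omega^*_{A/W_r(k)} \otimes_A M)
$$
produces the desired isomorphisms at finite level; the limit-level statement follows by passing to $\varprojlim_r$ with a routine Mittag--Leffler check.

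I expect the main obstacle to be verifying that $\W\Sat(\widehat{\Omega}^*_A \otimes_A M)$ genuinely satisfies the universal property of Definition \ref{def:saturated_dRW_intro}. This requires showing that the compatibility between $\nabla$ and $\phi_M$ on $M$, once translated through saturation, completion, and truncation, correctly encodes the compatibility required of the maps $\iota_r \colon \mathcal E(W_r(R)) \to \W_r M^0$ with Frobenius, quotient, and connection data for arbitrary de Rham--Witt modules; the subtlety is that $\mathcal E(W_r(R))$ is defined intrinsically via the crystalline site, whereas the candidate $\W_r\Sat(\widehat{\Omega}^0_A \otimes M)$ is built from the chosen lift. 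Once this local identification is in hand, the compatibilities listed in Proposition \ref{prop:compatibilities_of_cohomology_comparison} are obtained by chasing canonical maps (projection, Verschiebung, Frobenius, and passage to the inverse limit) through both sides of the identification.
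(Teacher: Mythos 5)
Your strategy matches the paper's at a high level---reduce to a Frobenius-lifted local situation, identify $\W\Omega^*_{R,\mathcal E}$ with $\W\Sat(\widehat{\Omega}^*_A \otimes_A M)$ (this identification is correct and is essentially the content of Corollary \ref{cor:initial_dRWLM} together with Theorem \ref{thm:general_construction}), compare at finite level, and invoke Berthelot's comparison. But there is a genuine gap in the middle step. You write that "unit-rootness guarantees that its Frobenius is surjective onto its `expected image,' so the strictification essentially agrees with the original complex. At level $r$ one thereby obtains $\W_r\Omega^*_{R,\mathcal E} \simeq \Omega^*_{A/W_r(k)} \otimes_A M$." This is false as stated: $\widehat{\Omega}^*_A \otimes_A M$ is not saturated (already $\widehat{\Omega}^*_A$ is not, e.g.\ $dx$ is not in the image of the divided Frobenius for $A = \Z_p[x]^{\wedge}$ with $\phi(x)=x^p$), and $\W_r\Sat(-)$ enlarges the mod-$p^r$ quotient considerably---compare $\W_r\Omega^0_R = W_r(R)$ against $A_r$ in degree $0$. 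What one actually proves is that the strictification map
$$
\rho_r \colon \bigl(\widehat{\Omega}^*_A \otimes_A M\bigr)/p^r \longrightarrow \W_r\Omega^*_{R,\mathcal E}
$$
is a \emph{quasi}-isomorphism (Lemma \ref{lem:dR_E_quasi-Cartier}). This is not a formality: it is the unit-root generalization of the Cartier isomorphism, which the paper imports from Ogus (\cite[Corollary 7.3.6]{ogus_F-crystals}) and feeds into the Bhatt--Lurie--Mathew criterion for when strictification is a mod-$p^r$ quasi-isomorphism. Your argument conflates being saturated (a structural property of the Dieudonn\'e complex) with the Cartier-type condition (a cohomological property of the mod-$p$ reduction); neither holds tautologically for $\widehat{\Omega}^*_A \otimes_A M$, and the latter is exactly what must be established.

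The globalization step also needs more care than "the \'etale sheaf property handles the gluing." Proposition \ref{prop:sheaf} says $\W_r\Omega^*_{X,\mathcal E}$ is determined locally, but to glue the \emph{identifications} with $Ru_{X/W_r*}\mathcal E$ you must check that the local comparison maps are functorial in the choice of lift and embedding---the paper proves this in Proposition \ref{prop:functoriality_of_non-derived_part_of_comparison} and then performs cohomological descent along a hypercover $X_\bullet \to X$ built from an affine cover with chosen lifts (Construction \ref{con:hypercover_setup}, Theorem \ref{thm:dRW_computes_Ru_*_E}), since no global smooth embedding with Frobenius lift need exist. Your "routine Mittag--Leffler check" is fine for passing from finite level to the limit, but it is not a substitute for this descent argument, and the functoriality verification is nontrivial because the construction of $\W\Omega^*_{R,\mathcal E}$ from a lift passes through the auxiliary crystal $\mathcal F$ and several equivalences of categories.
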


Finally, again assuming smoothness, we compare our de Rham--Witt (pro-)complexes to the classical ones constructed in \cite{etesse}.  These have the form
\begin{align*}
W_r\Omega^*_{X, \mathcal E} & = \dR(\mathcal E_{W_r(X), \gamma}) \otimes_{\Omega^*_{W_r(\mathcal O_X), \gamma}} W_r \Omega^*_X \text{ and} \\
W \Omega^*_{X, \mathcal E} & = \limarrow_r W_r\Omega^*_{X, \mathcal E}.
\end{align*}
where $\dR(\mathcal E_{W_r(X), \gamma})$ is the PD-de Rham complex associated to $\mathcal E$ on $W_r(X)$ (\cite[\textsection II, Proposition 1.1.5]{etesse}), and $(W_r \Omega^*_X)_r$ is the classical de Rham--Witt pro-complex of $X$.  (As before, we use a calligraphic $\W$ for the saturated de Rham--Witt complex and a roman $W$ for the classical one.)

\begin{theorem} \label{thm:etesse_comparison_intro}
(See Theorem \ref{thm:etesse_comparison}.)
Suppose $X$ is a $k$-scheme and $(\mathcal E, \phi_{\mathcal E})$ is a unit-root $F$-crystal on $\Cris(X/W(k))$.  Then we have compatible maps
\begin{align*}
W_r \Omega^*_{X, \mathcal E} & \to \W_r \Omega^*_{X, \mathcal E} \text{ for each } r, \text{ and} \\
W\Omega^*_{X, \mathcal E} & \to \W\Omega^*_{X, \mathcal E},
\end{align*}
which are isomorphisms if $X/k$ is smooth.
\end{theorem}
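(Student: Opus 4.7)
The plan is to construct the comparison map by exploiting the tensor product presentation of Étesse's complex, and then to establish the isomorphism claim in the smooth case by working étale-locally with a lift of $R$ admitting a Frobenius.

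For the construction of $W_r\Omega^*_{R,\mathcal E} \to \W_r\Omega^*_{R,\mathcal E}$, I would work affine-locally on $X = \Spec R$ and produce a map out of each factor in
\[
W_r\Omega^*_{R,\mathcal E} = \dR(\mathcal E_{W_r(R),\gamma}) \otimes_{\Omega^*_{W_r(R),\gamma}} W_r\Omega^*_R.
\]
The factor $W_r\Omega^*_R$ maps to $\W_r\Omega^*_R$ by Theorem \ref{thm:BLM_classical_comparison} and then into $\W_r\Omega^*_{R,\mathcal E}$ via its $\W\Omega^*_R$-module structure from Definition \ref{def:dRW_module_intro}. The factor $\dR(\mathcal E_{W_r(R),\gamma})$ maps in because its degree-zero component is $\mathcal E(W_r(R))$, which maps to $\W_r\Omega^0_{R,\mathcal E}$ via $\iota_r$; the compatibility of $\iota_r$ with the crystal connection and with the differential on $\W_r\Omega^*_{R,\mathcal E}$ forces a unique extension to all degrees. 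Agreement of the two constructions over $\Omega^*_{W_r(R),\gamma}$ is a routine check, so they descend to the tensor product. Globalization across $X$ follows from Proposition \ref{prop:sheaf}, and the map $W\Omega^*_{X,\mathcal E} \to \W\Omega^*_{X,\mathcal E}$ is obtained by passage to the inverse limit in $r$.

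For the isomorphism assertion when $X/k$ is smooth, I would work étale-locally on $X$ and reduce to the case where $R$ admits a lift $A$ to $W(k)$ together with a Frobenius lift $\phi$. Here the unit-root crystal $\mathcal E$ is determined by the finite projective $A$-module $M := \mathcal E(A)$ equipped with its Frobenius-linear isomorphism $\phi^* M \xrightarrow{\sim} M$, and its connection is determined by the crystal structure. I would then establish parallel lifted descriptions of both sides: Étesse's construction unwinds to a PD-de Rham computation involving $\widehat{\Omega}^*_A \otimes_A M$ with its integrable connection, while an analogue of Construction \ref{con:BLM_lifted_construction} should realize the saturated side as the strictification $\W\Sat(\widehat{\Omega}^*_A \otimes_A M)$ equipped with an appropriately defined divided Frobenius. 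Termwise comparison of these presentations reduces the coefficient claim to the trivial-coefficient comparison (Theorem \ref{thm:BLM_classical_comparison}), with $A$-flatness of $M$ propagating the isomorphism to the tensor product.

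The main obstacle will be establishing the lifted description of $\W\Omega^*_{R,\mathcal E}$ in the presence of $(A, \phi)$. Concretely, one must define a divided Frobenius on $\widehat{\Omega}^*_A \otimes_A M$ by combining the divided Frobenius on $\widehat{\Omega}^*_A$ from Proposition \ref{prop:BLM_dR_as_a_DC} with the unit-root Frobenius on $M$ and the connection, then verify that its strictification satisfies the universal property of Definition \ref{def:saturated_dRW_intro}. A secondary point is ensuring the comparison behaves well under the passage to the inverse limit, which is formal once the $r$-level statement is in hand, since both sides are strict Dieudonné towers and the map is compatible with their filtrations.
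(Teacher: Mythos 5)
Your construction of the comparison map $\zeta_{r,\mathcal E}$ is essentially identical to the paper's Construction \ref{con:map:from_etesse}: one uses the universal property of the tensor product defining $W_r\Omega^*_{R,\mathcal E}$, sending the factor $\dR(\mathcal E(W_r(R),\gamma))$ via $\iota^*_r$ and the factor $W_r\Omega^*_R$ via $\zeta_r$ and the module action, and globalizing via the quasicoherence of Proposition \ref{prop:sheaf}. This part is fine.

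The isomorphism step, however, has a genuine gap. You propose to reduce to a lifted description and then carry out a ``termwise comparison,'' with flatness of $M = \mathcal E(A)$ over $A$ propagating the trivial-coefficient isomorphism. The difficulty is twofold. First, \'Etesse's complex does not ``unwind'' to $\widehat{\Omega}^*_A \otimes_A M$: the two are related by a comparison map that is a quasi-isomorphism (essentially Lemma \ref{lem:dR_E_quasi-Cartier}) but certainly not an isomorphism of complexes, whereas Theorem \ref{thm:etesse_comparison_intro} asserts an actual isomorphism of pro-complexes. Second, and more fundamentally, even granting the lifted description $\W\Omega^*_{R,\mathcal E} \simeq \W\Sat(\widehat{\Omega}^*_A \otimes_A M)$ (which the paper does establish in Theorem \ref{thm:general_construction} via the $\dRWLM$ machinery), you would need to know that $\W_r\Sat$ commutes with tensoring by $M$, i.e.\ that $\W_r\Sat(\widehat{\Omega}^*_A \otimes_A M) \simeq M_r \otimes_{A_r} \W_r\Sat(\widehat{\Omega}^*_A)$. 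Strictification is a saturation (a colimit) followed by a completion, and there is no reason for it to commute with tensoring by a flat module carrying a nontrivial connection and Frobenius -- this is precisely the kind of base-change compatibility that \emph{must be proved}, and $A$-flatness of $M$ alone is nowhere near sufficient, because the Frobenius and differential on $\widehat{\Omega}^*_A \otimes_A M$ are twisted by $\phi_{\mathcal E}$ and $\nabla$.

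The paper circumvents this entirely by a different mechanism: it uses Katz's theorem (Remark \ref{rmk:proetale_trivialization}) to find a pro-\'etale affine cover $\Spec S_\infty \to \Spec R$ on which the $F$-crystal $(\mathcal E, \phi_{\mathcal E})$ becomes \emph{trivial} (not merely free as an $\mathcal O$-module -- the connection and Frobenius are also trivialized). It then shows that $\zeta_{r,\mathcal E}$ base-changes correctly along this cover using the \'etale base-change results (Proposition \ref{prop:dRW_is_qcoh}, Lemma \ref{lem:etale_base_change_of_comparison_maps}) and compatibility with filtered colimits (Proposition \ref{prop:colimit_of_dRW_complexes_is_dRW_complex}), reduces to the trivial-coefficient comparison of Theorem \ref{thm:BLM_comparison} via Lemma \ref{lem:classical_to_saturated_comparisons_agree_for_O}, and finishes by faithfully flat descent. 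The \'etale base-change theorem for strictification (Proposition \ref{V_etale}) is exactly the surrogate for your hoped-for commutation of $\W\Sat$ with tensoring by $M$; it is available only for $V$-adically \'etale base changes, which is why the pro-\'etale trivialization is the right tool here, rather than a Zariski-local lift where $M$ is free but not trivial as an $F$-crystal.
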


%%%%%%%%%%%%%%%%%%%%%%%%%%%%%%%%%%%%

\subsection{New ingredients}
\label{sec:new_ingredients}

The central idea introduced in this work is the category $\dRWM_{R, \mathcal E}$ of de Rham--Witt modules (Definition \ref{def:dRW_module}).  Whereas the classical treatment of de Rham--Witt complexes with coefficients, due to \'Etesse (\cite{etesse}), is more constructive in nature, de Rham--Witt modules provide a clean setting in which to define our saturated de Rham--Witt complexes $\W\Omega^*_{R, \mathcal E}$ by universal property:  namely, $\W\Omega^*_{R, \mathcal E}$ is defined to be an initial object of $\dRWM_{R, \mathcal E}$.  This is in the spirit of Illusie's original definition (\cite[I, num\'ero 1]{illusie})
as well as that of Bhatt--Lurie--Mathew (\cite[\textsection 4]{BLM}):  we build a category of objects that carry all the structure we want, and we ask for an initial object in this category.  The actual construction is done only later, first in the special case where $R$ admits a $p$-torsionfree lift with Frobenius, and later in general by reducing to this case.

A significant---and necessary---difference between our universal property and its predecessors is that while de Rham--Witt complexes with trivial coefficients carry an algebra structure (corresponding to the algebra structure of the trivial $F$-crystal $\mathcal O_{X/W}$), ours instead carries the structure of a module over $\W\Omega^*_R$ (corresponding to the structure of the coefficient crystal $\mathcal E$ as an $\mathcal O_{X/W}$-module).  Note that this has some content even when $\mathcal E$ is the trivial crystal:  $\W\Omega^*_R$ satisfies a universal property not only as an algebra but also as a module over itself.

Accordingly, we must make systematic use of modules in the categories $\DC$ and $\DC_{\str}$.  We develop this theory in chapter \ref{ch:DC_algebra}.  Although the symmetric monoidal structures of these categories appear already in \cite{BLM}, we study them in greater detail, including concrete descriptions of the resulting algebra and module objects (\textsection \ref{sec:modules_in_DC}), an account of tensor products over a base algebra (\textsection \ref{sec:tensor_over_a_base}), and a discussion of the structure carried by ${\W_r} M^*$ when $M^*$ is a module in $\DC_{\str}$ (\textsection \ref{sec:filtrations_on_modules}).

%%%%%%%%%%%%%%%%%%%%%%%%%%%%%%%%%%%%

\subsection{Outline}
\label{sec:intro_outline}

We begin in chapter \ref{ch:preliminaries} with some preliminary material on de Rham and PD-de Rham complexes, which will be used throughout.  In chapter \ref{ch:DC_algebra}, we develop the theory of modules in the categories $\DC$ and $\DC_{\str}$, as discussed above, and also study how filtered colimits of strict Dieudonn\'e complexes behave under the equivalence of categories $\DC_{\str} \simeq \TD$ (\textsection \ref{sec:DCstr_and_colimits}).

In chapter \ref{ch:dRWM}, we define the category $\dRWM_{R, \mathcal E}$ of \emph{de Rham--Witt modules} associated to a unit-root $F$-crystal $\mathcal E = (\mathcal E, \phi_{\mathcal E})$ on $\Spec R$, which houses the universal property characterizing our saturated de Rham--Witt complex $\W\Omega^*_{R, \mathcal E}$.  At this stage, we will not be able to construct $\W\Omega^*_{R, \mathcal E}$ except in the case of the trivial crystal $\mathcal E = \mathcal O_{X/W}$ (\textsection \ref{sec:trivial_F-crystal}).  However, we will prove several useful formal properties about the behavior of $\dRWM_{R, \mathcal E}$ and $\W\Omega^*_{R, \mathcal E}$:  functoriality (\textsection \ref{sec:functoriality_of_dRWM}), insensitivity to nilpotent thickenings (\textsection \ref{sec:nilpotent_thickenings}), an \'etale sheaf property (\textsection \ref{sec:etale_localization}), and compatibility with colimits (\textsection \ref{sec:compatibility_with_colimits}).  Although we work on an affine $k$-scheme $\Spec R$ throughout, the \'etale sheaf property allows us to define a saturated de Rham--Witt complex $\W\Omega^*_{X, \mathcal E}$ when $X$ is not necessarily affine.

In chapter \ref{ch:dRWLM}, we introduce our main technique for constructing $\W\Omega^*_{R, \mathcal E}$.  Namely, assuming that $R$ admits a $p$-torsionfree lift $A$ with Frobenius, we introduce the category $\dRWLM_{A, \mathcal E}$ of \emph{de Rham--Witt lift modules}.  In this case, we show that the categories $\dRWM_{R, \mathcal E}$ and $\dRWLM_{A, \mathcal E}$ are equivalent.  By studying the latter category, we are able to construct the saturated de Rham--Witt complex $\W\Omega^*_{R, \mathcal E}$ in the Frobenius-lifted situation.

We prove our main theorems in chapter \ref{ch:main_results}.  Namely, in \textsection \ref{sec:general_construction}, we show by reducing to the Frobenius-lifted situation that $\W\Omega^*_{R, \mathcal E}$ always exists.  If $X/k$ is smooth, we show in \textsection \ref{sec:cohomology_comparison} that $\W\Omega^*_{X, \mathcal E}$ computes the cohomology of $\mathcal E$,
and in \textsection \ref{sec:comparison_to_etesse} we compare it to the classical de Rham--Witt complex $W\Omega^*_{X, \mathcal E}$.

%%%%%%%%%%%%%%%%%%%%%%%%%%%%%%%%%%%%

\subsection{Future directions}
\label{sec:future_directions}

A significant technical nuisance in this work is our lack of a rich theory of Dieudonn\'e complexes valued in sheaves, including the tensor algebra discussed in chapter \ref{ch:DC_algebra}.  Given such a theory, we could define our saturated de Rham--Witt complexes by a universal property within a category of \emph{sheafy de Rham--Witt modules}, rather than working affine-locally and bootstrapping up to the non-affine case.

This would not only be a conceptual simplification; it would also allow for a simple alternate proof of Theorem \ref{thm:etesse_comparison_intro}.
Namely, suppose $X/k$ is smooth and $(\mathcal E, \phi_{\mathcal E})$ is a unit-root $F$-crystal on $X$.
It follows from a theorem of Katz that we have $(\mathcal E, \phi_{\mathcal E}) \simeq (\mathcal L \otimes \mathcal O_{X/W(k)}, \id \otimes \phi_{\mathcal O})$ for some \'etale $\Z_p$-local system $\mathcal L = (\mathcal L_r)_r$ on $X$.
Then Theorem \ref{thm:etesse_comparison_intro} would follow from an isomorphism of pro-complexes
\begin{equation} \label{eqn:katz_calculation}
(\W_r \Omega^*_{X, \mathcal E})_r \simeq (\W_r\Omega^*_X \otimes_{\Z/p^r\Z} \mathcal L_r)_r.
\end{equation}
Unfortunately, we cannot prove \ref{eqn:katz_calculation} by trivializing $\mathcal E$:  our definition of $\W_r \Omega^*_{X, \mathcal E}$ relies on the data of the entire tower $(\mathcal L_r)_r$, and there is generally no Zariski or \'etale cover of $X$ that trivializes the entire tower simultaneously.
Thus, we would like to embrace the idea of working with sheaves, and view both sides of the isomorphism not as sheaves valued in strict Dieudonn\'e towers but as \emph{strict Dieudonn\'e towers valued in the category of abelian sheaves}.

The notion of a strict Dieudonn\'e tower (or a Dieudonn\'e complex) valued in sheaves does not appear in \cite{BLM}, as all of their constructions are affine-local.  In joint work in progress with Joe Stahl (\cite{joint}), we develop a theory of Dieudonn\'e complexes in a general complete and cocomplete abelian category $\mathcal A$.  This theory includes categories $\DC_{\mathcal A}$, $\DC_{\sat, \mathcal A}$, $\DC_{\str, \mathcal A}$, and $\TD_{\mathcal A}$ of (saturated, strict) Dieudonn\'e complexes and strict Dieudonn\'e towers valued in $\mathcal A$, which recover the usual categories when $\mathcal A = \Ab$.

Generalizing the theory to this context introduces several new difficulties, both internal to the theory of Dieudonn\'e complexes in $\mathcal A$ and (when $\mathcal A$ is the category of abelian sheaves on a site $X$) involving comparisons between sheafy and presheafy constructions.
For example, we generally do not have an equivalence of categories
$$\DC_{\str, X} \simeq \TD_X$$
between strict Dieudonn\'e complexes and Dieudonn\'e towers of sheaves on $X$ (cf. \cite[Proposition 2.9.1, Corollary 2.9.4]{BLM}):  although we have functors in both directions, the composition
$$
\TD_X \to \DC_{\str, X} \to \TD_X
$$
is generally not isomorphic to the identity.
Similarly, given $M^*$ in $\DC_{\sat, X}$, we do not know whether the ``strictification'' $\W(M^*)$ is strict (cf. \cite[Proposition 2.7.5, Corollary 2.7.6]{BLM}).
Both of these issues can be resolved for our objects of interest by appealing to the quasicoherence property of Proposition \ref{prop:sheaf}; however, it would be interesting to know the most general statements.

A further challenge is to construct a symmetric monoidal structure on either $\DC_{\str, X}$ or $\TD_X$ generalizing the strict tensor product
$$
\otimes^{\str} \colon \DC_{\str} \times \DC_{\str} \to \DC_{\str}
$$
of \cite[Remark 7.6.4]{BLM}.  This is a necessary ingredient for building a category of sheafy de Rham--Witt modules; such a module should in particular be a module over $\W\Omega^*_X$ in $\DC_{\str, X}$.
It is not difficult to define a tensor product on $\DC_{\mathcal A}$ given one on $\mathcal A$; however, Bhatt--Lurie--Mathew's construction of $\otimes^{\str}$ seems to rely crucially on the fact that $\Z$ has cohomological dimension $1$, and it does not readily generalize to the case of sheaves.

Beyond these technical issues, we would like to give an account, in the spirit of \cite{BLM}, of de Rham--Witt complexes with coefficients in an $F$-crystal $(\mathcal E, \phi_{\mathcal E})$ that is not unit-root.  These objects cannot be strict Dieudonn\'e complexes, because the map
$$
\alpha_F \colon \W\Omega^*_{R, \mathcal E} \to \eta_p \W\Omega^*_{R, \mathcal E}
$$
given by $p^i F$ in degree $i$ will not be an isomorphism.  Instead, we would need a modification of the category $\DC_{\str}$ which carries some extra data allowing the Frobenius endomorphism to have smaller-than-expected image in a controlled way, in the spirit of Fontaine--Jannsen's theory of $\varphi$-gauges (\cite{F-J})---but which nonetheless has a universal construction analogous to strictification.  In this light, the current work can be viewed as a proof of concept, which addresses all cases that are accessible with the current tools.

%%%%%%%%%%%%%%%%%%%%%%%%%%%%%%%%%%%%

\subsection{Notation}
\label{sec:notation}

\begin{para}
Unless otherwise specified, all rings are commutative with $1$.  We fix a prime number $p$ throughout.
We will always let $k$ denote a perfect field of characteristic $p$, which we will usually treat as fixed but will occasionally need to vary.
We let $W = W(k)$ be its ring of Witt vectors, and $W_r = W_r(k) = W/p^rW$ for each $r > 0$.  We will always\footnote{Occasionally we will also use the letter $R$ for certain quotient maps in towers, in accordance with \cite[Definition 2.6.1]{BLM}.} let $R$ denote a ring of characteristic $p$, which will usually be a $k$-algebra and will sometimes have additional restrictions.  We will usually let $A$ denote a $p$-torsionfree lift of $R$, although the exact hypotheses we impose will vary.
\end{para}

\begin{para} \label{para:notation_Frobenii}
Several kinds of Frobenius operators will make an appearance, and we will do our best to notate them all unambiguously with the limited set of suggestive names at our disposal.  The absolute Frobenius endomorphism of an $\F_p$-algebra $R$ will be denoted $F_R$ or $\Frob_R$.  The Frobenius endomorphism on $W(R)$ will be written as $F$, or as $\sigma$ if $R = k$ is a perfect field of characteristic $p$.  Other Frobenius lifts will typically be denoted as $\phi$; cf. Situations \ref{sit:lifted_situation} and \ref{sit:embedded_situation}.  If $A$ is a ring with Frobenius lift $\phi$, then the induced endomorphism of $\Omega^*_A$ will be called $\phi$ or $\phi^*$; in this situation, we will reserve the name $F$ for a divided Frobenius.  (This is consistent with the convention of denoting Dieudonn\'e complexes as $(M^*, d, F)$; cf. \cite[Remark 2.1.4]{BLM}.)

We will write $F$-crystals as $(\mathcal E, \phi_{\mathcal E})$, where $\mathcal E$ is a crystal on $X$ and $\phi_{\mathcal E} \colon (F_X)^*_{\cris} \mathcal E \to \mathcal E$ is $\mathcal O_{X/S}$-linear; we will often suppress the Frobenius $\phi_{\mathcal E}$ from the notation.  Finally, if $T = (U, T, \delta)$ and $T' = (U', T', \delta')$ are objects of $\Cris(X/S)$ and we have a PD-morphism $h \colon T' \to T$ over $F_X \colon X \to X$, then the Frobenius-semilinear map of sections
\begin{equation} \label{eqn:F_on_sections}
\mathcal E(T) \overset{h^*}{\to} (h^* \mathcal E)(T') \simeq ((F_X)^*_{\cris} \mathcal E)(T') \overset{\phi_{\mathcal E}}{\to} \mathcal E(T')
\end{equation}
will be written as $F$.
\end{para}

\begin{para} \label{para:notation_PD_structures}
We will also encounter a number of PD-structures.  We will always reserve the notation $\brackets$ for one particular type of PD-structure:  namely, the unique PD-structure on the ideal $(p)$ of a $\Z_{(p)}$-algebra $A$ (usually a $\Z/p^r\Z$-algebra for some $r > 0$) that is compatible with the PD-structure on $(p) \subset \Z_{(p)}$.  Explicitly:
$$
(px)^{[n]} = x^n p^{[n]} = \frac{p^n}{n!} x^n,
$$
where $p^n/n! \in p\Z_{(p)}$.  Other PD-structures---such as the standard PD-structure $\gamma_n(Vx) = p^{n-1} V(x^n)/n!$ on $VW(R) \subset W(R)$ or $VW_{r-1}(R) \subset W_r(R)$---will be called $\gamma$ or $\delta$.
\end{para}

\begin{para}
We will borrow a number of notational conventions from \cite{BLM}.  For example, if $M^*$ is a Dieudonn\'e complex, we will let $\rho = \rho_M \colon M^* \to \W\Sat(M^*)$ denote its strictification map; that is, the initial morphism from $M^*$ to a strict Dieudonn\'e complex.  Similarly, $\alpha_F$ will denote the map of complexes $M^* \to \eta_p M^*$ (or $M^* \to M^*$ if $M^*$ is concentrated in nonnegative degrees) which is defined by $p^i F$ in degree $i$.
\end{para}

For future reference, we collect here some setup and related observations which we will frequently invoke throughout the paper.

\begin{situation} \label{sit:lifted_situation}
\hspace{0in}
\begin{enumerate}
\item[(a)] (\emph{The lifted situation})
Let $R$ be a $k$-algebra and $A$ a $p$-torsion-free ring with $A/pA = R$.  We set $A_r = A/p^r A$ for each $r > 0$, and $\widehat{A} = \limarrow_r A_r$.
\item[(b)] (\emph{The Frobenius-lifted situation})
In addition to the above, let $\phi \colon A \to A$ be a lift of the Frobenius endomorphism of $R$; that is, a ring homomorphism such that $\phi(x) \equiv x^p \pmod p$ for all $x \in A$.  Note that $\phi$ induces endomorphisms of each $A_r$.
\end{enumerate}
\end{situation}

\begin{situation} \label{sit:embedded_situation}
\hspace{0in}
\begin{enumerate}
\item[(a)] (\emph{The embedded situation})
Let $R$ be a $k$-algebra and $A$ a smooth algebra over $W = W(k)$ equipped with a quotient map $A \twoheadrightarrow R$ of $W$-algebras, with kernel $I \subset A$ (necessarily containing $p$).  For each $r > 0$, we set $A_r = A/p^r A$ and $I_r = I/p^r A$, and let $(B_r, \overline I_r, \gamma)$ denote the PD-envelope of $(A_r, I_r)$ over $(W_r(k), (p), \brackets)$.  Let $(B, \overline I, \gamma)$ be the inverse limit of the PD-algebras $(B_r, \overline I_r, \gamma)$.  Note that we have $B_r = B_{r+1}/p^r B_{r+1}$ and $\overline I_r = \overline I_{r+1}/p^r B_{r+1}$ by \cite[Remark 3.20.8]{B-O}; it follows by \cite[tag \texttt{09B8}]{stacks} that $B_r = B/p^r B$ and thus also $\overline I_r = \overline I/p^r B$.

\item[(b)] (\emph{The Frobenius-embedded situation})
In addition to the above, let $\phi = \phi_A \colon A \to A$ be a lift of the Frobenius endomorphism of $A_1$.  Note that $\phi$ is then also compatible with the Frobenius endomorphism of $R$, and thus preserves the ideal $I$.  It follows that $\phi$ induces PD-endomorphisms $\phi_{B_r} \colon (B_r, \overline I_r, \gamma) \to (B_r, \overline I_r, \gamma)$ for each $r$ by the functoriality of the PD-envelope, and thus also $\phi_B \colon (B, \overline I, \gamma) \to (B, \overline I, \gamma)$ by passing to the limit.
\end{enumerate}
\end{situation}

\begin{remark} \label{rmk:Frob_embedded_situation_Frob_lift}
In the Frobenius-embedded situation, $B$ may or may not have $p$-torsion,
but the endomorphism $\phi_B$ is a lift of the absolute Frobenius of $B_1 = B/pB$.  To prove this, note that the set of $b \in B_1$ such that $\phi_{B_1}(b) = b^p$ is a subring of $B_1$ containing $A_1$, so it suffices to check that it also contains the algebra generators $\gamma_n(a)$ for all $a \in I_1$ and $n > 0$.  But recalling that $a^p = p! \gamma_p(a) = 0$ and similarly $\gamma_n(a)^p = 0$, we can calculate:
$$
\phi_{B_1}(\gamma_n(a)) = \gamma_n(\phi_{A_1}(a)) = \gamma_n(a^p) = \gamma_n(0) = 0 = (\gamma_n(a))^p.
$$
\end{remark}

%%%%%%%%%%%%%%%%%%%%%%%%%%%%%%%%%%%%

\subsection{Acknowledgements}
This work is a much-abridged version of the author's Ph.D. thesis, \cite{thesis}, which was supported in significant part by NSF RTG grant DMS-1646385.

First and foremost, I thank my Ph.D. advisor, Martin Olsson, for his patient guidance throughout my time at Berkeley, without which this work would not have been possible.
I thank Arthur Ogus for several helpful conversations, and especially for a series of suggestions which inspired the strategy of chapters \ref{ch:dRWM} through \ref{ch:main_results}.
I thank Joe Stahl for numerous helpful conversations, particularly those related to our joint project \cite{joint}.
I am also grateful to Akhil Mathew, Minseon Shin, and Chris Dodd for helpful comments during various stages of this work, as well as to all the authors of \cite{BLM}, to which this work is heavily indebted.

%%%%%%%%%%%%%%%%%%%%%%%%%%%%%%%%%%%%
%%%%%%%%%%%%%%%%%%%%%%%%%%%%%%%%%%%%
%%%%%%%%%%%%%%%%%%%%%%%%%%%%%%%%%%%%

\section{Preliminaries}
\label{ch:preliminaries}

%%%%%%%%%%%%%%%%%%%%%%%%%%%%%%%%%%%%

\subsection{de Rham and PD-de Rham complexes of rings}
\label{sec:dR_and_PD-dR_complexes_of_rings}

\begin{para}
In this section, we recall some basic facts about de Rham and PD-de Rham complexes, which will serve as the most basic building blocks of all of our later constructions.
We refer the reader to \cite[\textsection 3]{B-O} and \cite[\textsection I]{berthelot} for background on PD-structures.
\end{para}

\begin{mydef} \label{def:PD_dR_cx}
Let $B \to A$ be a morphism of rings, and suppose $(A, I, \gamma)$ is a PD-algebra.
The \emph{PD-de Rham complex of $A/B$} is the initial object among $B$-linear commutative differential graded algebras $C^*$ which are equipped with a morphism $A \to C^0$ and satisfy the identity
$$
d\gamma_n(x) = \gamma_{n-1}(x) dx
$$
for all $n > 0$ and $x \in I$.
It can be constructed as the quotient of the usual de Rham complex $\Omega^*_{A/B} = {\bigwedge}^*_A \Omega^1_{A/B}$ by the dg-ideal generated by the elements $d\gamma_n(x) - \gamma_{n-1}(x) dx$.
\end{mydef}

\begin{remark} \label{rmk:PD_dR_dg_vs_graded_ideal}
Note that for $n > 1$, we have the following calculation in $\Omega^*_{A/B}$:
\begin{align*}
d \left( d\gamma_n(x) - \gamma_{n-1}(x) dx \right) & = 0 - d \gamma_{n-1}(x) \wedge dx \\
& = \gamma_{n-2}(x) dx \wedge dx - d \gamma_{n-1}(x) \wedge dx \\
& = dx \wedge \left( d \gamma_{n-1}(x) - \gamma_{n-2}(x) dx \right).
\end{align*}
This (combined with the fact that $d\gamma_1(x) - \gamma_0(x) dx$ vanishes already in $\Omega^1_{A/B}$) implies that the graded ideal generated by elements of the form $d\gamma_n(x) - \gamma_{n-1}(x) dx$ is closed under the differential, and thus it agrees with the dg-ideal generated by the same elements.
\end{remark}

\begin{remark} \label{rmk:base_ring_doesn't_matter}
Suppose $k$ is a perfect $\F_p$-algebra, and $A$ is a $W_r(k)$-algebra for some $r > 0$.  Then a standard argument shows that the de Rham complexes of $A$ over the base rings $\Z, \Z/p^r\Z, \Z_p, W(k),$ and $W_r(k)$ all agree.
Moreover, if $A$ comes equipped with a PD-structure $(I, \gamma)$, then the same is true for PD-de Rham complexes.
Accordingly, we will sometimes omit the base ring from the notation in this situation.
\end{remark}

Next we note how de Rham and PD-de Rham complexes behave under surjections of rings:
\begin{lemma} \label{lem:quotient_of_PD_dR_algebras}
\hspace{0in}
\begin{enumerate}
\item Let $A \to B$ be a surjection of rings.  Then the natural map $\Omega^*_A \to \Omega^*_B$ is the quotient by the dg-ideal $K$ generated by $\ker(A \to B)$ in degree 0.
\item Let $(A, I, \gamma) \to (B, J, \delta)$ be a surjection of PD-algebras with $J = \im(I)$.  Then the natural map $\Omega^*_{A,\gamma} \to \Omega^*_{B, \delta}$ is the quotient by the dg-ideal $K$ generated by $\ker(A \to B)$ in degree 0.
\end{enumerate}
\begin{proof}
We compare universal properties.  For any commutative differential graded algebra $C^*$, we have natural bijections:
\begin{align*}
\Hom_{cdga} (\Omega^*_A/K, C^*) & \simeq \{ f \in \Hom_{cdga}(\Omega^*_A, C^*): f(K) = 0 \} \\
& \simeq \{g \in \Hom(A, C^0):  g(\ker(A \to B)) = 0\} \\
& \simeq \Hom(B, C^0) \\
& \simeq \Hom_{cdga} (\Omega^*_B, C^*)
\end{align*}
and
\begin{align*}
\Hom_{cdga} (\Omega^*_{A, \gamma}/K, C^*) & \simeq \{ f \in \Hom_{cdga}(\Omega^*_{A,\gamma}, C^*): f(K) = 0 \} \\
& \simeq \{g \in \Hom(A, C^0):  dg(\gamma_n(x)) = g(\gamma_{n-1}(x)) dg(x), g(\ker(A \to B)) = 0\} \\
& \simeq \{h \in \Hom(B, C^0):  dh(\delta_n(y)) = h(\delta_{n-1}(y)) dh(y) \} \\
& \simeq \Hom_{cdga} (\Omega^*_{B, \delta}, C^*),
\end{align*}
where $x$ ranges over $I \subset A$ and $y$ ranges over $J \subset B$.  The result follows.
\end{proof}
\end{lemma}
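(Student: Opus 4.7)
The plan is to prove both parts by showing that the two objects in question corepresent the same functor on commutative differential graded algebras, using the explicit universal properties from Definitions \ref{def:dR_cx} and \ref{def:PD_dR_cx}. For any cdga $C^*$, I would compute $\Hom_{cdga}(\Omega^*_A/K, C^*)$ and match it with $\Hom_{cdga}(\Omega^*_B, C^*)$ (and similarly in the PD setting), then invoke the Yoneda lemma.

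For part (1), a cdga map $\Omega^*_A \to C^*$ that annihilates the dg-ideal $K$ generated by $\ker(A \to B)$ is equivalent data to a ring map $g \colon A \to C^0$ that vanishes on $\ker(A \to B)$, because $\Omega^*_A$ is generated as a dg-algebra by $A$ in degree zero. Such ring maps correspond bijectively to ring maps $h \colon B \to C^0$ by the universal property of the quotient ring $B = A/\ker(A \to B)$, which in turn correspond to cdga maps $\Omega^*_B \to C^*$ by Definition \ref{def:dR_cx}. Stringing these bijections together gives the required natural isomorphism.

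For part (2), I would run the same argument with the added requirement that ring maps $A \to C^0$ satisfy the $\gamma$-compatibility $dg(\gamma_n(x)) = g(\gamma_{n-1}(x))\,dg(x)$ for $x \in I$, and that ring maps $B \to C^0$ satisfy the analogous $\delta$-compatibility on $J$. The key verification is that these two compatibility conditions match up under the bijection $\{g \colon A \to C^0 \text{ vanishing on } \ker(A \to B)\} \leftrightarrow \{h \colon B \to C^0\}$. Given such an $h$, the pullback $g = h \circ f$ (where $f \colon A \to B$) is $\gamma$-compatible since $f$ is a PD-morphism: $f(\gamma_n(x)) = \delta_n(f(x))$. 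Conversely, given a $\gamma$-compatible $g$ that kills $\ker f$, I would use the hypothesis $J = \im(I)$ to write any $y \in J$ as $y = f(x)$ for some $x \in I$, whence $\delta_n(y) = f(\gamma_n(x))$ and therefore
$$
dh(\delta_n(y)) = dg(\gamma_n(x)) = g(\gamma_{n-1}(x))\,dg(x) = h(\delta_{n-1}(y))\,dh(y),
$$
giving $\delta$-compatibility of $h$.

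The main obstacle is precisely this PD-compatibility matching, and it is the only place the hypothesis $J = \im(I)$ enters the argument: without it, there could be elements of $J$ not expressible as $f(x)$ for $x \in I$, and we would have no way to derive the $\delta$-compatibility of $h$ from the $\gamma$-compatibility of $g$. Apart from this check, everything else is formal manipulation of universal properties, and the fact that $K$ is automatically closed under the differential (so that the notions of ``graded ideal'' and ``dg-ideal'' generated by elements of $\ker(A \to B)$ coincide in this setup) can be read off from Remark \ref{rmk:PD_dR_dg_vs_graded_ideal}'s type of reasoning applied to degree-zero generators.
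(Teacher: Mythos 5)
Your main argument---chaining natural bijections between $\Hom$-sets via the universal properties of Definitions \ref{def:dR_cx} and \ref{def:PD_dR_cx}, and noting that the hypothesis $J = \im(I)$ is what makes the $\gamma$- and $\delta$-compatibility conditions correspond---is exactly the paper's proof.

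One correction to your closing sentence, though: the dg-ideal $K$ and the graded ideal generated by $\ker(A \to B)$ (a collection of degree-$0$ elements) do \emph{not} coincide in general. Take $A = k[x]$, $B = k$, so $\ker = (x)$; then the dg-ideal generated by $x$ contains $dx$ and hence equals $(x) \oplus k[x]\,dx = \ker(\Omega^*_A \to \Omega^*_B)$, whereas the graded ideal is only $(x) \oplus (x)\,dx$. Remark \ref{rmk:PD_dR_dg_vs_graded_ideal} works because $d$ applied to a generator of the special form $d\gamma_n(x) - \gamma_{n-1}(x)\,dx$ is an $\Omega^*$-multiple of another such generator; this has no analogue for arbitrary degree-$0$ generators, since $da$ is genuinely a new degree-$1$ element. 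The error is harmless here because the lemma already \emph{defines} $K$ as the dg-ideal, but it's worth being clear that passing to the dg-ideal closure is not a no-op.
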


In Definition \ref{def:PD_dR_cx}, it is sometimes useful to impose the divided power compatibility condition only for $p$-th divided powers, and to quotient by the ideal generated by the given relations instead of its closure under the differential.  This is sufficient in the case of $\Z_{(p)}$-algebras:

\begin{proposition} \label{prop:PD_dR_p_suffices}
Suppose $B \to A$ is a morphism of rings, and $(A, I, \gamma)$ is a PD-algebra.  Suppose $A$ is moreover a $\mathbb Z_{(p)}$-algebra.
Then the dg-ideal of $\Omega^*_{A/B}$ generated by the elements $d\gamma_n(x) - \gamma_{n-1}(x) dx$ with $x \in I$ and $n \geq 1$ is in fact generated as an ordinary (graded) ideal by the elements of this form with $n=p$.
\end{proposition}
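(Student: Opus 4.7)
The plan is to prove the equality of ideals by induction. By Remark \ref{rmk:PD_dR_dg_vs_graded_ideal}, the dg-ideal in question already equals the graded ideal $J$ generated by all the elements $d\gamma_n(x) - \gamma_{n-1}(x)\, dx$ for $n \geq 1$ and $x \in I$. Write $J_p \subseteq J$ for the graded ideal generated by just those elements with $n = p$. It is enough to show by induction on $n$ that $d\gamma_n(x) - \gamma_{n-1}(x)\, dx \in J_p$ for every $n \geq 1$ and every $x \in I$; the base case $n = 1$ is trivial, and $n = p$ is the defining case of $J_p$.

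For the inductive step I would split according to the $p$-adic valuation of $n$. First, if $\gcd(n, p) = 1$ and $n \geq 2$, apply $d$ to the PD-identity $n \gamma_n(x) = x \gamma_{n-1}(x)$, substitute the inductive hypothesis for $d\gamma_{n-1}(x)$, use the further PD-identity $x\gamma_{n-2}(x) = (n-1)\gamma_{n-1}(x)$ to collect terms, and divide by the unit $n \in A$.

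Second, if $n = p^k$ with $k \geq 2$, I would exploit the identity $\gamma_p(\gamma_{p^{k-1}}(x)) = c_k \gamma_{p^k}(x)$ with $c_k := (p^k)! / ((p^{k-1}!)^p \cdot p!)$; a short Legendre-formula calculation gives $v_p(c_k) = 0$. Applying the hypothesized $n=p$ relation to $y := \gamma_{p^{k-1}}(x) \in I$ and substituting the inductive hypothesis for $d\gamma_{p^{k-1}}(x)$ reduces the problem to the purely ring-theoretic identity $\gamma_{p-1}(\gamma_{p^{k-1}}(x)) \cdot \gamma_{p^{k-1}-1}(x) = c_k \gamma_{p^k - 1}(x)$ in $A$, which one verifies directly using $\gamma_{p-1}(y) = y^{p-1}/(p-1)!$ together with $\gamma_a(x)\gamma_b(x) = \binom{a+b}{a}\gamma_{a+b}(x)$.

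Third, if $n = p^k m$ with $k \geq 1$, $m \geq 2$, and $\gcd(m, p) = 1$, I would apply $d$ to $\gamma_{p^k}(x)\,\gamma_{p^k(m-1)}(x) = \binom{n}{p^k}\gamma_n(x)$, where a digit-sum computation shows $\binom{n}{p^k}$ is a unit. Substituting the inductive hypothesis for $d\gamma_{p^k}(x)$ (handled by the second case or the hypothesis) and $d\gamma_{p^k(m-1)}(x)$ (available since $p^k(m-1) < n$), the Leibniz rule produces a sum of two terms that consolidates via Pascal's identity $\binom{n-1}{p^k - 1} + \binom{n-1}{p^k} = \binom{n}{p^k}$; dividing by the unit finishes the case.

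The main obstacle in this plan is the second case. Unlike the first and third, which are essentially formal consequences of the Leibniz rule together with standard PD-identities, the second case is the only one that requires invoking the hypothesized $p$-th relation at a \emph{new} element ($\gamma_{p^{k-1}}(x)$ rather than $x$ itself), and the factorial identity linking $c_k$ to the right-hand side is less transparent than the Pascal identity used in the third case. Once those two $p$-adic unit computations and the nested divided-power identity are in hand, the three cases exhaust all $n \geq 2$ and the induction closes.
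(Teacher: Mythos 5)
Your proposal is correct, and it takes a genuinely different route from the paper's proof. The paper's argument first checks (as in Remark~\ref{rmk:PD_dR_dg_vs_graded_ideal}) that the graded ideal $J_p$ generated by the $n=p$ relations is already a dg-ideal, so that $\Omega^*_{A/B}/J_p$ is a cdga, and then appeals to Langer--Zink, \cite[Lemma~1.2]{L-Z}, which shows the induced map $A \to \Omega^1_{A/B}/J_p^1$ is a full PD-derivation; by the universal property of $\Omega^*_{A/B,\gamma}$, this forces $J_p$ to equal the dg-ideal generated by all $n$. Your approach instead unpacks the content of that Langer--Zink lemma and proves it from scratch by strong induction on $n$, giving a self-contained, more elementary argument at the cost of somewhat more bookkeeping. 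I checked the nontrivial arithmetic in your cases: in the $n=p^k$ case, writing $q = p^{k-1}$, the identity
\[
\gamma_{p-1}(\gamma_q(x))\cdot\gamma_{q-1}(x) \;=\; \frac{1}{(p-1)!}\cdot\frac{((p-1)q)!}{(q!)^{p-1}}\cdot\binom{pq-1}{q-1}\,\gamma_{pq-1}(x) \;=\; c_k\,\gamma_{pq-1}(x)
\]
does reduce to $(q!)/(q-1)! = q$ after cancellation, and $v_p(c_k)=0$ by Legendre's formula as you say. In the third case, $v_p\binom{n}{p^k}=0$ follows from Kummer's theorem since adding $p^k$ to $p^k(m-1)$ in base $p$ produces no carries (the units digit of $m$ is nonzero because $\gcd(m,p)=1$), and Pascal's identity collapses the two terms from the Leibniz rule exactly as you describe. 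The three cases together with $n=1$ and $n=p$ exhaust all $n\geq 1$, so the induction closes. What the paper's approach buys is brevity and avoidance of the case analysis; what yours buys is independence from an external reference and an explicit view of why the $\mathbb{Z}_{(p)}$ hypothesis is used (the units $n$, $c_k$, and $\binom{n}{p^k}$).
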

\begin{proof}
First we note that the graded ideal generated by the elements $d\gamma_p(x) - \gamma_{p-1}(x) dx$ is already a dg-ideal; this follows from the calculation (cf. Remark \ref{rmk:PD_dR_dg_vs_graded_ideal}):
\begin{align*}
d(d\gamma_p(x) - \gamma_{p-1}(x) dx) & = 0 - d\gamma_{p-1}(x) \wedge dx \\
& = - d \left(\frac{x^{p-1}}{(p-1)!} \right) \wedge dx \\
& = - \frac{x^{p-2}}{(p-2)!} dx \wedge dx = 0,
\end{align*}
since $(p-1)!$ is invertible in $A$.
Next, it follows from \cite[Lemma 1.2]{L-Z} that the map
$$
A \overset{d}{\longrightarrow} \Omega^1_{A/B}/(d\gamma_p(x) - \gamma_{p-1}(x) dx)
$$
is a PD-derivation.  Since $\Omega^*_{A/B, \gamma}$ is the initial quotient of $\Omega^*_{A/B}$ with this property, it follows that the dg-ideals $\langle d\gamma_p(x) - \gamma_{p-1}(x) dx \rangle$ and $\langle d\gamma_n(x) - \gamma_{n-1}(x) dx \rangle$ agree.
\end{proof}

\begin{corollary} \label{cor:PD_dR_exact_p-torsion}
In the situation of Proposition \ref{prop:PD_dR_p_suffices}, the kernel of the quotient map $\Omega^*_{A/B} \to \Omega^*_{A/B, \gamma}$ is killed by $p$.
\end{corollary}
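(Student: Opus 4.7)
My plan is to combine the explicit description of the kernel from Proposition \ref{prop:PD_dR_p_suffices} with a direct computation showing that each generator is killed by $p$.

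First I would reduce to generators. By Proposition \ref{prop:PD_dR_p_suffices}, the kernel $K$ of $\Omega^*_{A/B} \to \Omega^*_{A/B,\gamma}$ is generated as a graded ideal (without needing closure under $d$) by elements of the form
$$\omega_x := d\gamma_p(x) - \gamma_{p-1}(x)\,dx \qquad (x \in I).$$
Since every element of $K$ is an $\Omega^*_{A/B}$-linear combination of the $\omega_x$'s, it will be enough to show $p\omega_x = 0$ in $\Omega^*_{A/B}$ for each $x \in I$.

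Next I would exploit that $A$ is a $\Z_{(p)}$-algebra, so $(p-1)!$ is a unit in $A$. The PD-axioms give $p!\,\gamma_p(x) = x^p$ and $(p-1)!\,\gamma_{p-1}(x) = x^{p-1}$, so $p\gamma_p(x) = x^p/(p-1)!$ and $p\gamma_{p-1}(x) = p x^{p-1}/(p-1)!$, where the fractions are interpreted using the inverse of $(p-1)!$ in $A$. Then a direct computation in $\Omega^*_{A/B}$ yields
$$p\,\omega_x = d\bigl(p\gamma_p(x)\bigr) - p\gamma_{p-1}(x)\,dx = d\!\left(\frac{x^p}{(p-1)!}\right) - \frac{p x^{p-1}}{(p-1)!}\,dx = \frac{p x^{p-1}}{(p-1)!}\,dx - \frac{p x^{p-1}}{(p-1)!}\,dx = 0.$$

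Combining these two points finishes the proof: $K$ is generated by $p$-torsion elements, so $K$ itself is killed by $p$. There is no real obstacle here, since Proposition \ref{prop:PD_dR_p_suffices} does all the heavy lifting of cutting the potentially infinite family of relations down to the single relation at $n = p$; the remaining content is just the observation that this one relation, when multiplied by $p$, collapses via the identity $p\gamma_p(x) = x^p/(p-1)!$.
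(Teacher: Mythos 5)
Your proof is correct and follows essentially the same approach as the paper: both reduce via Proposition \ref{prop:PD_dR_p_suffices} to the generators $d\gamma_p(x) - \gamma_{p-1}(x)\,dx$ and then observe these are $p$-torsion using that $(p-1)!$ is a unit. The only difference is cosmetic — the paper multiplies by $p!$ and then divides out the unit $(p-1)!$, while you multiply by $p$ directly.
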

\begin{proof}
By the proposition, the kernel is generated by elements of the form $d\gamma_p(x) - \gamma_{p-1}(x) dx$, where $x \in I$.  But these elements are killed by $p!$, and indeed by $p$ since $(p-1)!$ is a unit.
\end{proof}
We finish this section by discussing how de Rham and PD-de Rham complexes of truncated Witt vectors behave under \'etale base change:

\begin{lemma} \label{lem:PD_dR_etale_base_change}
Let $R \to S$ be an \'etale map of $\F_p$-algebras, and endow the ideals $VW_{r-1}(R) \subset W_r(R)$ and $VW_{r-1}(S) \subset W_r(S)$ with their standard PD-structures, both denoted $\gamma$.  Then the graded $W_r(S)$-module maps
\begin{align*}
W_r(S) \otimes_{W_r(R)} \Omega^*_{W_r(R)} & \to \Omega^*_{W_r(S)} \text{ and} \\
W_r(S) \otimes_{W_r(R)} \Omega^*_{W_r(R), \gamma} & \to \Omega^*_{W_r(S), \gamma}
\end{align*}
induced by functoriality of the de Rham and PD-de Rham complexes are isomorphisms.
\end{lemma}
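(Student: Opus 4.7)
The plan is to reduce to the classical fact that $W_r(-)$ preserves étale morphisms, and then handle the ordinary and PD cases using flatness and universal properties. Specifically, I would invoke the theorem (van der Kallen, Borger, Langer--Zink) that if $R \to S$ is étale then so is $W_r(R) \to W_r(S)$; in particular this map is flat and $\Omega^1_{W_r(S)/W_r(R)} = 0$.

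For the ordinary de Rham complex, étaleness immediately yields $W_r(S) \otimes_{W_r(R)} \Omega^1_{W_r(R)} \overset{\sim}{\to} \Omega^1_{W_r(S)}$ via the first fundamental exact sequence. Exterior powers commute with flat base change, so this propagates to all degrees, and functoriality identifies the resulting isomorphism with the map of the statement.

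For the PD-de Rham complex, write $\Omega^*_{W_r(R), \gamma} = \Omega^*_{W_r(R)}/K_R$ and similarly for $S$, as in Lemma \ref{lem:quotient_of_PD_dR_algebras}. By the de Rham case and flatness of $W_r(R) \to W_r(S)$, it suffices to prove $W_r(S) \cdot K_R = K_S$ as dg-ideals in $\Omega^*_{W_r(S)}$. Functoriality of the PD-structure of Construction \ref{con:PD_structure_on_WR} immediately gives the inclusion $W_r(S) \cdot K_R \subseteq K_S$. For the reverse, observe that Proposition \ref{prop:PD_dR_p_suffices} lets us generate both dg-ideals in degree one, so it suffices to verify that $K^1_S \subseteq W_r(S) \cdot K^1_R$. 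For this, consider the composite derivation
$$
D \colon W_r(S) \to \Omega^1_{W_r(S)}/(W_r(S) \cdot K^1_R),
$$
and show that it is a PD-derivation on $(W_r(S), VW_{r-1}(S), \gamma)$; then by the universal property $D$ factors through $\Omega^1_{W_r(S), \gamma}$, giving the desired inclusion.

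To verify the PD-derivation property, I would use that by flatness $VW_{r-1}(S) = W_r(S) \cdot VW_{r-1}(R)$, so $VW_{r-1}(R)$ is a $W_r(S)$-module generating set for $VW_{r-1}(S)$. The standard PD-identities $\gamma_n(ax) = a^n \gamma_n(x)$, $\gamma_n(x+y) = \sum_{i+j=n} \gamma_i(x)\gamma_j(y)$, and $n\gamma_n(x) = x\gamma_{n-1}(x)$ imply that a derivation out of a PD-ring to an $A$-module is a PD-derivation on an ideal as soon as it satisfies the PD-compatibility on any module generating set of that ideal. On the set $VW_{r-1}(R)$, the PD-identity for $D$ reduces via functoriality of the universal derivation to the corresponding identity in $\Omega^1_{W_r(R)}$ modulo $K^1_R$, which is precisely how $K^1_R$ is defined. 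The main obstacle is Step 1 — the preservation of étaleness by $W_r$ — which is a deep but well-documented input; modulo that, everything else is a matter of flatness, universal properties, and PD-calculus.
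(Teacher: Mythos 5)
Your proof is correct and follows essentially the same strategy as the paper: reduce to degree one, use flatness of the \'etale map $W_r(R) \to W_r(S)$ to see that $VW_{r-1}(S)$ is generated over $W_r(S)$ by $VW_{r-1}(R)$, and then verify that the PD-compatibility condition $d\gamma_n(\alpha) - \gamma_{n-1}(\alpha)d\alpha \in K$ is preserved under $W_r(S)$-linear combinations of elements $\alpha$. The paper presents this last step directly as ``the set of good elements is a sub-$W_r(S)$-ideal of $VW_{r-1}(S)$,'' verified by two explicit calculations; you package it via the universal property of $\Omega^1_{W_r(S),\gamma}$ (showing the composite derivation to $\Omega^1_{W_r(S)}/(W_r(S) \cdot K^1_R)$ is a PD-derivation). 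These are two views of the same argument, and the universal-property framing is a clean one. The one place you gloss is the assertion that the PD-identities imply ``PD-compatibility on a module generating set suffices'': this is true, but it is precisely the content of the two closure calculations (for scalars and for sums) that the paper carries out explicitly, so it deserves to be written out rather than waved at. Your closing remark that the input ``$W_r$ preserves \'etaleness'' is the main obstacle is fair; the paper also silently leans on the stronger statement that $W_r(S) \otimes_{W_r(R)} R = S$ (i.e. $W_r(S)$ is the unique \'etale lift of $S$ along $W_r(R) \twoheadrightarrow R$), which is implicit in the base-change step for both proofs.
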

\begin{proof}
The statement for ordinary de Rham complexes holds because $W_r(R) \to W_r(S)$ is \'etale.
To prove the statement for PD-de Rham complexes, it suffices to work in degree $1$, since the kernels
\begin{align*}
K_R & := \ker(\Omega^*_{W_r(R)} \to \Omega^*_{W_r(R), \gamma}), \\
K_S & := \ker(\Omega^*_{W_r(S)} \to \Omega^*_{W_r(S), \gamma})
\end{align*}
are generated in degree $1$ by Remark \ref{rmk:PD_dR_dg_vs_graded_ideal}.
Thus, by the case of ordinary de Rham complexes, it suffices to show that the natural map
\begin{equation} \label{eqn:PD_dR_kernel_base_change}
W_r(R) \otimes_{W_r(S)} K_R \to K_S
\end{equation}
is surjective.
To this end, let $Vy$ be an arbitrary element of $VW_{r-1}(S)$, and let $K$ denote the image of $\eqref{eqn:PD_dR_kernel_base_change}$.  We must show that
$$
d\gamma_n(Vy) - \gamma_{n-1}(Vy) dVy \in K
$$
for all $n$.  By base-changing the short exact sequence
$$
0 \to VW_{r-1}(R) \hookrightarrow W_r(R) \to R \to 0
$$
along the \'etale map $W_r(R) \to W_r(S)$, we see that
$$
W_r(S) \otimes_{W_r(R)} VW_{r-1}(R) \to VW_{r-1}(S)
$$
is surjective, and so $Vy$ lies in the $W_r(S)$-span of elements $Vx$ with $x \in W_{r-1}(R)$.

Certainly the elements $d\gamma_n(Vx) - \gamma_{n-1}(Vx) dVx$ lie in $K$ (or even $K_R$) for all $n$.
We will finish the proof by showing that the set of elements $\alpha \in VW_{r-1}(S)$ such that $d\gamma_n(\alpha) - \gamma_{n-1}(\alpha) d\alpha$ lies in $K$ for all $n$ is a sub-ideal of $VW_{r-1}(S)$, and thus contains $Vy$.  Indeed, this set is closed under scalar multiplication by $s \in W_r(S)$ by the calculation
\begin{align*}
d\gamma_n(s\alpha) - \gamma_{n-1}(s\alpha) d(s\alpha) & = d(s^n \gamma_n(\alpha)) - s^{n-1} \gamma_{n-1}(\alpha) d(s\alpha) \\
& = s^n d\gamma_n(\alpha) + n s^{n-1} \gamma_n(\alpha) ds - s^{n-1} \gamma_{n-1}(\alpha) (s d\alpha + \alpha ds) \\
& = s^n \left( d\gamma_n(\alpha) - \gamma_{n-1}(\alpha) d\alpha \right) + s^{n-1} (n \gamma_n(\alpha) - \alpha \gamma_{n-1}(\alpha)) ds \\
& = s^n \left( d\gamma_n(\alpha) - \gamma_{n-1}(\alpha) d\alpha \right) \in K,
\end{align*}
and it is closed under addition by the calculation
\begin{align*}
& d\gamma_n(\alpha + \alpha') - \gamma_{n-1}(\alpha + \alpha') d(\alpha + \alpha') \\
& = \sum_{i+j=n} d(\gamma_i(\alpha) \gamma_j(\alpha')) - \left( \sum_{i+j=n-1} \gamma_i(\alpha) \gamma_j(\alpha') \right) (d\alpha + d\alpha') \\
& = \sum_{i+j=n} \left( \gamma_i(\alpha) d\gamma_j(\alpha') + \gamma_j(\alpha') d\gamma_i(\alpha) \right) - \sum_{\substack{i+j=n \\ i \neq 0}} \gamma_j(\alpha') \gamma_{i-1}(\alpha) d\alpha - \sum_{\substack{i+j=n \\ j \neq 0}} \gamma_i(\alpha) \gamma_{j-1}(\alpha') d\alpha' \\
& = \sum_{\substack{i+j=n \\ i \neq 0}} \gamma_j(\alpha') \left( d\gamma_i(\alpha) - \gamma_{i-1}(\alpha) d\alpha \right)
+ \sum_{\substack{i+j=n \\ j \neq 0}} \gamma_i(\alpha) \left( d\gamma_j(\alpha') - \gamma_{j-1}(\alpha') d\alpha' \right) \in K. \qedhere
\end{align*}
\end{proof}

%%%%%%%%%%%%%%%%%%%%%%%%%%%%%%%%%%%%

\subsection{Completed de Rham and PD-de Rham complexes of rings}

\begin{para}
In this section, we will discuss the $p$-adically completed versions of the de Rham and PD-de Rham complexes of $p$-torsionfree rings.  It will be convenient to unify our discussions of the lifts $(A, (p))$ of the lifted situation \ref{sit:lifted_situation} and the completed PD-envelopes $(B, \overline I)$ of the embedded situation \ref{sit:embedded_situation} (under the extra hypothesis that $B$ is $p$-torsionfree).  Accordingly, we fix the following setup throughout this section.
\end{para}

\begin{situation} \label{sit:PD_lift}
\hspace{0in}
\begin{enumerate}
\item (The PD-embedded situation) Let $A$ be a $p$-torsionfree ring, and let $I$ be an ideal containing $p$ such that the quotients $I_r := I/p^r A \subset A/p^r A =: A_r$ are endowed with compatible PD-structures, all of which we (abusively) call $\gamma$.  Let $R$ be the $\F_p$-algebra $A/I$, which agrees with $A_r/I_r$ for all $r$.
\item (The PD-embedded situation with Frobenius) Suppose additionally that we are given an endomorphism $\phi \colon A \to A$, lifting the absolute Frobenius endomorphism of $A/pA$, such that each induced map $\phi_r \colon A_r \to A_r$ is a PD-morphism $(A_r, I_r, \gamma) \to (A_r, I_r, \gamma)$.  (In fact the condition $\phi(I_r) \subseteq I_r$ is automatic, as $\phi$ lifts the Frobenius endomorphism of $A_1$ and thus also that of its quotient $R$.)
\end{enumerate}
\end{situation}

\begin{example}
\hspace{0in}
\begin{enumerate}
\item In the lifted situation \ref{sit:lifted_situation}, the lift $(A, I = (p))$ satisfies the hypotheses of the PD-embedded situation, where $I_r \subset A_r$ carries the PD-structure $\brackets$ of \ref{para:notation_PD_structures}.

\item In the embedded situation \ref{sit:embedded_situation}, suppose the completed PD-envelope $B$ is $p$-torsionfree.  Then $(B, \overline{I})$ satisfies the hypotheses of the PD-embedded situation, where $\overline I_r \subset B_r$ carries its canonical PD-structure $\gamma$.
\end{enumerate}
Both of these examples carry through when adding Frobenius endomorphisms.
\end{example}

\begin{remark}
We do \emph{not} assume in Situation \ref{sit:PD_lift} that $I \subset A$ itself carries a PD-structure.  For example, we may have $(A, I) = (\Z, (p))$, which lacks divided powers because we cannot divide by integers that are prime to $p$.  Of course, this is not a problem in the $p^r$-torsion setting, as the quotients $A_r$ inherit PD-structures from the PD-structure on $(p) \subset \Z_p$.
On the other hand, the PD-structure can only be interpreted in terms of honest division by $p$ in the $p$-torsionfree ring $A$.  The need to switch between $p$-torsionfree and $p$-power torsion rings will make some proofs more complicated, so we invite the reader to imagine on first reading this section that $A$ is a $p$-torsionfree $\mathbb Z_{(p)}$-algebra, which would allow us to perform all necessary divisions in $A$ rather than passing to its quotients.
\end{remark}

\begin{mydef} \label{def:completed_dR}
Let $A$ be a $p$-torsionfree ring.
\begin{enumerate}
\item The \emph{completed de Rham complex} of $A$ is $\widehat{\Omega}^*_{A} := \limarrow_r \Omega^*_{A/p^r A}$.
\item If $I \subset A$ is an ideal containing $(p)$ and we have compatible PD-structures $\gamma$ on each of the ideals $I/p^r A \subset A/p^r A$, then the \emph{completed PD-de Rham complex} of $A$ is
\begin{align*}
\widehat{\Omega}^*_{A, \gamma} = \limarrow_r \Omega^*_{A/p^r A, \gamma},
\end{align*}
\end{enumerate}
\end{mydef}

\begin{remark}
Since the de Rham complex is compatible with base change, we have $\Omega^*_{A/p^r A} = \Omega^*_A/p^r \Omega^*_A$ for each $r$, and thus $\widehat{\Omega}^*_A$ agrees with the $p$-adic completion of the usual de Rham complex $\Omega^*_A$.
However, even when $A$ is $p$-adically complete, its completed de Rham complex need not agree with its naive de Rham complex.  For example, when $A = \Z_p$, the completed de Rham complex
$$
\widehat{\Omega}^*_{\Z_p} = \limarrow_r \Omega^*_{\Z/p^r\Z} = \Z_p
$$
is concentrated in degree $0$, but the naive de Rham complex $\Omega^*_{\Z_p}$ is unbounded.
\end{remark}

\begin{remark} \label{rmk:examples_of_completed_PD_dR}
The definition above will be useful later when we are given an $\F_p$-algebra $R$ which admits a lift; that is, a $p$-torsionfree ring $A$ such that $A/pA \simeq R$.  With no assumptions on $R$, we will also be interested in the tower $(\Omega^*_{W_r(R), \gamma})_r$, where the topology is coarser than the $p$-adic topology.  However, we will not work directly with its limit, as we are not able to give it the structure of a Dieudonn\'e complex.
\end{remark}

\begin{proposition} \label{prop:completed_dR_PD_only_kills_p-tors}
In the PD-embedded situation \ref{sit:PD_lift},
let $\pi_r$ be the quotient map $\Omega^*_{A_r} \to \Omega^*_{A_r, \gamma}$ for each $r$, and let $\pi = \limarrow(\pi_r) \colon \widehat{\Omega}^*_A \to \widehat{\Omega}^*_{A, \gamma}$ be their limit.  Then $\pi$ is surjective, and its kernel is contained in the exact $p$-torsion of $\widehat{\Omega}^*_A$.
\end{proposition}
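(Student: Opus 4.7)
The plan is to analyze the kernels $K_r := \ker(\pi_r)$ at each finite level and then pass to the inverse limit via a Mittag-Leffler argument. By Definition \ref{def:PD_dR_cx}, each $\pi_r$ is a surjection of dgas, and applying Proposition \ref{prop:PD_dR_p_suffices} (which applies because each $A_r$ is a $\mathbb{Z}_{(p)}$-algebra, since every prime $\ell \neq p$ is invertible modulo $p^r$), we may describe $K_r$ as the \emph{graded} ideal of $\Omega^*_{A_r}$ generated by the elements $d\gamma_p(x) - \gamma_{p-1}(x) dx$ with $x \in I_r$. By Corollary \ref{cor:PD_dR_exact_p-torsion}, each $K_r$ is killed by $p$.

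The key step is to show that the transition maps $K_{r+1} \to K_r$ are surjective. Any element of $K_r$ is a finite sum $\sum_i \omega_i \cdot (d\gamma_p(x_i) - \gamma_{p-1}(x_i) dx_i)$ with $\omega_i \in \Omega^*_{A_r}$ and $x_i \in I_r$. Since both $I_{r+1} \twoheadrightarrow I_r$ and $\Omega^*_{A_{r+1}} \twoheadrightarrow \Omega^*_{A_r}$ are surjective (each being reduction modulo $p^r$, and the latter being surjective in degree $0$ and hence in all degrees), we can lift each $x_i$ to some $\tilde{x}_i \in I_{r+1}$ and each $\omega_i$ to some $\tilde{\omega}_i \in \Omega^*_{A_{r+1}}$. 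The analogous sum in $K_{r+1}$ is then a preimage of the original element.

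Consequently, the short exact sequences
$$0 \to K_r \to \Omega^*_{A_r} \to \Omega^*_{A_r, \gamma} \to 0$$
form a Mittag-Leffler inverse system, so passing to $\varprojlim_r$ preserves exactness. This immediately gives that $\pi$ is surjective with kernel $\varprojlim_r K_r$; and since this kernel is an inverse limit of abelian groups each killed by $p$, it is itself killed by $p$, hence contained in the exact $p$-torsion of $\widehat{\Omega}^*_A$.

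The only real obstacle is the surjectivity of the transition maps on kernels. The essential ingredient here is Proposition \ref{prop:PD_dR_p_suffices}, which lets us describe $K_r$ as an honest graded ideal with generators depending only on elements of $I_r$ (which do lift to $I_{r+1}$), rather than as a dg-ideal whose generators might include higher-divided-power data whose liftability would be more delicate.
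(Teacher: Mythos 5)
Your proof is correct and matches the paper's approach: apply Mittag-Leffler to the kernels $K_r := \ker(\pi_r)$, showing they are killed by $p$ and have surjective transition maps, then pass to the limit to conclude $\pi$ is surjective with kernel $\limarrow_r K_r$ killed by $p$. One small inaccuracy in your final paragraph: Proposition \ref{prop:PD_dR_p_suffices} is not actually essential to the lifting step, since Remark \ref{rmk:PD_dR_dg_vs_graded_ideal} already shows the dg-ideal of Definition \ref{def:PD_dR_cx} coincides with the graded ideal generated by $d\gamma_n(x) - \gamma_{n-1}(x)\,dx$ for \emph{all} $n \geq 1$, and those elements lift to $I_{r+1}$ just as readily as the $n = p$ ones (each $x_i$ lifts, each $\omega_i$ lifts, and $n$ carries over); what the reduction to $n=p$ genuinely buys you is Corollary \ref{cor:PD_dR_exact_p-torsion} and hence the $p$-torsion bound, which you do cite correctly.
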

\begin{proof}
The kernel of $\pi$ is contained in the exact $p$-torsion because the same is true of each $\pi_r$ by Proposition \ref{prop:PD_dR_p_suffices}.  As for surjectivity, we will apply the Mittag-Leffler criterion to the kernels $K_r = \ker(\pi_r)$.  Since each $K_r$ is generated by the elements $dx^{[n]} - x^{[n-1]} dx$ for all $x \in I_r$, it follows by lifting each $x$ to $I_{r+1}$ that the quotient maps send $K_{r+1}$ surjectively onto $K_r$.  So Mittag-Leffler implies that $R^1 \limarrow K_r = 0$, and thus $\pi$ is surjective.
\end{proof}

\begin{lemma} \label{lem:PD_compatibility_for_free_when_I_equals_p}
Continuing from Proposition \ref{prop:completed_dR_PD_only_kills_p-tors}, suppose that $I = (p)$, and the quotients $I_r$ are endowed with the PD-structures $\gamma = \brackets$ of \ref{para:notation_PD_structures}.  Then the maps $\pi_r \colon \Omega^*_{A_r} \to \Omega^*_{A_r, \gamma}$ and $\pi \colon \widehat{\Omega}^*_{A} \to \widehat{\Omega}^*_{A, \gamma}$ are isomorphisms.
\end{lemma}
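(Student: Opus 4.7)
The plan is to reduce to showing that each $\pi_r$ is an isomorphism, from which the statement for $\pi = \limarrow \pi_r$ follows at once by passing to the inverse limit. Surjectivity of each $\pi_r$ is built into the construction as a quotient, so the whole content is injectivity. By the definition of $\Omega^*_{A_r, \gamma}$ (using Remark \ref{rmk:PD_dR_dg_vs_graded_ideal}), the kernel of $\pi_r$ is the dg-ideal of $\Omega^*_{A_r}$ generated by the relations $d\alpha^{[n]} - \alpha^{[n-1]} d\alpha$ for $\alpha \in I_r = pA_r$ and $n \geq 1$. It therefore suffices to show each such generator vanishes already in $\Omega^*_{A_r}$.

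To do so, I will write $\alpha = py$ for some $y \in A_r$ and use the explicit formula from Lemma \ref{lem:PD_Ar} to compute
$$
\alpha^{[n]} = (py)^{[n]} = y^n \cdot p^{[n]} = c_n y^n, \qquad c_n := \frac{p^n}{n!},
$$
where $c_n$ is regarded as an element of $A_r$ via the canonical map $\Z_{(p)} \to \Z/p^r\Z \to A_r$. The key observation is that both $c_n$ and $p$ lie in the image of $\Z$ in $A_r$, so Lemma \ref{lem:base_ring_doesn't_matter} (which identifies $\Omega^*_{A_r}$ with the absolute de Rham complex over $\Z$) gives $dc_n = 0$ and $dp = 0$. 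The verification then reduces to the direct calculation
$$
d\alpha^{[n]} = d(c_n y^n) = n c_n y^{n-1} dy, \qquad \alpha^{[n-1]} d\alpha = c_{n-1} y^{n-1} \cdot p\,dy = p c_{n-1} y^{n-1} dy,
$$
and these agree by the elementary identity $n c_n = p c_{n-1}$, which is immediate from the definition of $c_n$.

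I anticipate no real obstacle: the argument is a direct manipulation using the explicit form of $p^{[n]}$, and the only potentially delicate point---that $c_n$ behaves as a constant for the de Rham differential on $A_r$---is handled cleanly by the already-established base-ring independence in Lemma \ref{lem:base_ring_doesn't_matter}. One mild subtlety worth flagging is that the expression $c_n y^n$ appears to depend on the lift $y$ of $\alpha$, but this dependence is illusory: it is intrinsic to the PD-structure on $(p) \subset A_r$ that $\alpha^{[n]}$ is well-defined, and no separate check is required.
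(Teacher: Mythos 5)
Your proof is correct and is essentially the same as the paper's: both write the element of $(p) \subset A_r$ as $p$ times a lift, expand the divided power via the explicit formula $(py)^{[n]} = \frac{p^n}{n!}y^n$, and verify the PD-compatibility relation $d\gamma_n - \gamma_{n-1}\,d = 0$ directly in $\Omega^*_{A_r}$ by a short Leibniz-rule computation (the paper presents it as a single chain $d(\frac{p^n}{n!}x^n) = \frac{p^{n-1}x^{n-1}}{(n-1)!}\,d(px)$; you split it into two sides and match them via $nc_n = pc_{n-1}$, which is the same identity). Your extra remarks about $dc_n = dp = 0$ (via Lemma \ref{lem:base_ring_doesn't_matter}) and about well-definedness of $\alpha^{[n]}$ are correct and make explicit points the paper treats as implicit.
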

\begin{proof}
To show that each $\pi_r$ is an isomorphism, we must show that for each $y \in (p) \subset A_r$, we have $d\gamma_n(y) = \gamma_{n-1}(y) dy$ already in $\Omega^*_{A_r}$.  Write $y$ as $px$ for some $x \in A_r$, and calculate:
\begin{align*}
d\gamma_n(px) & = d \left( \frac{p^n}{n!} x^n \right)
 = \frac{p^n}{n!} \cdot n x^{n-1} dx
 = \frac{p^{n-1} x^{n-1}}{(n-1)!} \cdot d(px)
 = \gamma_{n-1}(px) d(px).
\end{align*}
So each $\pi_r$ is an isomorphism, and their limit $\pi$ is as well.
\end{proof}

\begin{corollary} \label{cor:F_on_completed_PD_dR_when_I_equals_p}
Continuing from Lemma \ref{lem:PD_compatibility_for_free_when_I_equals_p},
suppose that we are also given a lift $\phi \colon A \to A$ of the absolute Frobenius of $A/pA$.  Then the divided Frobenius endomorphism $F$ on $\Omega^*_A$ (Proposition \ref{prop:BLM_dR_as_a_DC}) passes to each of the quotients $\Omega^*_{A_r, \gamma}$, and to their limit $\widehat{\Omega}^*_{A, \gamma}$.  This Frobenius makes $\widehat{\Omega}^*_{A, \gamma}$ a Dieudonn\'e complex, whose strictification is a saturated de Rham--Witt complex of the $\F_p$-algebra $A/pA$.
\end{corollary}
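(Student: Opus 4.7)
The plan is to reduce everything to Construction \ref{con:BLM_lifted_construction} via the isomorphism provided by Lemma \ref{lem:PD_compatibility_for_free_when_I_equals_p}. That lemma already tells us that the quotient $\pi_r \colon \Omega^*_{A_r} \to \Omega^*_{A_r, \gamma}$ is an isomorphism for each $r$, and hence so is the limit $\pi \colon \widehat{\Omega}^*_A \to \widehat{\Omega}^*_{A, \gamma}$. Once $\pi$ is an isomorphism, transporting the Dieudonné structure from $\widehat{\Omega}^*_A$ to $\widehat{\Omega}^*_{A, \gamma}$ is purely formal, and the strictification statement follows from Bhatt--Lurie--Mathew.

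First I would verify that the divided Frobenius $F$ on $\Omega^*_A$ descends to each $\Omega^*_{A_r}$. Since $F$ is a graded ring endomorphism of $\Omega^*_A$ whose degree-$0$ component is $\phi$, for any $\omega \in \Omega^*_A$ of positive degree we have $F(p^r \omega) = \phi(p^r) F(\omega) = p^r F(\omega)$, so $F$ preserves $p^r \Omega^*_A$. Hence $F$ induces an endomorphism of each quotient $\Omega^*_{A_r} = \Omega^*_A / p^r \Omega^*_A$, and these endomorphisms are compatible with the transition maps $\Omega^*_{A_{r+1}} \to \Omega^*_{A_r}$. Transporting along the isomorphism $\pi_r$ then yields a graded ring endomorphism of $\Omega^*_{A_r, \gamma}$ for each $r$, and passing to the inverse limit produces the desired operator $F$ on $\widehat{\Omega}^*_{A, \gamma}$. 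The Dieudonné identity $dF = pFd$ holds on $\Omega^*_A$ by Proposition \ref{prop:BLM_dR_as_a_DC}, and is preserved under taking quotients and limits, so $\widehat{\Omega}^*_{A, \gamma}$ becomes a Dieudonné complex (in fact a Dieudonné algebra, since $F$ is multiplicative and $d$ satisfies the graded Leibniz rule).

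For the strictification claim, I would simply observe that $\pi$ is an isomorphism of Dieudonné algebras by construction, so the functor $\W\Sat$ sends $\widehat{\Omega}^*_{A, \gamma}$ and $\widehat{\Omega}^*_A$ to isomorphic objects. But Construction \ref{con:BLM_lifted_construction} asserts precisely that $\W\Sat(\widehat{\Omega}^*_A)$ is a saturated de Rham--Witt complex of $R = A/pA$, so the same holds for $\W\Sat(\widehat{\Omega}^*_{A, \gamma})$. There is no real obstacle here, since Lemma \ref{lem:PD_compatibility_for_free_when_I_equals_p} has already done the substantive work of showing that when $I = (p)$ the PD-compatibility relations hold automatically in the ordinary de Rham complex mod $p^r$; the corollary just records the fact that, in this special case, the PD-de Rham picture carries no new information beyond the ordinary de Rham picture already treated in \cite{BLM}.
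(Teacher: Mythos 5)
Your proof is correct and takes essentially the same route as the paper's: both reduce the PD-de Rham complex to the ordinary (completed) de Rham complex via Lemma \ref{lem:PD_compatibility_for_free_when_I_equals_p}, and then appeal to \cite[Variant 3.3.1, Corollary 4.2.3]{BLM} (equivalently Construction \ref{con:BLM_lifted_construction}) for the Dieudonn\'e structure and the strictification statement. You spell out the verification that $F$ preserves $p^r\Omega^*_A$, which the paper simply defers to the cited BLM results, but this is an expositional difference rather than a different argument.
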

\begin{proof}
The PD-de Rham complexes agree with their non-PD analogues by Lemma \ref{lem:PD_compatibility_for_free_when_I_equals_p}.  The claims are proved for the latter by \cite[Variant 3.3.1, Corollary 4.2.3]{BLM}.
\end{proof}

\begin{para}
We will finish this section with Proposition \ref{prop:F_on_PD_dR_I_not_p}, a mild but essential generalization of Corollary \ref{cor:F_on_completed_PD_dR_when_I_equals_p} which relaxes the assumption that $I = (p)$.
Note that although this allows us to impose a stronger PD-compatibility on the de Rham complex of $A$ (and thus kill some of its $p$-torsion), we must still complete with respect to the $p$-adic topology.
In particular, the proposition does not apply to $\limarrow_r \Omega^*_{W_r(R), \gamma}$ for a typical (imperfect) $\F_p$-algebra $R$.

First we need a lemma:
\end{para}

\begin{lemma} \label{lem_for_prop:F_on_PD_dR_I_not_p}
Let $A$, $I$, $\gamma$, and $\phi$ be as in the PD-embedded situation with Frobenius, \ref{sit:PD_lift}.  Suppose $x \in I$ is a lift of $\overline{x} \in I_r$.  Then the elements $x^p$ and $\phi(x)$ both lie in $pA$, and 
$\frac{x^p}{p}$ reduces to the element $(p-1)! \gamma_p(\overline x) \in A_r$.

\begin{proof}
Since $\overline{x}^p = p \gamma_p(\overline{x}) \in p I_r$, we have $\overline{x}^p \in pA_r$ and thus $x^p \in pA$.  Since $\phi$ is a lift of Frobenius, it follows that $\phi(x) \equiv x^p \equiv 0 \pmod p$.  So it makes sense to speak of the elements $\frac{x^p}{p}$ and $\frac{\phi(x)}{p} \in A$.

Next we claim that $\frac{x^p}{p}$ is a representative of the divided power $(p-1)! \gamma_p(\overline{x})$.  Letting $x'$ denote the image of $x$ in $A_{r+1}$, it is clear that $x^p$ is a representative of $x'^p = p! \gamma_p(x')$, and dividing by $p$ implies that $\frac{x^p}{p}$ represents $(p-1)! \gamma_p(x')$ up to the $p$-torsion in $A_{r+1}$, namely $p^r A_{r+1}$.  This error term is killed when we pass to the quotient $A_r$.
\end{proof}
\end{lemma}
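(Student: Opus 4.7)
The plan is to first verify the inclusions $x^p, \phi(x) \in pA$ by reducing modulo $p^r$ and invoking the PD-identity in $A_r$, and then to pin down the reduction of $\frac{x^p}{p}$ by promoting that identity one step further, to $A_{r+1}$, where dividing by $p$ introduces the desired factor of $(p-1)!$.

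First I would use the PD-structure on $(I_r, \gamma)$ to note that $\overline{x}^p = p!\,\gamma_p(\overline{x}) = p\cdot(p-1)!\,\gamma_p(\overline{x}) \in pA_r$. Lifting back to $A$, this gives $x^p \in pA + p^r A = pA$ (using $r \geq 1$). Since $\phi$ is a Frobenius lift, $\phi(x) \equiv x^p \pmod{p}$, so $\phi(x) \in pA$ as well. In particular, $\frac{x^p}{p}$ and $\frac{\phi(x)}{p}$ are well-defined elements of the $p$-torsionfree ring $A$.

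For the second claim, I would exploit the next PD-algebra in the tower, $(A_{r+1}, I_{r+1}, \gamma)$. Let $x' \in A_{r+1}$ denote the image of $x$ (so $x'$ reduces to $\overline{x}$), and let $y \in A$ be any lift of $\gamma_p(x') \in I_{r+1}$. The identity $(x')^p = p!\,\gamma_p(x')$ in $A_{r+1}$ translates to $x^p - p!\,y \in p^{r+1} A$. Dividing by $p$ in the torsionfree ring $A$ yields $\frac{x^p}{p} - (p-1)!\,y \in p^r A$, and reducing modulo $p^r$ (using compatibility of the PD-structures under $A_{r+1} \to A_r$, so that $y$ reduces to $\gamma_p(\overline{x})$) gives $\frac{x^p}{p} \equiv (p-1)!\,\gamma_p(\overline{x}) \pmod{p^r}$, as claimed.

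The only real subtlety, and the reason a naive one-line computation in $A_r$ fails, is that division by $p$ is ill-defined in the $p$-power torsion rings where the PD-identity lives most naturally; one is forced to lift to $A_{r+1}$ so that the ambiguity sits in $p^{r+1}A$ and survives division by $p$ as an element of $p^rA$ that is killed upon final reduction. This lifting trick is precisely what converts the $p!$ appearing in the PD-identity into the $(p-1)!$ appearing in the statement.
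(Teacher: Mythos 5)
Your proposal is correct and follows essentially the same route as the paper: the membership $x^p, \phi(x) \in pA$ is derived from the PD-identity $\overline{x}^p = p!\,\gamma_p(\overline{x})$ in $A_r$, and the reduction of $\frac{x^p}{p}$ is identified by lifting one step further to $A_{r+1}$, dividing by $p$ in the torsionfree ring $A$, and observing that the ambiguity $p^r A$ dies in the quotient $A_r$. Your write-up is slightly more explicit than the paper's in that you introduce the auxiliary lift $y$ of $\gamma_p(x')$ and phrase the containments precisely in $A$ rather than speaking of representatives, but the logical content is identical.
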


\begin{proposition} \label{prop:F_on_PD_dR_I_not_p}
Let $A$, $I$, $\gamma$, and $\phi$ be as in the PD-embedded situation with Frobenius, \ref{sit:PD_lift}.
Then the divided Frobenius $F \colon \Omega^*_A \to \Omega^*_A$ of Proposition \ref{prop:BLM_dR_as_a_DC} induces a (unique) map $F \colon \Omega^*_{A_r, \gamma} \to \Omega^*_{A_r, \gamma}$ for each $r$, and thus also $F \colon \widehat{\Omega}^*_{A, \gamma} \to \widehat{\Omega}^*_{A, \gamma}$ by passage to the limit.
\end{proposition}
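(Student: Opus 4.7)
My plan is to show that for each $r$ the divided Frobenius $F$ on $\Omega^*_A$ descends to $\Omega^*_{A_r, \gamma}$, and then to obtain $F$ on $\widehat \Omega^*_{A, \gamma}$ by passage to the inverse limit. Since the descents to the truncations will be constructed as reductions of $F$ on $\Omega^*_A$, they are automatically compatible with the transition maps of the tower, so the limit step is automatic once the individual descents are established.

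Since $F$ is additive it preserves $p^r \Omega^*_A$ and descends to an endomorphism $F_r$ of $\Omega^*_{A_r} = \Omega^*_A / p^r \Omega^*_A$. To further descend to $\Omega^*_{A_r,\gamma}$, I need $F_r$ to preserve the kernel $K_r := \ker(\Omega^*_{A_r} \twoheadrightarrow \Omega^*_{A_r,\gamma})$. As $A_r$ is a $\Z_{(p)}$-algebra, Proposition \ref{prop:PD_dR_p_suffices} tells us that $K_r$ is generated as an ordinary graded ideal by the elements $d\gamma_p(\bar x) - \gamma_{p-1}(\bar x) d\bar x$ with $\bar x \in I_r$. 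Because $F_r$ is a graded ring homomorphism, it suffices to show that each such generator lands in $K_r$.

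To verify this, I lift $\bar x$ to $x \in I$; by Lemma \ref{lem_for_prop:F_on_PD_dR_I_not_p} the element $x^p/p \in A$ reduces to $(p-1)!\, \gamma_p(\bar x)$ modulo $p^r$, so the generator is the reduction of
\[
\omega_x \;:=\; u \bigl( d(x^p/p) - x^{p-1}\, dx \bigr) \in \Omega^1_A, \qquad u = 1/(p-1)! \in \Z_{(p)}.
\]
Note that $p \cdot \omega_x = 0$ in $\Omega^*_A$, so $\omega_x$ is a $p$-torsion element. Applying $F$ to $\omega_x$ using the formula $F(dz) = z^{p-1}\, dz + d\delta(z)$ with $\delta(z) = (\phi(z) - z^p)/p$ (applied to $z = x^p/p$ and $z = x$), together with $F|_A = \phi$ and multiplicativity, yields a closed-form expression for $F(\omega_x) \in \Omega^*_A$.

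The hard part is to rewrite the reduction of $F(\omega_x)$ modulo $p^r$ as an element of $K_r$. The key ingredients available are: (i) the PD-compatibility of $\phi$, so that $\phi(\gamma_n(\bar x)) = \gamma_n(\phi(\bar x))$ in $A_r$; (ii) Lemma \ref{lem_for_prop:F_on_PD_dR_I_not_p} applied to $\phi(x) \in I$, which gives $\phi(x)^p/p \equiv (p-1)!\, \gamma_p(\phi(\bar x)) \pmod{p^r}$; and (iii) the observation that $\phi(x) \in pA$ (since $x^p \in pA$), which extracts a factor of $p^{p-1}$ from $\phi(x)^{p-1}$. Using these, the resulting expression rearranges modulo $p^r$ into a sum of scalar multiples of the PD-relation generators $d\gamma_p(\bar y) - \gamma_{p-1}(\bar y)\, d\bar y$ associated to elements $\bar y \in I_r$ obtained from $\phi(\bar x)$, plus error terms that vanish via the $p$-torsion identity noted above. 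Uniqueness of the descended $F_r$ at each level is automatic from surjectivity of the PD-quotient, and the resulting tower $\bigl(F_r\bigr)_r$ passes to $\widehat \Omega^*_{A, \gamma}$ in the limit.
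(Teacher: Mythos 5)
Your setup matches the paper's approach exactly: reduce modulo $p^r$, use Proposition \ref{prop:PD_dR_p_suffices} to restrict to the generators $d\gamma_p(\bar x) - \gamma_{p-1}(\bar x)\,d\bar x$, lift to characteristic $0$, and apply Lemma \ref{lem_for_prop:F_on_PD_dR_I_not_p}. But the proposal stops precisely where the work begins: it states that ``the resulting expression rearranges modulo $p^r$ into a sum of scalar multiples of the PD-relation generators \dots plus error terms that vanish,'' without exhibiting the rearrangement. That rearrangement is the entire content of the proposition, and it is not at all mechanical; the paper devotes two substantial display-chains to it. Moreover, your description of the output is wrong: the paper's computation shows that after cancelling the shared term $\phi(x)^{p-1}\, d(\phi(x)/p)$, the leftover from $Fd(x^p/p)$ vanishes via the PD-relation for $\gamma_p(\bar x)$, and the leftover from $F(x^{p-1}dx)$ vanishes via the PD-relation for $\bar x$ itself (scaled by $\phi(\bar x)^{p-1}$). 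Neither uses a PD-relation ``associated to elements obtained from $\phi(\bar x)$,'' so your sketch is pointed at the wrong target.

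A secondary issue: you define $\omega_x = u\bigl(d(x^p/p) - x^{p-1}\,dx\bigr)$ with $u = 1/(p-1)! \in \Z_{(p)}$ and declare $\omega_x \in \Omega^1_A$, but Situation \ref{sit:PD_lift} does not assume $A$ is a $\Z_{(p)}$-algebra (the paper explicitly flags $A = \Z$ as an allowed case), so $u$ need not act on $\Omega^1_A$. The paper sidesteps this by carrying the $(p-1)!$ along and dividing by it only after reducing to the $\Z/p^r\Z$-algebra $A_r$. You should do the same: work with $d(x^p/p) - x^{p-1}\,dx$ itself in $\Omega^1_A$, and invert $(p-1)!$ only at the level of $\Omega^1_{A_r,\gamma}$.
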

\begin{proof}
It is clear (cf. \cite[Variant 3.3.1]{BLM}) that $F$ passes to an endomorphism of $\Omega^*_{A_r} = \Omega^*_A/p^r \Omega^*_A$, so we must only show that this preserves the dg-ideal generated by elements of the form $d\gamma_n(\overline x) - \gamma_{n-1}(\overline x) d \overline x$, with $\overline x \in I_r$ and $n \geq 1$.  Since $A_r$ is a $\Z/p^r\Z$-algebra, Proposition \ref{prop:PD_dR_p_suffices} allows us to restrict ourselves to $n = p$ and replace ``dg-ideal'' with ``ideal''.
Now fix $\overline x \in I_r$.  We must show that the elements $F(d\gamma_p(\overline x))$ and $F(\gamma_{p-1}(\overline x) d\overline x) \in \Omega^1_{A_r}$ map to the same element of $\Omega^1_{A_r, \gamma}$.

The proof will require some calculations.  Namely, letting $x \in I$ be a lift of $\overline x$, we claim that the elements $Fd(\frac{x^p}{p})$ and $F(x^{p-1} dx) \in \Omega^1_A$ reduce to the same element of $\Omega^1_{A_r, \gamma}$.  To see why this suffices, note that the former reduces to $(p-1)! Fd(\gamma_p(\overline{x}))$ by Lemma \ref{lem_for_prop:F_on_PD_dR_I_not_p}, the latter reduces to $(p-1)! F(\gamma_{p-1}(\overline x) d\overline x)$ since $\overline{x}^{p-1} = (p-1)! \gamma_{p-1}(\overline x)$, and we can divide by the common factor $(p-1)!$ in the $\Z/p^r\Z$-algebra $A_r$.  To prove our claim, we first calculate in $\Omega^1_A$ using the formula of Proposition \ref{prop:BLM_dR_as_a_DC}; all divisions in $A$ by powers of $p$ are justified by applying Lemma \ref{lem_for_prop:F_on_PD_dR_I_not_p} to either $x$ or $\frac{x^p}{p}$:
\begin{align*}
Fd\left( \frac{x^p}{p} \right) & = \left( \frac{x^p}{p} \right)^{p-1} d\left( \frac{x^p}{p} \right) + d \left( \frac{\phi\left( \frac{x^p}{p} \right) - \left( \frac{x^p}{p} \right)^p}{p} \right) \\
& = \left( \frac{x^{p}}{p} \right)^{p-1} d\left( \frac{x^p}{p} \right) + d \left( \frac{\phi(x)^p}{p^2} \right) - d \left( \frac{x^{p^2}}{p^{p+1}} \right) \\
& = \left( \frac{x^{p}}{p} \right)^{p-1} d\left( \frac{x^p}{p} \right) + p^{p-2} d \left( \frac{\phi(x)}{p} \right)^p - d \left( \frac{x^{p^2}}{p^{p+1}} \right) \\
& = \left( \frac{x^{p}}{p} \right)^{p-1} d\left( \frac{x^p}{p} \right) + p^{p-1} \left( \frac{\phi(x)}{p} \right)^{p-1} d \left( \frac{\phi(x)}{p} \right) - d \left( \frac{x^{p^2}}{p^{p+1}} \right) \\
& = \left( \frac{x^{p}}{p} \right)^{p-1} d\left( \frac{x^p}{p} \right) + \phi(x)^{p-1} d \left( \frac{\phi(x)}{p} \right) - d \left( \frac{x^{p^2}}{p^{p+1}} \right)
\end{align*}
and
\begin{align*}
F(x^{p-1} dx) & = \phi(x^{p-1}) \cdot F(dx) \\
& = \phi(x)^{p-1} \cdot \left( x^{p-1} dx + d \left( \frac{\phi(x) - x^p}{p} \right) \right) \\
& = \phi(x)^{p-1} \cdot \left( x^{p-1} dx + d \left( \frac{\phi(x)}{p} \right) - d \left( \frac{x^p}{p} \right) \right) \\
& = \phi(x)^{p-1} x^{p-1} dx + \phi(x)^{p-1} d \left( \frac{\phi(x)}{p} \right) - \phi(x)^{p-1} d \left( \frac{x^p}{p} \right)
\end{align*}
Note that both of these expressions contain the term $\phi(x)^{p-1} d(\phi(x)/p)$.  We claim that when we quotient down to $\Omega^1_{A_r, \gamma}$, all the remaining terms cancel.  Using Lemma \ref{lem_for_prop:F_on_PD_dR_I_not_p} again, we have
\begin{align*}
\frac{x^p}{p} & \mapsto (p-1)! \gamma_p(\overline x) \in A_r \text{\qquad and} \\
\frac{x^{p^2}}{p^{p+1}} = \frac{ \left( \frac{x^p}{p} \right)^p}{p} & \mapsto (p-1)! \gamma_p((p-1)! \gamma_p(\overline x)) = (p-1)!^{p+1} \gamma_p(\gamma_p(\overline x)) \in A_r,
\end{align*}
so the remaining terms of the former map to:
\begin{align*}
& ((p-1)! \gamma_p(\overline{x}))^{p-1} d\left( (p-1)! \gamma_p(\overline{x})) - d \left( (p-1)!^{p+1} \gamma_p(\gamma_p(\overline x) \right) \right) \\
& = (p-1)!^p \gamma_p(\overline{x})^{p-1} d\gamma_p(\overline{x}) - (p-1)!^{p+1} d\gamma_p(\gamma_p(\overline x)) \\
& = (p-1)!^{p+1} \left( \gamma_{p-1}(\gamma_p(\overline{x})) d\gamma_p(\overline{x}) -d\gamma_p(\gamma_p(\overline x)) \right) \\
& = 0 \in \Omega^*_{A_r, \gamma}
\end{align*}
by applying the PD-compatibility relation to the element $\gamma_p(\overline x)$.  The remaining terms of the latter map to
\begin{align*}
& \phi(\overline x)^{p-1} \overline{x}^{p-1} d\overline{x}  - \phi(\overline{x})^{p-1} d( (p-1)! \gamma_p(\overline{x})) \\
& = \phi(\overline x)^{p-1} (p-1)! \left( \gamma_{p-1}(\overline{x}) d\overline{x}  - d( \gamma_p(\overline{x})) \right) \\
& = 0 \in \Omega^*_{A_r, \gamma},
\end{align*}
by applying the same PD-compatibility to the element $\overline{x}$.
\end{proof}

\begin{corollary} \label{cor:PD_dR_computes_dRW}
In the situation of Proposition \ref{prop:F_on_PD_dR_I_not_p}, the object $(\widehat{\Omega}^*_{A, \gamma}, d, F)$ is a Dieudonn\'e algebra, and its strictification is a saturated de Rham--Witt complex associated to the $\F_p$-algebra $A/pA$.
\end{corollary}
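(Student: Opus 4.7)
The plan is to bootstrap from the ordinary completed de Rham complex $\widehat{\Omega}^*_A$ using the surjection $\pi \colon \widehat{\Omega}^*_A \twoheadrightarrow \widehat{\Omega}^*_{A, \gamma}$ of Proposition \ref{prop:completed_dR_PD_only_kills_p-tors}. By Proposition \ref{prop:BLM_dR_as_a_DC} and Construction \ref{con:BLM_lifted_construction}, $\widehat{\Omega}^*_A$ is already a Dieudonn\'e algebra whose strictification is a saturated de Rham--Witt complex of $R = A/pA$. Moreover, by the very construction in Proposition \ref{prop:F_on_PD_dR_I_not_p}, the divided Frobenius on $\widehat{\Omega}^*_{A, \gamma}$ is induced from the one on $\widehat{\Omega}^*_A$, so $\pi$ is automatically a morphism of Dieudonn\'e complexes.

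For the Dieudonn\'e algebra claim, I would check that all of the required structure---the graded-commutative dg-algebra structure, the identity $dF = pFd$, the fact that $F$ is a graded ring endomorphism, and the congruence $F(x) \equiv x^p \pmod p$ in degree $0$---descends along the surjective, Frobenius-equivariant cdga map $\pi$. Each defining relation of a Dieudonn\'e algebra is preserved under a surjective homomorphism, so $(\widehat{\Omega}^*_{A, \gamma}, d, F)$ is a Dieudonn\'e algebra.

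For the strictification claim, I would show that the induced map on saturations $\Sat(\pi) \colon \Sat(\widehat{\Omega}^*_A) \to \Sat(\widehat{\Omega}^*_{A, \gamma})$ is an isomorphism, and then apply the completion functor $\W$. Surjectivity is automatic since $\Sat$ is a left adjoint and preserves epimorphisms. For injectivity, recall from Proposition \ref{prop:completed_dR_PD_only_kills_p-tors} that $\ker(\pi)$ is contained in the $p$-power torsion of $\widehat{\Omega}^*_A$. Since any saturated Dieudonn\'e complex is $p$-torsionfree by definition, the canonical map $\widehat{\Omega}^*_A \to \Sat(\widehat{\Omega}^*_A)$ annihilates all $p$-torsion and therefore factors through $\widehat{\Omega}^*_{A, \gamma}$; by the universal property of saturation, this produces an inverse to $\Sat(\pi)$. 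Applying $\W$ yields $\W\Sat(\widehat{\Omega}^*_{A, \gamma}) \simeq \W\Sat(\widehat{\Omega}^*_A)$, and the right-hand side is a saturated de Rham--Witt complex of $R$ by Construction \ref{con:BLM_lifted_construction}, so the left-hand side is as well.

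The main technical point to nail down is the factorization argument that produces the inverse to $\Sat(\pi)$; everything hinges on Proposition \ref{prop:completed_dR_PD_only_kills_p-tors}'s statement that $\ker(\pi)$ is $p$-torsion, combined with the defining $p$-torsionfreeness of saturated Dieudonn\'e complexes. Once this is in place, both parts of the corollary follow with essentially no additional calculation.
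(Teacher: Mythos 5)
Your proof is correct and follows essentially the same route as the paper: both use that $\pi\colon\widehat{\Omega}^*_A \to \widehat{\Omega}^*_{A,\gamma}$ is a surjective morphism of Dieudonn\'e algebras with $p$-torsion kernel (Proposition \ref{prop:completed_dR_PD_only_kills_p-tors}), hence becomes an isomorphism after applying $\Sat$, and then invoke Construction \ref{con:BLM_lifted_construction}. The paper's proof simply asserts these facts, while you spell out the factorization argument producing the inverse to $\Sat(\pi)$; that fleshing-out is sound (and the remark about left adjoints preserving epimorphisms, while true, is superfluous once you have the two-sided inverse).
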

\begin{proof}
All the Dieudonn\'e algebra properties are inherited via the quotient map $\widehat{\Omega}^*_A \to \widehat{\Omega}^*_{A, \gamma}$.  Moreover, by Proposition \ref{prop:completed_dR_PD_only_kills_p-tors}, this quotient map is surjective with $p$-torsion kernel, so it becomes an isomorphism after taking saturations, and in particular also after taking strictifications.  The result then follows from Construction \ref{con:BLM_lifted_construction}.
\end{proof}

%%%%%%%%%%%%%%%%%%%%%%%%%%%%%%%%%%%%
%%%%%%%%%%%%%%%%%%%%%%%%%%%%%%%%%%%%
%%%%%%%%%%%%%%%%%%%%%%%%%%%%%%%%%%%%

\section{Algebra with Dieudonn\'e complexes}
\label{ch:DC_algebra}

The de Rham--Witt complex $\W\Omega^*_{R, \mathcal E}$ with coefficients in a unit-root $F$-crystal $\mathcal E$ will have the structure of a dg-module over the Dieudonn\'e algebra $\W\Omega^*_R$.  Accordingly, we begin by giving an account of modules in the categories $\DC$ and $\DC_{\str}$.

%%%%%%%%%%%%%%%%%%%%%%%%

\subsection{Modules in $\DC$ and $\DC_{\str}$}
\label{sec:modules_in_DC}

\begin{para} \label{para:BLM_tensor}
In this section, we recall the symmetric monoidal structures with which Bhatt--Lurie--Mathew endows the categories $\DC$ and $\DC_{\str}$, denoted $\otimes$ and $\otimes^{\str}$,\footnote{We have promoted the subscript ``$\str$'' to a superscript in order to make room for a base ring later; cf. Definition \ref{def:tensor_over_a_base}.} and discuss the resulting categories of algebra and module objects.
\end{para}

\begin{mydef} \label{def:tensor_product}
(\cite[Remarks 2.1.5 and 7.6.4]{BLM})
If $M^*$ and $N^*$ are Dieudonn\'e complexes, then $M^* \otimes N^*$ is given by the tensor product of the underlying complexes of abelian groups, equipped with $F = F_M \otimes F_N$ and $d$ defined by the graded Leibniz rule.  The unit of this tensor product is $\Z = \Z[0]$, with Frobenius acting as the identity.  The symmetry $M^* \otimes N^* \stackrel{\sim}{\longrightarrow} N^* \otimes M^*$ is defined by
\begin{equation} \label{eqn:symmetry}
x \otimes y \mapsto (-1)^{|x| \cdot |y|} y \otimes x
\end{equation}
when $x$ and $y$ are homogeneous.

The strictified tensor product $\otimes^{\str}$ is
the unique symmetric monoidal structure on $\DC_{\str}$ making the strictification functor $\W\Sat \colon \DC \to \DC_{\str}$ symmetric monoidal.  In particular, we have a canonical isomorphism
$$
M^* \otimes^{\str} N^* \simeq \W\Sat(M^* \otimes N^*)
$$
for each $M^*$, $N^* \in \DC_{\str}$.  The unit of $\otimes^{\str}$ is $\W\Sat(\Z) = \Z_p$, again concentrated in degree 0 with trivial Frobenius.
\end{mydef}

\begin{remark} \label{rmk:internal_Hom}
In \cite[\textsection 3.4]{thesis}, we endow $\DC$ with an internal Hom functor, inspired by Ekedahl's $*$-product (\cite[I, \textsection 5]{ekedahl2}).  We do not know whether $\DC_{\str}$ admits an internal Hom functor.
\end{remark}

\begin{mydef} \label{def:alg_DC}
By an \emph{(associative) algebra in $\DC$}, we mean a monoid for the monoidal structure $\otimes$; that is, a Dieudonn\'e complex $A^*$ equipped with a multiplication map $m \colon A^* \otimes A^* \to A^*$ and a unit map $1 \colon \Z \to A^*$ in $\DC$, making the usual associativity and unit diagrams
commute.  We call $A^*$ \emph{commutative} if multiplication is compatible with the symmetry of the tensor product; this is the usual graded-commutative law.  We denote the categories of such objects, with the obvious morphisms, by $\Alg(\DC)$ and $\CAlg(\DC)$.

Algebra objects in $\DC_{\str}$ are defined analogously, using the symmetric monoidal structure $\otimes^{\str} = \W\Sat(- \otimes -)$ and its unit $\Z_p = \W\Sat(\Z)$.

\end{mydef}

\begin{mydef} \label{def:mod_DC}
For $A^*$ in $\Alg(\DC)$, we define a \emph{left $A^*$-module in $\DC$} to be a Dieudonn\'e complex $M^*$ equipped with a morphism $m_M \colon A^* \otimes M^* \to M^*$, such that the action diagrams
\begin{equation} \label{eqn:action_1}
\begin{tikzcd}
A^* \otimes A^* \otimes M^* \arrow[r, "m_A \otimes \id"] \arrow[d, "\id \otimes m_M"'] & A^* \otimes M^* \arrow[d, "m_M"] \\
A^* \otimes M^* \arrow[r, "m_M"]                                                       & M^*                             
\end{tikzcd}
\end{equation}
and
\begin{equation} \label{eqn:action_2}
\begin{tikzcd}
\Z \otimes M^* \arrow[rr, "1 \otimes \id"] \arrow[rd, equal] &     & A^* \otimes M^* \arrow[ld, "m_M"] \\
                                                                           & M^* &                                  
\end{tikzcd}
\end{equation}
commute.  For $A^* \in \Alg(\DC_{\str})$, we define a \emph{left $A^*$-module in $\DC_{\str}$} analogously, where the tensor products are replaced by $\otimes^{\str}$.

We denote the categories of such modules, with the obvious morphisms, as $A^*-\mathrm{mod}_{\DC}$ and $A^*-\mathrm{mod}_{\str}$ respectively.  We can similarly define the categories of right modules $\mathrm{mod}_{\DC}-A^*$ and $\mathrm{mod}_{\str}-A^*$, and bimodules $A^*-\mathrm{mod}_{\DC}-B^*$ and $A^*-\mathrm{mod}_{\str}-B^*$; as always, the two actions on a bimodule are required to commute.
\end{mydef}

\begin{remark} \label{DC_vs_CAlg}
As noted in \cite[Remark 3.1.5]{BLM}, a Dieudonn\'e algebra is just a commutative algebra object in $\DC$ satisfying a few extra conditions.  We will have no need for these conditions except inasmuch as they are built into the universal property of $\W\Omega^*_R$ for a $k$-algebra $R$.
\end{remark}

We have the following concrete interpretations of algebra and module objects in $\DC$ and $\DC_{\str}$:

\begin{lemma} \label{lem:description_of_algebras_and_modules_in_DC_and_DCstr}
\hspace{0in}
\begin{enumerate}
\item[(a)]
An algebra object in $\DC$ is a graded algebra $A^*$ equipped with graded group maps $F \colon A^* \to A^*$ and $d \colon A^* \to A^{*+1}$ satisfying the rules
\begin{align*}
d^2 & = 0, \\
dF & = pFd, \\ % $A^*$ is a Dieudonn\'e complex
d(ab) & = da \cdot b + (-1)^{|a|} a \cdot db, \\
F(ab) & = Fa \cdot Fb, \\ % multiplication is a map of Dieudonn\'e complexes
d(1) & = 0, \text{ and} \\
F(1) & = 1. % the unit map is a map of Dieudonn\'e complexes
\end{align*}
(In fact the identity $d(1) = 0$ is redundant, as it follows from the graded Leibniz rule.)
\item[(b)]
Given an algebra object $A^*$ in $\DC$, an object $M^*$ in $A^* \lmodDC$ is a graded left $A^*$-module equipped with graded group maps $F \colon M^* \to M^*$ and $d \colon M^* \to M^{*+1}$ such that
\begin{align*}
d^2 & = 0, \\
dF & = pFd, \\ % $M^*$ is a Dieudonn\'e complex
d(am) & = da \cdot m + (-1)^{|a|} a \cdot dm, \text{ and} \\
F(am) & = Fa \cdot Fm. % $A^*$ acts by a map of Dieudonn\'e complexes
\end{align*}
\item[(c)]
Morphisms of algebra objects (resp. module objects) are simply morphisms of graded algebras (resp. graded modules) that are compatible with $F$ and $d$.
\item[(d)]
The category of algebra objects (resp. modules over a fixed strict algebra $A^*$) in $\DC_{\str}$ is isomorphic to the full subcategory of the category of algebra objects (resp. modules over $A^*$) in $\DC$ spanned by the objects that are strict as Dieudonn\'e complexes.
\end{enumerate}
\end{lemma}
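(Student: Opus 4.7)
The plan is to prove all four parts by unpacking the definitions of the tensor product and unit in $\DC$ (respectively $\DC_{\str}$) and invoking the strictification adjunction for part (d). None of the steps are deep; the content is essentially bookkeeping, and the ``main obstacle'' is mostly organizational --- keeping straight which conditions come from $m$ being a chain map, which from compatibility with $F$, and which from associativity or the unit axioms.

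For part (a), I would start with an algebra object $(A^*, m, 1)$ in $\DC$ and translate each piece of the data. Since $m \colon A^* \otimes A^* \to A^*$ is a morphism in $\DC$, it is in particular a map of complexes of abelian groups respecting the grading, yielding a graded multiplication; the fact that $m$ is a chain map with respect to the differential on $A^* \otimes A^*$ (defined by the graded Leibniz rule, cf. Definition \ref{def:tensor_product}) gives $d(ab) = da \cdot b + (-1)^{|a|} a \cdot db$. Compatibility with $F$ on $A^* \otimes A^*$, which acts as $F_A \otimes F_A$, gives $F(ab) = F(a) F(b)$. The unit $1 \colon \Z \to A^*$ being a morphism in $\DC$ forces $F(1) = 1$ (since $F$ acts as the identity on $\Z$), and the associativity and unit diagrams are precisely associativity of $m$ and the usual unit condition. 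The identities $d^2 = 0$ and $dF = pFd$ are built into $A^*$ being a Dieudonn\'e complex, and $d(1) = 0$ follows from the graded Leibniz rule applied to $1 \cdot 1 = 1$ as noted.

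For part (b), the argument is entirely parallel: a morphism $m_M \colon A^* \otimes M^* \to M^*$ in $\DC$ gives a graded left action, Leibniz compatibility of $m_M$ with the differentials yields $d(am) = da \cdot m + (-1)^{|a|} a \cdot dm$, compatibility with $F$ gives $F(am) = F(a) F(m)$, and the action diagrams \eqref{eqn:action_1} and \eqref{eqn:action_2} are precisely associativity and unitality of the action. Part (c) is immediate: a morphism of algebra (resp. module) objects is by definition a morphism in $\DC$ commuting with $m$ (and $1$), which is exactly a morphism of graded algebras (resp. modules) compatible with $F$ and $d$.

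For part (d), the key tool is the adjunction between $\W\Sat \colon \DC \to \DC_{\str}$ and the inclusion $\DC_{\str} \hookrightarrow \DC$ recalled in \textsection\ref{sec:intro_sat_dRW}, together with the identity $A^* \otimes^{\str} B^* = \W\Sat(A^* \otimes B^*)$ from Definition \ref{def:tensor_product}. For $A^*, B^*, C^* \in \DC_{\str}$, the adjunction yields a natural bijection
\[
\Hom_{\DC_{\str}}(A^* \otimes^{\str} B^*, C^*) = \Hom_{\DC_{\str}}(\W\Sat(A^* \otimes B^*), C^*) \simeq \Hom_{\DC}(A^* \otimes B^*, C^*).
\]
Thus the data of a multiplication $m \colon A^* \otimes^{\str} A^* \to A^*$ in $\DC_{\str}$ is equivalent to the data of a multiplication $A^* \otimes A^* \to A^*$ in $\DC$, and similarly for the unit $\Z_p = \W\Sat(\Z) \to A^*$ and for module actions. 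Since the associativity, unitality, and (in the commutative case) symmetry diagrams are preserved by $\W\Sat$ (it is symmetric monoidal by construction), these diagrams commute on one side if and only if they commute on the other. This gives the claimed isomorphism of categories between algebras (or $A^*$-modules) in $\DC_{\str}$ and the full subcategories of algebras (or $A^*$-modules) in $\DC$ whose underlying Dieudonn\'e complex is strict.
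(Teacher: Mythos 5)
Your proposal is correct and follows essentially the same route as the paper: parts (a)--(c) are dispatched by unwinding the definitions (the paper simply says they ``follow immediately from unraveling the definitions,'' while you usefully spell out which structure maps yield which identities), and part (d) relies on the strictification adjunction together with the identity $A^* \otimes^{\str} B^* = \W\Sat(A^* \otimes B^*)$.

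One small clarification worth making in part (d): when you say the associativity and unit diagrams ``commute on one side if and only if they commute on the other'' because ``$\W\Sat$ is symmetric monoidal,'' symmetric monoidality only directly gives the forward direction (if the diagram commutes in $\DC$, applying $\W\Sat$ gives a commuting diagram in $\DC_{\str}$). The reverse direction is not a formal consequence of $\W\Sat$ being monoidal---$\W\Sat$ is not faithful in general. What saves you is exactly the hom-set bijection you established one sentence earlier, applied now to the triple tensor $A^* \otimes A^* \otimes A^*$: two maps $A^* \otimes A^* \otimes A^* \rightrightarrows A^*$ with $A^*$ strict agree if and only if their strictifications $A^* \otimes^{\str} A^* \otimes^{\str} A^* \rightrightarrows A^*$ agree, because both are determined by precomposition with $\rho \colon A^* \otimes A^* \otimes A^* \to A^* \otimes^{\str} A^* \otimes^{\str} A^*$. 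The paper states this explicitly via ``the universal property of the strictification map''; it would strengthen your write-up to make that step equally explicit rather than attributing it to symmetric monoidality alone.
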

\begin{proof}
Parts (a-c) follow immediately from unraveling the definitions.
We will sketch a proof of the algebra case of part (d); the module case is completely analogous.  Let $A^*$ be a strict Dieudonn\'e complex.  First note that the maps $m \colon A^* \otimes A^* \to A^*$ correspond bijectively to the maps $m^{\str} \colon A^* \otimes^{\str} A^* \to A^*$, by factoring $m$ as
$$
A^* \otimes A^* \overset{\rho}{\to} \W\Sat(A^* \otimes A^*) = A^* \otimes^{\str} A^* \overset{m^{\str}}{\to} A^*;
$$
in particular, we have $m^{\str} = \W\Sat(m)$.  The same reasoning applies to the unit maps $1 \colon \Z \to A^*$ and $1^{\str} \colon \Z_p \to A^*$.  Then the associativity diagram for $m^{\str}$ is the strictification of the associativity diagram for $m$, so the universal property of the strictification map
$$
\rho \colon A^* \otimes A^* \otimes A^* \to A^* \otimes^{\str} A^* \otimes^{\str} A^*
$$
implies that one commutes if and only if the other does.
The same argument applies to the unit diagram.  Thus the algebra structures of $A^*$ as an object of $\DC$ are in bijection with those of $A^*$ as an object of $\DC_{\str}$.

If $A^*$ and $B^*$ are algebra objects in $\DC_{\str}$ (which we can now identify with algebra objects in $\DC$ that are strict as Dieudonn\'e complexes), then a morphism $f \colon A^* \to B^*$ in $\DC_{\str}$ is the same as a morphism $f \colon A^* \to B^*$ in $\DC$; the same argument as above shows that $f$ is a morphism of algebra objects in $\DC_{\str}$ if and only if it is a morphism of algebra objects in $\DC$.  So we have given bijections of objects and of corresponding hom-sets in the two specified categories; these define an isomorphism of categories.
\end{proof}

\begin{remark}
Note that the identities above which only involve $d$ say exactly that $A^*$ is a differential graded algebra and $M^*$ is a differential graded module over it.  Thus we can also describe algebra and module objects in $\DC$ respectively as dg-algebras and dg-modules equipped with a graded group endomorphism $F$ satisfying some properties.
\end{remark}

\begin{para}
Suppose $\mathcal E = (\mathcal E, \phi_{\mathcal E})$ is a unit-root $F$-crystal on $\Cris(\Spec R/W)$, where $R$ is a $k$-algebra.  Our ultimate goal is to construct the de Rham--Witt complex $\W\Omega^*_{R, \mathcal E}$ as a module over $\W\Omega^*_R$ in $\DC_{\str}$, but we record two important precursors here, given by two kinds of de Rham complexes associated to $\mathcal E$.  We refer the interested reader to \cite[\textsection \textsection 2.6-2.7]{thesis} for more details and proofs.
\end{para}

\begin{example} \label{ex:classical_dR_DC_mod}
Suppose $R$, $A$, $B$, $\phi_A$, and $\phi_B$ are as in the Frobenius-embedded situation \ref{sit:embedded_situation}, and write $X = \Spec R$, $Y_r = \Spec A_r$.  For each $r > 0$, we get a complex
$$
\dR(\mathcal E(X \hookrightarrow Y_r)) := (\mathcal E(B_r) \otimes_{A_r} \Omega^*_{A_r}, \nabla),
$$
e.g. by \cite[Theorem 6.6, (i) $\to$ (iii)]{B-O}.  We call this the \emph{de Rham complex of $\mathcal E$ associated to the smooth embedding $X \hookrightarrow Y_r$}.
This comes with a functorial Frobenius endomorphism
$$
\phi = \phi_{\mathcal E}^* \otimes \phi_{A_r}^* \colon \mathcal E(B_r) \otimes_{A_r} \Omega^*_{A_r} \to \mathcal E(B_r) \otimes_{A_r} \Omega^*_{A_r},
$$
as well as a divided Frobenius
$$
F = \phi_{\mathcal E} \otimes F \colon \mathcal E(B_r) \otimes_{A_r} \Omega^i_{A_r} \to \mathcal E(B_r) \otimes_{A_r} \Omega^i_{A_r},
$$
where the latter $F$ is given by taking the map of Proposition \ref{prop:BLM_dR_as_a_DC} modulo $p^r$.  Note that while $\phi$ is an endomorphism of complexes, $F$ is only a graded group endomorphism satisfying $\nabla F = pF \nabla$, and we have $p^i F = \phi$ as endomorphisms of the degree-$i$ piece of the complex.
Thus, passing to the limit, we get a Dieudonn\'e complex $(\widehat{\dR}(\mathcal E(X \hookrightarrow Y_{\bullet})), \nabla, F)$.

Moreover, $\dR(\mathcal E(X \hookrightarrow Y_r))$ is a dg-module over the dg-algebra $\dR(\mathcal O(X \hookrightarrow Y_r))$, and the Frobenius endomorphisms $\phi_{\mathcal E}^* \otimes \phi_{A_r}^*$ and $\phi_{\mathcal E} \otimes F$ are respectively semilinear over their trivial-coefficient analogues.  Thus one can check using Lemma \ref{lem:description_of_algebras_and_modules_in_DC_and_DCstr} that $\widehat{\dR}(\mathcal O(X \hookrightarrow Y_{\bullet}))$ is an algebra object in $\DC$ (in fact it is a Dieudonn\'e algebra), and in general, $\widehat{\dR}(\mathcal E(X \hookrightarrow Y_{\bullet}))$ is a module over it in $\DC$.
\end{example}

\begin{example} \label{ex:PD_dR_DC_mod}
Suppose $R$ admits a $p$-torsionfree PD-embedding with Frobenius $(A, I, \gamma, \phi)$ as in Situation \ref{sit:PD_lift}, where $\gamma$ denotes the PD-structure on $I_r \subset A_r$ for each $r$.  Then $(T_r = \Spec A_r, \gamma)$ is a PD-thickening of $\Spec R$, equipped with an endomorphism $\phi_{A_r}$ lying over $F_R$.
Thus, for each $r > 0$, we get a complex
$$
\dR(\mathcal E(A_r, \gamma)) := (\mathcal E(A_r) \otimes_{A_r} \Omega^*_{A_r, \gamma}, \nabla),
$$
where the connection $\nabla$ is given by \cite[\textsection II, Proposition 1.1.5]{etesse}.  We call this the \emph{PD-de Rham complex associated to $\mathcal E$ over the PD-thickening $T_r$}.
As before, this comes with a functorial Frobenius endomorphism
$$
\phi = \phi_{\mathcal E} \otimes \phi^*_{A_r} \colon \mathcal E(A_r) \otimes_{A_r} \Omega^*_{A_r, \gamma} \to \mathcal E(A_r) \otimes_{A_r} \Omega^*_{A_r, \gamma}
$$
and a divided Frobenius
$$
F = \phi_{\mathcal E} \otimes F \colon \mathcal E(A_r) \otimes_{A_r} \Omega^*_{A_r, \gamma} \to \mathcal E(A_r) \otimes_{A_r} \Omega^*_{A_r, \gamma},
$$
where the latter $F$ is the divided Frobenius of Proposition \ref{prop:F_on_PD_dR_I_not_p}.
We have the same compatibilities as before; thus, passing to the limit, we get a Dieudonn\'e algebra $\widehat{\dR}(\mathcal O(A, \gamma)) = \widehat{\Omega}^*_{A, \gamma}$ and a module $\widehat{\dR}(\mathcal E(A, \gamma))$ over it in $\DC$.
\end{example}

\begin{remark} \label{rmk:PD_dR_functorialities}
The de Rham complexes of Examples \ref{ex:classical_dR_DC_mod} and \ref{ex:PD_dR_DC_mod} are functorial in $(\mathcal E, \phi_{\mathcal E})$, and also in the tuples $(X \hookrightarrow Y_r/W)$ and $(X/W(k), T_r)$ respectively.  We will often use the following property of the latter functoriality (which we state in geometric language):  given compatible maps $g \colon X' \to X$, $f \colon \Spec k' \to \Spec k$, and $h \colon T_r' \to T_r$, the induced pullback map
$$
h^*_{\mathcal E} \otimes h^*_{\Omega} \colon h^{-1}(\mathcal E_T \otimes_{\mathcal O_T} \Omega^*_{T, \gamma}) \to (g^*_{\cris} \mathcal E)_{T'} \otimes_{\mathcal O_{T'}} \Omega^*_{T', \gamma'}
$$
expresses its target $\dR((g_{\cris}^* \mathcal E)_{T', \gamma'})$ as the dg-module base change $\Omega^*_{T', \gamma'} \otimes_{h^{-1}(\Omega^*_{T, \gamma})} h^{-1}(\dR(\mathcal E_{T, \gamma}))$.
\end{remark}

\begin{remark} \label{rmk:PD_dR_over_Witt_vectors}
It will later be useful to work with de Rham complexes of $\mathcal E$ on the tower of truncated Witt vectors, $T_r = \Spec W_r(R)$.  Unfortunately, even when $W(R)$ is $p$-torsionfree (i.e. $R$ is reduced), this does not generally fit within Situation \ref{sit:PD_lift}, as we do not typically have $W_r(R) = W(R)/p^r$.
In this case, we can still write down a PD-de Rham complex
$$
\dR(\mathcal E(W_r(R), \gamma)) = \mathcal E(W_r(R)) \otimes_{W_r(R)} \Omega^*_{W_r(R), \gamma},
$$
equipped with the undivided Frobenius $\phi$ and with the structure of a dg-module over the dg-algebra $\dR(\mathcal O(W_r(R), \gamma)) = \Omega^*_{W_r(R), \gamma}$.  However, we no longer have a divided Frobenius (even on $\Omega^*_{W_r(R), \gamma}$), and so we are not able to give the limit
$$
\limarrow_r \mathcal E(W_r(R)) \otimes_{W_r(R)} \Omega^*_{W_r(R), \gamma}
$$
the structure of a Dieudonn\'e complex.
\end{remark}

Finally, we record the following lemma for later use.
\begin{lemma} \label{lem:A_to_M}
If $A^*$ is an algebra object in $\DC$ and $M^*$ is a left $A^*$-module in $\DC$, then we have a bijection
$$
\Hom_{A^* \lmodDC}(A^*, M^*) \leftrightarrow \{ x \in M^0:  Fx = x, dx = 0\}
$$
given by $f \leftrightarrow f(1)$.
\begin{proof}
An $A^*$-module homomorphism $f \colon A^* \to M^*$ is a map of graded modules commuting with $F$ and $d$.  In particular, $f$ must be given by the rule $f(a) = ax$, where $x = f(1) \in M^0$.  We claim that this $f$ is a map in $A^* \lmodDC$ if and only if $Fx = x$ and $dx = 0$.  These conditions are necessary because we always have $F(1) = 1$ and $d(1) = 0$ in $A^*$.  Conversely, given any $x \in M^0$ such that $Fx = x$ and $dx = 0$, $f(a) = ax$ does define a map in $A^* \lmodDC$, since it is a map of graded $A^*$-modules and we have the compatibilities
\[
f(Fa) = Fa \cdot x = Fa \cdot Fx = F(ax) = F(f(a))
\]
and (taking $a$ homogeneous)
\[
f(da) = da \cdot x = da \cdot x + (-1)^{|a|} a \cdot dx = d(ax) = d(f(a)). \qedhere
\]
\end{proof}
\end{lemma}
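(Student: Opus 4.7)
The plan is to construct explicit maps in both directions and verify that they are mutually inverse. In the forward direction, send $f \in \Hom_{A^* \lmodDC}(A^*, M^*)$ to $f(1) \in M^0$. Since $f$ is a morphism in $\DC$, it commutes with $F$ and $d$, and since $A^*$ is an algebra object in $\DC$ satisfying $F(1) = 1$ and $d(1) = 0$, the element $x := f(1)$ automatically satisfies $Fx = f(F(1)) = f(1) = x$ and $dx = f(d(1)) = 0$. It also lies in $M^0$ because $1 \in A^0$ and $f$ is a map of graded modules.

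For the reverse direction, given $x \in M^0$ with $Fx = x$ and $dx = 0$, define $f_x \colon A^* \to M^*$ by $f_x(a) = a \cdot x$. Because $x$ is in degree $0$, this is a degree-preserving map of graded $A^*$-modules, and $A^*$-linearity is immediate from the associativity axiom \eqref{eqn:action_1}. It remains to check compatibility with the Dieudonn\'e structure. For $F$, use that $F$ is multiplicative on the module action: $F(f_x(a)) = F(a \cdot x) = Fa \cdot Fx = Fa \cdot x = f_x(Fa)$. For $d$, use the graded Leibniz rule on the module action together with $dx = 0$: for homogeneous $a$,
\[
d(f_x(a)) = d(a \cdot x) = da \cdot x + (-1)^{|a|} a \cdot dx = da \cdot x = f_x(da).
\]

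Finally, I would check that the two assignments are inverse. One composition is immediate: $f_x \mapsto f_x(1) = 1 \cdot x = x$ by the unit axiom \eqref{eqn:action_2}. For the other, given $f \in \Hom_{A^* \lmodDC}(A^*, M^*)$, $A^*$-linearity forces $f(a) = f(a \cdot 1) = a \cdot f(1)$, which is precisely $f_{f(1)}$.

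I do not anticipate any real obstacle: the content is entirely formal once one writes down which axioms of algebras, modules, and Dieudonn\'e complexes are being invoked. The only minor subtlety is keeping track of degrees (ensuring $f_x$ preserves degree, which forces $x \in M^0$) and the sign from the graded Leibniz rule, which is harmless here because $dx = 0$ cancels the potentially troublesome term.
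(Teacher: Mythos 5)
Your proof is correct and takes essentially the same approach as the paper: both directions reduce to the explicit formula $f(a) = a \cdot x$, with the compatibility checks for $F$ and $d$ carried out by the identical calculations. The only difference is cosmetic—you phrase the verification that the assignments are mutually inverse a bit more explicitly, while the paper folds that into the observation that any morphism of graded modules from $A^*$ is determined by its value on $1$.
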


\begin{remark} \label{rmk:A_to_M_strict_case}
In the situation of Lemma \ref{lem:A_to_M}, if the module $M^*$ is strict, then in fact the condition $Fx = x$ implies the condition $dx = 0$, since for each $r$ we have
$$
dx = dF^r x = p^r F^r dx \equiv 0 \in \W_r M^*.
$$
\end{remark}

%%%%%%%%%%%%%%%%%%%%%%%%

\subsection{More on modules in $\DC$ and $\DC_{\str}$}
\label{sec:tensor_over_a_base}

\begin{para}
In this entirely formal section, we describe a few natural operations on module objects in $\DC$ and $\DC_{\str}$:  tensor product over a base algebra (including the special case of base change along a morphism of algebras), and strictification of a module in $\DC$.
\end{para}

\begin{mydef} \label{def:tensor_over_a_base}
Suppose $A^*$ is an algebra object in $\DC$, for example a Dieudonn\'e algebra.  If $M^*$ and $N^*$ are respectively left and right $A^*$-modules, we define $M^* \otimes_{A^*} N^*$ as the coequalizer of the diagram
\[
\begin{tikzcd}
M^* \otimes A^* \otimes N^* \arrow[r, "m_M \otimes \id_N", shift left] \arrow[r, "\id_M \otimes m_N"', shift right] & M^* \otimes N^*
\end{tikzcd}
\]
in $\DC$.
If $A^*$, $M^*$, and $N^*$ are strict, then we define the strictified tensor product $M^* \otimes_{A^*}^{\str} N^*$ as the coequalizer of the analogous diagram
\[
\begin{tikzcd}
M^* \otimes^{\str} A^* \otimes^{\str} N^* \arrow[r, "m_M \otimes \id_N", shift left] \arrow[r, "\id_M \otimes m_N"', shift right] & M^* \otimes^{\str} N^*
\end{tikzcd}
\]
in $\DC_{\str}$.  These respectively define bifunctors
\begin{align*}
\otimes_{A^*} & \colon (\rmodDC A^*) \times (A^* \lmodDC) \to \DC \\
\otimes_{A^*}^{\str} & \colon (\rmodstr A^*) \times (A^* \lmodstr) \to \DC_{\str}.
\end{align*}
\end{mydef}

\begin{remark} \label{rmk:forgetful_functor_commutes_with_limits_and_colimits}
By interpreting $\DC$ as the category of graded left modules over the noncommutative graded ring $R^* = \Z \langle d, F \rangle / (d^2, dF - pFd)$ (with $d$ in degree $1$ and $F$ in degree $0$), one can show that $\DC$ admits all limits and colimits, and they are compatible with the forgetful functor $\DC \to \Gr(\Ab)$.  In particular, the coequalizer defining $M^* \otimes_{A^*} N^*$ may be computed at the level of the underlying graded groups.  Moreover, we can equivalently describe $M^* \otimes_{A^*}^{\str} N^*$ as $\W\Sat(M^* \otimes_{A^*} N^*)$, as $\W\Sat$ is a left adjoint and therefore commutes with colimits.
\end{remark}

\begin{remark} \label{rmk:tensor_module_on_other_side}
Suppose we have algebra objects $A^*$ and $B^*$ in $\DC$.  If $M^*$, $N^*$, and $P^*$ have the appropriate module structures in $\DC$, then a diagram chase allows us to identify
$$
M^* \otimes_{A^*} (N^* \otimes_{B^*} P^*) \text{ and } (M^* \otimes_{A^*} N^*) \otimes_{B^*} P^*
$$
in $\DC$, together with their quotient maps from $M^* \otimes N^* \otimes P^*$.
This allows us to perform various standard module constructions in $\DC$.  For example, if $M^*$ is a $(B^*, A^*)$-bimodule, and $N^*$ is a left $A^*$-module, then $M^* \otimes_{A^*} N^*$ has the structure of a left $B^*$-module; its multiplication map is the composition
$$
B^* \otimes (M^* \otimes_{A^*} N^*) \simeq (B^* \otimes M^*) \otimes_{A^*} N^* \overset{m_M \otimes \id}{\longrightarrow} M^* \otimes_{A^*} N^*.
$$
(Note that the absolute tensor product here coincides with the tensor product over the unit object $\Z$.)  An important special case is where $M^* = B^*$ is an $A^*$-algebra.
All of this carries over to the strict setting as well.
\end{remark}

\begin{para} \label{para:WSatOmega}
We finish this section by showing that for $A^*$ an algebra object in $\DC$, the strictification of an $A^*$-module in $\DC$ is a $\W\Sat(A^*)$-module in $\DC_{\str}$.
A typical case of interest is $A^* = \widehat{\Omega}^*_A$, where $A$ is a $p$-torsionfree ring equipped with a lift of the Frobenius endomorphism of $R = A/pA$.  By Bhatt--Lurie--Mathew's Construction \ref{con:BLM_lifted_construction}, we have $\W\Sat(\widehat{\Omega}^*_A) = \W\Omega^*_R$, so this lemma says that the strictification of an $\widehat{\Omega}^*_A$-module in $\DC$ is a $\W\Omega^*_R$-module in $\DC_{\str}$.  The same is true of $A^* = \Omega^*_{W(R_{\red})}$, in view of Construction \ref{con:BLM_construction_W}.
\end{para}

\begin{lemma} \label{lem:WSat_module}
Let $A^*$ be an algebra object in $\DC$ (for example, a Dieudonn\'e algebra), and let $M^*$ be a left $A^*$-module in $\DC$.
\begin{enumerate}
\item The strictification $\W\Sat(M^*)$ carries a unique strict left $\W\Sat(A^*)$-module structure such that $\rho \colon M^* \to \W\Sat(M^*)$ is a morphism in $A^* \lmodDC$.
\item Sending $f \colon M^* \to N^*$ to $\W\Sat(f) \colon \W\Sat(M^*) \to \W\Sat(N^*)$ makes $\W\Sat$ a functor from $A^* \lmodDC$ to $\W\Sat(A^*) \lmodstr$.
\item The functor of part (2) is left-adjoint to the forgetful functor $\W\Sat(A^*) \lmodstr \to A^* \lmodDC$.
\end{enumerate}
\end{lemma}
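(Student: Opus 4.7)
The key observation is that $\W\Sat \colon \DC \to \DC_{\str}$ is symmetric monoidal essentially by the definition of $\otimes^{\str}$ in Definition \ref{def:tensor_product}, so we have natural isomorphisms $\W\Sat(X^* \otimes Y^*) \simeq \W\Sat(X^*) \otimes^{\str} \W\Sat(Y^*)$ and $\W\Sat(\Z) \simeq \Z_p$. For part (1), the plan is to define the action map as the composite
\[
\W\Sat(A^*) \otimes^{\str} \W\Sat(M^*) \xrightarrow{\sim} \W\Sat(A^* \otimes M^*) \xrightarrow{\W\Sat(m_M)} \W\Sat(M^*),
\]
and deduce the associativity and unit axioms by applying $\W\Sat$ to the diagrams \eqref{eqn:action_1} and \eqref{eqn:action_2} for $M^*$; the resulting diagrams live in $\DC_{\str}$ by virtue of the monoidal structure isomorphisms, and they commute because the original diagrams do and $\W\Sat$ is a functor.

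To see that $\rho_M \colon M^* \to \W\Sat(M^*)$ is $A^*$-linear (with $\W\Sat(M^*)$ regarded as an $A^*$-module by restriction along the algebra map $\rho_A \colon A^* \to \W\Sat(A^*)$), I would invoke naturality of $\rho$ applied to $m_M$ and then identify $\rho_{A^* \otimes M^*}$ with $\rho_A \otimes \rho_M$ via the monoidal structure. Uniqueness will follow from the universal property of strictification: any two strict $\W\Sat(A^*)$-module structures on $\W\Sat(M^*)$ for which $\rho_M$ is $A^*$-linear agree after precomposition with the strictification map $A^* \otimes M^* \to \W\Sat(A^* \otimes M^*) \simeq \W\Sat(A^*) \otimes^{\str} \W\Sat(M^*)$, hence must agree outright.

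Part (2) is essentially the functoriality of $\W\Sat$ packaged together with the same precomposition trick: given an $A^*$-linear $f \colon M^* \to N^*$, the $\W\Sat(A^*)$-linearity of $\W\Sat(f)$ can be verified after precomposing with the strictification map of $A^* \otimes M^*$, where it reduces to the $A^*$-linearity of $f$ by naturality of $\rho$. For the adjunction in part (3), I would begin with the ordinary strictification adjunction $\Hom_{\DC_{\str}}(\W\Sat(M^*), N^*) \simeq \Hom_{\DC}(M^*, N^*)$, given by $g \mapsto g \circ \rho_M$, and show that this bijection restricts to the module-level hom-sets. The forward direction is immediate: the composition of a $\W\Sat(A^*)$-linear map with the $A^*$-linear map $\rho_M$ is $A^*$-linear. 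For the reverse, given an $A^*$-linear $f$ and its induced strictification $g$, the two legs of the diagram expressing $\W\Sat(A^*)$-linearity of $g$ agree after precomposition with $A^* \otimes M^* \to \W\Sat(A^*) \otimes^{\str} \W\Sat(M^*)$, so the universal property of strictification forces them to agree. The main obstacle throughout is not any deep new idea but rather careful bookkeeping around the monoidal compatibility of $\W\Sat$ and the repeated use of the universal property of strictification to promote equalities out of the strictification map.
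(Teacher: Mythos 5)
Your proposal is correct and follows essentially the same route as the paper: defining the action on $\W\Sat(M^*)$ via the monoidal structure of $\W\Sat$ (equivalently, via $\W\Sat(m_M)$), verifying the axioms by applying $\W\Sat$ to the action diagrams, and using the universal property of the strictification map $A^* \otimes M^* \to \W\Sat(A^*) \otimes^{\str} \W\Sat(M^*)$ for uniqueness in (1), the linearity check in (2), and the restriction of the adjunction bijection in (3). The only cosmetic difference is that you phrase everything through the natural isomorphism $\W\Sat(X^* \otimes Y^*) \simeq \W\Sat(X^*) \otimes^{\str} \W\Sat(Y^*)$, whereas the paper more directly invokes the fact from \cite[Remark 7.6.4]{BLM} that $\rho\otimes\rho$ followed by the further $\rho$ is itself a strictification map; these are the same underlying ingredient.
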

\begin{proof}
First recall from \cite[Remark 7.6.4]{BLM} that for any $M^*$ and $N^*$ in $\DC$, the map
$$
M^* \otimes N^* \overset{\rho \otimes \rho}{\to} \W\Sat(M^*) \otimes \W\Sat(N^*) \overset{\rho}{\to} \W\Sat(M^*) \otimes^{\str} \W\Sat(N^*)
$$
is a strictification map.
For part (1), we are given a map $m_M \colon A^* \otimes M^* \to M^*$ in $\DC$ satisfying the action properties; i.e. such that the associativity and unit diagrams \ref{eqn:action_1} and \ref{eqn:action_2} commute.  We are looking for a map $m' \colon \W\Sat(A^*) \otimes^{\str} \W\Sat(M^*) \to \W\Sat(M^*)$ satisfying the analogous action properties, and which makes the outer region of the diagram
\[
\begin{tikzcd}
                                                      & A^* \otimes M^* \arrow[r, "m_M"] \arrow[d, "\rho \otimes \rho"] \arrow[ld, "\id \otimes \rho"'] & M^* \arrow[d, "\rho"] \\
A^* \otimes \W\Sat(M)^* \arrow[r, "\rho \otimes \id"] & \W\Sat(A^*) \otimes^{\str} \W\Sat(M^*) \arrow[r, "m'"]                               & \W\Sat(M^*)          
\end{tikzcd}
\]
commute.  But since the triangle on the left commutes, the outer region commutes if and only if the square does.  Since the left vertical map is a strictification map, there is a unique choice of $m'$ which makes the diagram commute, namely $m' = \W\Sat(m_M)$.  This also satisfies the associativity and unit properties, since the respective diagrams still commute after applying $\W\Sat$.

For (2), functoriality is clear; we must only show that for each $f \colon M^* \to N^*$ in $A^* \lmodDC$, $\W\Sat(f)$ is actually a morphism in $\W\Sat(A^*) \lmodstr$.  This follows by strictifying the commutative square
\[
\begin{tikzcd}
A^* \otimes M^* \arrow[r, "m_M"] \arrow[d, "\id \otimes f"'] & M^* \arrow[d, "f"] \\
A^* \otimes N^* \arrow[r, "m_N"]                             & N^*.               
\end{tikzcd}
\]
For (3), we must show that if $M^*$ is in $A^* \lmodDC$ and $N^*$ is in $\W\Sat(A^*) \lmodstr$, then any morphism $f \colon M^* \to N^*$ in $A^* \lmodDC$ factors uniquely through $\W\Sat(M^*)$ in $\W\Sat(A^*) \lmodstr$.  Uniqueness is clear, as there is a unique factoring map even in $\DC$, namely
$$
\W\Sat(f) \colon \W\Sat(M^*) \to \W\Sat(N^*) = N^*.
$$
This is in fact a morphism in $\W\Sat(A^*) \lmodstr$ by part (2).
\end{proof}

%%%%%%%%%%%%%%%%%%%%%%%%

\subsection{Filtrations on modules in $\DC_{\str}$}
\label{sec:filtrations_on_modules}

\begin{para}
In this section, we examine the following question:  if $A^*$ is an algebra object in $\DC$ (for example a Dieudonn\'e algebra), and $M^*$ is a strict $A^*$-module in $\DC$, then what kind of structure does $\W_r M^*$ have?  It is only a complex of abelian groups a priori; recall that Frobenius maps $\W_r M^*$ to its quotient $\W_{r-1} M^*$ and not to itself.  We will begin with generalities, and then specialize to the case $A^* = \W\Omega^*_R$ that is of greatest interest to us.
\end{para}

\begin{lemma} \label{lem:Vr+dVr_submodule}
Let $A^*$ be an algebra object in $\DC$, and $M^*$ a saturated left $A^*$-module in $\DC$.  Then for each $r$, we have containments:
\begin{enumerate}
\item $A^* \cdot (V^r M^* + dV^r M^*) \subset V^r M^* + dV^r M^*$, and
\item $(V^r A^* + dV^r A^*) \cdot M^* \subset V^r M^* + dV^r M^*$ if $A^*$ is saturated.
\end{enumerate}
In particular, part (1) makes $\W_r M^*$ a dg-module over $A^*$, and part (2) makes it a dg-module over $\W_r A^*$ when $A^*$ is saturated.
\begin{proof}
Fix elements $a$ of $A^*$ and $x$ of $M^*$, which for convenience we take to be homogeneous.  We must show that all of the elements $a \cdot V^r x$, $(V^r a) \cdot x$, $a \cdot dV^r x$, and $(dV^r a) \cdot x$ lie in $V^r M^* + dV^r M^*$.  The strategy for all four will be the same:  apply $F^r$ to the given element, simplify, and eventually cancel $F^r$'s from both sides of the equation, using the fact that $F$ is injective on $M^*$.

For $a \cdot V^r x$, we have:
\begin{align*}
F^r(a \cdot V^r x) & = F^r(a) \cdot F^r V^r x \\
& = F^r(a) \cdot p^r x \\
& = p^r (F^r(a) \cdot x) \\
& = F^r V^r(F^r a \cdot x).
\end{align*}
Canceling $F^r$'s gives $a \cdot V^r x = V^r(F^r a \cdot x) \in V^r M^*$.

For $(V^r a) \cdot x$, we have:
\begin{align*}
F^r((V^r a) \cdot x) & = (F^r V^r a) \cdot F^r x \\
& = p^r a \cdot F^r x \\
& = F^r V^r (a \cdot F^r x),
\end{align*}
so $(V^r a) \cdot x = V^r(a \cdot F^r x) \in V^r M^*$.

For $a \cdot dV^r x$, we need the graded Leibniz rule:
\begin{align*}
F^r(a \cdot dV^r x) & = (F^r a) \cdot (F^r dV^r x) \\
& = (F^r a) \cdot dx \\
& = (-1)^{|a|} \left( d((F^r a) \cdot x) - (d F^r a) \cdot x \right) \\
& = (-1)^{|a|} \left( F^r d V^r((F^r a) \cdot x) - (p^r F^r d a) \cdot x \right) \\
& = (-1)^{|a|} \left( F^r d V^r((F^r a) \cdot x) - F^r V^r((F^r d a) \cdot x) \right) \\
& = (-1)^{|a|} F^r \left( d V^r((F^r a) \cdot x) - V^r((F^r d a) \cdot x) \right).
\end{align*}
Again we cancel $F^r$'s and see that the right-hand side lies in $V^r M^* + dV^r M^*$.

Finally, for $(dV^r a) \cdot x$, we have:  
\begin{align*}
F^r((dV^r a) \cdot x) & = (F^r d V^r a) \cdot (F^r x) \\
& = da \cdot F^r x \\
& = d(a \cdot F^r x) - (-1)^{|a|} a \cdot dF^r x \\
& = F^r d V^r(a \cdot F^r x) - (-1)^{|a|} a \cdot p^r F^r dx \\
& = F^r d V^r(a \cdot F^r x) - (-1)^{|a|} F^r V^r(a \cdot F^r dx) \\
& = F^r \left(d V^r(a \cdot F^r x) - (-1)^{|a|} V^r(a \cdot F^r dx) \right).
\end{align*}
Canceling $F^r$'s, we again see that $(dV^r a) \cdot x \in V^r M^* + dV^r M^*$.
\end{proof}
\end{lemma}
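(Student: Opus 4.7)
The plan is to verify each of the four containments by applying $F^r$ to the element in question, manipulating the output using the structural identities of a saturated Dieudonn\'e module, and then cancelling $F^r$ at the end. The last step is legal because saturation implies that $F$ acts injectively on $M^*$ (and on $A^*$ in part~(2)). Throughout, the workhorse identities will be
\[
F^r V^r = p^r, \qquad dF^r = p^r F^r d, \qquad F^r dV^r = d,
\]
where the first is immediate from $FV = p$, the second is obtained by iterating $dF = pFd$, and the third follows by applying $dF = pFd$ to $V^r x$ and cancelling $p^r$ using $p$-torsion-freeness.

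For part~(1), the first containment follows from the direct calculation
\[
F^r(a \cdot V^r x) = F^r a \cdot F^r V^r x = p^r(F^r a \cdot x) = F^r V^r(F^r a \cdot x),
\]
and cancelling $F^r$ gives $a \cdot V^r x = V^r(F^r a \cdot x) \in V^r M^*$. For the element $a \cdot dV^r x$, I would first apply $F^r$ and use $F^r dV^r = d$ to reduce to $F^r a \cdot dx$; then expand via the graded Leibniz rule
\[
d(F^r a \cdot x) = dF^r a \cdot x + (-1)^{|a|} F^r a \cdot dx,
\]
and substitute $dF^r a = p^r F^r da$ to obtain
\[
F^r a \cdot dx = (-1)^{|a|}\bigl(d(F^r a \cdot x) - p^r F^r da \cdot x\bigr).
\]
Rewriting the two terms on the right as $F^r dV^r(F^r a \cdot x)$ and $F^r V^r(F^r da \cdot x)$, then cancelling $F^r$, exhibits $a \cdot dV^r x$ as an element of $dV^r M^* + V^r M^*$.

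For part~(2), the computations for $(V^r a) \cdot x$ and $(dV^r a) \cdot x$ run along exactly parallel lines, using $F^r V^r a = p^r a$ in $A^*$ for the first and, for the second, combining the graded Leibniz rule with $dF^r x = p^r F^r dx$ in $M^*$ to mirror the previous calculation with the roles of $A^*$ and $M^*$ interchanged (which is why the hypothesis that $A^*$ is saturated becomes essential here). The only real obstacle is careful bookkeeping of the Leibniz signs; I would handle this by fixing $a$ homogeneous throughout. Once all four containments are established, the differential on $M^*$ and the action of $A^*$ automatically descend to $\W_r M^* = M^*/(V^r M^* + dV^r M^*)$, giving it the structure of a dg-module over $A^*$ in general, and over $\W_r A^*$ when $A^*$ is saturated.
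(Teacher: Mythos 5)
Your proposal is correct and follows the paper's proof essentially verbatim: the same four cases are handled by the same strategy of applying $F^r$, invoking $F^rV^r = p^r$, $dF^r = p^rF^rd$, $F^rdV^r = d$ together with the graded Leibniz rule, and then cancelling $F^r$ using its injectivity on the saturated $M^*$. The computations you spell out (and those you sketch for part (2)) match the paper line for line, including the observation that saturation of $A^*$ is needed only so that $V^ra$ and $dV^ra$ are defined in the first place.
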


\begin{para}
Now let $R$ be an $\F_p$-algebra and $A^* = \W\Omega^*_R$.  If $M^*$ is a strict module over $A^*$, then the previous lemma tells us that $\W_r M^*$ is a dg-module over $\W_r A^*$.  Our next goal is to make this more concrete by specifying some de Rham complexes which can serve as convenient substitutes for the cdga $\W_r \Omega^*_R$.
\end{para}

\begin{lemma} \label{lem:PD_dR_to_WrOmega}
Let $R$ be an $\F_p$-algebra.  The dg-algebra map
$$
\Omega^*_{W(R)} \to \Omega^*_{W(R_{\red})} \overset{\rho}{\to} \W\Sat(\Omega^*_{W(R_{\red})}) = \W\Omega^*_R \twoheadrightarrow \W_r \Omega^*_R
$$
factors uniquely through the quotient map $\Omega^*_{W(R)} \to \Omega^*_{W_r(R), \gamma}$.
\begin{proof}
Uniqueness is clear, because $\Omega^*_{W(R)} \to \Omega^*_{W_r(R), \gamma}$ is surjective.  For existence, recall by Definition \ref{def:PD_dR_cx} and Lemma \ref{lem:quotient_of_PD_dR_algebras} that
the kernel of $\Omega^*_{W(R)} \to \Omega^*_{W_r(R), \gamma}$ is generated as a dg-ideal by elements of the forms
\begin{align*}
\omega_1 & = d\gamma_n(x) - \gamma_{n-1}(x) dx \text{ for } x \in VW(R), \text{ and} \\
\omega_2 & = V^r y \text{ for } y \in W(R).
\end{align*}
We must show that each of these maps to zero in $\W_r \Omega^*_R$.  But $\omega_1$ maps to zero even in $\W \Omega^*_R$, since it is killed by $n!$ and $\W\Omega^*_R$ is $\Z$-torsionfree.  As for $\omega_2$, since the map $$W(R) = \Omega^0_{W(R)} \to \W\Omega^0_R$$ is compatible with Verschiebung operators, we have $\rho(V^r x) = V^r \rho(x) \in \W\Omega^*_R$, which vanishes in $\W_r \Omega^*_R$.
\end{proof}
\end{lemma}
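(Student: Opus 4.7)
The plan is to identify explicit generators of the kernel of the quotient map $\Omega^*_{W(R)} \twoheadrightarrow \Omega^*_{W_r(R), \gamma}$ and verify that each generator is annihilated by the composition to $\W_r \Omega^*_R$. Uniqueness of the factorization follows immediately from the surjectivity of this quotient.

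For existence, I would factor the quotient as $\Omega^*_{W(R)} \to \Omega^*_{W_r(R)} \to \Omega^*_{W_r(R), \gamma}$. By Lemma \ref{lem:quotient_of_PD_dR_algebras}, the kernel of the first arrow is the dg-ideal generated in degree $0$ by $V^r W(R) = \ker(W(R) \to W_r(R))$; by Definition \ref{def:PD_dR_cx}, the kernel of the second is the dg-ideal generated by the PD-compatibility relations $d\gamma_n(\bar x) - \gamma_{n-1}(\bar x)\,d\bar x$ for $\bar x \in VW_{r-1}(R)$. Lifting back along $W(R) \to W_r(R)$, the total kernel is the dg-ideal generated by the two families $\omega_2 = V^r y$ (for $y \in W(R)$) and $\omega_1 = d\gamma_n(x) - \gamma_{n-1}(x)\,dx$ (for $x \in VW(R)$ and $n \geq 1$), the latter making sense via the PD-structure of Construction \ref{con:PD_structure_on_WR}. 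Since dg-ideals are closed under $d$, it suffices to check that the $\omega_i$ themselves map to zero in $\W_r \Omega^*_R$.

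For $\omega_1$, I would exploit the identity $(Vy)^n = p^{n-1} V(y^n) = n!\,\gamma_n(Vy)$ in $W(R)$, which follows by iterating the Witt-vector relation $V(a) \cdot b = V(a \cdot Fb)$. Applying $d$ together with the Leibniz rule then gives $n!\,\omega_1 = 0$ already in $\Omega^*_{W(R)}$. Since $\W\Omega^*_R$ is a $\Z_p$-algebra and $p$-torsionfree (being a saturated Dieudonn\'e complex), it has no torsion at any nonzero integer, so $\omega_1$ must vanish in $\W\Omega^*_R$ itself, and \emph{a fortiori} in $\W_r \Omega^*_R$.

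For $\omega_2$, the plan is to show that the composition $W(R) \to W(R_{\red}) \to \W\Omega^0_R$ respects the Verschiebung. The first map respects $V$ by functoriality of Witt vectors. The strictification $\rho$ in degree $0$ commutes with $V$ because it commutes with $F$ and because $F$ is injective on the saturated, $p$-torsionfree complex $\W\Omega^0_R$, so that the identity $FV = p$ characterizes $V$ uniquely. Therefore $\omega_2 = V^r y$ maps to $V^r$ applied to an element of $\W\Omega^0_R$, which vanishes in $\W_r \Omega^*_R = \W\Omega^*_R/(\im V^r + \im dV^r)$ by definition. The only computation of any substance is the Witt-vector identity $n!\,\gamma_n(Vy) = (Vy)^n$; everything else is formal, and I do not expect a genuine obstacle.
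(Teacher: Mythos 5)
Your proposal is correct and takes essentially the same approach as the paper: identify the kernel of $\Omega^*_{W(R)} \to \Omega^*_{W_r(R),\gamma}$ via the two generating families $\omega_1$ and $\omega_2$, kill $\omega_1$ by the $n!$-torsion argument and the torsion-freeness of $\W\Omega^*_R$, and kill $\omega_2$ via compatibility of the degree-$0$ strictification map with Verschiebung. The only difference is that you spell out two steps the paper leaves implicit — the explicit Witt-vector computation $n!\,\gamma_n(Vy) = (Vy)^n$ underlying $n!\,\omega_1 = 0$, and the derivation of $\rho(Vx) = V\rho(x)$ from injectivity of $F$ on the saturated target together with $FV = p$ — both of which are correct.
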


\begin{corollary} \label{cor:PD_dR_module}
Let $M^*$ be a strict $\W\Omega^*_R$-module.
\begin{enumerate}
\item The dg-$\Omega^*_{W(R)}$-module structure of $M^*$ coming from the strictification map $\Omega^*_{W(R)} \to \W\Omega^*_R$ induces a (unique) dg-$\Omega^*_{W_r(R), \gamma}$-module structure on $\W_r M^*$ for each $r$.
\item If $f \colon M^* \to N^*$ is a morphism of strict $\W\Omega^*_R$-modules, then $\W_r(f) \colon \W_r M^* \to \W_r N^*$ is a morphism of dg-$\Omega^*_{W_r(R), \gamma}$-modules for each $r$.  Thus $\W_r$ defines a functor
$$
\W\Omega^*_R \lmodstr \to \Omega^*_{W_r(R), \gamma} \ldgmod.
$$
\end{enumerate}
\begin{proof}
For part (a), Lemma \ref{lem:Vr+dVr_submodule} shows that the dg-$\Omega^*_{W(R)}$-module structure of $\W_r M^*$ factors through $\Omega^*_{W(R)} \to \W\Omega^*_R \to \W_r \Omega^*_R$.  In view of the map $\Omega^*_{W_r(R), \gamma} \to \W_r \Omega^*_R$ of Lemma \ref{lem:PD_dR_to_WrOmega}, it also factors through $\Omega^*_{W(R)} \to \Omega^*_{W_r(R), \gamma}$.

For part (b), we simply observe that $f$ is a morphism of dg-$\W\Omega^*_R$-modules and thus also of dg-$\Omega^*_{W(R)}$-modules, so $\W_r(f)$ is a morphism of dg-$\Omega^*_{W_r(R), \gamma}$-modules.
\end{proof}
\end{corollary}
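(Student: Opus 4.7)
My plan is to assemble Corollary \ref{cor:PD_dR_module} from the two preceding lemmas: Lemma \ref{lem:Vr+dVr_submodule} first produces a dg-$\W_r\Omega^*_R$-module structure on $\W_r M^*$, and Lemma \ref{lem:PD_dR_to_WrOmega} then allows us to restrict scalars along the canonical dg-algebra map $\Omega^*_{W_r(R), \gamma} \to \W_r\Omega^*_R$.

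For part (1), since $\W\Omega^*_R$ is saturated, I would apply part (2) of Lemma \ref{lem:Vr+dVr_submodule} with $A^* = \W\Omega^*_R$ and the given $M^*$ to obtain the containment
\[
(V^r \W\Omega^*_R + dV^r \W\Omega^*_R) \cdot M^* \subseteq V^r M^* + dV^r M^*,
\]
so the $\W\Omega^*_R$-action on $M^*$ descends to an action of $\W_r\W\Omega^*_R = \W_r\Omega^*_R$ on $\W_r M^*$; the closing sentence of that lemma records that this descent is a dg-module structure. Composing with the dg-algebra map produced by Lemma \ref{lem:PD_dR_to_WrOmega} then pulls this back to the desired dg-$\Omega^*_{W_r(R), \gamma}$-module structure on $\W_r M^*$. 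Compatibility with the dg-$\Omega^*_{W(R)}$-module structure on $M^*$ follows tautologically: both composites $\Omega^*_{W(R)} \to \W\Omega^*_R \to \W_r\Omega^*_R$ and $\Omega^*_{W(R)} \to \Omega^*_{W_r(R), \gamma} \to \W_r\Omega^*_R$ agree by the construction in Lemma \ref{lem:PD_dR_to_WrOmega}. Uniqueness is then forced by the surjectivity of $\Omega^*_{W(R)} \twoheadrightarrow \Omega^*_{W_r(R), \gamma}$: any dg-$\Omega^*_{W_r(R), \gamma}$-structure extending the given $\Omega^*_{W(R)}$-action is determined on the image, which is everything.

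Part (2) is then formal. A morphism $f \colon M^* \to N^*$ in $\W\Omega^*_R \lmodstr$ is a fortiori a morphism of dg-$\W\Omega^*_R$-modules, so passing to the quotient yields a morphism $\W_r(f)$ of dg-$\W_r\Omega^*_R$-modules, and restriction of scalars along the map of Lemma \ref{lem:PD_dR_to_WrOmega} preserves this. Functoriality of the resulting assignment $\W_r \colon \W\Omega^*_R \lmodstr \to \Omega^*_{W_r(R), \gamma} \ldgmod$ is inherited from the evident functoriality of $\W_r$ at the level of underlying complexes.

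I do not anticipate any serious obstacle, since once the two preceding lemmas are in hand everything reduces to diagram chasing. The only point that deserves a moment's attention is the verification that the descended $\W_r \Omega^*_R$-action on $\W_r M^*$ genuinely satisfies the graded Leibniz rule with respect to the induced differential, but this is precisely what the concluding sentence of Lemma \ref{lem:Vr+dVr_submodule} asserts, so no further computation is required.
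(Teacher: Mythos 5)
Your proof is correct and follows essentially the same route as the paper: apply Lemma \ref{lem:Vr+dVr_submodule} to obtain a dg-$\W_r\Omega^*_R$-module structure on $\W_r M^*$, then restrict scalars along the dg-algebra map $\Omega^*_{W_r(R), \gamma} \to \W_r\Omega^*_R$ of Lemma \ref{lem:PD_dR_to_WrOmega}, with the morphism case reducing to a formal restriction-of-scalars observation. You spell out a couple of points the paper leaves implicit (the uniqueness via surjectivity of $\Omega^*_{W(R)} \twoheadrightarrow \Omega^*_{W_r(R),\gamma}$, and that the descended action really satisfies the Leibniz rule), which is a reasonable amount of added care but does not change the argument.
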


The following two results play the same role for the lifted construction \ref{con:BLM_lifted_construction} of $\W\Omega^*_R$ that the previous two play for the Witt vector construction \ref{con:BLM_construction_W}.

\begin{lemma} \label{lem:dR_lift_to_WrOmega}
Let $R$, $(A, \phi)$, and $A_r$ be as in the Frobenius-lifted situation \ref{sit:lifted_situation}.
\begin{enumerate}
\item The dg-algebra map
$$
\widehat{\Omega}^*_A \overset{\rho}{\to} \W\Sat(\widehat{\Omega}^*_A) = \W\Omega^*_R \twoheadrightarrow \W_r \Omega^*_R
$$
factors uniquely through the PD-de Rham complex $\Omega^*_{A_r}$.
\item The morphism $\Omega^*_{A_r} \to \W_r \Omega^*_R$ of part (1) factors as
$$
\Omega^*_{A_r} \overset{h_r}{\to} \Omega^*_{W_r(R), \gamma} \to \W_r \Omega^*_R,
$$
where the first map is induced (via quotients and de Rham complex functoriality) by the canonical $\delta$-ring map $h \colon A \to W(R)$, and the second was constructed in Lemma \ref{lem:PD_dR_to_WrOmega}.
\end{enumerate}
\begin{proof}
Part (1) is clear, since the target is killed by $p^r$ and the natural map $\widehat{\Omega}^*_A \to \Omega^*_{A_r}$ is the quotient by $p^r$.  For part (2), we must check that the bottom triangle of the diagram below commutes; this will follow from the commutativity of the upper pentagon.
\[
\begin{tikzcd}
\Omega^*_A \arrow[rd, "h"'] \arrow[dd, two heads] \arrow[rr] &                                          & \widehat{\Omega}^*_A \arrow[rr, "\rho"] &                                           & \W\Omega^*_R \arrow[dd, two heads] \\
                                                  & \Omega^*_{W(R)} \arrow[dd, two heads] \arrow[rr]    &                                         & \Omega^*_{W(R_{\red})} \arrow[ru, "\rho"] &                         \\
\Omega^*_{A_r} \arrow[rd, "h_r"'] \arrow[rrrr]    &                                          &                                         &                                           & \W_r \Omega^*_R         \\
                                                  & {\Omega^*_{W_r(R), \gamma}} \arrow[rrru] &                                         &                                           &                        
\end{tikzcd}
\]
To prove that the upper pentagon commutes, it suffices by the universal property of $\Omega^*_A$ to check commutativity in degree $0$.
Recall from \cite[Proposition 3.6.2]{BLM} that we have $\W\Omega^0_R = W(\W_1 \Omega^0_R)$.  All of the maps above are compatible with the various Frobenius endomorphisms, and the target $\W\Omega^0_R$ is $p$-torsionfree, so the two maps $A \rightrightarrows \W\Omega^0_R$ are both $\delta$-ring maps.  But by construction (\cite[Proposition 4.1.4, Corollary 4.2.3]{BLM}), both maps pass to the same map $A/pA = R \overset{e}{\to} \W_1 \Omega^0_R$ when we take quotients.  It follows by the universal property of Witt vectors that they agree.
\end{proof}
\end{lemma}

\begin{corollary} \label{cor:PD_dR_module_lift}
Let $R$, $(A, \phi)$, and $A_r$ be as in the Frobenius-lifted situation \ref{sit:lifted_situation}, and let $M^*$ be a strict $\W\Omega^*_R$-module.
\begin{enumerate}
\item The dg-$\widehat{\Omega}^*_A$-module structure of $M^*$ coming from the strictification map $\widehat{\Omega}^*_A \to \W\Omega^*_R$ induces a (unique) dg-$\Omega^*_{A_r}$-module structure on $\W_r M^*$ for each $r$.
\item If $f \colon M^* \to N^*$ is a morphism of strict $\W\Omega^*_R$-modules, then $\W_r(f) \colon \W_r M^* \to \W_r N^*$ is a morphism of dg-$\Omega^*_{A_r}$-modules for each $r$.  Thus $\W_r$ defines a functor
$$
\W\Omega^*_R \lmodstr \to \Omega^*_{A_r} \ldgmod.
$$
\item The functor of part (2) factors as 
$$
\W\Omega^*_R \lmodstr \to \Omega^*_{W_r(R), \gamma} \ldgmod \to \Omega^*_{A_r} \ldgmod,
$$
where the first step is the functor of Corollary \ref{cor:PD_dR_module} and the second is restriction of scalars along $h_r \colon \Omega^*_{A_r} \to \Omega^*_{W_r(R), \gamma}$.
\end{enumerate}
\begin{proof}
For part (1), simply recall that the natural map $\widehat{\Omega}^*_A \to \Omega^*_{A_r}$ is the quotient by $p^r$.
For part (2), note that $f$ and therefore also $\W_r(f)$ are maps of dg-$\widehat{\Omega}^*_A$-modules, so the latter is a map of dg-$\Omega^*_{A_r}$-modules.
Part (3) follows from part (2) of Lemma \ref{lem:dR_lift_to_WrOmega}.
\end{proof}
\end{corollary}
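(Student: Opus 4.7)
The plan is to adapt the proof of Corollary \ref{cor:PD_dR_module} mutatis mutandis, substituting the factorization of Lemma \ref{lem:dR_lift_to_WrOmega} for that of Lemma \ref{lem:PD_dR_to_WrOmega}. The two essential ingredients are Lemma \ref{lem:Vr+dVr_submodule}, which tells us that $V^r M^* + dV^r M^*$ is stable under the action of any algebra $A^*$ acting on $M^*$ in $\DC$, and the observation that $\W_r M^*$ is killed by $p^r$ because $M^*$ is strict.

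For part (1), I will first restrict scalars along the strictification map $\rho \colon \widehat{\Omega}^*_A \to \W\Omega^*_R$ to regard $M^*$ as a dg-$\widehat{\Omega}^*_A$-module in $\DC$. By Lemma \ref{lem:Vr+dVr_submodule}, the subgroup $V^r M^* + dV^r M^* \subseteq M^*$ is stable under both the $\W\Omega^*_R$- and the $\widehat{\Omega}^*_A$-action, so $\W_r M^*$ inherits a canonical structure of dg-$\widehat{\Omega}^*_A$-module. Since $\W_r M^*$ is $p^r$-torsion, this action descends uniquely to an action of the quotient $\widehat{\Omega}^*_A/p^r \widehat{\Omega}^*_A$, which equals $\Omega^*_{A_r}$ (de Rham commutes with the base change $A \to A/p^r A$, and $p$-adic completion is unchanged upon reducing mod $p^r$). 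Part (2) is then essentially automatic: if $f \colon M^* \to N^*$ is a morphism in $\W\Omega^*_R\lmodstr$, it is in particular a morphism of dg-$\widehat{\Omega}^*_A$-modules, so $\W_r(f)$ is a morphism of dg-$\widehat{\Omega}^*_A$-modules between $p^r$-torsion objects, hence a morphism of dg-$\Omega^*_{A_r}$-modules.

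For part (3), I will invoke part (2) of Lemma \ref{lem:dR_lift_to_WrOmega}, which expresses the map $\Omega^*_{A_r} \to \W_r \Omega^*_R$ as the composition $\Omega^*_{A_r} \xrightarrow{h_r} \Omega^*_{W_r(R), \gamma} \to \W_r \Omega^*_R$. Chasing the construction of the $\Omega^*_{A_r}$-action on $\W_r M^*$ (which is induced from the $\W_r \Omega^*_R$-action of Lemma \ref{lem:Vr+dVr_submodule}) against the analogous construction of the $\Omega^*_{W_r(R), \gamma}$-action in Corollary \ref{cor:PD_dR_module} shows at once that the former factors as restriction of scalars along $h_r$ applied to the latter. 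No real obstacle is anticipated; the only mildly subtle point is the identification $\widehat{\Omega}^*_A/p^r = \Omega^*_{A_r}$, which has already been used implicitly in Lemma \ref{lem:dR_lift_to_WrOmega}.
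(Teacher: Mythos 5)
Your proposal is correct and takes essentially the same route as the paper's own proof: part (1) uses the quotient-by-$p^r$ identification $\widehat{\Omega}^*_A/p^r \simeq \Omega^*_{A_r}$ together with the fact that $\W_r M^*$ is $p^r$-torsion, part (2) is the observation that $\W_r(f)$ inherits dg-$\widehat{\Omega}^*_A$-linearity, and part (3) invokes part (2) of Lemma \ref{lem:dR_lift_to_WrOmega}. You are slightly more explicit than the paper in spelling out that the stability of $V^r M^* + dV^r M^*$ under the $\widehat{\Omega}^*_A$-action is what makes $\W_r M^*$ a $\widehat{\Omega}^*_A$-module in the first place (via Lemma \ref{lem:Vr+dVr_submodule}), but the substance is the same.
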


\begin{remark} \label{rmk:PD_dR_dRW_etale_base_change}
If $R \to S$ is an \'etale map of $\F_p$-algebras, then Lemma \ref{lem:PD_dR_etale_base_change} and \cite[Corollary 5.3.5]{BLM} respectively tell us that the natural maps
$$
\Omega^*_{W_r(R), \gamma} \to \Omega^*_{W_r(S), \gamma}
\text{\quad and \quad}
\W_r \Omega^*_R \to \W_r \Omega^*_S
$$
are base-change maps along $W_r(R) \to W_r(S)$.
It follows that the natural dg-algebra map
$$
\Omega^*_{W_r(S), \gamma} \otimes_{\Omega^*_{W_r(R), \gamma}} \W_r \Omega^*_R \to \W_r \Omega^*_S
$$
is an isomorphism.
\end{remark}

%%%%%%%%%%%%%%%%%%%%%%%%

\subsection{Strict Dieudonn\'e complexes and colimits}
\label{sec:DCstr_and_colimits}

\begin{para}
Given a diagram $(M_i^*)_i$ of strict Dieudonn\'e complexes, there are three things we might reasonably mean by the ``colimit'' of $M_i^*$.  Namely, we could mean their colimit
$$
\colim_{i, \str} M_i^*
$$
in the category $\DC_{\str}$, their colimit
$$
\colim_{i, \DC} M_i^*
$$
in the category $\DC$ (which by Remark \ref{rmk:forgetful_functor_commutes_with_limits_and_colimits} may be computed at the level of graded abelian groups), or the colimit
$$
(\colim_i \W_r M_i^*)_r
$$
of the associated strict Dieudonn\'e towers.  In this largely formal section, we will compare these notions of colimits.  This will later play a crucial role, via Proposition \ref{prop:colimit_of_dRW_complexes_is_dRW_complex}, in the proof of Theorem \ref{thm:etesse_comparison}.  We begin with what holds for arbitrary colimits, and then specialize to filtered colimits.
\end{para}

\begin{lemma} \label{lem:WSat_commutes_with_colimits}
The category $\DC_{\str}$ has all colimits, and they can be computed as the strictifications of the corresponding colimits in $\DC$.
\end{lemma}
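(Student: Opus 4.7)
The plan is to exploit the fact that $\DC_{\str}$ is a reflective subcategory of $\DC$: the inclusion $\DC_{\str} \hookrightarrow \DC$ is fully faithful and admits the left adjoint $\W\Sat$. For reflective subcategories, colimits always exist and are computed by reflecting the ambient colimit, so both assertions of the lemma will follow from a single Yoneda-style calculation.

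First I would invoke Remark \ref{rmk:forgetful_functor_commutes_with_limits_and_colimits} to note that $\DC$ itself has all colimits (indeed, they are computed on underlying graded abelian groups). Then, given any small diagram $(M_i^*)_i$ in $\DC_{\str}$, I would form the candidate object $C^* := \W\Sat(\colim_{i, \DC} M_i^*)$, where the colimit inside is taken in $\DC$ (viewing each $M_i^*$ as a Dieudonn\'e complex via the inclusion).

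The core step is to check that $C^*$ corepresents the functor $N^* \mapsto \lim_i \Hom_{\DC_{\str}}(M_i^*, N^*)$ on $\DC_{\str}$. For $N^*$ in $\DC_{\str}$, I would chain the natural bijections
\begin{align*}
\Hom_{\DC_{\str}}(C^*, N^*)
&= \Hom_{\DC_{\str}}\bigl(\W\Sat(\colim_{i, \DC} M_i^*), N^*\bigr) \\
&\simeq \Hom_{\DC}\bigl(\colim_{i, \DC} M_i^*, N^*\bigr) \\
&\simeq \lim_i \Hom_{\DC}(M_i^*, N^*) \\
&\simeq \lim_i \Hom_{\DC_{\str}}(M_i^*, N^*),
\end{align*}
where the second bijection uses the $(\W\Sat, \text{inclusion})$-adjunction, the third is the universal property of the colimit in $\DC$, and the fourth uses that the inclusion $\DC_{\str} \hookrightarrow \DC$ is fully faithful. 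These bijections are natural in $N^*$, and one checks that the identity on $C^*$ corresponds to the evident cocone $(M_i^* \to \colim_{i,\DC} M_i^* \to C^*)_i$, which verifies the universal property.

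There is no real obstacle here: the argument is entirely formal once one has the adjunction $\W\Sat \dashv (\text{inclusion})$ and the existence of colimits in $\DC$, both of which are already in hand. The only point to be slightly careful about is tracking naturality, so that the bijection really identifies the universal cocone on $C^*$ with the given cocones in $\DC_{\str}$; this is immediate from unwinding the unit $\rho \colon \colim_{i, \DC} M_i^* \to C^*$ of the adjunction.
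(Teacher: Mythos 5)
Your proof is correct and follows essentially the same approach as the paper: the paper's own proof is the one-line observation that $\W\Sat$ is a left adjoint and therefore preserves colimits, which is exactly the reflective-subcategory fact you spell out via the hom-set chain. Your version just makes explicit the supporting ingredients (full faithfulness of the inclusion, existence of colimits in $\DC$ from Remark \ref{rmk:forgetful_functor_commutes_with_limits_and_colimits}, naturality in $N^*$) that the paper leaves implicit.
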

\begin{proof}
The strictification functor is a left adjoint, so it commutes with all colimits.
\end{proof}

\begin{lemma} \label{lem:filtered_colimits_preserve_DCsat}
(cf. \cite[Proposition 4.3.3 and Remark 4.3.4]{BLM}) Filtered colimits (in $\DC$) of saturated Dieudonn\'e complexes are saturated.  In particular, $\DC_{\sat}$ admits filtered colimits, and they coincide with the corresponding colimits in $\DC$.
\end{lemma}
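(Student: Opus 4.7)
The plan is to verify the three conditions characterizing a saturated Dieudonn\'e complex (namely $p$-torsion-freeness, injectivity of $F$, and the statement that $F$ surjects onto $\{x : dx \in pM^{*+1}\}$) directly on the colimit, using the fact (Remark \ref{rmk:forgetful_functor_commutes_with_limits_and_colimits}) that filtered colimits in $\DC$ may be computed at the level of the underlying graded abelian groups, and the classical fact that filtered colimits in $\Ab$ are exact.

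Let $(M_i^*)_i$ be a filtered diagram of saturated Dieudonn\'e complexes, and write $M^* = \colim_{i, \DC} M_i^*$. First, $p$-torsion-freeness of each $M_i^*$ says that $0 \to M_i^* \xrightarrow{p} M_i^*$ is exact, and exactness of filtered colimits gives $p$-torsion-freeness of $M^*$. The same reasoning, applied to $0 \to M_i^* \xrightarrow{F} M_i^*$, shows that $F$ is injective on $M^*$.

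The main content is the third condition. Given $x \in M^i$ with $dx = pz$ for some $z \in M^{i+1}$, I would use filteredness of the diagram to choose a single index $j$ and representatives $x_j \in M_j^i$, $z_j \in M_j^{i+1}$ mapping to $x$ and $z$ respectively. The element $dx_j - pz_j \in M_j^{i+1}$ maps to zero in the colimit, so by the standard description of filtered colimits of abelian groups there exists a transition map $M_j^* \to M_k^*$ along which $dx_j - pz_j$ is sent to zero; letting $x_k, z_k$ denote the images, we get $dx_k = pz_k$ in the saturated complex $M_k^*$. By saturatedness of $M_k^*$, there is a $y_k \in M_k^i$ with $Fy_k = x_k$, and its image $y \in M^i$ satisfies $Fy = x$.

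The final assertion, that $\DC_{\sat}$ admits filtered colimits agreeing with those in $\DC$, is then immediate: since $\DC_{\sat} \hookrightarrow \DC$ is a full subcategory that has just been shown to be closed under filtered colimits, any cocone in $\DC_{\sat}$ with vertex $M^*$ that is universal in $\DC$ is automatically universal in $\DC_{\sat}$. I do not expect any serious obstacle; the only subtlety is being careful with the ``there exists a later stage where a given element vanishes'' step, which is the standard way that filtered colimits in $\Ab$ turn the equation $dx = pz$ in the colimit into the same equation at a finite stage.
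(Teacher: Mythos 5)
Your proof is correct and follows essentially the same approach as the paper: check $p$- and $F$-torsionfreeness via exactness of filtered colimits of abelian groups, then chase a representative of an element $x$ with $dx \in pM^{*+1}$ to a finite stage $M_k^*$ where the equation $dx_k = pz_k$ actually holds, and apply saturatedness of $M_k^*$ there. If anything, your version is slightly more careful than the paper's at the colimit-chasing step: the paper says ``we may instead take $x \in M^*_\ell$ and $dx \in pM^*_\ell$,'' whereas you explicitly observe that $dx_j - pz_j$ maps to zero in the colimit and therefore dies at a later stage, which is the precise justification for that claim.
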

\begin{proof}
Let $(M^*_i)_{i \in I}$ be a filtered system in $\DC$ with each $M^*_i$ saturated.  Then $M^* := \colim_i M^*_i$ is $p$- and $F$-torsionfree because $p$- and $F$-torsion are given by finite limits of graded groups, and filtered colimits commute with these.
Thus we must only show that the map $$F \colon M^* \to \{x \in M^*:  dx \in pM^{*+1}\}$$ is surjective.
An element of the target can be represented by an element $x \in M^*_i$ such that $dx \in p M^*_j$ for some $i, j \in I$.  By choosing an element $\ell \in I$ with maps $i \to \ell$, $j \to \ell$, we may instead take $x \in M^*_{\ell}$ and $dx \in pM^*_{\ell}$.  Then $x$ lies in the image of $F$ on $M^*_{\ell}$, and thus also on $M^*$.
\end{proof}

\begin{remark}
Strict Dieudonn\'e complexes are \emph{not} closed under filtered colimits in $\DC$.  For example, the group $\Z_p$ concentrated in degree $0$ with $F = \id$ forms a strict Dieudonn\'e complex, but its sequential colimit along the multiplication-by-$p$ map is isomorphic to $\Q_p$, which is not strict.  Of course, the colimit of this system in $\DC_{\str}$ is $\W\Sat({\Q_p}) = 0$ by Lemma \ref{lem:WSat_commutes_with_colimits}.
\end{remark}

\begin{lemma} \label{lem:Wr_commutes_with_colimits}
If $(M_i^*)_i$ is a filtered diagram of saturated Dieudonn\'e complexes, then the canonical map
$$
\W_r(\colim_{i, \DC} M^*_i) \to \colim_i (\W_r M^*_i)
$$
is an isomorphism.
\end{lemma}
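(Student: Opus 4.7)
The plan is to reduce the lemma to the fact that filtered colimits of abelian groups are exact and commute with images. Setting $M^* := \colim_{i, \DC} M_i^*$, we know by Lemma \ref{lem:filtered_colimits_preserve_DCsat} that $M^*$ is saturated, so $\W_r M^* = M^*/(V^r M^* + dV^r M^*)$ is defined, and by Remark \ref{rmk:forgetful_functor_commutes_with_limits_and_colimits} the colimit $M^*$ is computed at the level of graded abelian groups.

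The first step will be to verify that the structure maps $\iota_i \colon M_i^* \to M^*$ commute with the Verschiebung operators. This follows from saturation:  each $\iota_i$ commutes with $F$ and with multiplication by $p$, and $F$ is injective on the saturated complex $M^*$, so the characterization $FV = p$ forces $\iota_i \circ V = V \circ \iota_i$. Since $d$ also commutes with every morphism in $\DC$, this ensures that the graded group maps
$$
\beta_i \colon M_i^* \oplus M_i^{*-1} \to M_i^*, \qquad (x, y) \mapsto V^r x + dV^r y,
$$
are functorial in $i$ and that their colimit is the analogous operator $\beta$ for $M^*$. Because filtered colimits in $\Ab$ commute with taking images, I then obtain
$$
\colim_i (V^r M_i^* + dV^r M_i^*) \;=\; \operatorname{im}(\beta) \;=\; V^r M^* + dV^r M^*
$$
as graded subgroups of $M^*$.

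Finally, exactness of filtered colimits in $\Ab$ lets me commute quotients past colimits, yielding
$$
\colim_i \W_r M_i^* \;=\; \colim_i \bigl( M_i^* / (V^r M_i^* + dV^r M_i^*) \bigr) \;\cong\; M^* / (V^r M^* + dV^r M^*) \;=\; \W_r M^*,
$$
and a direct check of the induced maps shows that this identification is the inverse of the canonical morphism in the statement. No single step is genuinely difficult; the only mildly subtle point is the compatibility of $V$ with the structure maps, which as noted is forced by saturation and injectivity of $F$.
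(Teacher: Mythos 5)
Your proposal is correct and takes essentially the same approach as the paper, which observes that $\W_r$ is a cokernel in the category of graded abelian groups and that colimits commute with colimits. You spell out a detail the paper leaves implicit — that morphisms between saturated Dieudonn\'e complexes automatically commute with Verschiebung, so the cokernel presentation $\W_r M^* = \coker\bigl(M^* \oplus M^{*-1} \xrightarrow{(V^r,\, dV^r)} M^*\bigr)$ is functorial on the diagram — but this is standard and your verification via injectivity of $F$ is exactly the right one.
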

\begin{proof}
Since $\W_r$ is a cokernel (in the category of graded abelian groups), this follows from the fact that colimits commute with colimits.
\end{proof}

\begin{para}
We finish this section by showing that filtered colimits of Dieudonn\'e towers---or equivalently, the Dieudonn\'e towers corresponding to filtered limits in $\DC_{\str}$ under the equivalence of categories $\DC_{\str} \simeq \TD$ of \cite[Corollary 2.9.4]{BLM}---can be computed at the level of projective systems of graded abelian groups:
\end{para}

\begin{proposition} \label{prop:TD_equivalence_compatible_with_filtered_colimits}
If $(M_i^*)_i$ is a filtered diagram of strict Dieudonn\'e complexes, then for each $r$, we have a canonical isomorphism of graded abelian groups
$$
(\W_r (\colim_{i, \str} M^*_i))_r \simeq (\colim_i (\W_r M^*_i))_r,
$$
compatible with their respective quotient, Frobenius, and Verschiebung maps as $r$ varies.
\end{proposition}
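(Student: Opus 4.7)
The plan is to reduce the statement to a chain of three canonical identifications, each of which has already been established (or is an immediate consequence of what has been established).

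First, by Lemma \ref{lem:WSat_commutes_with_colimits}, the colimit in $\DC_{\str}$ may be computed as
$$
\colim_{i, \str} M_i^* \;\simeq\; \W\Sat(\colim_{i, \DC} M_i^*).
$$
Since every strict Dieudonn\'e complex is saturated, Lemma \ref{lem:filtered_colimits_preserve_DCsat} tells us that $N^* := \colim_{i, \DC} M_i^*$ is already saturated, so its saturation is trivial and $\W\Sat(N^*) = \W(N^*)$, where $\W \colon \DC_{\sat} \to \DC_{\str}$ is the completion functor discussed in the introduction.

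Second, I would invoke the basic property of the completion functor from \cite[Proposition 2.7.5 and Corollary 2.7.6]{BLM}: for any saturated Dieudonn\'e complex $N^*$, the natural map $N^* \to \W(N^*)$ induces an isomorphism of quotients $\W_r(N^*) \overset{\sim}{\to} \W_r(\W(N^*))$ for every $r$. Applying this with $N^* = \colim_{i, \DC} M_i^*$ yields
$$
\W_r(\colim_{i, \str} M_i^*) \;=\; \W_r(\W(N^*)) \;\simeq\; \W_r(N^*) \;=\; \W_r(\colim_{i, \DC} M_i^*).
$$

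Third, Lemma \ref{lem:Wr_commutes_with_colimits} gives $\W_r(\colim_{i, \DC} M_i^*) \simeq \colim_i \W_r M_i^*$. Composing the three canonical isomorphisms produces the desired identification of graded abelian groups for each $r$. Compatibility as $r$ varies with the quotient maps $\W_{r+1} \to \W_r$, the Frobenius maps $F \colon \W_{r+1} \to \W_r$, and the Verschiebung maps $V \colon \W_r \to \W_{r+1}$ follows from the naturality of all three steps: the functor $\W$ is defined termwise from $\W_r$, the maps in the tower $(\W_r N^*)_r$ are induced by the structure maps of any saturated Dieudonn\'e complex $N^*$, and filtered colimits of graded abelian groups obviously commute with these termwise operations. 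Since each constituent isomorphism is canonical, there is no real obstacle; the only mild subtlety is invoking the BLM identity $\W_r(\W N^*) = \W_r N^*$ at the right moment, which is precisely what converts the abstract categorical colimit in $\DC_{\str}$ into something computable at the level of each truncation.
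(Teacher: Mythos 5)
Your proof is correct and follows essentially the same chain of identifications the paper uses (Lemmas \ref{lem:WSat_commutes_with_colimits}, \ref{lem:filtered_colimits_preserve_DCsat}, and \ref{lem:Wr_commutes_with_colimits}); the paper absorbs your second step — the BLM identity $\W_r(\W N^*)\simeq\W_r N^*$ for $N^*$ saturated — silently into its first displayed isomorphism, so your version is just slightly more explicit about invoking it.
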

\begin{proof}
Combining Lemmas \ref{lem:WSat_commutes_with_colimits}, \ref{lem:filtered_colimits_preserve_DCsat}, and \ref{lem:Wr_commutes_with_colimits}, we have isomorphisms of complexes
\begin{align*}
\W_r (\colim_{i, \str} M^*_i) & \simeq \W_r \Sat(\colim_{i, \DC}(M^*_i)) \\
& = \W_r (\colim_{i, \DC}(M^*_i)) \\
& \simeq \colim_i (\W_r (M^*_i)).
\end{align*}
These isomorphisms are clearly compatible with the quotient, Frobenius, and Verschiebung maps, as all of these maps are induced by the respective endomorphisms of the various strict Dieudonn\'e complexes.
\end{proof}

\begin{remark}
Of course, Lemma \ref{lem:WSat_commutes_with_colimits} and the equivalence of categories $\DC_{\str} \simeq \TD$ guarantee that $\TD$ admits \emph{arbitrary} colimits, computed by the left-hand side of Proposition \ref{prop:TD_equivalence_compatible_with_filtered_colimits}.  But in general these need not agree with the colimits of the underlying projective systems of graded groups.  For example, the coequalizer of the diagram
\[
\begin{tikzcd}
\Z_p \arrow[r, "p", shift left] \arrow[r, "0"', shift right] & \Z_p
\end{tikzcd}
\]
in $\DC_{\str}$ is zero, and thus so is the coequalizer of the corresponding diagram
\[
\begin{tikzcd}
\Big( (\Z/p^r\Z) \arrow[r, "p", shift left] \arrow[r, "0"', shift right] & (\Z/p^r\Z) \Big)_r
\end{tikzcd}
\]
in $\TD$.  But this latter coequalizer does not vanish when computed at the level of towers of graded groups.
\end{remark}

%%%%%%%%%%%%%%%%%%%%%%%%%%%%%%%%%%%%
%%%%%%%%%%%%%%%%%%%%%%%%%%%%%%%%%%%%
%%%%%%%%%%%%%%%%%%%%%%%%%%%%%%%%%%%%

\section{Definition and first properties}
\label{ch:dRWM}

%%%%%%%%%%%%%%%%%%%%%%%%%%%%%%%%%%%%

\subsection{de Rham--Witt modules over a $k$-algebra}
\label{sec:dRWM}

\begin{para} \label{para:intro_dRWM}
Let $k$ be a perfect field of characteristic $p$, $R$ a $k$-algebra, and $\mathcal E$ a unit-root $F$-crystal on $(\Spec R/W)_{\cris}$.  In this section, we will describe a certain category of modules over the saturated de Rham--Witt complex $\W\Omega^*_R$ which are endowed with some data coming from $\mathcal E$.  Our de Rham--Witt complex $\W\Omega^*_{R, \mathcal E}$ will be defined as the initial object of this category.

As in the classical story, the starting point of our definition is the PD-de Rham complex
$$ \dR(\mathcal E(W_r(R), \gamma)) := \mathcal E(W_r(R)) \otimes_{W_r(R)} \Omega^*_{W_r(R), \gamma} $$
of Remark \ref{rmk:PD_dR_over_Witt_vectors}.
We begin with the following observation.  If $M^*$ is a strict $\W\Omega^*_R$-module, then Corollary \ref{cor:PD_dR_module} gives $\W_r M^*$ the structure of a dg-$\Omega^*_{W_r(R), \gamma}$-module, and in particular also a graded $W_r(R)$-module.  We then have the following easy lemma:
\end{para}

\begin{lemma} \label{lem:construct_graded_not_dg_map_non-sheafy}
Suppose we are given a $\W\Omega^*_R$-module $M^*$ in $\DC_{\str}$, equipped with a $W_r(R)$-linear map $\iota \colon \mathcal E(W_r(R)) \to \W_r M^0$.  Then:
\begin{enumerate}
\item[(a)] The map $\iota$ extends uniquely to a map of graded left $\Omega^*_{W_r(R), \gamma}$-modules (not necessarily compatible with differentials)
$$
\iota^* \colon \mathcal E(W_r(R)) \otimes_{W_r(R)} \Omega^*_{W_r(R), \gamma} \to \W_r M^*.
$$
\item[(b)] If we are instead given a compatible family of maps $\iota_r \colon \mathcal E(W_r(R)) \to \W_r M^0$ for all $r$, then the resulting maps $\iota_r^*$ are also compatible.
\end{enumerate}
\begin{proof}
Part (a) is the tensor-hom adjunction for the homomorphism $W_r(R) \to \Omega^*_{W_r(R), \gamma}$ of graded rings; explicitly, the map is
$$
\iota^* \colon \mathcal E(W_r(R)) \otimes_{W_r(R)} \Omega^*_{W_r(R), \gamma} \stackrel{\iota \otimes \id}{\longrightarrow} \W_r M^0 \otimes_{W_r(R)} \Omega^*_{W_r(R), \gamma} \stackrel{m}{\to} \W_r M^*,
$$
where $m$ is the multiplication map.  Part (b) is then clear from this formula.
\end{proof}
\end{lemma}

\begin{mydef} \label{def:dRW_module}
By a \emph{de Rham--Witt module over $(R, \mathcal E)$} we will mean a collection of the following data:  a left $\W\Omega^*_R$-module $M^*$ in $\DC_{\str}$, equipped with $W_r(R)$-linear maps
$$
\iota_r \colon \mathcal E(W_r(R)) \to \W_r M^0
$$
for each $r$,
such that:
\begin{enumerate}
\item For each $r > 0$, the diagram
\begin{equation} \label{eqn:dRWM_R_compatibility}
\begin{tikzcd}
\mathcal E(W_r(R)) \arrow[d, two heads] \arrow[r, "\iota_r"] & \W_r M^0 \arrow[d, two heads] \\
\mathcal E(W_{r-1}(R)) \arrow[r, "\iota_{r-1}"]         & \W_{r-1} M^0           
\end{tikzcd}
\end{equation}
commutes, where the two vertical maps are the quotient maps.

\item For each $r > 0$, the diagram
\begin{equation} \label{eqn:dRWM_F_compatibility}
\begin{tikzcd}
\mathcal E(W_r(R)) \arrow[d, "F"'] \arrow[r, "\iota_r"] & \W_r M^0 \arrow[d, "F"] \\
\mathcal E(W_{r-1}(R)) \arrow[r, "\iota_{r-1}"]         & \W_{r-1} M^0           
\end{tikzcd}
\end{equation}
commutes, where the left vertical map is given by applying \eqref{eqn:F_on_sections} to the Frobenius map $\Spec W_{r-1}(R) \to \Spec W_r(R)$.

\item The maps
$$
\iota_r^* \colon \dR(\mathcal E(W_r(R), \gamma)) \to \W_r M^*
$$
of Lemma \ref{lem:construct_graded_not_dg_map_non-sheafy} are compatible with differentials.
\end{enumerate}

A morphism of de Rham--Witt modules over $(R, \mathcal E)$ is a morphism $f \colon M^* \to N^*$ of strict $\W\Omega^*_R$-modules such that $\iota_{r, N} = \W_r(f^0) \circ \iota_{r, M}$ for each $r$.  We call the resulting category $\dRWM_{R, \mathcal E}$.
\end{mydef}

\begin{remark}
The motivation for the maps $\iota_r$ is classical:  in the case of the trivial $F$-crystal $\mathcal E = \mathcal O_{X/W}$, they correspond to the requirement $M_n^0 = W_n(M_1^0)$ of \cite[I, D\'efinition 1.1, V1]{illusie}, which subsequent authors (e.g. \cite[Definition 4.4.1(3)]{BLM}) have usually replaced with the data of a map $W(R) \to M^0$.  The three subsequent conditions simply state that the maps $\iota_r$ must be compatible with quotient maps, Frobenius, and connections.
\end{remark}

\begin{remark} \label{rmk:redefine_dRWM}
Of course we can equivalently demand the data of the extension $\iota_r^*$, rather than only $\iota_r = \iota^0_r$.  In this case, we demand compatibility with $F$ only in degree 0, as the source does not generally have a divided Frobenius operator in positive degrees (cf. Remark \ref{rmk:PD_dR_over_Witt_vectors})---it only has an undivided Frobenius.   As for the other compatibilities, by Lemma \ref{lem:construct_graded_not_dg_map_non-sheafy}, the graded maps $\iota^*_r$ are $\Omega^*_{W_r(R), \gamma}$-linear if and only if the $\iota_r$ are $W_r(R)$-linear, and the $\iota^*_r$ are compatible with quotient maps (in all degrees) if and only if the $\iota_r$ are.  We have a similar statement for morphisms $f \colon M^* \to N^*$:  if $\iota_{r, N} = \W_r(f) \circ \iota_{r, M}$, then the compatibility $\iota^*_{r, N} = \W_r(f) \circ \iota^*_{r, M}$ in all degrees follows, because a graded $\Omega^*_{W_r(R), \gamma}$-module morphism from $\mathcal E(W_r(R)) \otimes_{W_r(R)} \Omega^*_{W_r(R), \gamma}$ is determined by its degree-0 component.

This allows us to formulate an alternative to Definition \ref{def:dRW_module} which focuses on the maps of complexes $\iota^*_r$ rather than their degree-0 components $\iota_r = \iota_r^0$.  We will allow ourselves to pass freely between the two definitions, but will use this latter definition more often in practice.
\end{remark}

\begin{altdef} \label{def:dRW_module_again}
A \emph{de Rham--Witt module over $(R, \mathcal E)$} is a collection of the following data:  a left $\W\Omega^*_R$-module $M^*$ in $\DC_{\str}$, equipped with maps
$$
\iota^*_r \colon \dR(\mathcal E(W_r(R), \gamma)) \to \W_r M^*
$$
of dg-$\Omega^*_{W_r(R), \gamma}$-modules for each $r$, such that the following diagrams commute for all $r$:
\[
\begin{tikzcd}
\dR(\mathcal E(W_r(R), \gamma)) \arrow[d, two heads] \arrow[r, "\iota^*_r"] & \W_r M^* \arrow[d, two heads] \\
\dR(\mathcal E(W_{r-1}(R), \gamma)) \arrow[r, "\iota^*_{r-1}"]              & \W_{r-1} M^*                 
\end{tikzcd}
\]
and
\[
\begin{tikzcd}
\mathcal E(W_r(R)) \arrow[d, "F"'] \arrow[r, "\iota^0_r"] & \W_r M^0 \arrow[d, "F"] \\
\mathcal E(W_{r-1}(R)) \arrow[r, "\iota^0_{r-1}"]         & \W_{r-1} M^0           
\end{tikzcd}
\]
A morphism of de Rham--Witt modules over $(R, \mathcal E)$ is a morphism $f \colon M^* \to N^*$ of strict $\W\Omega^*_R$-modules such that $\iota^*_{r, N} = \W_r(f) \circ \iota^*_{r, M}$ for each $r$.
\end{altdef}

\begin{mydef} \label{def:saturated_dRW}
We call $M^*$ a \emph{saturated de Rham--Witt complex associated to $\mathcal E$ over $R$} if it is initial among de Rham--Witt modules over $(R, \mathcal E)$.  Such an object is unique up to unique isomorphism if it exists; we will denote it $\W\Omega^*_{R, \mathcal E}$.
\end{mydef}

\begin{remark} \label{rmk:no_obvious_initial_dRWM}
Unfortunately, there is in general no ``obvious'' construction of an initial de Rham--Witt module.  A natural attempt would be to give $\limarrow_r \dR(\mathcal E(W_r(R), \gamma))$ the structure of a Dieudonn\'e complex and take its strictification; we are unable to carry this out in light of Remark \ref{rmk:PD_dR_over_Witt_vectors}.  In lieu of such a direct construction, we will first give a construction in the case of the trivial $F$-crystal $\mathcal E = \mathcal O$, and later construct $\W\Omega^*_{R, \mathcal E}$ in general by reducing to the lifted situation, which we study in chapter \ref{ch:dRWLM}.
\end{remark}

\begin{remark} \label{rmk:dRWM_in_nonnegative_degrees}
All of the de Rham--Witt modules we are interested in will be concentrated in nonnegative degrees.  This property (or more generally the vanishing of $M^{-1}$ for a strict Dieudonn\'e complex $M^*$) allows us to formulate another slight variant of Definition \ref{def:dRW_module}.  Namely, if $M^{-1} = 0$, then we have $\W_r M^0 = M^0/\im V^r$, so $F \colon M^0 \to M^0$ induces an endomorphism of $\W_r M^0$ rather than just a map $\W_r M^0 \to \W_{r-1} M^0$.  Thus it makes sense to replace condition (2) of Definition \ref{def:dRW_module} with the commutativity of the diagram
\begin{equation} \label{eqn:dRWM_alternative_F_compatibility}
\begin{tikzcd}
\mathcal E(W_r(R)) \arrow[r, "\iota_r"] \arrow[d, "F"'] & \W_r M^0 \arrow[d, "F"] \\
\mathcal E(W_r(R)) \arrow[r, "\iota_r"]                & \W_r M^0,
\end{tikzcd}
\end{equation}
where the left vertical map is again given by \eqref{eqn:F_on_sections}.
One can easily check that for any pair $(M^*, (\iota_r^*)_r)$ such that $M^{-1} = 0$ and the squares \eqref{eqn:dRWM_R_compatibility} commute, the commutativity of \eqref{eqn:dRWM_F_compatibility} for all $r$ is equivalent to the commutativity of \eqref{eqn:dRWM_alternative_F_compatibility} for all $r$.
\end{remark}

%%%%%%%%%%%%%%%%%%%%%%%%%%%%%%%%%%%%

\subsection{Example:  the trivial $F$-crystal}
\label{sec:trivial_F-crystal}

\begin{para}
We will first construct $\W\Omega^*_{R, \mathcal E}$ in the case of the trivial $F$-crystal $(\mathcal O_{X/S}, F)$
on an arbitrary affine $k$-scheme $X = \Spec R$.  This will of course recover the saturated de Rham--Witt complex $\W\Omega^*_R$ of Bhatt--Lurie--Mathew.  The point is that we have replaced the algebra structure in the universal property of \cite[Definition 4.1.1]{BLM} with a module structure.
\end{para}

\begin{construction} \label{con:dRW_with_trivial_coefficients}
The ``trivial'' de Rham--Witt module over $(R, \mathcal O_{\Spec R/W})$ is as follows.  View $\W\Omega^*_R$ as a left module over itself,
and let $\iota^*_r \colon \Omega^*_{W_r(R), \gamma} \to \W_r \Omega^0_R$ be the dg-algebra map of Lemma \ref{lem:PD_dR_to_WrOmega}, which is in particular also a dg-$\Omega^*_{W_r(R), \gamma}$-module map.
\end{construction}

\begin{proposition} \label{prop:dRW_with_trivial_coefficients}
The object $(\W\Omega^*_R, (\iota_r)_r)$ of Construction \ref{con:dRW_with_trivial_coefficients} is a saturated de Rham--Witt complex associated to the trivial $F$-crystal on $\Spec R$.

\begin{proof}
First, we check that it is a de Rham--Witt module over $(R, \mathcal O_{\Spec R/W})$, using Alternative Definition \ref{def:dRW_module_again}.  By construction, the maps $\iota^*_r$ and $\iota^*_{r-1}$ are compatible via the quotient maps.  To see that $\iota^0_r$ and $\iota^0_{r-1}$ are compatible via Frobenius, consider the diagram:

\[
\xymatrix{
W(R) \ar[rr] \ar[dr]^F \ar @{->>}[dd] & & \W\Sat(\Omega^*_{W(R_{\red})})^0 \ar @{->>}[dd] \ar[dr]^F \\
& W(R) \ar[rr] \ar @{->>}[dd] & & \W\Sat(\Omega^*_{W(R_{\red})})^0 \ar @{->>}[dd] \\
W_r(R) \ar[dr]^F \ar[rr]^{\iota^0_r} & & \W_r\Omega^0_R \ar[dr]^F &  \\
& W_{r-1} (R) \ar[rr]^{\iota^0_{r-1}} & & \W_{r-1}\Omega^0_R
}
\]
The top, left, and right faces of the cube commute, as do the front and back faces by the construction of $\iota^*_r$ and $\iota^*_{r-1}$.  Since the vertical maps are surjective, it follows that the bottom face commutes.

So $(\W\Omega^*_R, (\iota^*_r)_r)$ is indeed a de Rham--Witt module over $(R, \mathcal O_{\Spec R/W})$.  Now let $M^* = (M^*, (\iota^*_{r, M})_r)$ be another such de Rham--Witt module, we must show there is a unique map $\W\Omega^*_R \to M^*$ of de Rham--Witt modules.  Such a map is in particular a $\W\Omega^*_R$-module morphism; by Lemma \ref{lem:A_to_M} and Remark \ref{rmk:A_to_M_strict_case}, these are determined by $f(1)$, which may be any $x \in M^0$ such that $Fx = x$.

But in order for $f$ to be a morphism of de Rham--Witt modules, each $\W_r(f)$ must intertwine $\iota^*_{r, M}$ with $\iota^*_{r, \W\Omega}$.  Equivalently, $\W_r (f)$ must send $\iota^0_{r,\W\Omega}(1) = 1 \in \W_r \Omega^*_R$ to $\iota^0_{r, M}(1) \in \W_r M^0$.  So $f(1)$ must be the element $x := (\iota^0_{r, M}(1))_r \in \limarrow_r (\W_r M^0) = M^0$.  This element is indeed fixed by Frobenius, since we have $F(1) = 1$ in $W_r(R)$, and $\iota^0_{r, M}$ commutes with Frobenius.  Thus this $f$ is the unique morphism of de Rham--Witt modules $\W\Omega^*_R \to M^*$, as desired.
\end{proof}
\end{proposition}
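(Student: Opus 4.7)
The plan is to verify both halves of the universal property: that $(\W\Omega^*_R, (\iota_r)_r)$ is an object of $\dRWM_{R, \mathcal O_{\Spec R/W}}$, and that it is initial among such objects. For convenience I would work with Alternative Definition \ref{def:dRW_module_again}, since $\dR(\mathcal O(W_r(R), \gamma)) = \Omega^*_{W_r(R), \gamma}$ in the trivial-crystal case and the maps $\iota^*_r$ are already built in Construction \ref{con:dRW_with_trivial_coefficients}. The fact that $\iota^*_r$ is a dg-$\Omega^*_{W_r(R), \gamma}$-module map is tautological from its construction as a dg-algebra map $\Omega^*_{W_r(R), \gamma} \to \W_r \Omega^*_R$ (with the target a module over itself). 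Compatibility of the $\iota^*_r$ across $r$ is built into Lemma \ref{lem:PD_dR_to_WrOmega}, which produces the whole tower uniformly from the single map $\Omega^*_{W(R)} \to \W\Omega^*_R$.

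The only real content on this side is the Frobenius compatibility in degree $0$. I would verify this by lifting the square to the level of $W(R) \to \W\Omega^0_R$, where Frobenius compatibility holds essentially by construction: the map factors through $W(R) \to W(R_\red) \to \W\Omega^0_R$, and both stages intertwine the respective Frobenii (the first is Witt vector functoriality; the second is part of the data of a Dieudonn\'e complex). Surjectivity of the quotient maps $W(R) \twoheadrightarrow W_r(R)$ and $\W\Omega^0_R \twoheadrightarrow \W_r \Omega^0_R$ then lets me descend to the required square.

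For the universal property, let $(M^*, (\iota^*_{r, M})_r)$ be an arbitrary de Rham--Witt module. I would combine two characterizations. First, $\W\Omega^*_R$-module morphisms $f \colon \W\Omega^*_R \to M^*$ in $\DC_{\str}$ are classified by $F$-fixed elements $x = f(1) \in M^0$, by Lemma \ref{lem:A_to_M} together with Remark \ref{rmk:A_to_M_strict_case}. Second, the $\dRWM$-compatibility $\W_r(f) \circ \iota^*_{r, \W\Omega} = \iota^*_{r, M}$ applied in degree $0$ to $1 \in W_r(R)$ forces $x$ to have $r$-th truncation $\iota^0_{r, M}(1) \in \W_r M^0$; since $M^0 = \limarrow_r \W_r M^0$, this determines $x$ uniquely as the coherent sequence $(\iota^0_{r, M}(1))_r$.

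To finish, I would check that this $x$ is genuinely $F$-fixed (so that the resulting $f$ exists as a $\W\Omega^*_R$-module map) and that the resulting $f$ does intertwine the full tower of $\iota^*_r$'s, not just their degree-$0$ parts. The first is immediate from $F(1) = 1$ in $W_r(R)$ combined with the assumed Frobenius compatibility of the $\iota^0_{r, M}$. The second is automatic from the reduction observed in Remark \ref{rmk:redefine_dRWM}: a graded $\Omega^*_{W_r(R), \gamma}$-linear map out of $\Omega^*_{W_r(R), \gamma}$ is determined by where it sends $1$. I do not anticipate serious obstacles; the proposition is essentially a translation between Definition \ref{def:saturated_dRW} and the classical universal property of $\W\Omega^*_R$ via the self-representability encoded by Lemma \ref{lem:A_to_M}.
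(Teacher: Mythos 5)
Your proposal is correct and follows essentially the same route as the paper's own proof: verify the Frobenius compatibility in degree $0$ by lifting to the infinite-level map $W(R) \to \W\Omega^0_R$ (where it holds by functoriality and the strictification morphism) and descending via surjectivity, then establish initiality by combining Lemma \ref{lem:A_to_M} and Remark \ref{rmk:A_to_M_strict_case} with the constraint that $\W_r(f)(1) = \iota^0_{r,M}(1)$ to pin down $f(1)$ as the coherent sequence $(\iota^0_{r,M}(1))_r \in M^0$. Your explicit invocation of Remark \ref{rmk:redefine_dRWM} to confirm the full-tower compatibility is a nice clarification of a step the paper leaves implicit, but the argument is the same.
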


%%%%%%%%%%%%%%%%%%
\subsection{Functoriality of $\dRWM_{R, \mathcal E}$}
\label{sec:functoriality_of_dRWM}

\begin{para}
Before showing that our saturated de Rham--Witt complexes exist in general, we will discuss the functoriality of the category $\dRWM_{R, \mathcal E}$, first in the $F$-crystal $\mathcal E$ and then in the pair $R/k$.  Both of these functorialities will behave as the identity on the underlying strict Dieudonn\'e complexes.
\end{para}

\begin{construction} \label{construction:dRWM_functoriality_in_E}
Let $f \colon \mathcal E \to \mathcal E'$ be a morphism of unit-root $F$-crystals on $\Spec R$.  We have a functor $f^* \colon \dRWM_{R, \mathcal E'} \to \dRWM_{R, \mathcal E}$, defined as follows.

Given an object $(M^*, (\iota'^*_r)_r)$ in $\dRWM_{R, \mathcal E'}$, first notice that $M^*$ already has the structure of a $\W\Omega^*_R$-module in $\DC_{\str}$.  We endow $M^*$ with maps $\iota^*_r \colon \dR(\mathcal E(W_r(R), \gamma)) \to \W_r M^*$ defined as the composition
\[
\begin{tikzcd}
\dR(\mathcal E(W_r(R)), \gamma) \arrow[rr, "f(W_r(R)) \otimes \id"] & & \dR(\mathcal E'(W_r(R)), \gamma) \arrow[r, "\iota'^*_r"] & \W_r M^*.
\end{tikzcd}
\]
We first check that $f^*(M^*) := (M^*, (\iota^*_r)_r)$ is a de Rham--Witt module over $(R, \mathcal E)$.  For each $r$, $\iota^*_r$ is a map of dg-$\Omega^*_{W_r(R), \gamma}$-modules, because the same is true of $f(W_r(R)) \otimes \id$ (by the functoriality of $\dR(\mathcal E)$) and $\iota'^*_r$ (by assumption).  The compatibilities with quotient and Frobenius maps likewise follow from those of $f(W_r(R)) \otimes \id$ and $\iota'^*_r$.  So $(M^*, (\iota^*_r)_r)$ is a de Rham--Witt module.

Now let $M^* = (M^*, (\iota'^*_r)_r)$ and $N^* = (N^*, (\eta'^*_r)_r)$ be two de Rham--Witt modules over $(R, \mathcal E')$, and let $\iota^*_r$ and $\eta^*_r$ be the maps constructed above, so that
\begin{align*}
f^*(M^*) & = (M^*, (\iota^*_r)_r) \text{ and} \\
f^*(N^*) & = (N^*, (\eta^*_r)_r).
\end{align*}
Suppose we are given a morphism $g \colon M^* \to N^*$ in $\dRWM_{R, \mathcal E'}$.  This is by definition a map of $\W\Omega^*_R$-modules which makes the bottom triangle of the following diagram commute for each $r$:
\[
\begin{tikzcd}
                              & \dR(\mathcal E(W_r(R), \gamma)) \arrow[ldd, "\iota^*_r"'] \arrow[rdd, "\eta^*_r"] \arrow[d] &          \\
                              & \dR(\mathcal E'(W_r(R), \gamma)) \arrow[ld, "\iota'^*_r"] \arrow[rd, "\eta'^*_r"']          &          \\
\W_r M^* \arrow[rr, "\W_r g"] &                                                                                    & \W_r N^*
\end{tikzcd}
\]
This diagram lives in the category of dg-$\Omega^*_{W_r(R), \gamma}$-modules.  Since the two other small triangles commute by the construction of $\iota^*_r$ and $\eta^*_r$, it follows that the large triangle commutes, which makes the map $f^*(g) := g$ a morphism from $f^*(M^*) = (M^*, (\iota^*_r)_r)$ to $f^*(N^*) = (N^*, (\eta^*_r)_r)$ in $\dRWM_{R, \mathcal E}$.  We clearly have $f^*(\id) = \id$ and $f^*(g \circ h) = f^*(g) \circ f^*(h)$ for all $g, h$, so $f^*$ is a functor $\dRWM_{R, \mathcal E'} \to \dRWM_{R, \mathcal E}$.
\end{construction}

\begin{lemma} \label{lem:functoriality_in_E}
The construction above is functorial in $f$:  given maps $\mathcal E \overset{f}{\to} \mathcal E' \overset{g}{\to} \mathcal E''$, we have equalities of functors
\begin{align*}
(gf)^* = f^* \circ g^* \colon & \dRWM_{R, \mathcal E''} \to \dRWM_{R, \mathcal E} \text{ and} \\
\id^* = \id \colon & \dRWM_{R, \mathcal E} \to \dRWM_{R, \mathcal E}
\end{align*}
\begin{proof}
Immediate from the construction.
\end{proof}
\end{lemma}

\begin{para} \label{para:intro_functoriality_in_R/k}
Next we move on to functoriality in $R$.  In order to apply this to the Frobenius endomorphism of $R$ later (Proposition \ref{prop:alpha_F_on_WOmega_R,E}), it will be useful to give an account of functoriality in the pair $R/k$ rather than only for $k$-algebra maps $R \to R'$.  Thus we introduce the following temporary notation:  $\dRWM_{R/k, \mathcal E}$ is the category of de Rham--Witt modules over $(R, \mathcal E)$ with ground field $k$, and $\W\Omega^*_{R/k, \mathcal E}$ is its initial object (provided that this exists).
\end{para}

\begin{remark}
Note that although the ground field $k$ makes no direct appearance in Definition \ref{def:dRW_module} (thanks to our implicit use of Remark \ref{rmk:base_ring_doesn't_matter}), the choices of $R$ and $(\mathcal E, \phi_{\mathcal E})$ constrain $k$ in opposite directions:  $k$ must be small enough so that $R$ is a $k$-algebra, but large enough so that $(\mathcal E, \phi_{\mathcal E})$ is defined over $W = W(k)$.  Thus, in contrast to the situation in \cite{BLM}, choosing $k = \F_p$ uniformly would cause a loss of generality.
\end{remark}

\begin{construction} \label{construction:dRWM_functoriality_in_R}
Suppose we are given a commutative diagram
\[
\begin{tikzcd}
\Spec R' \arrow[r, "f"] \arrow[d] & \Spec R \arrow[d] \\
\Spec k' \arrow[r]                & \Spec k,
\end{tikzcd}
\]
corresponding to a ring map $f^{\sharp} \colon R \to R'$ over $k \to k'$, and let $\mathcal E$ be a unit-root $F$-crystal on $\Cris(\Spec R/W)$.  We have a functor $f_* \colon \dRWM_{R'/k', f_{\cris}^* \mathcal E} \to \dRWM_{R/k, \mathcal E}$, defined as follows.

Suppose we are given an object $M^*$ in $\dRWM_{R'/k', f_{\cris}^* \mathcal E}$.  This is by definition a strict $\W\Omega^*_{R'}$-module equipped with morphisms $\iota'^*_r \colon \dR(f_{\cris}^* \mathcal E(W_r(R'), \gamma)) \to \W_r M^*$ for each $r$, satisfying various compatibilities.  We make the Dieudonn\'e complex $M^*$ into a module over $\W\Omega^*_R$ by restricting scalars along the map $\W\Omega^*_{f^{\sharp}} \colon \W\Omega^*_R \to \W\Omega^*_{R'}$.
By Remark \ref{rmk:PD_dR_functorialities}, we have
$$
\dR(\mathcal E(W_r(R), \gamma)) \otimes_{\Omega^*_{W_r(R), \gamma}} \Omega^*_{W_r(R'), \gamma} \overset{\sim}{\to} \dR(f_{\cris}^* \mathcal E(W_r(R'), \gamma))
$$
as a dg-module.
We then define $\iota^*_r \colon \dR(\mathcal E(W_r(R), \gamma)) \to \W_r M^*$ as the composition
$$
\dR(\mathcal E(W_r(R), \gamma)) \overset{\pi^*_r}{\to} \dR(f_{\cris}^* \mathcal E(W_r(R'), \gamma)) \overset{\iota'^*_r}{\to} \W_r M^*,
$$
where $\pi^*_r$ is the base change map.  All of the compatibilities of $\iota^*_r$ follow from the corresponding compatibilities of $\iota'^*_r$ and $\pi^*_r$, so $(M^*, \iota^*_r)$ is a de Rham--Witt module over $(R, \mathcal E)$.

Finally, suppose we are given a morphism $g \colon (M^*, (\iota'^*_r)_r) \to (N^*, (\eta'^*_r)_r)$ in $\dRWM_{R'/k', f_{\cris}^* \mathcal E}$.  This is by definition a map of strict $\W\Omega^*_{R'}$-modules making the lower triangle of the diagram
\[
\begin{tikzcd}
                           &   & \dR(\mathcal E(W_r(R), \gamma)) \arrow[lldd, "\iota^*_r"'] \arrow[d, "\pi^*_r"] \arrow[rrdd, "\eta^*_r"] &  &        \\
                           &   & \dR(f_{\cris}^* \mathcal E(W_r(R'), \gamma)) \arrow[lld, "\iota'^*_r"] \arrow[rrd, "\eta'^*_r"']                     &   &       \\
\W_r M^* \arrow[rrrr, "\W_r g"] &   &                                                                                    &         & \W_r N^*
\end{tikzcd}
\]
commute.  But as in Construction \ref{construction:dRWM_functoriality_in_E}, commutativity of the lower triangle implies that of the outer triangle, and a map of strict $\W\Omega^*_{R'}$-modules is in particular a map of strict $\W\Omega^*_R$-modules via $\W\Omega^*_R \to \W\Omega^*_{R'}$.  This makes $f_*(g) := g$ a morphism in $\dRWM_{R/k, \mathcal E}$.  As before, the identities $f_*(\id) = \id$ and $f_*(g \circ h) = f_*(g) \circ f_*(h)$ are immediate, making $f_* \colon \dRWM_{R'/k', f_{\cris}^* \mathcal E} \to \dRWM_{R/k, \mathcal E}$ a functor.
\end{construction}

\begin{remark}
Analogously to Lemma \ref{lem:functoriality_in_E}, one can easily show that the functors of Construction \ref{construction:dRWM_functoriality_in_R} behave as expected when we are given a composition of maps $\Spec R'' \overset{f}{\to} \Spec R' \overset{g}{\to} \Spec R$ of affine schemes over $\Spec k'' \to \Spec k' \to \Spec k$.
A similarly routine verification shows that the functorialities of $\dRWM_{R/k, \mathcal E}$ in $\mathcal E$ and $R/k$ commute with each other.
\end{remark}

\begin{remark}
Examining Construction \ref{construction:dRWM_functoriality_in_R} reveals that the category $\dRWM_{R/k, \mathcal E}$ is independent of $k$ in the following sense:  whenever $f^{\sharp} \colon R \to R'$ is an isomorphism,
the functor $f_* \colon \dRWM_{R'/k', f_{\cris}^* \mathcal E} \to \dRWM_{R/k, \mathcal E}$ is an isomorphism of categories, regardless of whether $k \to k'$ is an isomorphism.
Thus, at this point we will resume our previous convention of omitting $k$ from the notation $\dRWM_{R, \mathcal E}$.
\end{remark}

\begin{remark} \label{rmk:functorialities_of_WOmega}
The functorialities of the category $\dRWM_{R, \mathcal E}$ naturally lead to functorialities of its initial object, provided that the initial object exists.
Namely, suppose we are given a map $f \colon \Spec R' \to \Spec R$ of affine schemes over $\Spec k' \to \Spec k$ and a map $g \colon \mathcal E \to \mathcal E'$ of unit-root $F$-crystals on $\Cris(\Spec R/W)$.  Then assuming the relevant saturated de Rham--Witt complexes exist, there are unique maps in $\dRWM_{R, \mathcal E}$
$$
\W\Omega^*_{R, \mathcal E} \to g^*(\W\Omega^*_{R, \mathcal E'})
\text{\quad and \quad}
\W\Omega^*_{R, \mathcal E} \to f_*(\W\Omega^*_{R', f^*_{\cris} \mathcal E}),
$$
which compose as expected if we are instead given maps
$$
\mathcal E \overset{g}{\to} \mathcal E' \overset{g'}{\to} \mathcal E''
\text{\quad or \quad}
\Spec R'' \overset{f'}{\to} \Spec R' \overset{f}{\to} \Spec R.
$$
Since $g^*$ and $f_*$ act as the identity on the underlying strict Dieudonn\'e complexes, we may regard these as maps
$$
\W\Omega^*_{R, \mathcal E} \to \W\Omega^*_{R, \mathcal E'}
\text{\quad and \quad}
\W\Omega^*_{R, \mathcal E} \to \W\Omega^*_{R', f^*_{\cris} \mathcal E}
$$
in $\DC_{\str}$.
\end{remark}

\begin{remark} \label{rmk:alpha_F_is_a_DC_map}
We will finish this section by computing the functoriality map of Remark \ref{rmk:functorialities_of_WOmega} explicitly in the case of the Frobenius morphism.  Before stating this, recall that for any Dieudonn\'e complex $M^*$, the map
$$
\alpha_F \colon M^* \to M^*
$$
defined by $p^i F$ in degree $i$ is a morphism of Dieudonn\'e complexes, thanks to the calculations
\begin{align*}
F(\alpha_F(x)) & = p^i F^2(x) = \alpha_F(Fx) \text{ and} \\
d(\alpha_F(x)) & = p^i dF(x) = p^{i+1} Fdx = \alpha_F(dx)
\end{align*}
for $x$ homogeneous of degree $i$.  In particular, if $M^*$ is saturated, it makes sense to write down the maps $\W_r(\alpha_F) \colon \W_r M^* \to \W_r M^*$.

We give the result first in the case of trivial coefficients, and then for a general $\mathcal E$:
\end{remark}

\begin{lemma} \label{lem:alpha_F_on_WOmega_R}
For any $\F_p$-algebra $R$, the endomorphism
$$
\W\Omega^*_{F_R} \colon \W\Omega^*_R \to \W\Omega^*_R
$$
induced by the absolute Frobenius $F_R \colon R \to R$ coincides with $\alpha_F$.
\end{lemma}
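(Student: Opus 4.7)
The strategy is to apply the universal property of $\W\Omega^*_R$ as an initial object in $\DA_{\str}$ equipped with a ring map $R \to \W_1 \Omega^0_R$ (Definition \ref{BLM_definition}). The endomorphism $\W\Omega^*_{F_R}$ is by definition the unique morphism $\W\Omega^*_R \to \W\Omega^*_R$ in $\DA_{\str}$ whose precomposition with $e_R \colon R \to \W_1 \Omega^0_R$ equals $e_R \circ F_R$. Thus it suffices to show that $\alpha_F$ is a morphism in $\DA_{\str}$ and that $\W_1(\alpha_F^0) \circ e_R = e_R \circ F_R$.

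First I would promote the observation of Remark \ref{rmk:alpha_F_is_a_DC_map} to the level of Dieudonn\'e algebras. Compatibility with $d$ and $F$ is already recorded there, so the remaining checks are multiplicativity and unitality. For homogeneous $x, y$ of degrees $i, j$ we compute
\[
\alpha_F(x) \cdot \alpha_F(y) = p^i F(x) \cdot p^j F(y) = p^{i+j} F(xy) = \alpha_F(xy),
\]
using that $F$ is a graded ring endomorphism, and $\alpha_F(1) = p^0 F(1) = 1$. Since $\W\Omega^*_R$ is concentrated in nonnegative degrees, $\alpha_F$ is a well-defined endomorphism of the underlying graded ring, and the calculations above make it a morphism in $\DA_{\str}$.

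Next I would verify the compatibility on the structure map. In degree $0$, $\alpha_F$ is simply $F \colon \W\Omega^0_R \to \W\Omega^0_R$. One of the defining properties of a Dieudonn\'e algebra is that $F(x) \equiv x^p \pmod{V \W\Omega^0_R}$, so $\W_1(F)$ is the Frobenius endomorphism $x \mapsto x^p$ on the $\F_p$-algebra $\W_1 \Omega^0_R$. Since $e_R \colon R \to \W_1 \Omega^0_R$ is a ring homomorphism between $\F_p$-algebras, it intertwines Frobenius on the two sides; hence $\W_1(\alpha_F^0) \circ e_R = (\cdot)^p \circ e_R = e_R \circ F_R$. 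The universal property of $\W\Omega^*_R$ then forces $\alpha_F = \W\Omega^*_{F_R}$.

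No serious obstacle is expected; the only points requiring any care are the multiplicativity check for $\alpha_F$ (which is where the factor $p^i$ in degree $i$ is essential, as it is exactly what makes $p^i F \otimes p^j F = p^{i+j} F$) and the invocation of the Dieudonn\'e-algebra condition $F(x) \equiv x^p \pmod V$ to identify $\W_1(F)$ with Frobenius in degree $0$.
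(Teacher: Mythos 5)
Your proposal is correct and follows essentially the same route as the paper's proof: invoke the universal property from Definition \ref{BLM_definition}, upgrade Remark \ref{rmk:alpha_F_is_a_DC_map} to a statement about Dieudonn\'e algebras via multiplicativity, and identify $\W_1(\alpha_F)$ with the Frobenius endomorphism of the $\F_p$-algebra $\W_1\Omega^0_R$ using the defining congruence $F(x)\equiv x^p \pmod{V}$. You spell out the multiplicativity and degree-$0$ checks in more detail than the paper does, but the argument is the same.
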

\begin{proof}
First note that $\alpha_F$ is a map of Dieudonn\'e algebras, by Remark \ref{rmk:alpha_F_is_a_DC_map} and multiplicativity.
Now, unwinding the universal property of \cite[Definition 4.1.1]{BLM}, the claim says that the diagram
\[
\begin{tikzcd}
R \arrow[r] \arrow[d, "F_R"'] & \W_1 \Omega^0_R \arrow[d, "\W_1(\alpha_F)"]         \\
R \arrow[r] & \W_1 \Omega^0_R
\end{tikzcd}
\]
commutes.  But in fact it follows from the definition of Dieudonn\'e algebras that $\W_1(\alpha_F) = F \colon \W_1 \Omega^0_R \to \W_1 \Omega^0_R$ is the Frobenius endomorphism of this $\F_p$-algebra, so the diagram indeed commutes.
\end{proof}

\begin{proposition} \label{prop:alpha_F_on_WOmega_R,E}
Let $R$ be a $k$-algebra and $\mathcal E$ a unit-root $F$-crystal on $\Cris(\Spec R/W)$.  Consider the object $(F_{\Spec R})_* \phi_{\mathcal E}^* \W\Omega^*_{R, \mathcal E}$ of $\dRWM_{R, \mathcal E}$ obtained from Constructions \ref{construction:dRWM_functoriality_in_E} and \ref{construction:dRWM_functoriality_in_R}, and recall that we have
$$
(F_{\Spec R})_* \phi_{\mathcal E}^* \W\Omega^*_{R, \mathcal E} = \W\Omega^*_{R, \mathcal E}
$$
as a strict Dieudonn\'e complex.  Then the unique map
$$
\W\Omega^*_{R, \mathcal E} \to (F_{\Spec R})_* \phi_{\mathcal E}^* \W\Omega^*_{R, \mathcal E}
$$
in $\dRWM_{R, \mathcal E}$ coincides with the map $\alpha_F \colon \W\Omega^*_{R, \mathcal E} \to \W\Omega^*_{R, \mathcal E}$ of Dieudonn\'e complexes.
\end{proposition}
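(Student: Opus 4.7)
The plan is to use initiality: since $\W\Omega^*_{R, \mathcal E}$ is initial in $\dRWM_{R, \mathcal E}$, the asserted functoriality map is characterized as the \emph{unique} morphism of de Rham--Witt modules $\W\Omega^*_{R, \mathcal E} \to (F_{\Spec R})_* \phi_{\mathcal E}^* \W\Omega^*_{R, \mathcal E}$. So it suffices to verify that $\alpha_F$, which is already a morphism of strict Dieudonn\'e complexes by Remark \ref{rmk:alpha_F_is_a_DC_map}, promotes to such a morphism when the target carries the twisted structure coming from Constructions \ref{construction:dRWM_functoriality_in_E} and \ref{construction:dRWM_functoriality_in_R}.

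First I would verify that $\alpha_F$ respects the $\W\Omega^*_R$-module structure. Combining Lemma \ref{lem:alpha_F_on_WOmega_R} with the definition of $(F_{\Spec R})_*$, the $\W\Omega^*_R$-action on the target is restriction of scalars along $\alpha_F \colon \W\Omega^*_R \to \W\Omega^*_R$, so the required compatibility is exactly $\alpha_F(am) = \alpha_F(a) \cdot \alpha_F(m)$ for homogeneous $a$ of degree $i$ and $m$ of degree $j$; this follows at once from $F(am) = Fa \cdot Fm$ and $p^{i+j} = p^i \cdot p^j$. Next I would check compatibility with the maps $\iota^*_r$. Since $\W\Omega^*_{R, \mathcal E}$ lies in nonnegative degrees, Remark \ref{rmk:dRWM_in_nonnegative_degrees} allows me to recast condition (2) of Definition \ref{def:dRW_module} as the identity $F \circ \iota^0_r = \iota^0_r \circ F_{\mathcal E}$ of endomorphisms of $\W_r \W\Omega^0_{R, \mathcal E}$, with $F_{\mathcal E}$ the semilinear Frobenius of Example \ref{ex:Frobenius_on_sections}(1). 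Unwinding the two functoriality constructions, the ``twisted'' iota $\iota^{0, \mathrm{new}}_r$ equals $\iota^0_r \circ F_{\mathcal E}$, while in degree $0$ the map $\W_r(\alpha_F)$ acts as $F$; so the required identity $\W_r(\alpha_F) \circ \iota^0_r = \iota^{0, \mathrm{new}}_r$ is precisely the Frobenius-compatibility axiom satisfied by $\W\Omega^*_{R, \mathcal E}$.

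Extending this to higher degrees is then automatic: both $\W_r(\alpha_F) \circ \iota^*_r$ and $\iota^{*, \mathrm{new}}_r$ are maps of graded $\Omega^*_{W_r(R), \gamma}$-modules from $\mathcal E(W_r(R)) \otimes_{W_r(R)} \Omega^*_{W_r(R), \gamma}$, a module generated in degree $0$, so they are determined by their degree-$0$ parts once one checks that $\W_r(\alpha_F)$ is $\Omega^*_{W_r(R), \gamma}$-linear as a map into the twisted target (which again is formal from Lemma \ref{lem:alpha_F_on_WOmega_R} and Corollary \ref{cor:PD_dR_module}). The main obstacle is the bookkeeping that identifies $\iota^{0, \mathrm{new}}_r$ with $\iota^0_r \circ F_{\mathcal E}$: this requires carefully tracing the base-change map $\pi^*_r$ of Construction \ref{construction:dRWM_functoriality_in_R} along $W_r(F_R) = F$ and checking that its composition with $\phi_{\mathcal E} \otimes 1$ recovers precisely the crystal-theoretic semilinear Frobenius of Example \ref{ex:Frobenius_on_sections}(1). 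This step is a diagram chase at the level of PD-de Rham complexes of affines, but it is the place where the $F$-crystal datum $\phi_{\mathcal E}$ interacts with the Witt-vector Frobenius $F$ on $W_r(R)$, and it is what couples the two functorialities into a single identification with $\alpha_F$.
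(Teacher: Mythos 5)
Your proposal is correct and follows essentially the same route as the paper's proof: reduce by initiality to showing $\alpha_F$ is a morphism of de Rham--Witt modules, check module compatibility via $p^{i+j}F(xm) = p^i F(x)\cdot p^j F(m)$ together with Lemma \ref{lem:alpha_F_on_WOmega_R}, reduce the $\iota$-compatibility to degree $0$ via Remark \ref{rmk:dRWM_in_nonnegative_degrees}, and observe that unwinding Constructions \ref{construction:dRWM_functoriality_in_E} and \ref{construction:dRWM_functoriality_in_R} shows the twisted $\iota^0_r$ is $\iota^0_r\circ F_{\mathcal E}$, with $W_r(f)^*\circ\phi_{\mathcal E}$ recovering the semilinear Frobenius of Example \ref{ex:Frobenius_on_sections}(1). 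The paper asserts this last identification directly (using that $W_r(F_R)$ is the Witt-vector Frobenius by Remark \ref{rmk:V_F_p_formulas}), while you flag it as the main bookkeeping step, but the content is the same.
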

\begin{proof}
For convenience, we will abbreviate the absolute Frobenius morphism $F_{\Spec R}$ as $f$.  To show that $\alpha_F$ is a map of de Rham--Witt modules, we must show that it is compatible with module structures and $\iota$ maps.  Unraveling the given functors
$$
\dRWM_{R, \mathcal E} \overset{\phi_{\mathcal E}^*}{\to} \dRWM_{R, f^*_{\cris} \mathcal E} \overset{f_*}{\to} \dRWM_{R, \mathcal E},
$$
the necessary compatibility with module structures is that for all $x \in \W\Omega^*_R$ and $m \in \W\Omega^*_{R, \mathcal E}$, we have
$$
\alpha_F(x \cdot m) = \W\Omega^*_{F_R}(x) \cdot \alpha_F(m).
$$
Taking $x$ and $m$ homogeneous of degree $i$ and $j$ respectively, this equation simplifies by Lemma \ref{lem:alpha_F_on_WOmega_R} to the known identity
$$
p^{i+j} F(x \cdot m) = p^i F(x) \cdot p^j F(m).
$$
As for compatibility with $\iota$ maps, the de Rham--Witt module $f_* \phi_{\mathcal E}^* \W\Omega^*_{R, \mathcal E}$ is equipped by construction with the maps
$$
\iota_r \colon \mathcal E(W_r(R)) \overset{W_r(f)^*}{\longrightarrow} f^*_{\cris} \mathcal E(W_r(R)) \overset{\phi_{\mathcal E}}{\to} \mathcal E(W_r(R)) \overset{\iota_{r, \W\Omega}}{\longrightarrow} \W_r \Omega^0_{R, \mathcal E},
$$
where $\iota_{r, \W\Omega}$ is the map which the de Rham--Witt module $\W\Omega^*_{R, \mathcal E}$ comes equipped with, and the composition of the first two maps is the semilinear endomorphism
$$
F \colon \mathcal E(W_r(R)) \to \mathcal E(W_r(R))
$$
of \eqref{eqn:dRWM_F_compatibility}.  (Note in particular that $W_r(f)$ agrees with the Witt vector Frobenius $F \colon W_r(R) \to W_r(R)$.)
Thus, to prove that $\alpha_F$ ($= F$ in degree $0$) is compatible with the $\iota$ maps, it suffices to prove that the square
\[
\begin{tikzcd}
\mathcal E(W_r(R)) \arrow[r, "\iota_{r, \W\Omega}"] \arrow[d, "F"] & \W_r \Omega^0_{R, \mathcal E} \arrow[d, "F"] \\
\mathcal E(W_r(R)) \arrow[r, "\iota_{r, \W\Omega}"]                & \W_r \Omega^0_{R, \mathcal E}
\end{tikzcd}
\]
commutes.  But this square is exactly \eqref{eqn:dRWM_alternative_F_compatibility}, and its commutativity follows from Remark \ref{rmk:dRWM_in_nonnegative_degrees} since we know that $\W\Omega^*_{R, \mathcal E}$ is a de Rham--Witt module concentrated in nonnegative degrees.
\end{proof}

%%%%%%%%%%%%%%%%%%

\subsection{Insensitivity to nilpotent thickenings}
\label{sec:nilpotent_thickenings}

Throughout this section, we fix the following setup:

\begin{situation} \label{sit:reduction}
Let $R$ be a $k$-algebra and $\mathcal E$ a unit-root $F$-crystal on $\Cris(\Spec R/W)$.  Let $f \colon \Spec R_{\red} \hookrightarrow \Spec R$ be the closed embedding over $k$ corresponding to the reduction map $f^{\sharp} \colon R \to R_{\red}$.
\end{situation}

\begin{para}
Recall from Remark \ref{rmk:BLM_nilpotent_invariance} that the saturated de Rham--Witt complex $\W\Omega^*_R$ of Bhatt--Lurie--Mathew is insensitive to nilpotent thickenings.  Our present goal is to generalize this statement to the case of unit-root coefficients.  The main result is as follows.
\end{para}

\begin{proposition} \label{prop:dRWM_equiv_reduction}
The functor $f_* \colon \dRWM_{R_{\red}, f^*_{\cris} \mathcal E} \to \dRWM_{R, \mathcal E}$ is an equivalence of categories.
\end{proposition}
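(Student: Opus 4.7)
The plan is to construct an inverse functor $g \colon \dRWM_{R, \mathcal E} \to \dRWM_{R_{\red}, f^*_{\cris}\mathcal E}$ by exhibiting, on any object of $\dRWM_{R, \mathcal E}$, a canonical factoring of the structure maps $\iota^*_r$ through the corresponding PD--de Rham complex over $W_r(R_{\red})$. The key observations are that (i) $f_*$ acts as the identity on underlying strict Dieudonn\'e complexes, and (ii) the module structure of $M^*$ over $\W\Omega^*_R$ factors through the isomorphism $\W\Omega^*_R \overset{\sim}{\to} \W\Omega^*_{R_{\red}}$ of Remark \ref{rmk:BLM_nilpotent_invariance}, hence in particular through every $\W_r\Omega^*_{R_{\red}}$.

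The main technical step is to show that for each $r$, the map $\iota^*_r$ factors uniquely as
\[
\dR(\mathcal E(W_r(R), \gamma)) \twoheadrightarrow \dR(f^*_{\cris}\mathcal E(W_r(R_{\red}), \gamma)) \overset{\bar\iota^*_r}{\longrightarrow} \W_r M^*,
\]
where the first map is the base-change surjection from Remark \ref{rmk:PD_dR_functorialities} along the surjection $W_r(R) \twoheadrightarrow W_r(R_{\red})$ and the induced surjection $\Omega^*_{W_r(R),\gamma} \twoheadrightarrow \Omega^*_{W_r(R_{\red}),\gamma}$. It suffices to check this in degree $0$, where the statement reduces to showing that $\iota_r \colon \mathcal E(W_r(R)) \to \W_r M^0$ kills the kernel of $\mathcal E(W_r(R)) \twoheadrightarrow \mathcal E(W_r(R_{\red}))$. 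For $a$ in the ideal $\mathfrak a := \ker(W_r(R) \twoheadrightarrow W_r(R_{\red}))$ and $e \in \mathcal E(W_r(R))$, the $W_r(R)$-linearity of $\iota_r$ gives $\iota_r(ae) = a \cdot \iota_r(e)$ in $\W_r M^0$; but the action of $a$ here is through the composite $W_r(R) \to \W_r\Omega^0_R \simeq \W_r\Omega^0_{R_{\red}}$, which annihilates $\mathfrak a$. Hence $\iota_r$ vanishes on $\mathfrak a \cdot \mathcal E(W_r(R))$.

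The delicate step is verifying that $\mathfrak a \cdot \mathcal E(W_r(R))$ is precisely the full kernel of $\mathcal E(W_r(R)) \twoheadrightarrow \mathcal E(W_r(R_{\red}))$. This is where the hypothesis that $\mathcal E$ is a finite locally free crystal is essential: by Lemma \ref{lem:locally_free_tower_nakayama} and quasicoherence, $\mathcal E(W_r(R))$ is a finitely generated projective (hence flat) $W_r(R)$-module, so tensoring the short exact sequence $0 \to \mathfrak a \to W_r(R) \to W_r(R_{\red}) \to 0$ with $\mathcal E(W_r(R))$ shows exactness on the left and identifies the kernel as $\mathfrak a \cdot \mathcal E(W_r(R))$, as desired. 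This flatness argument is the principal obstacle; without it the factoring might only exist up to a possibly nonzero discrepancy.

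Given the factoring, define $g(M^*, (\iota^*_r)_r) := (M^*, (\bar\iota^*_r)_r)$, with module structure through $\W\Omega^*_{R_{\red}}$. The compatibilities required by Alternative Definition \ref{def:dRW_module_again} (dg-module structure, quotient compatibility, Frobenius compatibility in degree $0$) are inherited from those of $\iota^*_r$, using that the base-change surjections are themselves compatible with quotient, connection, and Frobenius. Morphisms in $\dRWM_{R, \mathcal E}$ already respect module structures and $\iota$-maps, so $g$ is functorial with no additional choices. Finally, $f_* \circ g = \id$ and $g \circ f_* = \id$ because both compositions preserve the underlying Dieudonn\'e complex and module structure, and the uniqueness of the factoring $\bar\iota^*_r$ (combined with $\bar\iota^*_r$ obviously recovering $\iota^*_r$ when composed with the base-change surjection) forces the $\iota$-data to match. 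This completes the proof sketch.
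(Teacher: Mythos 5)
Your proposal reaches the correct conclusion, but it takes a genuinely different route from the paper's and contains a couple of imprecisions worth flagging.

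The paper proves Lemma \ref{lem:reduction_iota_factors} via the $p^N$-torsion lifting argument of Lemma \ref{lem:p^N-torsion_sheafy} and Corollary \ref{cor:p^N-torsion_presheafy}: any element of $\ker\bigl(\mathcal E(W_r(R)) \to (f^*_{\cris}\mathcal E)(W_r(R_{\red}))\bigr)$ lifts to a $p^N$-torsion element in $\mathcal E(W_{r+N}(R))$ for $N\gg 0$, whose image in $\W_{r+N}M^0$ then dies in $\W_r M^0$ by axiom (7) of strict Dieudonn\'e towers. You instead exploit the fact that the $W_r(R)$-module structure on $\W_r M^0$ already factors through $W_r(R_{\red})$. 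This is a legitimate and, in principle, cleaner argument---once you note that the dg-$\Omega^*_{W_r(R),\gamma}$-module structure on $\W_r M^*$ from Corollary \ref{cor:PD_dR_module} factors through $\Omega^*_{W_r(R_{\red}),\gamma}$ (as one checks from Lemma \ref{lem:PD_dR_to_WrOmega}, whose source map $\Omega^*_{W(R)} \to \W_r\Omega^*_R$ visibly passes through $\Omega^*_{W(R_{\red})}$), the factoring of $\iota^*_r$ through the base change to $\Omega^*_{W_r(R_{\red}),\gamma}$ is automatic for any dg-module map, in every degree at once, with no separate degree-$0$ check.

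Two corrections. First, the assertion that the action map $W_r(R) \to \W_r\Omega^0_R \simeq \W_r\Omega^0_{R_{\red}}$ annihilates $\mathfrak a$ does require an argument; it is not a formal consequence of the displayed isomorphism, but follows by tracing the construction in Lemma \ref{lem:PD_dR_to_WrOmega}, which begins with the reduction $\Omega^*_{W(R)}\to\Omega^*_{W(R_{\red})}$. Second, and more significantly, the flatness of $\mathcal E(W_r(R))$ is a red herring: by the crystal property and quasicoherence (cf.\ Remark \ref{rmk:Frobenius_on_Zariski_sheaves}), $(f^*_{\cris}\mathcal E)(W_r(R_{\red})) = \mathcal E(W_r(R))\otimes_{W_r(R)} W_r(R_{\red}) = \mathcal E(W_r(R))/\mathfrak a\,\mathcal E(W_r(R))$, so the kernel is $\mathfrak a\cdot\mathcal E(W_r(R))$ tautologically, for \emph{any} $W_r(R)$-module. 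Flatness gives left-exactness of $-\otimes\mathcal E(W_r(R))$, but that is a different (and here unneeded) statement. Calling this the ``principal obstacle'' inverts the weight of the argument: the real content in your approach is the module-structure factoring, which you state as though it were immediate.
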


Before we prove this, we will need a few lemmas on the behavior of the pullback map
$$
\mathcal E_{W_r(R)} \to W_r(f)^*(\mathcal E_{W_r(R)}) = (f^*_{\cris} \mathcal E)_{W_r(R_{\red})}.
$$

\begin{lemma} \label{lem:p^N-torsion_presheafy}
Fix $r > 0$.  For every $x \in \ker(\mathcal E(W_r(R)) \to (f_{\cris}^* \mathcal E)(W_r(R_{\red})))$, there exists an $N \gg 0$ such that $x$ lies in the image of the $p^N$-torsion in $\mathcal E(W_n(R))$ for all $n \geq r$.
\end{lemma}

The proof requires a technical digression on locally free crystals, so we split it off into \textsection \ref{sec:proof_of_lemma}.

\begin{lemma} \label{lem:reduction_iota_factors}
In Situation \ref{sit:reduction}, suppose $(M^*, (\iota^*_r)_r)$ is a de Rham--Witt module over $(R, \mathcal E)$.  Then:
\begin{enumerate}
\item Each map $\iota_r = \iota^0_r \colon \mathcal E(W_r(R)) \to \W_r M^0$ factors through the surjection
$$\mathcal E(W_r(R)) \twoheadrightarrow \mathcal E(W_r(R)) \otimes_{W_r(R)} W_r(R_{\red}) \simeq (f_{\cris}^* \mathcal E)(W_r(R_{\red})).$$
\item Each map $\iota^*_r \colon \dR(\mathcal E(W_r(R), \gamma)) \to \W_r M^*$ factors through the surjection
$$\dR(\mathcal E(W_r(R), \gamma)) \twoheadrightarrow \dR(\mathcal E(W_r(R), \gamma)) \otimes_{\Omega^*_{W_r(R), \gamma}} \Omega^*_{W_r(R_{\red}), \gamma} \simeq \dR(f^*_{\cris} \mathcal E(W_r(R_{\red}), \gamma)).$$
\end{enumerate}
\end{lemma}
\begin{proof}
For (1), let $x$ lie in the kernel of $\mathcal E(W_r(R)) \twoheadrightarrow (f_{\cris}^* \mathcal E)(W_r(R_{\red}))$.  We must show that $\iota_r(x) = 0$.  By Lemma \ref{lem:p^N-torsion_presheafy}, we can lift $x$ to a $p^N$-torsion element $\widetilde{x} \in \mathcal E(W_{r+N}(R))$.  Then we have $\iota_{r+N}(\widetilde{x}) \in \W_{r+N} M^0[p^N]$.  By axiom 7 of Dieudonn\'e towers (\cite[Definition 2.6.1]{BLM}), this must reduce to $0$ in $\W_r M^0$.  So we have $\iota_r(x) = 0$.

For (2), first observe by part (1) that $\iota_r^0$ annihilates
$$\ker(W_r(R) \to W_r(R_{\red})) \cdot \mathcal E(W_r(R)).$$
It follows that $\iota_r^*$ annihilates
$$\ker(W_r(R) \to W_r(R_{\red})) \cdot \dR(\mathcal E(W_r(R), \gamma)),$$
since $\mathcal E(W_r(R))$ generates its PD-de Rham complex as a module over $\Omega^*_{W_r(R), \gamma}$.  Therefore $\iota_r^*$ factors through
$$\dR(\mathcal E(W_r(R), \gamma)) \otimes_{\Omega^*_{W_r(R), \gamma}} \Omega^*_{W_r(R), \gamma}/K,$$
where $K$ is the dg-ideal generated by $\ker(W_r(R) \to W_r(R_{\red}))$ in degree 0.  But in view of Lemma \ref{lem:quotient_of_PD_dR_algebras} and Remark \ref{rmk:PD_dR_functorialities}, this quotient agrees with $\dR(f^*_{\cris} \mathcal E(W_r(R_{\red}), \gamma))$.
\end{proof}

\begin{proof}[Proof of Proposition \ref{prop:dRWM_equiv_reduction}]
We must show that $f_* \colon \dRWM_{R_{\red}, f^*_{\cris} \mathcal E} \to \dRWM_{R, \mathcal E}$ is essentially surjective and fully faithful.  To prove it is essentially surjective, let $(M^*, (\iota^*_r)_r)$ be a de Rham--Witt module over $(R, \mathcal E)$; we must show this is isomorphic to an object in the image of $f_*$.  
To construct such an object, we keep the same underlying strict Dieudonn\'e complex $M^*$ (viewed as a $\W\Omega^*_{R_{\red}}$-module via the isomorphism $\W\Omega^*_R \to \W\Omega^*_{R_{\red}}$ of Remark \ref{rmk:BLM_nilpotent_invariance}) and factor the maps $\iota^*_r$ through maps $\iota'^*_r \colon \dR(\mathcal E(W_r(R), \gamma)) \to \W_r M^*$ using Lemma \ref{lem:reduction_iota_factors}.

Each $\iota'^*_r$ is a map of dg-$\Omega^*_{W_r(R_{\red}), \gamma}$-modules by construction, and the necessary compatibilities with Frobenius (in degree 0) and quotient maps follow from the corresponding compatibilities of $\iota^*_r$ and $\dR(\mathcal E(W_r(R), \gamma)) \twoheadrightarrow \dR(f^*_{\cris} \mathcal E(W_r(R_{\red}), \gamma))$.  So the strict $\W\Omega^*_{R_{\red}}$-module $M^*$, equipped with the maps $\iota'^*_r$, is a de Rham--Witt module over $(R_{\red}, f_{\cris}^* \mathcal E)$.  We have $f_*(M^*, (\iota'^*_r)_r) = (M^*, (\iota^*_r)_r)$ by construction.  This proves essential surjectivity.

To prove that $f_*$ is fully faithful, let $M^* = (M^*, (\iota'^*_r)_r)$ and $N^* = (N^*, (\eta'^*_r)_r)$ be two objects in $\dRWM_{R_{\red}, f_{\cris}^* \mathcal E}$.  Let $\iota^*_r$ and $\eta^*_r$ be the maps of Construction \ref{construction:dRWM_functoriality_in_R}; that is,
\begin{align*}
f_* M^* & = (M^*, (\iota^*_r)_r) \text{ and} \\
f_* N^* & = (N^*, (\eta^*_r)_r)
\end{align*}
in $\dRWM_{R, \mathcal E}$.  Recall that a homomorphism $g \colon M^* \to N^*$ in $\dRWM_{R_{\red}, f_{\cris}^* \mathcal E}$ is a map of $\W\Omega^*_{R_{\red}}$-modules such that bottom triangle in the diagram
\[
\begin{tikzcd}
                              & \dR(\mathcal E(W_r(R), \gamma)) \arrow[ldd, "\iota^*_r"', bend right] \arrow[d, "\pi^*_r"] \arrow[rdd, "\eta^*_r", bend left] &          \\
                              & \dR(f^*_{\cris} \mathcal E(W_r(R_{\red}), \gamma)) \arrow[ld, "\iota'^*_r"] \arrow[rd, "\eta'^*_r"']               &          \\
\W_r M^* \arrow[rr, "\W_r g"] &  & \W_r N^*
\end{tikzcd}
\]
commutes for all $r$.  A homomorphism $g \colon f_* M^* \to f_* N^*$ in $\dRWM_{R, \mathcal E}$ is a map of $\W\Omega^*_R$-modules such that the outer region commutes.  But $\pi^*_r$ is surjective and the other two inner regions commute, so the inner triangle commutes if and only if the outer one does.  Since we also know that $\W\Omega^*_R \to \W\Omega^*_{R_{\red}}$ is an isomorphism, it follows that
$$
\Hom_{\dRWM_{R_{\red}, f_{\cris}^* \mathcal E}}(M^*, N^*) \overset{\sim}{\to} \Hom_{\dRWM_{R, \mathcal E}}(f_* M^*, f_* N^*),
$$
as desired.  This completes the proof.
\end{proof}

\begin{corollary} \label{cor:R_Rred_same_dRW}
In Situation \ref{sit:reduction}, suppose that either $(R_{\red}, f_{\cris}^* \mathcal E)$ or $(R, \mathcal E)$ admits a saturated de Rham--Witt complex.  Then both pairs admit saturated de Rham--Witt complexes, and the equivalence $f_* \colon \dRWM_{R_{\red}, f_{\cris}^* \mathcal E} \to \dRWM_{R, \mathcal E}$ sends one to the other.
\begin{proof}
This follows immediately from Proposition \ref{prop:dRWM_equiv_reduction} and the definition of saturated de Rham--Witt complexes as initial de Rham--Witt modules.
\end{proof}
\end{corollary}

\begin{remark}
Since the functor $f_*$ preserves the underlying Dieudonn\'e complex and its module structure over $\W\Omega^*_R \simeq \W\Omega^*_{R_{\red}}$, and the maps $\iota^*_r$ and $\iota'^*_r$ are compatible, we can regard the two saturated de Rham--Witt complexes in Corollary \ref{cor:R_Rred_same_dRW} as being ``the same''.
\end{remark}

%%%%%%%%%%%%%%%%%%

\subsection{Digression:  the proof of Lemma \ref{lem:p^N-torsion_presheafy}}
\label{sec:proof_of_lemma}

To prove Lemma \ref{lem:p^N-torsion_presheafy}, we begin with the following technical lemma on locally free crystals:

\begin{lemma} \label{lem:locally_free_tower_nakayama}
Let $X$ be a $k$-scheme and $\mathcal E$ a finite locally free crystal of $\mathcal O_{X/W}$-modules.  Then the system $(\mathcal E_{W_r(X)})_r$ of Zariski sheaves on $W_r(X)$ is finite locally free, in the following sense:  there exist an affine open cover $(\Spec R_i)_i$ of $X$ (which we identify with $\Spec W_r R_i \subset W_r(X)$ as topological spaces), integers $n_i$, and isomorphisms $\mathcal E_{W_r(R_i)} \simeq \mathcal O^{\oplus n_i}$ for all $r$ and $i$, compatible with the quotient maps from $W_r(R_i)$ to $W_{r-1}(R_i)$.
\begin{proof}
Choose an affine open cover $(\Spec R_i)_i \subset X$ trivializing the Zariski sheaf $\mathcal E_X$.  For each $i$, choose generators $x_1, \dots, x_{n_i}$ for the finite free $R_i$-module $\mathcal E(R_i)$.  Lift each $x_j$ arbitrarily to a compatible family of sections $\widetilde x_j \in \mathcal E(W_r R_i)$ for each $r$; this is possible because $\mathcal E(W_r R_i) \simeq \mathcal E(W_{r+1} R_i) \otimes_{W_{r+1} R_i} W_r R_i$ and the map $W_{r+1} R_i \to W_r R_i$ is surjective.

Since the kernel $VW_{r-1}(R_i)$ of $W_r(R_i) \to R_i$ is nilpotent, it is contained in the Jacobson radical of $W_r(R_i)$.  Thus Nakayama's lemma implies that for each $r$, the elements $\widetilde x_j$ generate $\mathcal E(W_r R_i)$.  It follows that the $\widetilde x_j$ are linearly independent, as $\mathcal E(W_r R_i)$ is a locally free $W_r(R_i)$-module of rank $n_i$ and thus cannot be generated by fewer than $n_i$ elements.  Thus the elements $\widetilde x_j$ form a basis of the $W_r(R_i)$-module $\mathcal E(W_r R_i)$.  It follows by quasicoherence that the $\widetilde x_j$ identify $\mathcal E_{\Spec W_r R_i}$ with $\mathcal O^{\oplus n_i}$.
\end{proof}
\end{lemma}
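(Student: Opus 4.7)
The plan is to reduce to a Zariski-local calculation: first trivialize the restriction $\mathcal E_X$ of the crystal to the Zariski site of $X$, then lift the trivialization up the tower of thickenings $R_i \hookrightarrow W_r(R_i)$ using the crystal property together with a Nakayama-type argument. Nothing deep is needed; the key point is to use the nilpotence of $VW_{r-1}(R_i)$ in $W_r(R_i)$ so that a basis mod $V$ lifts to a basis.

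First I would pick an affine open cover $(\Spec R_i)_i$ of $X$ on which $\mathcal E_X$ is free of finite rank $n_i$, and select a basis $\overline{x}_1, \dots, \overline{x}_{n_i}$ of $\mathcal E(R_i)$. Next I would produce, for each $r$, sections $\widetilde{x}_j^{(r)} \in \mathcal E(W_r R_i)$ lifting the $\overline{x}_j$ and compatible under the transition maps $\mathcal E(W_{r+1} R_i) \to \mathcal E(W_r R_i)$. This is possible by induction on $r$: at each step one chooses arbitrary lifts, using that the transition maps of the crystal evaluated on the tower are surjective (because $W_{r+1}(R_i) \twoheadrightarrow W_r(R_i)$ is surjective and $\mathcal E$ is a crystal, so $\mathcal E(W_r R_i)$ is identified with $\mathcal E(W_{r+1} R_i) \otimes_{W_{r+1}(R_i)} W_r(R_i)$).

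Then, for fixed $r$, I would show that these lifted sections generate $\mathcal E(W_r R_i)$ as a $W_r(R_i)$-module. The ideal $VW_{r-1}(R_i) = \ker(W_r(R_i) \twoheadrightarrow R_i)$ is nilpotent, hence contained in the Jacobson radical of $W_r(R_i)$, so by Nakayama's lemma any family whose reduction generates $\mathcal E(R_i)$ must generate $\mathcal E(W_r R_i)$. Finally, since $\mathcal E(W_r R_i)$ is finite locally free of rank $n_i$ over $W_r(R_i)$ (shrinking $\Spec R_i$ if needed to ensure constancy of rank on each connected component), any generating set of size $n_i$ must be a basis; the resulting isomorphism $\mathcal E_{W_r(R_i)} \simeq \mathcal O^{\oplus n_i}$ follows from quasicoherence, and compatibility across $r$ is built into the construction.

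The only step requiring mild care is arranging the lifts $\widetilde{x}_j^{(r)}$ to be compatible across all $r$ simultaneously, which is what forces the inductive construction rather than independent choices at each level. No difficulty arises from the crystal structure beyond the identification of $\mathcal E(W_r R_i)$ with the evaluation of the crystal on the canonical PD-thickening $\Spec R_i \hookrightarrow \Spec W_r(R_i)$; this is needed implicitly to make sense of $\mathcal E_{W_r(R_i)}$ as a finite locally free $W_r(R_i)$-module to begin with.
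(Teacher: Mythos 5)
Your proposal is correct and follows essentially the same approach as the paper's proof: trivialize $\mathcal E_X$ on a cover, lift a basis compatibly up the Witt tower using the crystal isomorphisms and surjectivity of the transition maps, apply Nakayama via the nilpotence of $VW_{r-1}(R_i)$, and conclude by a rank count plus quasicoherence. The only cosmetic difference is that you make explicit the (harmless) possible shrinking to ensure constant rank, which the paper leaves implicit.
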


The next lemma may be viewed as a ``sheafy version'' of our goal.

\begin{lemma} \label{lem:p^N-torsion_sheafy}
Fix some $r > 0$.  For every local section $x$ of $\ker(\mathcal E_{W_r(R)} \to (f_{\cris}^* \mathcal E)_{W_r(R_{\red})})$, there exists an $N \gg 0$ and an open covering $\{U_i\}_i$ of $\Spec R$ such that $x|_{W_r(U_i)}$ lies in the image of the $p^N$-torsion in $\mathcal E_{W_n(U_i)}$ for all $n \geq r$.
\end{lemma}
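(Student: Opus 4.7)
The plan is to reduce to the case of the trivial crystal via local trivialization, and then exploit the very explicit description of $p^N$-torsion in Witt vectors to construct the desired lift by padding Witt coordinates with zeros.

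First, I would apply Lemma \ref{lem:locally_free_tower_nakayama} to refine any given open cover of $\Spec R$ to an affine cover $\{\Spec R_i\}_i$ such that there exist compatible trivializations $\mathcal E_{W_n(R_i)} \simeq W_n(R_i)^{\oplus n_i}$ for all $n$. Under such a trivialization, a local section $x$ over $\Spec R_i$ of $\ker(\mathcal E_{W_r(R)} \to (f_{\cris}^*\mathcal E)_{W_r(R_{\red})})$ corresponds to a tuple $(x_1,\dots,x_{n_i}) \in W_r(R_i)^{\oplus n_i}$, each $x_j$ lying in $\ker(W_r(R_i) \to W_r((R_i)_{\red}))$. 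By the functoriality of Witt coordinates, this kernel consists exactly of those Witt vectors whose components are nilpotent in $R_i$.

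Next, for each $i$, write $x_j = (a_{j,0},\dots,a_{j,r-1})$ in Witt coordinates. Each $a_{j,k} \in R_i$ is nilpotent, so there exists $N_{j,k} \geq 0$ with $a_{j,k}^{p^{N_{j,k}}} = 0$. Since there are only finitely many indices $(j,k)$ at level $r$ and rank $n_i$, we may let $N_i = \max_{j,k} N_{j,k}$, so that $a_{j,k}^{p^{N_i}} = 0$ for all $j,k$. Because $\Spec R$ is quasicompact, we may further arrange that the cover $\{\Spec R_i\}_i$ is finite (this is harmless since for this lemma we only need to cover the open on which $x$ is defined, which we may assume to be quasicompact), and then set $N = \max_i N_i$.

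The heart of the argument is the construction of the lift. For any $n \geq r$, define $\widetilde x_j \in W_n(R_i)$ to have Witt coordinates
\[
\widetilde x_j = (a_{j,0},\dots,a_{j,r-1},0,\dots,0).
\]
By construction this reduces to $x_j$ under the quotient $W_n(R_i) \to W_r(R_i)$, and every Witt coordinate of $\widetilde x_j$ satisfies $y^{p^N} = 0$. Invoking the characterization of $p^N$-torsion recorded in Remark \ref{rmk:V_F_p_formulas}, we conclude that $p^N \widetilde x_j = 0$ in $W_n(R_i)$. Assembling the $\widetilde x_j$ coordinatewise and transporting back through the trivialization yields a $p^N$-torsion lift $\widetilde x$ of $x|_{W_r(R_i)}$ in $\mathcal E_{W_n(R_i)}$, as required.

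There is no real obstacle here: the argument is essentially a calculation with Witt coordinates. The only point requiring care is ensuring that a single $N$ works uniformly over all members of the cover and all levels $n \geq r$ — both are automatic, the former from finiteness (quasicompactness of $\Spec R$ combined with the finite rank of $\mathcal E$), and the latter because padding with zeros leaves the $p^N$-torsion property intact regardless of how many zeros we append.
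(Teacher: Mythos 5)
Your proof is correct and follows essentially the same strategy as the paper's: trivialize the tower $\mathcal E_{W_\bullet(R)}$ via Lemma \ref{lem:locally_free_tower_nakayama}, observe that the Witt coordinates of a section in the kernel are nilpotent, and construct the lift by padding those coordinates with zeros, noting via the formula $px = (0, x_0^p, x_1^p, \dots)$ that the result is $p^N$-torsion. The only cosmetic differences are that the paper reduces to rank $1$ and to a globally defined section before doing the coordinate calculation, while you keep the rank general and are more explicit about extracting a uniform $N$ from quasicompactness of $\Spec R$; both are sound.
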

\begin{proof}
As the statement is local on $\Spec W_r(R)$, we may assume by Lemma \ref{lem:locally_free_tower_nakayama} that the entire tower of Zariski sheaves $\mathcal E_{W_{\bullet}(R)}$ is isomorphic to $\mathcal O_{\Spec W_{\bullet}(R)}^{\oplus m}$, and that $x$ is a globally defined section of $\mathcal E_{W_r(R)}$; that is, an element of $W_r(R)^{\oplus m}$.  We will moreover assume that $m=1$; the general case follows by repeating the same construction in each coordinate.

With this setup, all we must prove is the following:  for every $x \in \ker(W_r(R) \to W_r(R_{\red}))$, there exists an $N \gg 0$ such that $x$ lies in the image of the $p^N$-torsion in $W_n(R)$ for all $n \geq r$.
To prove this, write $x$ as $(x_0, x_1, \dots, x_{r-1})$ in Witt coordinates, where each $x_i \in R$ is nilpotent.  Choose $N$ so that each $x_i^{p^N} = 0$, and lift $x$ to $(x_0, x_1, \dots, x_{r-1}, 0, 0, \dots) \in W_n(R)$ for each $n \geq r$.  This is $p^N$-torsion by the formula for multiplication by $p$ on Witt coordinates:  $p(x_0, x_1, \dots) = FV(x_0, x_1, \dots) = (0, x_0^p, x_1^p, \dots)$.
\end{proof}

\begin{proof}[Proof of Lemma \ref{lem:p^N-torsion_presheafy}]
We proved above that there exists an $N \gg 0$ such that for all $n \geq r$, the sheaf-theoretic image of the reduction map 
$$
\mathcal E_{W_n(R)}[p^N] \to \mathcal E_{W_r(R)}
$$
(both interpreted as sheaves on $\Spec W_n(R)$, say) contains the kernel of
$$
\mathcal E_{W_r(R)} \to W_r(f)^*(\mathcal E_{W_r(R)}) = (f^*_{\cris} \mathcal E)_{W_r(R_{\red})}
$$
We must now prove the analogous statement for global sections instead of sheaves, which would ordinarily be a more delicate statement.  But since we are dealing with maps of quasicoherent sheaves on an affine scheme, taking global sections is an exact functor, and thus the global sections of the sheaf-theoretic image coincide with the image of the global sections.  So the two statements are equivalent.
\end{proof}

%%%%%%%%%%%%%%%%%%

\subsection{\'Etale localization}
\label{sec:etale_localization}

We now address the question of how our saturated de Rham--Witt modules localize along \'etale morphisms of affine schemes.  We begin with a few lemmas:

\begin{lemma} \label{lem:Wr_tensor}
Suppose $A^*$ is an algebra object in $\DC$, $M^*$ and $N^*$ are strict right and left $A^*$-modules respectively, and $P^*$ is any strict Dieudonn\'e complex equipped with a map $f \colon M^* \otimes_{A^*} N^* \to P^*$ in $\DC$.  Then the map for each $r \geq 0$, the composition
$$
f_r \colon M^* \otimes_{A^*} N^* \to P^* \twoheadrightarrow \W_r(P^*)
$$
factors uniquely through a map of complexes
$$
\W_r(M^*) \otimes_{A^*} \W_r(N^*) \to \W_r(P^*).
$$
\begin{proof}
This follows from the identities
\begin{align*}
V^r m \otimes n & = V^r(m \otimes F^r n), \\
dV^r m \otimes n & = d(V^r m \otimes n) - (-1)^{|m|} V^r b \otimes dn \\
& = dV^r(m \otimes F^r n) - (-1)^{|m|} V^r(m \otimes F^r d n),
\end{align*}
and similarly for $m \otimes V^r n$ and $m \otimes dV^r n$.  (Recall that one can verify the first identity by applying the injective map $F^r$ to both sides.)
\end{proof}
\end{lemma}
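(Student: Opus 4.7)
The plan is to show that the composition $f_r$ annihilates a subgroup whose quotient yields $\W_r(M^*) \otimes_{A^*} \W_r(N^*)$; the unique factorization is then forced by surjectivity of the quotient map, and compatibility with differentials is automatic since $f$ commutes with $d$. By Remark \ref{rmk:forgetful_functor_commutes_with_limits_and_colimits}, the underlying graded group of $\W_r(M^*) \otimes_{A^*} \W_r(N^*)$ is the quotient of that of $M^* \otimes_{A^*} N^*$ by the subgroup spanned by pure tensors with at least one factor in $\im V^r + \im dV^r$; here I use Lemma \ref{lem:Vr+dVr_submodule} to ensure that $\W_r M^*$ and $\W_r N^*$ carry well-defined $A^*$-module structures. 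Thus it suffices to verify that $f_r$ kills pure tensors of the forms $V^r m \otimes n$, $dV^r m \otimes n$, $m \otimes V^r n$, and $m \otimes dV^r n$.

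For the generator $V^r m \otimes n$, I work directly in the saturated (hence $F$-injective) target $P^*$. Applying $F^r$ to each side of the putative identity
\[
f(V^r m \otimes n) = V^r f(m \otimes F^r n)
\]
yields $p^r f(m \otimes F^r n)$ on both sides, using that $f$ is a morphism in $\DC$ and that $F^r V^r = p^r$ on saturated complexes. Injectivity of $F^r$ on $P^*$ then forces the identity to hold, and the right-hand side lies in $\im V^r$, so it maps to zero in $\W_r P^*$. The case $m \otimes V^r n$ is symmetric.

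For the generator $dV^r m \otimes n$, I apply the graded Leibniz rule for the differential on the tensor product and the fact that $f$ commutes with $d$ to obtain
\[
f(dV^r m \otimes n) = df(V^r m \otimes n) - (-1)^{|m|} f(V^r m \otimes dn),
\]
and then invoke the previous step on both terms on the right to get
\[
f(dV^r m \otimes n) = dV^r f(m \otimes F^r n) - (-1)^{|m|} V^r f(m \otimes F^r dn) \in \im dV^r + \im V^r \subset P^*.
\]
This maps to zero in $\W_r P^*$, and the case $m \otimes dV^r n$ is analogous. The only nontrivial ingredient throughout is $F^r$-injectivity of the saturated complex $P^*$, which is the standard device for promoting identities in $\DC_{\sat}$; I do not anticipate any serious obstacle beyond tracking signs and grading shifts correctly.
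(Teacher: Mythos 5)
Your proof is correct and follows the same strategy as the paper's: reduce to killing generators $V^r m\otimes n$, $dV^r m\otimes n$ (and symmetrically), establish $f(V^r m\otimes n)=V^r f(m\otimes F^r n)$ by applying the injective $F^r$ in $P^*$, then treat the $dV^r$ generators via the graded Leibniz rule. You are slightly more explicit than the paper about where the identities actually live --- namely in the saturated target $P^*$ after applying $f$, since $M^*\otimes_{A^*}N^*$ need not be saturated --- which is a worthwhile clarification of the paper's terse phrasing.
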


\begin{remark}
If $A^*$ is also strict, then we have
$$
\W_r(M^*) \otimes_{A^*} \W_r(N^*) = \W_r(M^*) \otimes_{\W_r(A^*)} \W_r(N^*)
$$
in light of Lemma \ref{lem:Vr+dVr_submodule} and the surjectivity of $A^* \to \W_r A^*$, so we may view Lemma \ref{lem:Wr_tensor} as providing a map
$$
\W_r(M^*) \otimes_{\W_r(A^*)} \W_r(N^*) \to \W_r(P^*).
$$
\end{remark}

The main result of this section is as follows:

\begin{proposition} \label{prop:dRW_is_qcoh}
Let $X = \Spec R$ be an affine $k$-scheme, $f \colon \Spec S \to X$ an \'etale neighborhood, and $\mathcal E$ a unit-root $F$-crystal on $\Cris(X/W)$.  Assume $(R, \mathcal E)$ admits a saturated de Rham--Witt complex $\W\Omega^*_{R, \mathcal E}$.  Then:
\begin{enumerate}
\item The pair $(S, f^*_{\cris} \mathcal E)$ also admits a saturated de Rham--Witt complex $\W\Omega^*_{S, f^*_{\cris} \mathcal E}$.
\item For each $r > 0$, $S \mapsto \W_r \Omega^*_{S, f^*_{\cris} \mathcal E}$ defines a quasicoherent sheaf of $W_r \mathcal O$-modules on the affine \'etale site $\AffEt(X)$.
\end{enumerate}
\end{proposition}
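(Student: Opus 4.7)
The plan is to build $\W\Omega^*_{S, f^*_{\cris}\mathcal{E}}$ by a strictified base change along the étale map, and then to deduce quasicoherence from the corresponding finite-level compatibility. Let $\W\Omega^*_R \to \W\Omega^*_S$ denote the morphism of strict Dieudonn\'e algebras induced by $f$ via Remark~\ref{rmk:functorialities_of_WOmega}. I would set
\[
N^* := \W\Omega^*_S \otimes^{\str}_{\W\Omega^*_R} \W\Omega^*_{R, \mathcal{E}},
\]
which is a strict left $\W\Omega^*_S$-module by Remark~\ref{rmk:tensor_module_on_other_side} and Lemma~\ref{lem:WSat_module}. To promote $N^*$ to a de Rham--Witt module over $(S, f^*_{\cris}\mathcal{E})$, for each $r$ I would define $\iota^*_{r, N}$ as the composition
\begin{align*}
\dR\bigl((f^*_{\cris}\mathcal{E})(W_r(S),\gamma)\bigr)
&\simeq \Omega^*_{W_r(S),\gamma} \otimes_{\Omega^*_{W_r(R),\gamma}} \dR(\mathcal{E}(W_r(R),\gamma)) \\
&\to \W_r\Omega^*_S \otimes_{\W_r\Omega^*_R} \W_r\Omega^*_{R,\mathcal{E}} \\
&\to \W_r(N^*),
\end{align*}
where the first step is the étale base-change isomorphism of Remark~\ref{rmk:PD_dR_functorialities}, the second tensors $\Omega^*_{W_r(S),\gamma} \to \W_r\Omega^*_S$ with $\iota^*_{r,\W\Omega_{R,\mathcal{E}}}$, and the third comes from Lemma~\ref{lem:Wr_tensor}. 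The compatibilities of Alternative Definition~\ref{def:dRW_module_again} then follow routinely from the corresponding properties of $\iota^*_{r, \W\Omega_{R, \mathcal{E}}}$ plus the functoriality of the PD-de Rham complex in the crystal.

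To verify initiality, let $(M^*, (\iota^*_{r, M})_r)$ be any object of $\dRWM_{S, f^*_{\cris}\mathcal{E}}$. Restriction along $f$ via Construction~\ref{construction:dRWM_functoriality_in_R} realises $M^*$ as a de Rham--Witt module over $(R, \mathcal{E})$, so the initiality of $\W\Omega^*_{R, \mathcal{E}}$ produces a unique morphism $u \colon \W\Omega^*_{R,\mathcal{E}} \to f_* M^*$ in $\dRWM_{R, \mathcal{E}}$. Because $M^*$ is already a strict $\W\Omega^*_S$-module, Lemma~\ref{lem:WSat_module} together with the universal property of $\otimes^{\str}_{\W\Omega^*_R}$ extends $u$ uniquely to a strict $\W\Omega^*_S$-module map $\tilde u \colon N^* \to M^*$. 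Compatibility of $\tilde u$ with the $\iota^*_r$'s is automatic: both sides are dg-$\Omega^*_{W_r(S),\gamma}$-linear and so, by Lemma~\ref{lem:construct_graded_not_dg_map_non-sheafy} and Remark~\ref{rmk:redefine_dRWM}, are determined by their degree-zero components, which agree on the image of $\iota^*_{r, \W\Omega_{R, \mathcal{E}}}$ precisely because $u$ is a morphism in $\dRWM_{R, \mathcal{E}}$. Uniqueness of $\tilde u$ in $\dRWM_{S, f^*_{\cris}\mathcal{E}}$ follows from uniqueness of $u$. This establishes part (1), with $N^* = \W\Omega^*_{S, f^*_{\cris}\mathcal{E}}$.

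For part (2), the crux is proving that the natural map
\[
W_r(S) \otimes_{W_r(R)} \W_r\Omega^*_{R,\mathcal{E}} \longrightarrow \W_r\Omega^*_{S, f^*_{\cris}\mathcal{E}} = \W_r(N^*)
\]
is an isomorphism; combined with the trivial-coefficient étale base-change isomorphism $\W_r\Omega^*_R \otimes_{W_r(R)} W_r(S) \simeq \W_r\Omega^*_S$ of Remark~\ref{rmk:PD_dR_dRW_etale_base_change}, this yields quasicoherence, and the étale sheaf property then follows for free because quasicoherent sheaves on the small affine étale site descend automatically. The main obstacle lies in controlling the interaction between $\W_r$ and the strictification built into $\otimes^{\str}$: while Lemma~\ref{lem:Wr_tensor} furnishes the comparison map $\W_r\Omega^*_S \otimes_{\W_r\Omega^*_R} \W_r\Omega^*_{R,\mathcal{E}} \to \W_r(N^*)$, showing it is an isomorphism requires ruling out any distortion by $\W\Sat$ of the filtration $\im V^r + \im dV^r$. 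I would tackle this by transporting the problem to the equivalent category $\TD$ of strict Dieudonn\'e towers (using Proposition~\ref{prop:TD_equivalence_compatible_with_filtered_colimits} to handle any colimits appearing in $\Sat$), and then exploiting the flatness of the étale map $W_r(R) \to W_r(S)$ to reduce to termwise statements at each level $r$, bootstrapping from the trivial-coefficient base-change identity guaranteed by Theorem~\ref{BLM_sheaf}.
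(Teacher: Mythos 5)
Your overall strategy coincides with the paper's: you form $N^* = \W\Omega^*_S \otimes^{\str}_{\W\Omega^*_R} \W\Omega^*_{R, \mathcal E}$, define the maps $\iota^*_{r,N}$ via base change and the multiplication map supplied by Lemma~\ref{lem:Wr_tensor}, and prove initiality by pushing forward a test object along $f$, using the universal property of $\W\Omega^*_{R,\mathcal E}$ and then the adjunction for the strictified tensor product. That part of the argument works, and matches the paper's proof closely.

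The genuine gap is in part (2), and you correctly identify where it sits but do not close it. The crux is to show that the comparison map
\[
\W_r\Omega^*_S \otimes_{\W_r\Omega^*_R} \W_r\Omega^*_{R,\mathcal E} \longrightarrow \W_r(N^*)
\]
furnished by Lemma~\ref{lem:Wr_tensor} is an isomorphism. Your proposed route — passing to $\TD$ via Proposition~\ref{prop:TD_equivalence_compatible_with_filtered_colimits}, exploiting flatness of $W_r(R) \to W_r(S)$, and bootstrapping from the trivial-coefficient base change — invokes the wrong proposition for the wrong job: Proposition~\ref{prop:TD_equivalence_compatible_with_filtered_colimits} concerns filtered colimits of strict Dieudonné complexes, whereas the strictified tensor product is built from a coequalizer (not a filtered colimit) followed by saturation (a filtered colimit) followed by completion $\W$ (a limit). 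Nothing in that proposition controls how $\W_r$ interacts with $\otimes^{\str}$. The actual argument, carried out by the paper in Construction~\ref{con:strict_tower}, Proposition~\ref{prop:strict_tower}, and Proposition~\ref{V_etale}, uses the $V$-adic \'etaleness of $\W\Omega^*_R \to \W\Omega^*_S$ in an essential way: \cite[Corollary 5.4.9]{BLM} guarantees that the operators $R$, $F$, $V$ extend uniquely over the \'etale base, which lets one promote $(\W_r\Omega^*_S \otimes_{\W_r\Omega^*_R} \W_r M^*)_r$ to a strict Dieudonné tower, with the tower axioms verified via flatness of $\W_r\Omega^0_R \to \W_r\Omega^0_S$; one then checks by hand that the resulting limit is a strictification of $\W\Omega^*_S \otimes_{\W\Omega^*_R} \W\Omega^*_{R,\mathcal E}$. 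Without that uniqueness-of-extension input and the explicit tower construction, there is no reason the finite-level comparison map should be an isomorphism; flatness alone will not "reduce to termwise statements," because the question is precisely whether strictification sees the termwise filtration correctly.
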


\begin{remark} \label{rmk:etale_base_change_of_etesse}
The analogue of Proposition \ref{prop:dRW_is_qcoh} for the classical de Rham--Witt complex with coefficients (\cite[\textsection II, D\'efinition 1.1.7]{etesse}) is much easier to prove:  the natural morphisms
\begin{align*}
\Omega^*_{W_r(R), \gamma} & \to \Omega^*_{W_r(S), \gamma}, \\
\dR(\mathcal E(W_r(R), \gamma)) & \to \dR(f^*_{\cris}(\mathcal E)(W_r(S), \gamma)), \text{ and} \\
W_r \Omega^*_R & \to W_r \Omega^*_S
\end{align*}
are all base-change maps along $W_r(R) \to W_r(S)$ by Lemma \ref{lem:PD_dR_etale_base_change} and \cite[I, Proposition 1.14]{illusie}, thus so is
$$
\dR(\mathcal E(W_r(R), \gamma)) \otimes_{\Omega^*_{W_r(R), \gamma}} W_r \Omega^*_R \to \dR(f^*_{\cris}(\mathcal E)(W_r(S), \gamma)) \otimes_{\Omega^*_{W_r(S), \gamma}} W_r \Omega^*_S.
$$
\end{remark}

\begin{remark}
The idea of the proof of Proposition \ref{prop:dRW_is_qcoh} will be that we have isomorphisms
\begin{align*}
\W \Omega^*_{S, f^*_{\cris} \mathcal E} & \simeq \W\Omega^*_S \otimes^{\str}_{\W\Omega^*_R} \W\Omega^*_{R, \mathcal E}, \text{ and} \\
\W_r \Omega^*_{S, f^*_{\cris} \mathcal E} & \simeq W_r (S) \otimes_{W_r (R)} \W_r\Omega^*_{R, \mathcal E} \text{ for each } r,
\end{align*}
where the latter visibly defines the quasicoherent sheaf $\widetilde{\W_r\Omega^*_{R, \mathcal E}}$.

In order to pass between the two isomorphisms above, we must show that the strictified tensor product above can be computed at finite levels.  In fact, since writing down a de Rham--Witt module requires an understanding of both its underlying strict Dieudonn\'e complex $M^*$ and the corresponding strict Dieudonn\'e tower $(\W_r M^*)_r$, we will need this comparison (Proposition \ref{V_etale}) before we can prove either isomorphism above.
The proof of this proposition will take up much of this section; it takes place in the setting of a $V$-adically \'etale morphism of Dieudonn\'e algebras (cf. \cite[\textsection 5.3]{BLM})---of which the prototypical example is $\W\Omega^*_R \to \W\Omega^*_S$ where $R \to S$ is \'etale.
\end{remark}

\begin{para}
Suppose $A^* \to B^*$ is a $V$-adically \'etale morphism of strict Dieudonn\'e algebras,
suppose $M^*$ is a strict $A^*$-module, and set $N_r^* = \W_r B^* \otimes_{\W_r A^*} \W_r M^*$.
In the next few results, we will give this the structure of a Dieudonn\'e tower, and then show that its limit computes the strictified tensor product $B^* \otimes^{\str}_{A^*} M^*$.
Note first that we have natural isomorphisms of graded modules
\begin{align*}
N_r^* = \W_r B^* \otimes_{\W_r A^*} \W_r M^* & \simeq \W_r B^0 \otimes_{\W_r A^0} \W_r M^* \\
& \simeq W_r(B^0/VB^0) \otimes_{W_r(A^0/VA^0)} \W_r M^* \\
& \simeq W(B^0/VB^0) \otimes_{W(A^0/VA^0)} \W_r M^*,
\end{align*}
where the first step is the statement that $A^* \to B^*$ is $V$-adically \'etale, and the second step follows by \cite[Proposition 3.6.2]{BLM}.
\end{para}

\begin{construction} \label{con:strict_tower}
Suppose $A^* \to B^*$ is a $V$-adically \'etale morphism of strict Dieudonn\'e algebras,
and suppose $M^*$ is a strict $A^*$-module.  Then we can endow the family of complexes $(\W_r B^* \otimes_{\W_r A^*} \W_r M^*)_r$ with maps $R, F$, and $V$ making it into a strict Dieudonn\'e tower.
For convenience, we set $N_r^* = \W_r B^* \otimes_{\W_r A^*} \W_r M^*$.  First observe that we have natural isomorphisms of graded modules
\begin{align*}
N_r^* = \W_r B^* \otimes_{\W_r A^*} \W_r M^* & \simeq \W_r B^0 \otimes_{\W_r A^0} \W_r M^* \\
& \simeq W_r(B^0/VB^0) \otimes_{W_r(A^0/VA^0)} \W_r M^* \\
& \simeq W(B^0/VB^0) \otimes_{W(A^0/VA^0)} \W_r M^*,
\end{align*}
where the first step is the statement that $A^* \to B^*$ is $V$-adically \'etale, and the second step follows by \cite[Proposition 3.6.2]{BLM}.
In particular, \cite[Corollary 5.4.9]{BLM} implies that the maps
\begin{align*}
& R \colon \W_{r+1} M^* \to \W_r M^* \to N_r^*, \\
& F \colon \W_{r+1} M^* \to \W_r M^* \to N_r^*, \text{ and} \\
& V \colon \W_r M^* \to \W_{r+1} M^* \to N_{r+1}^*
\end{align*}
each extend uniquely to maps
\begin{align*}
& R \colon N_{r+1}^* \to N_r^*, \\
& F \colon N_{r+1}^* \to N_r^*, \\
& V \colon N_r^* \to N_{r+1}^*
\end{align*}
respectively.
Note that the uniqueness of the extensions implies that we have identities
\begin{align*}
R(b \otimes m) & = Rb \otimes Rm, \\
F(b \otimes m) & = Fb \otimes Fm, \text{ and} \\
V(Fb \otimes m) & = b \otimes V m
\end{align*}
for $b \in \W_r B^0$ and $m \in \W_r M^*$, since these are true for $b$ in the image of $\W_r A^0 \to \W_r B^0$.
\end{construction}

\begin{proposition} \label{prop:strict_tower}
The maps $R$, $F$, and $V$ of Construction \ref{con:strict_tower} endow the system $(N_r^*)_r$ with the structure of a strict Dieudonn\'e tower.

\begin{proof}
Properties (1), (3), (4), and (5) of \cite[Definition 2.6.1]{BLM} follow from the corresponding properties of $(M_r^*)_r$ and the uniqueness of the extensions of \cite[Corollary 5.4.9]{BLM}.  Properties (2) and (7) follow from base-changing the corresponding properties of $(M_r^*)_r$ along the \'etale (thus flat) map $\W_r A^0 \to \W_r B^0$.
Properties (6) and (8) follow from a similar argument if we are careful about semilinearity.
Namely, twisting the respective exact sequences for $(M^*_r)_r$ by various powers of Frobenius gives exact sequences of $\W_{r+1} A^0$-modules
$$
M_{r+1}^* \overset{F}{\to} F^*(M_r^*) \overset{d}{\to} F^*(M_r^{* + 1}) / p
$$
and
$$
(F^{r+1})^*(M_1^* \oplus M_1^{* - 1}) \overset{(V^r, dV^r)}{\longrightarrow} F^*(M_{r+1}^*) \overset{R}{\to} F^*(M_r^*),
$$
where the linearity of $d$ and $dV^r$ follows from the calculations
\begin{align*}
d(F(a) \cdot m) & = dF(a) \cdot m + F(a) \cdot dm \\
& = pFd \cdot m + F(a) \cdot dm \\
& \equiv F(a) \cdot dm \pmod p,
\end{align*}
and
\begin{align*}
dV^r(F^{r+1}(a) \cdot m) & = d(F(a) \cdot V^r m) \\
& = dF(a) \cdot V^r m + F(a) \cdot dV^r m \\
& = pFd(a) \cdot V^r m + F(a) \cdot dV^r m \\
& = F(a) \cdot dV^r m \in M^*_{r+1},
\end{align*}
each for $a \in \W_{r+1} A^0$.
The corresponding properties of $(N^*_r)_r$ follow by base-changing along the \'etale map $\W_r A^0 \to \W_r B^0$; this operation commutes with Frobenius pullbacks by \cite[Remark 5.4.2]{BLM}.
\end{proof}
\end{proposition}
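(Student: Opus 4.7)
The plan is to prove that $(N_r^*)_r$ satisfies each of the eight axioms of a strict Dieudonné tower (\cite[Definition 2.6.1]{BLM}) by transporting them from the corresponding axioms for $(\W_r M^*)_r$, using the fact that $\W_r A^0 \to \W_r B^0$ is étale (hence flat), and using the uniqueness portion of \cite[Corollary 5.4.9]{BLM} whenever we need to verify that an identity between extended maps holds.

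First, I would dispose of the axioms that are purely identities between the operators $R$, $F$, $V$ (for example, the relations $FV = VF = p$, $RF = FR$, $RV = VR$, and the compatibility of $d$ with the tower structure). In each case, the corresponding identity is known to hold on $\W_r M^*$; since the extended maps on $N_r^*$ are the unique extensions along $\W_r M^* \to N_r^*$, and both sides of each proposed identity are such extensions of the same map, the identities hold on $N_r^*$ automatically. Alternatively, one can check these identities on pure tensors $b \otimes m$ with $b \in \W_r B^0$ and $m \in \W_r M^*$, using the explicit formulas $R(b \otimes m) = Rb \otimes Rm$, $F(b \otimes m) = Fb \otimes Fm$, and $V(Fb \otimes m) = b \otimes Vm$ recorded in the construction.

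Next, I would turn to the axioms asserting that certain short sequences built from $R$, $F$, $V$, and $d$ are exact (for $(\W_r M^*)_r$ these are the axioms making the tower strict, as opposed to merely Dieudonné). For these, the plan is to regard each such sequence for $(\W_r M^*)_r$ as an exact sequence of $\W_r A^0$-modules and then tensor up along the flat map $\W_r A^0 \to \W_r B^0$; flatness preserves exactness, and the result is precisely the analogous sequence for $(N_r^*)_r$. The subtlety here, which I expect to be the main obstacle, is semilinearity: several of the operators in question (notably $F$, $V^r$, $d$, and $dV^r$) are Frobenius-semilinear rather than linear, so the naive tensor product is not well-defined. The remedy is to rewrite each sequence after applying suitable Frobenius pullbacks so that every arrow becomes an honest $\W_{r+1} A^0$-module map. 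For instance, $d \colon M^* \to M^{*+1}/p$ should be replaced by $d \colon M^* \to F^*(M^{*+1}/p)$, and similarly $V^r$ and $dV^r$ should be viewed as maps from $(F^{r+1})^* M^*_1$; a short calculation using $dF = pFd$ and the Leibniz rule shows that these reinterpreted maps are genuinely linear. Once this is done, one invokes the fact that étale base change commutes with Frobenius pullbacks (\cite[Remark 5.4.2]{BLM}) to conclude exactness.

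Finally, I would address the axioms describing the degree-zero layer of the tower (e.g.\ the Witt-vector structure). These follow directly from the corresponding statements for $(\W_r M^*)_r$ after tensoring with $\W_r B^0 \simeq W_r(B^0/VB^0)$ over $\W_r A^0 \simeq W_r(A^0/VA^0)$, using that the formation of truncated Witt vectors is compatible with étale base change. Assembling these verifications gives the strict Dieudonné tower structure on $(N_r^*)_r$; the only nontrivial step throughout is the semilinearity bookkeeping needed to legitimately apply flat base change to the defining exact sequences.
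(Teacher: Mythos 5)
Your proposal takes essentially the same approach as the paper's proof: dispose of the identity axioms by the uniqueness clause of \cite[Corollary 5.4.9]{BLM} (or equivalently by checking them on pure tensors with the explicit formulas), then handle the exactness axioms by flat base change along the \'etale map $\W_r A^0 \to \W_r B^0$, twisting by Frobenius pullbacks to linearize the semilinear operators $F$, $d$, $V^r$, $dV^r$, and invoking \cite[Remark 5.4.2]{BLM} to commute \'etale base change past Frobenius pullback. You have correctly identified the two genuinely delicate points (semilinearity of the sequence maps, and the interaction of base change with Frobenius twists), which is where the real content of the argument lies.

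The one inaccuracy is in your third group. There is no axiom in \cite[Definition 2.6.1]{BLM} about a ``degree-zero layer'' or a ``Witt-vector structure''; the axioms the paper handles by plain flat base change are the surjectivity of $R$ (axiom 2) and a torsion/kernel condition (axiom 7). The identification $\W_r B^0 \simeq W_r(B^0/VB^0)$ you invoke is part of the \emph{construction} of the tower $(N_r^*)_r$ (it is how the paper makes sense of the base change at all), not something that needs verifying as a tower axiom. Your proposed method---tensor the relevant statement about $(\W_r M^*)_r$ along the flat map $\W_r A^0 \to \W_r B^0$---is exactly right for those two axioms; only the description of which axioms these are, and the claimed need for compatibility of Witt vectors with \'etale base change, should be corrected.
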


We now claim that this strict Dieudonn\'e tower corresponds, under the equivalence of categories of \cite[Corollary 2.9.4]{BLM}, to the strictified tensor product $B^* \otimes_{A^*}^{\str} M^*$.

\begin{proposition} \label{V_etale} % strictified V-adically etale base change can be computed at finite levels
Suppose $A^* \to B^*$ is a $V$-adically \'etale morphism of strict Dieudonn\'e algebras, and suppose $M^*$ is a strict $A^*$-module.  Then the natural $B^*$-linear map
$$\rho' \colon B^* \otimes_{A^*} M^* \to \limarrow_r (\W_r(B^*) \otimes_{\W_r(A^*)} \W_r(M^*))$$
exhibits the latter as a strictification of the former.  In particular, this induces isomorphisms
$$\W_r(B^*) \otimes_{\W_r(A^*)} \W_r(M^*) \simeq \W_r (B^* \otimes_{A^*} ^{\str} M^*)$$
of $\W_r B^*$-modules for each $r$.
\begin{proof}
By the universal property of strictification, the given map $\rho'$ factors uniquely as
$$B^* \otimes_{A^*} M^* \overset{\rho}{\to} B^* \otimes_{A^*}^{\str} M^* \overset{g}{\to} \limarrow_r (\W_r(B^*) \otimes_{\W_r(A^*)} \W_r(M^*)).$$
We must show that $g$ is an isomorphism of strict Dieudonn\'e complexes.  We will construct its inverse by working in the equivalent category of strict Dieudonn\'e towers.

For each $r > 0$, the composite map
$$B^* \otimes_{A^*} M^* \overset{\rho}{\to} B^* \otimes_{A^*}^{\str} M^* \twoheadrightarrow \W_r(B^* \otimes_{A^*}^{\str} M^*)$$
factors as
$$B^* \otimes_{A^*} M^* \twoheadrightarrow \W_r(B^*) \otimes_{\W_r(A^*)} \W_r(M^*) \overset{f_r}{\to} \W_r(B^* \otimes_{A^*}^{\str} M^*),$$
by Lemma \ref{lem:Wr_tensor}.  It follows from the uniqueness part of \cite[Corollary 5.4.9]{BLM} that the maps $f_r$ are compatible with $R, F$, and $V$,
so they define a map of strict Dieudonn\'e towers
$$(f_r)_r \colon (\W_r(B^*) \otimes_{\W_r(A^*)} \W_r(M^*))_r \to (\W_r(B^* \otimes_{A^*}^{\str} M^*))_r.$$
Under the equivalence of categories between strict Dieudonn\'e complexes and strict Dieudonn\'e towers, the maps $g$ and $f_r$ respectively correspond to maps $g_r := \W_r(g)$ and $f := \limarrow_r f_r$ as in the following diagram:
\[
\begin{tikzcd}
                                                                                                                           & B^* \otimes_{A^*} M^* \arrow[ld, "\rho'"'] \arrow[rd, "\rho"] &                                                                              \\
\limarrow_n \left( \W_n(B^*) \otimes_{\W_n(A^*)} \W_n(M^*) \right) \arrow[d, two heads, "\pi'_r"'] \arrow[rr, "f", bend left=5] &                                             & B^* \otimes^{\str}_{A^*} M^* \arrow[d, two heads, "\pi_r"] \arrow[ll, "g", bend left=5] \\
\W_r(B^*) \otimes_{\W_r(A^*)} \W_r(M^*) \arrow[rr, "f_r", bend left=5]                                                       &                                             & \W_r(B^* \otimes^{\str}_{A^*} M^*) \arrow[ll, "g_r", bend left=5]             
\end{tikzcd}
\]
In the diagram above, we have
\begin{align*}
\rho' & = g \circ \rho \text{ and} \\
\pi_r \circ \rho & = f_r \circ \pi'_r \circ \rho'
\end{align*}
by construction.  It follows by passing to quotients (resp. limits) that 
\begin{align*}
\pi'_r \circ \rho' & = g_r \circ \pi_r \circ \rho \text{ and} \\
\rho & = f \circ \rho',
\end{align*}
so in particular we have
\begin{align*}
\rho & = f \circ g \circ \rho \text{ and } \\
\pi'_r \circ \rho' &  = g_r \circ f_r \circ \pi'_r \circ \rho'.
\end{align*}
By the universal properties of the maps $\rho$ and $\pi'_r \circ \rho'$, this implies that $f \circ g$ and $g_r \circ f_r$ are identity maps on the respective objects.  Since $(f_r)_r$ and $(g_r)_r$ correspond to $f$ and $g$ under the equivalence of categories, this proves that $f$ and $g$ are inverses.
\end{proof}
\end{proposition}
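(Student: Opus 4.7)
The plan is to reduce the claim to showing that a canonical comparison morphism $g$ is an isomorphism, and then construct its inverse through the equivalent category of strict Dieudonn\'e towers. First, since the strictification map $\rho \colon B^* \otimes_{A^*} M^* \to B^* \otimes^{\str}_{A^*} M^*$ is initial among maps to strict Dieudonn\'e complexes, $\rho'$ factors uniquely as $\rho' = g \circ \rho$ for some morphism $g$ in $\DC_{\str}$. Proposition \ref{prop:strict_tower} has already exhibited $(N_r^*)_r := (\W_r B^* \otimes_{\W_r A^*} \W_r M^*)_r$ as a strict Dieudonn\'e tower, so by the equivalence $\DC_{\str} \simeq \TD$ of \cite[Corollary 2.9.4]{BLM} its limit is a strict Dieudonn\'e complex whose $\W_r$-quotient is $N_r^*$. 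Thus the target of $\rho'$ is legitimately an object of $\DC_{\str}$, and the entire statement reduces to checking that $g$ is an isomorphism.

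To produce an inverse to $g$, I would work levelwise and then promote the result via the tower equivalence. For each $r$, the composite
$$
B^* \otimes_{A^*} M^* \xrightarrow{\;\rho\;} B^* \otimes^{\str}_{A^*} M^* \twoheadrightarrow \W_r(B^* \otimes^{\str}_{A^*} M^*)
$$
factors through $N_r^*$ by Lemma \ref{lem:Wr_tensor}, yielding a canonical map $f_r \colon N_r^* \to \W_r(B^* \otimes^{\str}_{A^*} M^*)$. The key verification is that the family $(f_r)_r$ constitutes a morphism of strict Dieudonn\'e towers. But the operators $R$, $F$, $V$ on $N_r^*$ were pinned down in Construction \ref{con:strict_tower} as the unique extensions of the corresponding operators on $\W_r M^*$, in the sense of \cite[Corollary 5.4.9]{BLM}. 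Since the tower operators on $\W_r(B^* \otimes^{\str}_{A^*} M^*)$ automatically extend those on $\W_r M^*$ along the canonical map, the uniqueness clause forces compatibility with each $f_r$. Under $\TD \simeq \DC_{\str}$, the tower map $(f_r)_r$ then corresponds to a morphism $h \colon \limarrow_r N_r^* \to B^* \otimes^{\str}_{A^*} M^*$ in $\DC_{\str}$.

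Finally, one checks that $g$ and $h$ are mutually inverse. Both $h \circ g \circ \rho$ and $\rho$ compute the canonical map $B^* \otimes_{A^*} M^* \to B^* \otimes^{\str}_{A^*} M^*$ by construction, so the universal property of $\rho$ gives $h \circ g = \id$. Dually, $\W_r(g) \circ f_r$ and $\id_{N_r^*}$ agree after precomposing with the surjection from $B^* \otimes_{A^*} M^*$, hence coincide; assembling across $r$ yields $g \circ h = \id$. I expect the main obstacle to lie in the tower-compatibility step: although the uniqueness argument is short in principle, the semilinearity of $V$ and $dV^r$ (cf.\ the proof of Proposition \ref{prop:strict_tower}) makes it easy to misapply \cite[Corollary 5.4.9]{BLM}, so one must be careful about which Frobenius pullback twist is in play before invoking uniqueness of extensions.
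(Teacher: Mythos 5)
Your proposal is correct and follows essentially the same route as the paper's proof: factor $\rho'$ through the strictification $\rho$ to obtain $g$, use Lemma \ref{lem:Wr_tensor} to produce the levelwise maps $f_r$, invoke the uniqueness clause of \cite[Corollary 5.4.9]{BLM} (with attention to the relevant Frobenius twists, as you correctly flag) to upgrade $(f_r)_r$ to a morphism of strict Dieudonn\'e towers, and then verify that $g$ and the resulting map are mutual inverses via the universal properties of $\rho$ and the surjections $\pi'_r \circ \rho'$. The only cosmetic difference is that you name the inverse $h$ where the paper calls it $f = \limarrow_r f_r$.
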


\begin{proof}[Proof of Proposition \ref{prop:dRW_is_qcoh}]
Set $M^* = \W\Omega^*_S \otimes^{\str}_{\W\Omega^*_R} \W\Omega^*_{R, \mathcal E}$.  To prove part (1), we will endow $M^*$ with maps $\iota^*_{r, M} \colon \dR(f_{\cris}^* \mathcal E(W_r(S), \gamma)) \to \W_r M^*$ making it a saturated de Rham--Witt complex of $(S, f_{\cris}^* \mathcal E)$.  Note that by Proposition \ref{V_etale} above and \cite[Corollary 5.3.5]{BLM}, we have isomorphisms
\begin{align*}
\W_r(M^*) & \simeq \W_r \Omega^*_S \otimes_{\W_r \Omega^*_R} \W_r\Omega^*_{R, \mathcal E} \text{ as dg-modules, and} \\
& \simeq W_r (S) \otimes_{W_r (R)} \W_r\Omega^*_{R, \mathcal E} \text{ as graded modules.}
\end{align*}
So once we prove that $M^*$ is a saturated de Rham--Witt complex associated to $(S, f_{\cris}^* \mathcal E)$, it will follow that the functor $S \mapsto \W_r \Omega^*_{S, f_{\cris}^* \mathcal E}$ on $\AffEt(X)$ is simply the quasicoherent sheaf $\widetilde{\W_r \Omega^*_{R, \mathcal E}}$ of $W_r \mathcal O$-modules, proving part (2).

We construct the maps $\iota^*_{r, M}$ as the composition
\begin{align*}
\dR(f_{\cris}^* \mathcal E(W_r(S), \gamma)) & \simeq \Omega^*_{W_r(S), \gamma} \otimes_{\Omega^*_{W_r(R), \gamma}} \dR(\mathcal E(W_r(R), \gamma)) \\
& \simeq \W_r \Omega^*_S \otimes_{\W_r \Omega^*_R} \dR(\mathcal E(W_r(R), \gamma)) \\
& \to \W_r \Omega^*_S \otimes_{\W_r \Omega^*_R} \W_r \Omega^*_{R, \mathcal E} = \W_r M^*,
\end{align*}
using the isomorphisms of Remark \ref{rmk:PD_dR_functorialities} and Remark \ref{rmk:PD_dR_dRW_etale_base_change} and the map
$$\iota^*_{r, R} \colon \dR(\mathcal E(W_r(R), \gamma)) \to \W_r \Omega^*_{R, \mathcal E}$$
which comes as part of the data of the universal de Rham--Witt module $\W\Omega^*_{R, \mathcal E}$.  Each map in this composition is a map of dg-$\Omega^*_{W_r(S), \gamma}$-modules, and is compatible with reduction maps and with Frobenius in degree 0.  Therefore the same is true of $\iota^*_{r, M}$, and thus the maps $\iota^*_{r, M}$ make $M^*$ a de Rham--Witt module over $(S, f_{\cris}^* \mathcal E)$.

Now let $N^* = (N^*, (\iota^*_{r, N})_r)$ be an arbitrary de Rham--Witt module over $(S, f_{\cris}^* \mathcal E)$.  We claim that there is a unique map $M^* \to N^*$ in $\dRWM_{S, f_{\cris}^* \mathcal E}$.  To prove this, first recall the de Rham--Witt module $f_* N^* = (N^*, (\iota_{r, f_* N}^*)_r)$ of Construction \ref{construction:dRWM_functoriality_in_R}.
By the universal property of $\W\Omega^*_{R, \mathcal E}$, there is a unique map $g \colon \W\Omega^*_{R, \mathcal E} \to f_* N^*$ in $\dRWM_{R, \mathcal E}$.  Since $N^*$ is a strict $\W\Omega^*_S$-module, $g$ factors uniquely as
$$
\W\Omega^*_{R, \mathcal E} \overset{a}{\to} \W\Omega^*_S \otimes_{\W\Omega^*_R}^{\str} \W\Omega^*_{R, \mathcal E} = M^* \overset{h}{\to} N^*,
$$
where $a = 1 \otimes \id$.  So it only remains to prove that $h \colon M^* \to N^*$ is in fact a map in $\dRWM_{S, f_{\cris}^* \mathcal E}$; that is, that $\iota^*_{r, N}$ coincides with the composition
$$
\dR(f_{\cris}^* \mathcal E(W_r(S), \gamma)) \overset{\iota^*_{r, M}}{\longrightarrow} \W_r M^* \overset{\W_r(h)}{\longrightarrow} \W_r N^*.
$$
To prove this, we chase the diagram:
\[
\begin{tikzcd}
\dR(\mathcal E(W_r(R), \gamma)) \arrow[r] \arrow[d, "{\iota^*_{r, R}}"] \arrow[rrd, "{\iota^*_{r, f_* N}}", bend left=40] & \dR(f_{\cris}^* \mathcal E(W_r(S), \gamma)) \arrow[rd, "{\iota^*_{r, N}}"] \arrow[d, "{\iota^*_{r, M}}"']          &          \\
{\W_r \Omega^*_{R, \mathcal E}} \arrow[r, "\W_r(a)"'] \arrow[rr, "\W_r(g)"', bend right=20]                           & \W_r M^* \arrow[r, "\W_r(h)"'] & \W_r N^*
\end{tikzcd}
\]
All regions except for the triangle at right commute by construction.  It follows that the triangle commutes on the image of $\dR(\mathcal E(W_r(R), \gamma)) \to \dR(f_{\cris}^* \mathcal E(W_r(S), \gamma))$.  But this image generates $\dR(f_{\cris}^* \mathcal E(W_r(S), \gamma))$ as a graded $\Omega^*_{W_r(S), \gamma}$-module, and the maps in the triangle are all maps of dg-$\Omega^*_{W_r(S), \gamma}$-modules, so the triangle commutes.
\end{proof}

\begin{remark} \label{rmk:dRW_sheaves}
As a result of Proposition \ref{prop:dRW_is_qcoh}, we can define $\W\Omega^*_{X, \mathcal E}$ and $\W_r\Omega^*_{X, \mathcal E}$ as sheaves on the \'etale site of $X$ even if $X$ is not affine; namely, they are the sheaves defined on affines by
\begin{align*}
\Spec R & \mapsto \W\Omega^*_{R, \mathcal E|_{\Spec R}} \text{ and} \\
\Spec R & \mapsto \W_r\Omega^*_{R, \mathcal E|_{\Spec R}},
\end{align*}
respectively.  By the proposition, these both exist provided that $X$ has an open cover by affines $\Spec R_i$ such that $(R_i, \mathcal E|_{\Spec R_i})$ admits a saturated de Rham--Witt complex; if so, then $\W_r\Omega^i_{X, \mathcal E}$ is a quasicoherent sheaf of $W_r \mathcal O_X$-modules for each $i$.
\end{remark}

\begin{remark}
As discussed in \textsection \ref{sec:future_directions}, we expect that it is also possible to define $\W\Omega^*_{X, \mathcal E}$ directly by a universal property analogous to Definition \ref{def:saturated_dRW}.  This will require the theory of (strict) Dieudonn\'e complexes and Dieudonn\'e towers valued in sheaves, which will be developed in \cite{joint}.
\end{remark}

%%%%%%%%%%%%%%%%%%

\subsection{Compatibility with colimits}
\label{sec:compatibility_with_colimits}

\begin{para}
In this section, we will show that saturated de Rham--Witt complexes with coefficients behave well with respect to filtered colimits of rings, in a certain sense.  To make this precise, we introduce the following setup:
\end{para}

\begin{situation} \label{sit:crystal_and_colimit_of_rings}
Fix a $k$-algebra $R_0$ and a unit-root $F$-crystal $\mathcal E_0$ on $X_0 = \Spec R_0$.
Let $(X_i = \Spec R_i)_{i \in \mathcal I}$ be a diagram of affine $R_0$-schemes, indexed by a cofiltered category $\mathcal I$.
Let $X = \limarrow_i X_i$, which agrees with the spectrum of $R = \colim_i R_i$.
Then we have morphisms of affine schemes
\[
\begin{tikzcd}
X \arrow[rd, "g"'] \arrow[rr, "h_i"] &     & X_i \arrow[ld, "g_i"] \\
                                     & X_0 &                      
\end{tikzcd}
\]
for each $i \in \mathcal I$, and
$$
X_i \overset{f_{\alpha}}{\to} X_j
$$
for each $\alpha \colon i \to j$ in $\mathcal I$.
Pulling $\mathcal E_0$ back along the various maps yields unit-root $F$-crystals $\mathcal E_i := (g_i)^*_{\cris} \mathcal E_0$ on $X_i$ for each $i \in \mathcal I$, and similarly $\mathcal E := g^*_{\cris} \mathcal E_0$ on $X$.
\end{situation}

\begin{construction} \label{con:colimit_of_dRW_complexes_as_dRWM}
In Situation \ref{sit:crystal_and_colimit_of_rings}, suppose that $\W\Omega^*_{R_i, \mathcal E_i}$ exists for each $i$.  We wish to assemble these into a diagram indexed by $\mathcal I^{\op}$ and give their strictified colimit the structure of a de Rham--Witt module over $(R, \mathcal E)$.  The first step is to write down the transition maps $\W\Omega^*_{R_i, \mathcal E_i} \to \W\Omega^*_{R_j, \mathcal E_j}$ for each $\alpha \colon j \to i$ in $\mathcal I$.  To this end, note that the universal property of $\W\Omega^*_{R_i, \mathcal E_i}$ induces a map
$$\pi_{\alpha} \colon \W\Omega^*_{R_i, \mathcal E_i} \to f_{\alpha *} \W\Omega^*_{R_j, \mathcal E_j}$$
in $\dRWM_{R_i, \mathcal E_i}$, and recall (cf. Construction \ref{construction:dRWM_functoriality_in_R}) that the target
has the same underlying Dieudonn\'e complex as $\W\Omega^*_{R_j, \mathcal E_j}$.  So we can view $\pi_{\alpha}$ as a map $\W\Omega^*_{R_i, \mathcal E_i} \to \W\Omega^*_{R_j, \mathcal E_j}$ of strict Dieudonn\'e complexes.  The uniqueness part of the universal property of $\W\Omega^*_{R_i, \mathcal E_i}$ ensures that these maps compose correctly given morphisms $\ell \overset{\beta}{\to} j \overset{\alpha}{\to} i$ in $\mathcal I$, so that $(\W\Omega^*_{R_i, \mathcal E_i}, \pi_{\alpha})$ forms an $\mathcal I^{\op}$-shaped diagram in $\DC_{\str}$.

Let $M^*$ denote the colimit of this system; recall from Lemma \ref{lem:WSat_commutes_with_colimits} that this is the strictification of the colimit in $\DC$.  Our next task is to give $M^*$ the structure of a de Rham--Witt module over $(R, \mathcal E)$.  
Since each $\W\Omega^*_{R_i, \mathcal E_i}$ is a strict $\W\Omega^*_{R_i}$-module (compatibly with transition maps), their colimit in $\DC$ is a module over the Dieudonn\'e algebra colimit
$$\colim_{i, \DA} \W\Omega^*_{R_i}.$$
Lemma \ref{lem:WSat_module} then makes $M^*$ a module over the strictification
$$\W\Sat(\colim_{i, \DA} \W\Omega^*_{R_i}) = \colim_{i, \DA_{\str}} \W\Omega^*_{R_i}.$$
But this latter colimit equals $\W\Omega^*_R$, since $\W\Omega^*_{-}$ is a left adjoint (\cite[Definition 4.1.1]{BLM}) and thus commutes with arbitrary colimits.  So $M^*$ is a module over $\W\Omega^*_R$ in $\DC_{\str}$.

To finish making $M^*$ a de Rham--Witt module, we must endow it with a map
$$
\iota^*_r \colon \dR(\mathcal E(W_r(R), \gamma)) \to \W_r M^*
$$
for each $r > 0$, satisfying various compatibilities.  To construct $\iota^*_r$, we simply take the colimit of the corresponding maps
$$
\iota^*_{r, i} \colon \dR(\mathcal E_i(W_r(R_i), \gamma)) \to \W_r \Omega^*_{R_i, \mathcal E_i}
$$
as $i$ varies.  We identify the target of this map, $\colim_i \W_r \Omega^*_{R_i, \mathcal E_i}$, with $\W_r M^*$ using Proposition \ref{prop:TD_equivalence_compatible_with_filtered_colimits}; and we identify its source with $\dR(\mathcal E(W_r(R), \gamma))$ by the calculation
\begin{align*}
\colim_i \dR(\mathcal E_i(W_r(R_i), \gamma)) & \simeq \colim_i \left( \dR(\mathcal E_0(W_r(R_0), \gamma)) \otimes_{\Omega^*_{W_r(R_0), \gamma}} \Omega^*_{W_r(R_i), \gamma} \right) \\
& \simeq \dR(\mathcal E_0(W_r(R_0), \gamma)) \otimes_{\Omega^*_{W_r(R_0), \gamma}} \colim_i \left( \Omega^*_{W_r(R_i), \gamma} \right) \\
& \simeq \dR(\mathcal E_0(W_r(R_0), \gamma)) \otimes_{\Omega^*_{W_r(R_0), \gamma}} \Omega^*_{W_r(R), \gamma} \\
& \simeq \dR(\mathcal E(W_r(R), \gamma)).
\end{align*}
Since each $\iota^*_{r, i}$ is a morphism of dg-$\Omega^*_{W_r(R_i), \gamma}$-modules, their colimit is a morphism of dg-$\Omega^*_{W_r(R), \gamma}$-modules.  Finally, the compatibilities of the $\iota^*_r$ with quotient and Frobenius maps follow from the corresponding compatibilities of the $\iota^*_{r, i}$.
\end{construction}

\begin{proposition} \label{prop:colimit_of_dRW_complexes_is_dRW_complex}
In the situation above, if $\W\Omega^*_{R_i, \mathcal E_i}$ exists for each $i$, then the object
$$
M^* = \colim_{i, \DC_{\str}}(\W\Omega^*_{R_i, \mathcal E_i}) \in \dRWM_{R, \mathcal E}
$$
of Construction \ref{con:colimit_of_dRW_complexes_as_dRWM} is a saturated de Rham--Witt complex associated to $(R, \mathcal E)$.  In particular, we have
$$
\W_r \Omega^*_{R, \mathcal E} = \colim_i \W_r\Omega^*_{R_i, \mathcal E_i}
$$
for each $r$.
\end{proposition}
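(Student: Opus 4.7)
The plan is to verify the universal property directly: given any $N^* \in \dRWM_{R, \mathcal{E}}$, I will construct a unique morphism $M^* \to N^*$ in $\dRWM_{R, \mathcal{E}}$. The construction is via the universal properties of the individual $\W\Omega^*_{R_i, \mathcal{E}_i}$, assembled along the colimit.

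For each $i \in \mathcal{I}$, the structure map $h_i \colon \Spec R \to \Spec R_i$ yields via Construction \ref{construction:dRWM_functoriality_in_R} a de Rham--Witt module $(h_i)_* N^* \in \dRWM_{R_i, \mathcal{E}_i}$ whose underlying strict Dieudonn\'e complex is $N^*$. The universal property of $\W\Omega^*_{R_i, \mathcal{E}_i}$ then produces a unique morphism $\psi_i \colon \W\Omega^*_{R_i, \mathcal{E}_i} \to (h_i)_* N^*$ in $\dRWM_{R_i, \mathcal{E}_i}$, which I regard as a morphism of strict Dieudonn\'e complexes $\W\Omega^*_{R_i, \mathcal{E}_i} \to N^*$. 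The first task is to check that for every $\alpha \colon j \to i$ in $\mathcal{I}$, we have $\psi_j \circ \pi_\alpha = \psi_i$; this follows by the uniqueness part of the universal property of $\W\Omega^*_{R_i, \mathcal{E}_i}$, since both sides are morphisms from $\W\Omega^*_{R_i, \mathcal{E}_i}$ to $(h_i)_* N^* = (f_\alpha)_*(h_j)_* N^*$ in $\dRWM_{R_i, \mathcal{E}_i}$ (using that the two functorialities compose as expected, Lemma \ref{lem:functorialities_commute} and its analogue for $(-)_*$).

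The compatible family $(\psi_i)_i$ induces a unique map $\psi \colon M^* = \colim_{i, \DC_{\str}} \W\Omega^*_{R_i, \mathcal{E}_i} \to N^*$ in $\DC_{\str}$. To check that $\psi$ is a morphism in $\dRWM_{R, \mathcal{E}}$, I verify separately that it is $\W\Omega^*_R$-linear and compatible with the $\iota^*_r$ maps. The module-structure compatibility follows because the $\W\Omega^*_R$-action on $M^*$ is, by construction (via Lemma \ref{lem:WSat_module}), the strictification of the colimit action, so $\psi$ is $\W\Omega^*_R$-linear iff its restriction to each $\W\Omega^*_{R_i, \mathcal{E}_i}$ is $\W\Omega^*_{R_i}$-linear via restriction of scalars along $\W\Omega^*_{R_i} \to \W\Omega^*_R$, which is exactly the content of $\psi_i$ being a morphism in $\dRWM_{R_i, \mathcal{E}_i}$. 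For the $\iota^*_r$-compatibility, I use Proposition \ref{prop:TD_equivalence_compatible_with_filtered_colimits} to identify $\W_r M^* \simeq \colim_i \W_r \Omega^*_{R_i, \mathcal{E}_i}$ and the construction of $\iota^*_{r, M}$ as the colimit of the $\iota^*_{r, i}$; then the requisite square becomes the colimit of the squares witnessing that each $\psi_i$ is a morphism in $\dRWM_{R_i, \mathcal{E}_i}$.

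For uniqueness, suppose $\psi' \colon M^* \to N^*$ is another morphism in $\dRWM_{R, \mathcal{E}}$. Precomposing with the canonical map $\W\Omega^*_{R_i, \mathcal{E}_i} \to M^*$ yields morphisms $\psi'_i$ in $\DC_{\str}$, and one checks that each $\psi'_i$ is in fact a morphism in $\dRWM_{R_i, \mathcal{E}_i}$ to $(h_i)_* N^*$ (the module and $\iota^*_r$ compatibilities for $\psi'_i$ follow from those of $\psi'$ by construction of the $\W\Omega^*_R$-action and the $\iota^*_r$ on $M^*$). The uniqueness clause of the universal property of $\W\Omega^*_{R_i, \mathcal{E}_i}$ forces $\psi'_i = \psi_i$, and the universal property of the colimit then gives $\psi' = \psi$. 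The final assertion $\W_r \Omega^*_{R, \mathcal{E}} = \colim_i \W_r \Omega^*_{R_i, \mathcal{E}_i}$ is then immediate from Proposition \ref{prop:TD_equivalence_compatible_with_filtered_colimits}. The main technical obstacle is bookkeeping the compatibility between the two flavors of functoriality and the fact that strictification of a colimit of modules is computed as the colimit in $\DC_{\str}$ via Lemma \ref{lem:WSat_module} and Lemma \ref{lem:WSat_commutes_with_colimits}, but all of these are formal consequences of adjointness.
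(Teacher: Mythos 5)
Your proof is correct and follows the same strategy as the paper's: push $N^*$ forward along each $h_i$, use the universal property of $\W\Omega^*_{R_i, \mathcal E_i}$ to get compatible maps $\psi_i$, pass to the (strictified) colimit to get $\psi$, verify it is a morphism of de Rham--Witt modules via Proposition \ref{prop:TD_equivalence_compatible_with_filtered_colimits}, and prove uniqueness by reconstructing the $\psi_i$ from any given $\psi'$. You spell out a couple of steps the paper takes more quickly (the compatibility $\psi_j \circ \pi_\alpha = \psi_i$ and the separate check of $\W\Omega^*_R$-linearity), but these are the same argument.
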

\begin{proof}
Let $N^*$ be any test object in $\dRWM_{R, \mathcal E}$; we must show there exists a unique map $M^* \to N^*$ in $\dRWM_{R, \mathcal E}$.  We first show existence.  For each $i$, pushing forward along $h_i \colon X \to X_i$ gives an object $h_{i *} N^*$ in $\dRWM_{R_i, \mathcal E_i}$.  The universal property of $\W\Omega^*_{R_i, \mathcal E_i}$ then provides a unique map
$$\psi_i \colon \W\Omega^*_{R_i, \mathcal E_i} \to h_{i *} N^*$$
in $\dRWM_{R_i, \mathcal E_i}$.  Forgetting the de Rham--Witt module structure for the moment, these are in particular maps of strict $\W\Omega^*_{R_i}$-modules $\W\Omega^*_{R_i, \mathcal E_i} \to N^*$, compatible as $i$ varies.  So passing to the strictified colimit gives a map
$$\psi \colon M^* \to N^*$$
of strict $\W\Omega^*_R$-modules.  To prove that this is a map of de Rham--Witt modules over $(R, \mathcal E)$, consider the following diagram:
\[
\begin{tikzcd}
\dR(\mathcal E_i(W_r(R_i), \gamma)) \arrow[r] \arrow[d, "{\iota^*_{r, i}}"] \arrow[rrd, "{\iota^*_{r, h_{i*} N}}", bend left=40] & \dR(\mathcal E(W_r(R), \gamma)) \arrow[d, "\iota^*_{r, M}"] \arrow[rd, "{\iota^*_{r, N}}"] &          \\
{\W_r \Omega^*_{R_i, \mathcal E_i}} \arrow[r] \arrow[rr, "\W_r(\psi_i)"', bend right=20]                                     & \W_r M^* \arrow[r, "\W_r(\psi)"]                                                  & \W_r N^*
\end{tikzcd}
\]
The various $\iota^*$ maps here are the ones giving $\W\Omega^*_{R_i, \mathcal E_i}$, $M^*$, $N^*$, and $h_{i *} N^*$ the structure of de Rham--Witt modules.  The square on the left commutes by construction of $\iota^*_{r, M}$, and the top and bottom regions commute by the constructions of $\iota^*_{r, h_{i*} N}$ and $\psi$ respectively.  The commutativity of the outer region is precisely the statement that $\psi_i \colon \W\Omega^*_{R_i, \mathcal E_i} \to h_{i*} N^*$ is a morphism in $\dRWM_{R_i, \mathcal E_i}$.  If this is the case for all $i$, then since
$$
\dR(\mathcal E(W_r(R), \gamma)) = \colim_i \dR(\mathcal E_i(W_r(R_i), \gamma)),
$$
it follows that the small triangular region commutes as well.  But this is the statement that $\psi \colon M^* \to N^*$ is a morphism in $\dRWM_{R, \mathcal E}$.

To complete the proof, we must show that $\psi$ is the unique map $M^* \to N^*$ in $\dRWM_{R, \mathcal E}$.  But given any such $\psi$, we can reconstruct the diagram above, constructing $\psi_i$ from $\psi$ instead of the other way around.  The commutativity of the small triangle implies that of the outer region, so the $\psi_i$ are morphisms of de Rham--Witt modules.  The universal property of $\W\Omega^*_{R_i, \mathcal E_i}$ forces these to agree with our earlier maps $\psi_i$, which proves the uniqueness of $\psi$.
\end{proof}

%%%%%%%%%%%%%%%%%%%%%%%%%%%%%%%%%%%%
%%%%%%%%%%%%%%%%%%%%%%%%%%%%%%%%%%%%
%%%%%%%%%%%%%%%%%%%%%%%%%%%%%%%%%%%%

\section{Construction from a lift with Frobenius}
\label{ch:dRWLM}

\begin{para}
So far, we have only been able to construct our saturated de Rham--Witt complexes $\W\Omega^*_{R, \mathcal E}$ in the case of the trivial crystal $\mathcal E = \mathcal O$.  Our goal in this chapter is to construct $\W\Omega^*_{R, \mathcal E}$ for a general unit-root $F$-crystal $\mathcal E$, provided that $R$ comes equipped with a lift with Frobenius.
\end{para}

\begin{para}
Throughout this chapter, we fix $R$, $A$, $(A_r)_r$, and $\phi$ as in the Frobenius-lifted situation \ref{sit:lifted_situation}.  As usual, let $\mathcal E$ be a unit-root $F$-crystal on $\Spec R$.
\end{para}

\begin{para}
The main player in this chapter will be a variant of the category of de Rham--Witt modules, called the category $\dRWLM_{A, \mathcal E}$ of \emph{de Rham--Witt lift modules} over $(A, \mathcal E)$ (Definition \ref{def:dRW_lift_module}), which incorporates data coming from the lifts $A_r$ of $R$ rather than from the truncated Witt vectors $W_r(R)$.  The usefulness of this category comes from the fact that while $\dRWM_{R, \mathcal E}$ has no ``obvious'' initial object, $\dRWLM_{A, \mathcal E}$ does (Corollary \ref{cor:initial_dRWLM}), and the two categories turn out to be equivalent (Proposition \ref{prop:dRWM_dRWLM_equivalence}).  Thus, although the category $\dRWM_{R, \mathcal E}$ is more canonical (as it does not depend on a lift), $\dRWLM_{A, \mathcal E}$ is a useful setting in which to perform constructions.  The resulting construction can be viewed as a generalization of the formula $\W\Omega^*_R = \W\Sat(\widehat{\Omega}^*_A)$ of Construction \ref{con:BLM_lifted_construction}.
\end{para}

%%%%%%%%%%%%%%%%%%%%%%%%%%%%%%%%%%%%

\subsection{de Rham--Witt lift modules}
\label{sec:dRWLM}

\begin{para}
Recall that for any $r > 0$, the PD-structure $\brackets$ on $(p) \subset A_r$ makes $\Spec A_r$ an object of $\Cris(\Spec R/{W_r})$.  Then by Example \ref{ex:PD_dR_DC_mod}, $\mathcal E$ determines PD-de Rham complexes
\begin{align*}
\dR(\mathcal E(A_r, \brackets)) & = (\mathcal E(A_r) \otimes_{A_r} \Omega^*_{A_r, \brackets}, \nabla) \text{ for each $r$, and} \\
\widehat{\dR}(\mathcal E(A, \brackets)) & = \limarrow_r \dR(\mathcal E(A_r, \brackets)).
\end{align*}
In fact, we have $\Omega^*_{A_r, \brackets} = \Omega^*_{A_r}$ by Lemma \ref{lem:PD_compatibility_for_free_when_I_equals_p} under our hypotheses.  Accordingly, we will usually suppress the PD-structure and simply write $\dR(\mathcal E(A_r))$ or $\widehat{\dR}(\mathcal E(A))$.
\end{para}

We begin with the following observation.  If $M^*$ is a strict $\W\Omega^*_R$-module, then Corollary \ref{cor:PD_dR_module_lift} gives $\W_r M^*$ the structure of a dg-$\Omega^*_{A_r}$-module, so in particular it is also a graded $A_r$-module.  We have the following easy analogue of Lemma \ref{lem:construct_graded_not_dg_map_non-sheafy}:

\begin{lemma} \label{lem:construct_graded_not_dg_map_lift}
Suppose we are given a $\W\Omega^*_R$-module $M^*$ in $\DC_{\str}$, equipped with an $A_r$-linear map $\lambda \colon \mathcal E(A_r) \to \W_r M^0$.  Then:
\begin{enumerate}
\item[(a)] The map $\lambda$ extends uniquely to a map of graded left $\Omega^*_{A_r}$-modules (not necessarily compatible with differentials)
$$
\lambda^* \colon \dR(\mathcal E(A_r)) = \mathcal E(A_r) \otimes_{A_r} \Omega^*_{A_r} \to \W_r M^*.
$$
\item[(b)] If we are instead given a compatible family of maps $\lambda_r \colon \mathcal E(A_r) \to \W_r M^0$ for all $r$, then the resulting maps $\lambda_r^*$ are also compatible.
\end{enumerate}
\end{lemma}
\begin{proof}
Omitted.
\end{proof}

\begin{mydef} \label{def:dRW_lift_module}
By a \emph{de Rham--Witt lift module for $(A, \mathcal E)$} we will mean a collection of the following data:  a left $\W\Omega^*_R$-module in $\DC_{\str}$, equipped with $A_r$-linear maps
$$
\lambda_r \colon \mathcal E(A_r) \to \W_r M^0
$$
for each $r > 0$, such that:
\begin{enumerate}

\item For each $r > 0$, the diagram
\[
\begin{tikzcd}
\mathcal E(A_r) \arrow[d, two heads] \arrow[r, "\lambda_r"] & \W_r M^0 \arrow[d, two heads] \\
\mathcal E(A_{r-1}) \arrow[r, "\lambda_{r-1}"]         & \W_{r-1} M^0           
\end{tikzcd}
\]
commutes, where the two vertical maps are the quotient maps.

\item For each $r > 0$, the diagram
\[
\begin{tikzcd}
\mathcal E(A_r) \arrow[d, "F"'] \arrow[r, "\lambda_r"] & \W_r M^0 \arrow[d, "F"] \\
\mathcal E(A_{r-1}) \arrow[r, "\lambda_{r-1}"]         & \W_{r-1} M^0           
\end{tikzcd}
\]
commutes, where the left vertical map is given by applying \eqref{eqn:F_on_sections} to the Frobenius map $\Spec A_{r-1} \to \Spec A_r$.

\item The maps $\lambda_r^*$ of Lemma \ref{lem:construct_graded_not_dg_map_lift} are maps of complexes
$$
(\mathcal E(A_r) \otimes_{A_r} \Omega^*_{A_r}, \nabla) \to (\W_r M^*, d).
$$
\end{enumerate}
A morphism of de Rham--Witt lift modules over $(A, \mathcal E)$ is a morphism $f \colon M^* \to N^*$ of strict $\W\Omega^*_R$-modules such that $\lambda_{r, N} = \W_r(f^0) \circ \lambda_{r, M}$ for each $r$.  We call the resulting category $\dRWLM_{A, \mathcal E}$.
\end{mydef}

\begin{remark} \label{rmk:redefine_dRWLM}
As in Remark \ref{rmk:redefine_dRWM}, we could equivalently demand the data of the extension $\lambda^*_r$, rather than only $\lambda_r = \lambda^0_r$.  The translation works exactly as before (with the result stated as Alternative Definition \ref{def:dRWL_module_again} below), except for the following.  Recall that in Definition \ref{def:dRW_module}, it was not possible to demand that the $\iota^*_r$ be compatible with Frobenius in all degrees, as their source $\dR(\mathcal E(W_r(R), \gamma))$ does not carry a divided Frobenius operator.  Here, the source of $\lambda^*_r$ does have a divided Frobenius (Example \ref{ex:PD_dR_DC_mod}), so we could demand that $\lambda^*_r$ be compatible with it in all degrees.  But in fact this comes for free, as the following lemma shows.
\end{remark}

\begin{lemma} \label{lem:dRWLM_F_compatibility_in_one_vs_all_degrees}
Let $(M^*, (\lambda_r)_r)$ be a de Rham--Witt lift module over $(A, \mathcal E)$.  Then for each $r$, the diagram
\[
\begin{tikzcd}
\dR(\mathcal E(A_r)) \arrow[r, "\lambda^*_r"] \arrow[d, "F"'] & \W_r M^* \arrow[d, "F"] \\
\dR(\mathcal E(A_{r-1})) \arrow[r, "\lambda^*_{r-1}"]         & \W_{r-1} M^*,           
\end{tikzcd}
\]
commutes, where the left vertical map is the divided Frobenius composed with the quotient map $\dR(\mathcal E(A_r)) \to \dR(\mathcal E(A_{r-1}))$.
\end{lemma}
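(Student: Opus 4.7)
The plan is to reduce to the already-known compatibility in degree $0$ (axiom (2) of Definition \ref{def:dRW_lift_module}) by exploiting three pieces of semilinearity. First, the map $\lambda^*_r$ is $\Omega^*_{A_r}$-linear by its construction in Lemma \ref{lem:construct_graded_not_dg_map_lift}. Second, the divided Frobenius on $\dR(\mathcal E(A_r))$ is multiplicative over the divided Frobenius on $\Omega^*_{A_r}$ by Proposition \ref{prop:nablaF=pFnabla_PD}(2). Third, the tower Frobenius $F\colon \W_r M^* \to \W_{r-1} M^*$ is semilinear over the divided Frobenius $F\colon \Omega^*_{A_r} \to \Omega^*_{A_{r-1}}$; this will follow from the strict $\W\Omega^*_R$-module structure on $M^*$ combined with the fact that the $\Omega^*_{A_r}$-action on $\W_r M^*$ factors through the strictification map $\widehat{\Omega}^*_A \to \W\Omega^*_R$ (Corollary \ref{cor:PD_dR_module_lift}), which intertwines the two divided Frobenius operators.

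With these three compatibilities in hand, the diagram chase becomes routine. A general element of $\dR(\mathcal E(A_r))$ can be written as $\omega \cdot (e \otimes 1)$ with $e \in \mathcal E(A_r)$ and $\omega \in \Omega^*_{A_r}$. Going right-then-down will yield $F(\omega) \cdot F(\lambda_r(e))$ in $\W_{r-1} M^*$ (using semilinearity of the tower Frobenius), while going down-then-right will yield $F(\omega) \cdot \lambda_{r-1}(\phi_{\mathcal E}(e))$ (using Construction \ref{con:divided_Frobenius_on_PD_dR_E} together with the $\Omega^*_{A_{r-1}}$-linearity of $\lambda^*_{r-1}$). These agree provided $F(\lambda_r(e)) = \lambda_{r-1}(\phi_{\mathcal E}(e))$ in $\W_{r-1} M^0$, which is precisely axiom (2) of Definition \ref{def:dRW_lift_module}, upon identifying $\phi_{\mathcal E}(e)$ viewed modulo $p^{r-1}$ with the image of $e$ under the Frobenius map $\mathcal E(A_r) \to \mathcal E(A_{r-1})$ of Example \ref{ex:Frobenius_on_sections}(2).

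The main obstacle here is bookkeeping rather than substance: one must carefully verify the semilinearity of the tower Frobenius on $\W_r M^*$ over the divided Frobenius on $\Omega^*_{A_r}$. This will require chasing the $\Omega^*_{A_r}$-action through the factorization $\Omega^*_{A_r} \to \Omega^*_{W_r(R), \gamma} \to \W_r \Omega^*_R$ of Lemma \ref{lem:dR_lift_to_WrOmega}, and confirming that the Frobenius on $\W\Omega^*_R$ agrees, on the image of $\widehat{\Omega}^*_A$, with the divided Frobenius of Proposition \ref{prop:BLM_dR_as_a_DC}---which holds because $\widehat{\Omega}^*_A \to \W\Omega^*_R$ is precisely the strictification map and is therefore Frobenius-preserving by construction.
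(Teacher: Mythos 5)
Your proof is correct and matches the paper's argument: both reduce to the degree-zero axiom by writing a simple tensor $e \otimes \omega$ as $\omega \cdot (e \otimes 1)$ and exploiting the $\Omega^*_{A_r}$-linearity of $\lambda^*_r$, the multiplicativity of the divided Frobenius, and the Frobenius-compatibility of the module action. Your proposed chase through Lemma \ref{lem:dR_lift_to_WrOmega} and Corollary \ref{cor:PD_dR_module_lift} to establish semilinearity of the tower Frobenius is more circuitous than the paper's treatment---which simply lifts $\omega$ and $\lambda^*_r(e \otimes 1)$ to $\widehat{\Omega}^*_A$ and $M^*$ and invokes Frobenius-compatibility of the multiplication map $\widehat{\Omega}^*_A \otimes M^* \to M^*$---but both routes reach the same fact.
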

\begin{proof}
Consider a simple tensor $e \otimes \omega \in \mathcal E(A_r) \otimes_{A_r} \Omega^i_{A_r} = \dR(\mathcal E(A_r))$.  We calculate:
\begin{align*}
\lambda^*_{r-1}(F(e \otimes \omega)) & = \lambda^*_{r-1}(\phi_{\mathcal E}(e) \otimes F \omega) \\
& = F \omega \cdot \lambda_{r-1}(\phi_{\mathcal E}(e)) \\
& = F \omega \cdot F(\lambda_r(e)) \\
& = F(\omega \cdot \lambda_r(e)) \\
& = F(\lambda^*_r(e \otimes \omega)),
\end{align*}
where the third equality comes from the given compatibility in degree $0$, and the fourth follows (after lifting $\omega$ to an element of $\widehat{\Omega}^*_A$ and $\lambda^*_r(e \otimes 1)$ to an element of $M^*$) from the compatibility of the multiplication map $\widehat{\Omega}^*_A \otimes M^* \to M^*$ with Frobenius.
\end{proof}

\begin{altdef} \label{def:dRWL_module_again}
A \emph{de Rham--Witt lift module over $(A, \mathcal E)$} is a collection of the following data:  a left $\W\Omega^*_R$-module $M^*$ in $\DC_{\str}$, equipped with maps
$$
\lambda^*_r \colon \dR(\mathcal E(A_r)) \to \W_r M^*
$$
of dg-$\Omega^*_{A_r}$-modules for each $r$, such that the following diagrams commute for all $r$:
\[
\begin{tikzcd}
\dR(\mathcal E(A_r)) \arrow[d, two heads] \arrow[r, "\lambda^*_r"] & \W_r M^* \arrow[d, two heads] \\
\dR(\mathcal E(A_{r-1})) \arrow[r, "\lambda^*_{r-1}"]              & \W_{r-1} M^*                 
\end{tikzcd}
\]
and
\[
\begin{tikzcd}
\dR(\mathcal E(A_r)) \arrow[d, "F"'] \arrow[r, "\lambda^*_r"] & \W_r M^* \arrow[d, "F"] \\
\dR(\mathcal E(A_{r-1})) \arrow[r, "\lambda^*_{r-1}"]              & \W_{r-1} M^*                 
\end{tikzcd}
\]
A morphism of de Rham--Witt lift modules over $(A, \mathcal E)$ is a morphism $f \colon M^* \to N^*$ of strict $\W\Omega^*_R$-modules such that $\lambda^*_{r, N} = \W_r(f) \circ \lambda^*_{r, M}$ for each $r$.
\end{altdef}

\begin{remark} \label{rmk:initial_dRWLM}
Unlike the situation in Remark \ref{rmk:no_obvious_initial_dRWM}, we will be able to directly construct an initial object in $\dRWLM_{A, \mathcal E}$.  Namely, recall from Example \ref{ex:PD_dR_DC_mod} that $\widehat{\dR}(\mathcal E(A))$ is a Dieudonn\'e complex, and moreover a $\widehat{\Omega}^*_A$-module in $\DC$.  By Lemma \ref{lem:WSat_module}, it follows that $\W\Sat(\widehat{\dR}(\mathcal E(A)))$ has the structure of a strict module over $\W\Sat(\widehat{\Omega}^*_A) = \W\Omega^*_R$.  We claim that this can be made into a de Rham--Witt lift module, and that it is initial.  In fact we will prove more (Proposition \ref{prop:dRWLM_equals_comma_category}):  the category of de Rham--Witt lift modules is \emph{equivalent} to the category of strict $\W\Omega^*_R$-modules equipped with a $\W\Omega^*_R$-module map from $\W\Sat(\widehat{\dR}(\mathcal E(A)))$; that is, the coslice category
$$
\W\Sat(\widehat{\dR}(\mathcal E(A))) / \W\Omega^*_R \lmodstr.
$$
This latter category has the obvious initial object $(\W\Sat(\widehat{\dR}(\mathcal E(A))), \id)$.
\end{remark}

\begin{lemma} \label{lemma:comma_category_to_dRWLM}
Suppose $M^*$ is a strict $\W\Omega^*_R$-module equipped with a $\W\Omega^*_R$-module map
$$
f \colon \W\Sat(\widehat{\dR}(\mathcal E(A))) \to M^*.
$$
\begin{enumerate}
\item For each $r$, the composition
\begin{align} \label{eqn:map_constructing_lambda}
\widehat{\dR}(\mathcal E(A)) \overset{\rho}{\to} \W\Sat(\widehat{\dR}(\mathcal E(A))) \overset{f}{\to} M^* \twoheadrightarrow \W_r M^*
\end{align}
factors uniquely as
$$
\widehat{\dR}(\mathcal E(A)) \to \dR(\mathcal E(A_r)) \overset{\lambda^*_r}{\to} \W_r M^*.
$$
\item The maps $\lambda^*_r$ give $M^*$ the structure of a de Rham--Witt lift module for $(A, \mathcal E)$.
\item The construction $(M^*, f) \mapsto (M^*, (\lambda^*_r)_r)$ defines a functor $\Theta$ from the coslice category $\W\Sat(\widehat{\dR}(\mathcal E(A))) / \W\Omega^*_R \lmodstr$ to $\dRWLM_{\mathcal E, A}$.
\end{enumerate}

\begin{proof}
To prove (1), simply note\footnote{This is well-known, but not quite trivial---it requires a few homological lemmas.  The interested reader can find a proof in \cite[\textsection 2.9]{thesis}.} that the natural map $\widehat{\dR}(\mathcal E(A)) \to \dR(\mathcal E(A_r))$ is the quotient by $p^r$, and the target $\W_r M^*$ is killed by $p^r$.

For (2), to show that the $\lambda^*_r$ satisfy the conditions of Alternative Definition \ref{def:dRWL_module_again}, note that even the maps \eqref{eqn:map_constructing_lambda} are maps of complexes, compatible with the quotient and Frobenius maps on each side.  It follows that the induced maps $\lambda^*_r$ are as well.

Finally, for (3), suppose we are given a morphism
\[
\begin{tikzcd}
                    & \W\Sat(\widehat{\dR}(\mathcal E(A))) \arrow[ld, "f_M"'] \arrow[rd, "f_N"] &     \\
M^* \arrow[rr, "g"] &                                                                           & N^*
\end{tikzcd}
\]
in the coslice category, and let $\lambda^*_{r, M}$ and $\lambda^*_{r, N}$ be the maps constructed in part (1).  We claim that $g$ is a morphism of de Rham--Witt lift modules from $(M^*, (\lambda^*_{r, M})_r)$ to $(N^*, (\lambda^*_{r, N})_r)$.  All there is to prove is that $\lambda^*_{r, N} = \W_r(g) \circ \lambda^*_{r, M}$ for each $r$.  This follows by chasing the diagram below, where the commutativity of the outer region follows from that of the inner regions plus the surjectivity of the indicated map on the left.
\begin{equation} \label{eqn:diagram_in_comma_category_lemma}
\begin{tikzcd}
                                                                                                                     &                                                                     &                                                                           & M^* \arrow[dd, "g"] \arrow[r, two heads] & \W_r M^* \arrow[dd, "\W_r(g)"] \\
\dR(\mathcal E(A_r)) \arrow[rrrru, "{\lambda^*_{r, M}}", bend left=25] \arrow[rrrrd, "{\lambda^*_{r, N}}"', bend right=25] & \widehat{\dR}(\mathcal E(A)) \arrow[r, "\rho"] \arrow[l, two heads] & \W\Sat(\widehat{\dR}(\mathcal E(A))) \arrow[ru, "f_M"] \arrow[rd, "f_N"'] &                                          &                                \\
                                                                                                                     &                                                                     &                                                                           & N^* \arrow[r, two heads]                 & \W_r N^*                      
\end{tikzcd}
\end{equation}
\end{proof}
\end{lemma}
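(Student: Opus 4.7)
The plan is to prove the three parts in order, with each relying on straightforward diagram chases once we have the right factorizations.

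For part (1), I would simply invoke Proposition \ref{prop:completed_PD_dR_mod_p^r}, which identifies the natural map $\widehat{\dR}(\mathcal E(A)) \to \dR(\mathcal E(A_r))$ as the reduction modulo $p^r$. Since $\W_r M^*$ is annihilated by $p^r$ (as part of a strict Dieudonn\'e tower), the composition in \eqref{eqn:map_constructing_lambda} kills the image of $p^r$, and therefore factors uniquely through the quotient $\dR(\mathcal E(A_r))$. This gives the map $\lambda^*_r$, and its uniqueness is automatic from the surjectivity of $\widehat{\dR}(\mathcal E(A)) \twoheadrightarrow \dR(\mathcal E(A_r))$.

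For part (2), the key observation is that the composition \eqref{eqn:map_constructing_lambda} is already a morphism of complexes carrying a substantial amount of structure, and each piece of structure descends to $\dR(\mathcal E(A_r))$. Specifically: (i) the map $\rho$ is a morphism of $\widehat{\Omega}^*_A$-modules in $\DC$ by construction (via Lemma \ref{lem:WSat_module}), and $f$ is a $\W\Omega^*_R$-module map, so the composition is a morphism of $\widehat{\Omega}^*_A$-modules where $M^*$ is viewed via restriction along $\widehat{\Omega}^*_A \to \W\Omega^*_R$; (ii) after projecting to $\W_r M^*$ and factoring, we obtain a map of dg-$\Omega^*_{A_r}$-modules, using that the $\Omega^*_{A_r}$-module structure on $\W_r M^*$ from Corollary \ref{cor:PD_dR_module_lift} is exactly the one arising from restriction of scalars along $\widehat{\Omega}^*_A \to \W\Omega^*_R \twoheadrightarrow \W_r \Omega^*_R$ and the map $\Omega^*_{A_r} \to \W_r \Omega^*_R$ of Lemma \ref{lem:dR_lift_to_WrOmega}; (iii) compatibility of the $\lambda^*_r$ with the quotient maps follows from the fact that all of them are obtained from the same map $\widehat{\dR}(\mathcal E(A)) \to M^*$ by further projection; and (iv) compatibility in degree 0 with divided Frobenius follows because $\rho$ and $f$ are morphisms in $\DC_{\str}$, hence commute with $F$, and the quotient and degree-0 divided Frobenius on $\dR(\mathcal E(A_r))$ are induced from those on $\widehat{\dR}(\mathcal E(A))$.

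For part (3), given a morphism $g \colon (M^*, f_M) \to (N^*, f_N)$ in the coslice category (so $g \circ f_M = f_N$), I would verify the equality $\lambda^*_{r,N} = \W_r(g) \circ \lambda^*_{r,M}$ by a diagram chase: both maps are factorizations through the surjection $\widehat{\dR}(\mathcal E(A)) \twoheadrightarrow \dR(\mathcal E(A_r))$ of the same composition, namely $\widehat{\dR}(\mathcal E(A)) \overset{\rho}{\to} \W\Sat(\widehat{\dR}(\mathcal E(A))) \overset{f_N}{\to} N^* \twoheadrightarrow \W_r N^*$, once we use that $f_N = g \circ f_M$ and the naturality of the projection $N^* \twoheadrightarrow \W_r N^*$. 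Uniqueness of the factorization from part (1) then forces the equality, exhibiting $g$ as a morphism in $\dRWLM_{A, \mathcal E}$. Compatibility with identities and composition in the coslice category is immediate since $\Theta$ acts as the identity on the underlying morphisms of $\W\Omega^*_R$-modules.

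The whole proof is essentially formal; the only subtle point—and so the place where I would take the most care—is checking in part (2) that the $\lambda^*_r$ land in the correct categorical structure, namely dg-$\Omega^*_{A_r}$-modules rather than merely graded modules or dg-modules over some smaller ring. This hinges on the compatibility of the strictification map $\widehat{\Omega}^*_A \to \W\Omega^*_R$ with the various filtrations established in Lemma \ref{lem:dR_lift_to_WrOmega} and Corollary \ref{cor:PD_dR_module_lift}, and is the reason that the preparatory material of \textsection\ref{sec:filtrations_on_modules} was set up.
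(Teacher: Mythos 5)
Your proposal is correct and follows essentially the same approach as the paper's proof: part (1) is the same appeal to Proposition \ref{prop:completed_PD_dR_mod_p^r} plus $p^r$-torsion of $\W_r M^*$, and part (3) is the same uniqueness-of-factorization diagram chase. In part (2) you spell out in more detail the verification (via Lemma \ref{lem:WSat_module}, Lemma \ref{lem:dR_lift_to_WrOmega}, and Corollary \ref{cor:PD_dR_module_lift}) that the $\lambda^*_r$ are genuinely dg-$\Omega^*_{A_r}$-module maps, whereas the paper compresses this into a single sentence; this is a fair and useful elaboration of what the paper leaves implicit, not a different argument.
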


\begin{proposition} \label{prop:dRWLM_equals_comma_category}
The functor $\Theta \colon \W\Sat(\widehat{\dR}(\mathcal E(A))) / \W\Omega^*_R \lmodstr \to \dRWLM_{\mathcal E, A}$ of Lemma \ref{lemma:comma_category_to_dRWLM} is an equivalence of categories.
\end{proposition}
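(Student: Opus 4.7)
The plan is to exhibit an explicit quasi-inverse functor
$$\Psi \colon \dRWLM_{A, \mathcal E} \to \W\Sat(\widehat{\dR}(\mathcal E(A))) / \W\Omega^*_R \lmodstr.$$
Given a de Rham--Witt lift module $(M^*, (\lambda^*_r)_r)$, I would first form the inverse limit
$$\widetilde\lambda \colon \widehat{\dR}(\mathcal E(A)) = \limarrow_r \dR(\mathcal E(A_r)) \longrightarrow \limarrow_r \W_r M^* = M^*,$$
using that $M^*$ is strict (so the right-hand limit is literally $M^*$) and that the $\lambda^*_r$ are compatible with the quotient maps by Alternative Definition \ref{def:dRWL_module_again}. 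I would then verify that $\widetilde\lambda$ is a morphism in $\widehat{\Omega}^*_A \lmodDC$: it is a map of dg-$\widehat{\Omega}^*_A$-modules because each $\lambda^*_r$ is $\Omega^*_{A_r}$-dg-linear (with $\W_r M^*$ receiving its $\Omega^*_{A_r}$-module structure as in Corollary \ref{cor:PD_dR_module_lift}), and it commutes with the divided Frobenius in every degree by Lemma \ref{lem:dRWLM_F_compatibility_in_one_vs_all_degrees}. Invoking the adjunction of Lemma \ref{lem:WSat_module} then produces the unique extension
$$f_{\widetilde\lambda} \colon \W\Sat(\widehat{\dR}(\mathcal E(A))) \to M^*$$
in $\W\Omega^*_R \lmodstr$, and I define $\Psi(M^*, (\lambda^*_r)_r) := (M^*, f_{\widetilde\lambda})$. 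On morphisms, $\Psi$ sends $g \colon M^* \to N^*$ (as a $\W\Omega^*_R$-module map) to itself, viewed in the coslice category; the required commutativity $g \circ f_{\widetilde\lambda_M} = f_{\widetilde\lambda_N}$ follows by taking the limit of $\W_r(g) \circ \lambda^*_{r, M} = \lambda^*_{r, N}$ and then invoking the uniqueness clause in Lemma \ref{lem:WSat_module}.

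Next, I would verify that $\Psi$ and $\Theta$ are mutually inverse. For $\Psi \circ \Theta$, given $(M^*, f)$, Lemma \ref{lemma:comma_category_to_dRWLM} constructs $\lambda^*_r$ as the factorization of $f \circ \rho$ through $\dR(\mathcal E(A_r)) = \widehat{\dR}(\mathcal E(A))/p^r$; taking the inverse limit of these $\lambda^*_r$ therefore recovers $f \circ \rho$, and the unique $\W\Omega^*_R$-linear extension to $\W\Sat(\widehat{\dR}(\mathcal E(A)))$ is $f$ itself. For $\Theta \circ \Psi$, starting from $(M^*, (\lambda^*_r)_r)$, $\Psi$ produces $f_{\widetilde\lambda}$ whose composition with $\rho$ equals $\widetilde\lambda = \limarrow_s \lambda^*_s$; reducing this modulo $p^r$ recovers the original $\lambda^*_r$. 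Both identifications are strict, so $\Psi \circ \Theta$ and $\Theta \circ \Psi$ are the respective identity functors.

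The main obstacle I foresee is the verification that $\widetilde\lambda$ is a morphism in $\widehat{\Omega}^*_A \lmodDC$, rather than merely a map of complexes of abelian groups. The dg-$\widehat{\Omega}^*_A$-module compatibility transfers from the finite levels to the limit without issue, but compatibility with the divided Frobenius in positive degrees is not demanded directly by Definition \ref{def:dRW_lift_module} (which only requires it in degree $0$); it must be extracted from the graded $\Omega^*_{A_r}$-linearity using Lemma \ref{lem:dRWLM_F_compatibility_in_one_vs_all_degrees}. Once this is in hand, all remaining verifications amount to careful but routine applications of the universal property of the strictification functor.
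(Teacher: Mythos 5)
Your proof is correct and follows essentially the same route as the paper: the key moves — taking $\limarrow_r \lambda^*_r$ to obtain a map $\widehat{\dR}(\mathcal E(A)) \to M^*$ in $\widehat{\Omega}^*_A \lmodDC$ (using Lemma \ref{lem:dRWLM_F_compatibility_in_one_vs_all_degrees} for the Frobenius compatibility in positive degrees), then invoking the adjunction of Lemma \ref{lem:WSat_module} — are exactly what the paper does. The only difference is presentational: you package this as an explicit quasi-inverse $\Psi$, whereas the paper proves essential surjectivity and full faithfulness directly; the two are logically equivalent, and your identification of the degree-zero-versus-all-degrees Frobenius subtlety is precisely the point the paper handles via Remark \ref{rmk:redefine_dRWLM}.
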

\begin{proof}
We will show first that it is essentially surjective and then that it is fully faithful.  Suppose $(M^*, (\lambda^*_r)_r)$ is a de Rham--Witt lift module over $(A, \mathcal E)$, as in Alternative Definition \ref{def:dRWL_module_again}.  In particular, the maps $\lambda^*_r \colon \dR(\mathcal E(A_r)) \to \W_r M^*$ are maps of dg-$\Omega^*_{A_r}$-modules, compatible with the quotient and Frobenius maps on both sides as $r$ varies.  Passing to the limit, we get a map $\lambda^* \colon \widehat{\dR}(\mathcal E(A)) \to M^*$ of dg-$\widehat{\Omega}^*_A$-modules, which is again compatible with $F$.  By Lemma \ref{lem:description_of_algebras_and_modules_in_DC_and_DCstr}, this is a map in $\widehat{\Omega}^*_A \lmodDC$.  Thus by Lemma \ref{lem:WSat_module}, its strictification
$$
\W\Sat(\lambda^*) \colon \W\Sat(\widehat{\dR}(\mathcal E(A))) \to M^*
$$
is a map in $\W\Omega^*_R \lmodstr$.
Thus we have constructed an object $(M^*, \W\Sat(\lambda^*))$ in the coslice category $\W\Sat(\widehat{\dR}(\mathcal E(A))) / \W\Omega^*_R \lmodstr$, and one can check that the functor $\Theta$
sends this to $(M^*, (\lambda^*_r)_r)$.

To prove that $\Theta$ is fully faithful, note that it acts as the identity on the underlying strict $\W\Omega^*_R$-modules and the morphisms thereof, and the morphisms on both sides are morphisms $M^* \to N^*$ of strict $\W\Omega^*_R$-modules satisfying some extra conditions.  Namely, a morphism $g \colon M^* \to N^*$ in $\W\Omega^*_R \lmodstr$ is a morphism of de Rham--Witt lift modules if we have $\lambda^*_{r, N} = \W_r(g) \circ \lambda^*_{r, M}$ for each $r$, and it is a morphism in the coslice category if $f_N = g \circ f_M$, where $f_M$ and $f_N$ are the given maps from $\W\Sat(\widehat{\dR}(\mathcal E(A)))$.  Thus it suffices to show that these conditions are equivalent to each other.  We showed one implication already in Lemma \ref{lemma:comma_category_to_dRWLM}, and the reverse implication follows by chasing the same diagram \eqref{eqn:diagram_in_comma_category_lemma} in reverse:  if the outer region commutes, then the two maps $\widehat{\dR}(\mathcal E(A)) \to \W_r N^*$ agree for all $r$, which implies the two maps $\widehat{\dR}(\mathcal E(A)) \to N^*$ agree, and thus $f_N = g \circ f_M$ by the universal property of $\rho$.
\end{proof}

\begin{corollary} \label{cor:initial_dRWLM}
The category $\dRWLM_{A, \mathcal E}$ admits an initial object, whose underlying strict $\W\Omega^*_R$-module is $\W\Sat(\widehat{\dR}(\mathcal E(A)))$.
\end{corollary}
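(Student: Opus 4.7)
The plan is to deduce this immediately from the equivalence of categories
$$
\Theta \colon \W\Sat(\widehat{\dR}(\mathcal E(A))) / \W\Omega^*_R \lmodstr \overset{\sim}{\to} \dRWLM_{A, \mathcal E}
$$
established in Proposition \ref{prop:dRWLM_equals_comma_category}. Equivalences of categories preserve initial objects, so it suffices to exhibit an initial object in the coslice category and then compute its image under $\Theta$.

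For the first step, the coslice category $\W\Sat(\widehat{\dR}(\mathcal E(A))) / \W\Omega^*_R \lmodstr$ has the tautological initial object
$$
(\W\Sat(\widehat{\dR}(\mathcal E(A))), \: \id),
$$
since by definition a morphism from this object to any $(M^*, f)$ in the coslice category is precisely the datum of a $\W\Omega^*_R$-linear map $g \colon \W\Sat(\widehat{\dR}(\mathcal E(A))) \to M^*$ such that $g \circ \id = f$, and there is a unique such map, namely $g = f$.

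For the second step, I would compute $\Theta(\W\Sat(\widehat{\dR}(\mathcal E(A))), \id)$ using the recipe of Lemma \ref{lemma:comma_category_to_dRWLM}: the associated maps $\lambda^*_r$ are obtained by factoring
$$
\widehat{\dR}(\mathcal E(A)) \overset{\rho}{\to} \W\Sat(\widehat{\dR}(\mathcal E(A))) \twoheadrightarrow \W_r \W\Sat(\widehat{\dR}(\mathcal E(A)))
$$
through the quotient $\widehat{\dR}(\mathcal E(A)) \twoheadrightarrow \dR(\mathcal E(A_r))$ of Proposition \ref{prop:completed_PD_dR_mod_p^r}. In particular, the underlying strict $\W\Omega^*_R$-module of this de Rham--Witt lift module is $\W\Sat(\widehat{\dR}(\mathcal E(A)))$, as claimed.

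Since there is essentially no obstacle here beyond invoking the previously established equivalence, the ``main'' content is entirely packed into Proposition \ref{prop:dRWLM_equals_comma_category}; the present corollary is a purely formal consequence. The only subtlety worth flagging is that one must record not only the isomorphism class of the initial object but also the identification of its underlying strict $\W\Omega^*_R$-module, which is immediate from the explicit description of $\Theta$ (which acts as the identity on underlying strict $\W\Omega^*_R$-modules).
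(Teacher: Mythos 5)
Your proof is correct and follows exactly the same route as the paper: exhibit the tautological initial object of the coslice category and push it through the equivalence $\Theta$ of Proposition \ref{prop:dRWLM_equals_comma_category}, noting that $\Theta$ preserves the underlying strict $\W\Omega^*_R$-module. You have merely spelled out the verification that the coslice category's initial object is indeed $(\W\Sat(\widehat{\dR}(\mathcal E(A))), \id)$ and recalled the explicit recipe for $\Theta$, which the paper treats as immediate.
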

\begin{proof}
The coslice category $\W\Sat(\widehat{\dR}(\mathcal E(A))) / \W\Omega^*_R \lmodstr$ clearly has the initial object
$$(\W\Sat(\widehat{\dR}(\mathcal E(A))), \id).$$
The equivalence $\Theta$ sends this to an initial object of $\dRWLM_{A, \mathcal E}$ with the same underlying strict $\W\Omega^*_R$-module.
\end{proof}

%%%%%%%%%%%%%%%%%%

\subsection{Comparing $\dRWM_{R, \mathcal E}$ and $\dRWLM_{A, \mathcal E}$}
\label{sec:comparison_dRWM_to_dRWLM}

We maintain the setup and notation of the previous section.

\begin{construction} \label{constr:dRWM_to_dRWLM}
Let $(M^*, (\iota^*_r)_r)$ be a de Rham--Witt module for $(R, \mathcal E)$, as in Alternative Definition \ref{def:dRW_module_again}.  We can give the $\W\Omega^*_R$-module $M^*$ the structure of a de Rham--Witt lift module over $(A, \mathcal E)$ as follows.  For each $r$, let
$$
h_r \colon (\Spec R \hookrightarrow \Spec W_r(R), \gamma) \to (\Spec R \hookrightarrow \Spec A_r, \brackets)
$$
be the PD-morphism which is induced by the canonical $\delta$-ring map $A \to W(R)$, and let
$$
\theta^*_r \colon \dR(\mathcal E(A_r)) \to \dR(\mathcal E(W_r(R), \gamma))
$$
be the map of dg-$\Omega^*_{A_r}$-modules induced by functoriality.
Then let $\lambda^*_r$ be the composition
$$
\lambda^*_r \colon \dR(\mathcal E(A_r)) \overset{\theta^*_r}{\to} \dR(\mathcal E(W_r(R), \gamma)) \overset{\iota^*_r}{\to} \W_r M^*,
$$
\end{construction}

\begin{proposition} \label{prop:dRWM_dRWLM_functor}
In the situation of Construction \ref{constr:dRWM_to_dRWLM}, $(M^*, (\lambda^*_r)_r)$ is a de Rham--Witt lift module over $(A, \mathcal E)$.
Moreover, the map sending $(M^*, (\iota^*_r)_r)$ to $(M^*, (\lambda^*_r)_r)$ and acting as the identity on morphisms is a functor $\Psi_{A, \mathcal E} \colon \dRWM_{R, \mathcal E} \to \dRWLM_{A, \mathcal E}$.
\begin{proof}
We must show that each $\lambda^*_r$ is a map of dg-$\Omega^*_{A_r}$-modules, and that they are compatible with quotient and Frobenius maps as $r$ varies.  In view of Lemmas \ref{lem:construct_graded_not_dg_map_lift} and \ref{lem:dRWLM_F_compatibility_in_one_vs_all_degrees}, it suffices to prove these compatibilities in degree $0$.

For each $r$, $\theta^*_r$ is a map of dg-$\Omega^*_{A_r}$-modules by construction, while $\iota^*_r$ is by definition a map of dg-$\Omega^*_{W_r(R), \gamma}$-modules, and is in particular a map of dg-$\Omega^*_{A_r}$-modules via the natural map $\Omega^*_{A_r} \to \Omega^*_{W_r(R), \gamma}$.  Therefore each $\lambda^*_r$ is a map of dg-$\Omega^*_{A_r}$-modules.
\\
\\
The maps $\theta^0_r$ are compatible with quotient and Frobenius maps, by evaluating $\mathcal E$ on the commutative squares
\[
\begin{tikzcd}
A_r \arrow[r] \arrow[d, two heads] & W_r(R) \arrow[d, two heads] \\
A_{r-1} \arrow[r]                  & W_{r-1}(R)                
\end{tikzcd}
\text{\quad and \quad}
\begin{tikzcd}
A_r \arrow[r] \arrow[d, "F"'] & W_r(R) \arrow[d, "F"] \\
A_{r-1} \arrow[r]                  & W_{r-1}(R)                
\end{tikzcd}
\]
of PD-thickenings.  The $\iota^0_r$ are compatible with quotient and Frobenius maps by definition, so it follows that the $\lambda^0_r$ are as well.  This proves that $(M^*, (\lambda^*_r)_r)$ is a de Rham--Witt lift module over $(A, \mathcal E)$.

To prove that $\Psi_{A, \mathcal E}$ is a functor, suppose we are given a map $f \colon M^* \to N^*$ which defines a morphism $(M^*, (\iota^*_{r, M})_r) \to (N^*, (\iota^*_{r, N})_r)$ in $\dRWM_{R, \mathcal E}$.  We must show that the same map $f$ defines a morphism $(M^*, (\lambda^*_{r, M})_r) \to (N^*, (\lambda^*_{r, N})_r)$ in $\dRWLM_{A, \mathcal E}$; that is, we have $\lambda^*_{r, N} = \W_r(f) \circ \lambda^*_{r, M}$ for each $r$.  This follows from chasing the diagram
\begin{equation} \label{diagram:dRWM_dRWLM_morphisms}
\begin{tikzcd}
                                                                                                                                         &                                                                                        & \W_r M^* \arrow[dd, "\W_r(f)"] \\
\dR(\mathcal E(A_r)) \arrow[r, "\theta^*_r"] \arrow[rru, "{\lambda^*_{r, M}}", bend left] \arrow[rrd, "{\lambda^*_{r, N}}"', bend right] & \dR(\mathcal E(W_r(R), \gamma)) \arrow[ru, "{\iota^*_{r, M}}"] \arrow[rd, "{\iota^*_{r, N}}"'] &                                \\
                                                                                                                                         &                                                                                        & \W_r N^*,                      
\end{tikzcd}
\end{equation}
where the commutativity of all inner regions implies that of the outer region.
\end{proof}
\end{proposition}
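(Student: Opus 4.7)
The plan is to verify directly that $(M^*, (\lambda^*_r)_r)$ satisfies all conditions of Alternative Definition \ref{def:dRWL_module_again}, and then check that the assignment respects morphisms. All necessary compatibilities will be obtained by composing known compatibilities of $\theta^*_r$ and $\iota^*_r$.

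First, I would show that each $\lambda^*_r$ is a map of dg-$\Omega^*_{A_r}$-modules. By Remark \ref{rmk:PD_dR_functorialities} applied to the PD-morphism $h_r$ of Lemma \ref{lift_to_witt}, the map $\theta^*_r \colon \dR(\mathcal E(A_r)) \to \dR(\mathcal E(W_r(R), \gamma))$ is a map of dg-$\Omega^*_{A_r}$-modules. On the other hand, $\iota^*_r$ is a map of dg-$\Omega^*_{W_r(R), \gamma}$-modules by hypothesis, and Lemma \ref{lem:dR_lift_to_WrOmega}(2) together with the map $h_r$ makes $\W_r M^*$ a dg-$\Omega^*_{A_r}$-module via restriction of scalars, compatibly with $\iota^*_r$. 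The composition $\lambda^*_r = \iota^*_r \circ \theta^*_r$ is therefore a map of dg-$\Omega^*_{A_r}$-modules.

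Next, for the compatibility squares, I would reduce to checking them in degree $0$ via Lemmas \ref{lem:construct_graded_not_dg_map_lift} and \ref{lem:dRWLM_F_compatibility_in_one_vs_all_degrees}. For the quotient compatibility, applying the functor $\mathcal E$ to the commutative square of PD-thickenings
\[
\begin{tikzcd}
A_r \arrow[r, "h_r"] \arrow[d, two heads] & W_r(R) \arrow[d, two heads] \\
A_{r-1} \arrow[r, "h_{r-1}"]              & W_{r-1}(R)
\end{tikzcd}
\]
shows that the $\theta^0_r$ are compatible with reductions; combined with the corresponding compatibility of the $\iota^0_r$ built into the definition of $\dRWM_{R, \mathcal E}$, this yields the desired compatibility of the $\lambda^0_r$. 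For Frobenius compatibility, Lemma \ref{lift_to_witt}(3) gives that $h_r$ intertwines the Frobenius endomorphisms of $(A_r)_r$ and $(W_r(R))_r$, so functoriality of $\mathcal E$ again produces a commutative square for $\theta^0_r$; composing with the corresponding square for $\iota^0_r$ finishes the argument.

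For the functoriality of $\Psi_{A, \mathcal E}$, given a morphism $f \colon M^* \to N^*$ in $\dRWM_{R, \mathcal E}$, I need to check that $f$ also defines a morphism in $\dRWLM_{A, \mathcal E}$; that is, $\lambda^*_{r, N} = \W_r(f) \circ \lambda^*_{r, M}$ for each $r$. This follows by precomposing the relation $\iota^*_{r, N} = \W_r(f) \circ \iota^*_{r, M}$ with $\theta^*_r$, which is a routine diagram chase. The identity-on-morphisms property and preservation of composition are then immediate from the definitions.

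I do not expect any deep obstacle here: the whole proof is a composition of pre-established functorial compatibilities (Remark \ref{rmk:PD_dR_functorialities} and Lemma \ref{lift_to_witt}), with the mild book-keeping concern being to reduce degree-$*$ statements to degree-$0$ using the lemmas cited above so that the argument stays concrete.
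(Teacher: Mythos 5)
Your proposal is correct and follows essentially the same route as the paper's own proof: you reduce to degree $0$ using the same two lemmas, obtain the $\theta^0_r$ compatibilities by evaluating $\mathcal E$ on the same commutative squares of PD-thickenings (via Lemma \ref{lift_to_witt}), and prove functoriality by the same diagram chase precomposing with $\theta^*_r$. Your explicit invocation of Lemma \ref{lem:dR_lift_to_WrOmega}(2) to identify the two dg-$\Omega^*_{A_r}$-module structures on $\W_r M^*$ is a slightly more careful spelling-out of the paper's remark that $\iota^*_r$ restricts along $\Omega^*_{A_r} \to \Omega^*_{W_r(R), \gamma}$, but it is the same argument.
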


In fact we can prove more:

\begin{proposition} \label{prop:dRWM_dRWLM_equivalence}
The functor $\Psi_{A, \mathcal E} \colon \dRWM_{R, \mathcal E} \to \dRWLM_{A, \mathcal E}$ is an equivalence of categories.
\end{proposition}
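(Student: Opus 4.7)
The plan is to construct a quasi-inverse $\Phi_{A, \mathcal E} \colon \dRWLM_{A, \mathcal E} \to \dRWM_{R, \mathcal E}$ using the base-change adjunction for dg-modules along $h_r \colon A_r \to W_r(R)$. The crux of the argument is the identification
$$
\dR(\mathcal E(W_r(R), \gamma)) \;\simeq\; \Omega^*_{W_r(R), \gamma} \otimes_{\Omega^*_{A_r}} \dR(\mathcal E(A_r))
$$
of dg-$\Omega^*_{W_r(R), \gamma}$-modules, which follows from Remark \ref{rmk:PD_dR_functorialities} applied to the PD-morphism $h_r$ (together with the crystal identification $\mathcal E(W_r(R)) \simeq \mathcal E(A_r) \otimes_{A_r} W_r(R)$). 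Simultaneously, Corollary \ref{cor:PD_dR_module_lift}(3) tells us that for any strict $\W\Omega^*_R$-module $M^*$, the dg-$\Omega^*_{A_r}$-module structure on $\W_r M^*$ factors through the canonical dg-$\Omega^*_{W_r(R), \gamma}$-module structure. Therefore the standard extension-of-scalars adjunction for dg-modules furnishes a natural bijection between $\Omega^*_{A_r}$-dg-module maps $\lambda^*_r \colon \dR(\mathcal E(A_r)) \to \W_r M^*$ and $\Omega^*_{W_r(R), \gamma}$-dg-module maps $\iota^*_r \colon \dR(\mathcal E(W_r(R), \gamma)) \to \W_r M^*$, with $\lambda^*_r = \iota^*_r \circ \theta^*_r$ in one direction and $\iota^*_r$ the unique extension in the other.

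For essential surjectivity, given $(M^*, (\lambda^*_r)_r) \in \dRWLM_{A, \mathcal E}$, I define $\iota^*_r$ by this adjunction. Compatibility of the $\iota^*_r$ with the quotient maps as $r$ varies is immediate from naturality (since the same property holds for the $\lambda^*_r$ and the base-change identification is functorial in $r$). Compatibility with the Frobenius in degree zero reduces to checking that on an element $e_A \otimes w \in \mathcal E(A_r) \otimes_{A_r} W_r(R) = \mathcal E(W_r(R))$ one has $F \iota^0_r(e_A \otimes w) = \iota^0_{r-1}(F(e_A \otimes w))$; expanding both sides using $W_r(R)$-linearity, the multiplicativity of the Frobenius on $\W_r M^0 \to \W_{r-1} M^0$, and the Frobenius-equivariance of $h_r$ from Lemma \ref{lift_to_witt}(3), both sides reduce to $F(w) \cdot \lambda^0_{r-1}(\phi_{\mathcal E}(e_A))$. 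That $\Psi_{A, \mathcal E}$ applied to the resulting object recovers the original $(M^*, (\lambda^*_r)_r)$ is immediate from the construction.

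For full faithfulness, note that both $\Psi_{A, \mathcal E}$ and the inverse construction act as the identity on the underlying strict $\W\Omega^*_R$-modules and on morphisms thereof. It therefore suffices to check that a morphism $f \colon M^* \to N^*$ of strict $\W\Omega^*_R$-modules satisfies $\W_r(f) \circ \iota^*_{r, M} = \iota^*_{r, N}$ for all $r$ if and only if it satisfies $\W_r(f) \circ \lambda^*_{r, M} = \lambda^*_{r, N}$ for all $r$. The forward implication is the chase already performed in diagram \eqref{diagram:dRWM_dRWLM_morphisms}; the reverse follows by observing that $\W_r(f) \circ \iota^*_{r, M}$ is an $\Omega^*_{W_r(R), \gamma}$-dg-module map extending $\W_r(f) \circ \lambda^*_{r, M} = \lambda^*_{r, N}$, and hence equals $\iota^*_{r, N}$ by the uniqueness part of the base-change adjunction.

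The main obstacle is the Frobenius-compatibility check in degree zero, which is where the specific properties of the maps $h_r$ (not just their $A_r$- or $W_r(R)$-linearity) come into play. Once this is settled, the remainder is a formal matter of unwinding base-change adjunctions. A consequence worth flagging is that combining this equivalence with Corollary \ref{cor:initial_dRWLM} yields the existence of $\W\Omega^*_{R, \mathcal E}$ in the Frobenius-lifted setting, with underlying strict Dieudonn\'e complex $\W\Sat(\widehat{\dR}(\mathcal E(A)))$.
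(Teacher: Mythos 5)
Your proposal is correct and follows essentially the same route as the paper: both proofs hinge on the base-change isomorphism $\dR(\mathcal E(W_r(R), \gamma)) \simeq \Omega^*_{W_r(R), \gamma} \otimes_{\Omega^*_{A_r}} \dR(\mathcal E(A_r))$ of Remark \ref{rmk:PD_dR_functorialities}, on Corollary \ref{cor:PD_dR_module_lift}(3) for the factorization of module structures, and on the uniqueness in the extension-of-scalars adjunction for full faithfulness. The only difference is presentational: you spell out the degree-zero Frobenius compatibility check explicitly (and correctly flag the role of the Frobenius-equivariance of $h_r$ from Lemma \ref{lift_to_witt}(3)), where the paper asserts that it "follows from the same compatibilities of $\lambda^*_r$" and leaves the semilinearity bookkeeping to the reader.
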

\begin{proof}
We will show that $\Psi_{A, \mathcal E}$ is essentially surjective and fully faithful.  For essential surjectivity, suppose $(M^*, (\lambda^*_r)_r)$ is an object of $\dRWLM_{A, \mathcal E}$.  In particular, each $\lambda^*_r$ is a morphism $\dR(\mathcal E(A_r)) \to \W_r M^*$ of dg-$\Omega^*_{A_r}$-modules.  But Corollary \ref{cor:PD_dR_module_lift} shows that the dg-$\Omega^*_{A_r}$-module structure of $\W_r M^*$ factors through its dg-$\Omega^*_{W_r(R), \gamma}$-module structure, so we can factor $\lambda^*_r$ as
\[
\dR(\mathcal E(A_r)) \to \Omega^*_{W_r(R), \gamma} \otimes_{\Omega^*_{A_r}} \dR(\mathcal E(A_r)) \simeq \dR(\mathcal E(W_r(R), \gamma)) \overset{\iota^*_r}{\to} \W_r M^*,
\]
where the isomorphism comes from Remark \ref{rmk:PD_dR_functorialities}, and $\iota^*_r$ is a morphism of dg-$\Omega^*_{W_r(R), \gamma}$-modules.  The compatibilities of the $\iota^*_r$ with quotient and Frobenius maps follow from the same compatibilities of $\lambda^*_r$.  Thus $(M^*, (\iota^*_r)_r)$ is a de Rham--Witt module over $(R, \mathcal E)$, and by construction we have $\Psi_{A, \mathcal E}(M^*, (\iota^*_r)_r) = (M^*, (\lambda^*_r)_r)$.

Since $\Psi_{A, \mathcal E}$ acts as the identity on morphisms, it is clearly faithful.  To prove that it is full, we must only chase diagram \ref{diagram:dRWM_dRWLM_morphisms} in reverse:  recall that a morphism $f \colon M^* \to N^*$ in $\W\Omega^*_R \lmodstr$ defines a morphism in $\dRWM_{R, \mathcal E}$ if and only if the small triangle commutes, and it defines a morphism in $\dRWLM_{A, \mathcal E}$ if and only if the outer region commutes.  The universal property of
$$
\Omega^*_{W_r(R), \gamma} \otimes_{\Omega^*_{A_r}} \dR(\mathcal E(A_r)) \simeq \dR(\mathcal E(W_r(R), \gamma))
$$
implies that these two conditions are in fact equivalent.
\end{proof}

\begin{corollary} \label{cor:lifted_construction_as_dRWM}
Let $(M^*, (\lambda_r)_r)$ denote the initial object $\W\Sat(\limarrow_r \dR(\mathcal E(A_r)))$ of $\dRWLM_{A, \mathcal E}$.  Then $M^*$ can be endowed with maps $(\iota_r)_r$ making it an initial object of $\dRWM_{R, \mathcal E}$; that is, a saturated de Rham--Witt complex associated to $(R, \mathcal E)$.
\end{corollary}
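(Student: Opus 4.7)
The plan is to deduce this essentially as a formal consequence of the equivalence of categories established in Proposition \ref{prop:dRWM_dRWLM_equivalence}. Since equivalences preserve and reflect initial objects, we just need to take a preimage of the given initial object of $\dRWLM_{A, \mathcal E}$ under $\Psi_{A, \mathcal E}$, and this preimage will automatically be an initial object of $\dRWM_{R, \mathcal E}$.

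More concretely, I would first invoke the essential surjectivity half of the proof of Proposition \ref{prop:dRWM_dRWLM_equivalence}: given $(M^*, (\lambda^*_r)_r) \in \dRWLM_{A, \mathcal E}$, one constructs $\iota^*_r \colon \dR(\mathcal E(W_r(R), \gamma)) \to \W_r M^*$ by factoring $\lambda^*_r$ through the base-change isomorphism
\[
\Omega^*_{W_r(R), \gamma} \otimes_{\Omega^*_{A_r}} \dR(\mathcal E(A_r)) \simeq \dR(\mathcal E(W_r(R), \gamma)),
\]
using that the dg-$\Omega^*_{A_r}$-module structure on $\W_r M^*$ factors through its dg-$\Omega^*_{W_r(R), \gamma}$-module structure (Corollary \ref{cor:PD_dR_module_lift}(3)). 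Applied to $(M^*, (\lambda_r)_r) = \W\Sat(\limarrow_r \dR(\mathcal E(A_r)))$, this produces the desired maps $(\iota^*_r)_r$, making $M^*$ into a de Rham--Witt module over $(R, \mathcal E)$ whose image under $\Psi_{A, \mathcal E}$ is (canonically isomorphic to) the given initial $\dRWLM$ object.

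Finally, to conclude that $(M^*, (\iota^*_r)_r)$ is initial in $\dRWM_{R, \mathcal E}$, I would argue as follows. Given any object $N^* \in \dRWM_{R, \mathcal E}$, the existence and uniqueness of a morphism $M^* \to N^*$ in $\dRWM_{R, \mathcal E}$ translates, under the fully faithful functor $\Psi_{A, \mathcal E}$, to the existence and uniqueness of a morphism $\Psi_{A, \mathcal E}(M^*) \to \Psi_{A, \mathcal E}(N^*)$ in $\dRWLM_{A, \mathcal E}$, which holds by Corollary \ref{cor:initial_dRWLM}. There is no real obstacle here: all the substantive work was carried out in Proposition \ref{prop:dRWM_dRWLM_equivalence} and Corollary \ref{cor:initial_dRWLM}, and this corollary is just the transport-of-structure statement obtained by combining them.
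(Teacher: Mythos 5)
Your proof is correct and follows the approach the paper intends: Corollary \ref{cor:lifted_construction_as_dRWM} is stated in the paper without an explicit proof because it is an immediate consequence of combining the equivalence $\Psi_{A, \mathcal E}$ of Proposition \ref{prop:dRWM_dRWLM_equivalence} (which preserves and reflects initial objects, acting as the identity on underlying strict $\W\Omega^*_R$-modules) with the identification of the initial object of $\dRWLM_{A, \mathcal E}$ in Corollary \ref{cor:initial_dRWLM}. Your explicit unpacking of the essential surjectivity construction for the $\iota^*_r$ is exactly what the paper leaves implicit.
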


%%%%%%%%%%%%%%%%%%%%%%%%%%%%%%%%%%%%
%%%%%%%%%%%%%%%%%%%%%%%%%%%%%%%%%%%%
%%%%%%%%%%%%%%%%%%%%%%%%%%%%%%%%%%%%

\section{Proofs of main results}
\label{ch:main_results}

In this chapter, we prove our three main results:  the existence of our saturated de Rham--Witt complexes in general, and comparisons (for $X/k$ smooth) to crystalline cohomology and the classical de Rham--Witt complexes of \cite{etesse}.
The first two sections, comprising the general construction and comparison to cohomology, are inspired by \cite[\textsection 5]{ogus}, which does the same for the saturated de Rham--Witt complex of Bhatt--Lurie--Mathew.
Working with nontrivial coefficient crystals will require us to make a few changes, one of which (the use of PD-de Rham complexes rather than naive de Rham complexes in \textsection \ref{sec:cohomology_comparison}) turns out to be a significant simplification.

In the final section, which compares our de Rham--Witt complexes to the classical ones, the idea is to work on a pro-\'etale cover trivializing the $F$-crystal $(\mathcal E, \phi_{\mathcal E})$.  Such a cover exists by Remark \ref{rmk:proetale_trivialization}, and the results of sections \ref{sec:etale_localization} and \ref{sec:compatibility_with_colimits} allow us to express our saturated de Rham--Witt complexes on the cover in terms of those on the base scheme.

%%%%%%%%%%%%%%%%%%

\subsection{General construction}
\label{sec:general_construction}

\begin{para} \label{para:general_construction_setup}
Throughout this section, we will work in the Frobenius-embedded situation \ref{sit:embedded_situation}, with two modifications:  we assume that $R$ is reduced (which does no harm in view of Corollary \ref{cor:R_Rred_same_dRW}), and we allow $A$ to be an arbitrary $p$-torsionfree $W$-algebra instead of a smooth one.  We let $(\mathcal E, \phi_{\mathcal E})$ be a unit-root $F$-crystal on $\Cris(\Spec R/W)$ as usual.

Additionally, let $\widetilde B$ be the quotient of the $p$-adically completed PD-envelope $B$ by the $p$-adic closure of the $p$-power torsion ideal, and $\widetilde B_r = \widetilde B/p^r \widetilde B$ for each $r > 0$.
Finally, we fix the following notation for the various affine schemes of interest to us:
\begin{align*}
X & = \Spec R, \\
Y_r & = \Spec A_r, \\
D_r & = \Spec B_r, \text{ and} \\
\widetilde{D_r} & = \Spec \widetilde{B_r}.
\end{align*}
\end{para}

\begin{remark} \label{rmk:lift_with_Frobenius_finite_type}
If $R$ is generated as a $k$-algebra by elements $\{x_i\}_{i \in I}$, we may choose $A$ to be the polynomial algebra
$$A = W[\{t_i\}_{i \in I}],$$
with the map $A \twoheadrightarrow R$ induced by $t_i \mapsto x_i$ and the Frobenius lift given by
$$
\phi \left(a \prod_i t_i^{e_i} \right) = \sigma(a) \prod_i t_i^{pe_i}.
$$
For future reference, we note that this relativizes as follows.  Given a morphism $f \colon R \to R'$ lying over a map $k \to k'$ of fields,
suppose $R$ is generated over $k$ by elements $\{x_i\}_{i \in I}$ and $R'$ is generated over $k'$ by elements $\{y_j\}_{j \in J}$.  Then consider the diagram
\[
\begin{tikzcd}
W(k)[\{t_i\}_{i \in I}] \arrow[r, hook] \arrow[d, two heads] & W(k')[\{t_i\}_{i \in I}, \{u_j\}_{j \in J}] \arrow[d, two heads] \\
R \arrow[r, "f"]   & R'
\end{tikzcd}
\]
where the left vertical map is as before, the right one sends $t_i$ to $f(x_i)$ and $u_j$ to $y_j$, and both polynomial algebras are equipped with Frobenius lifts as before.

In particular, if $R$ is a finite-type $k$-algebra (as we will assume later), then we can lift it to a polynomial algebra $A$ on finitely many generators, which is in particular smooth over $W$.  If $R \to R'$ is a morphism of finite-type algebras over $k$ and $k'$ respectively, then we can lift it to a morphism of polynomial algebras $A, A'$ on finitely many generators over $W(k)$ and $W(k')$ respectively, which are again smooth.  Note also that we are free to $p$-adically complete the polynomial algebras $A$ and $A'$ and work with the smooth $p$-adic formal schemes $\Spf A$ and $\Spf A'$.
\end{remark}

\begin{remark}
The natural map $R = A/I \to B/\overline{I}$ is an isomorphism.  The analogous statement $R = A_r/I_r \overset{\sim}{\to} B_r/\overline{I_r}$ for uncompleted PD-envelopes follows from \cite[Remark 3.20.4]{B-O}, since $(p) \subset W(k)$ is principal, and the case of the completed PD-envelope follows by quotienting by $p^r$ (cf. \cite[Remark 3.20.8]{B-O}) and passing to the limit.
\end{remark}

\begin{lemma} \label{lem:ogus_5.1}
The kernel of $B \to \widetilde B$ is a sub-PD-ideal of $(\overline I, \gamma)$.  In particular, the quotient map
$$
B \twoheadrightarrow B/\overline I = R
$$
factors through $\widetilde B$, and $\gamma$ induces PD-structures (which we will abusively also call $\gamma$) on the ideals $\ker(\widetilde B \to R) \subset \widetilde B$ and $\ker(\widetilde B_r \to R) \subset \widetilde B_r$ for each $r$.
\end{lemma}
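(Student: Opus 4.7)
The plan is to reduce the lemma to the single assertion that $J := \ker(B \to \widetilde B)$, by construction the $p$-adic closure of the $p$-power torsion ideal $T \subseteq B$, is a sub-PD-ideal of $(\overline I, \gamma)$. Granted this, the ``in particular'' assertions follow from standard PD-theory (\cite[\textsection 3]{B-O}): the factorization $B \twoheadrightarrow \widetilde B \twoheadrightarrow R$ is immediate from $J \subseteq \overline I$; the quotient $\overline I/J$ inherits a PD-structure because $J$ is a sub-PD-ideal; and the analogous statement for $\widetilde B_r$ follows from the compatibility of PD-envelopes with reduction modulo $p^r$ (cf. \cite[Remark 3.20.8]{B-O}) together with the observation that $p^r \widetilde B$ is itself a sub-PD-ideal of $\overline I/J$.

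To show $J \subseteq \overline I$, I will first observe that $T \subseteq \overline I$ because $R = B/\overline I$ is reduced (so it contains no nonzero $p$-power torsion), and then note that $\overline I$ is $p$-adically closed (since $pB \subseteq \overline I$ makes it open). Hence $J = \overline T \subseteq \overline I$.

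The remaining task, $\gamma_n(J) \subseteq J$ for $n \geq 1$, I would address in two steps. The first, $\gamma_n(T) \subseteq T$, is essentially a one-line computation: if $p^N x = 0$, the basic PD-identity $\gamma_n(rx) = r^n\gamma_n(x)$ gives $p^{Nn}\gamma_n(x) = \gamma_n(p^N x) = 0$. The second step, extending to $J$, will rely on a continuity estimate: for $y \in p^M B \cap \overline I$ with $M \geq 1$, factoring $y = p \cdot z$ with $z \in p^{M-1}B$ and using $\gamma_j(rx) = r^j\gamma_j(x)$ with $x = p \in \overline I$ yields
\begin{equation*}
\gamma_j(y) = z^j \gamma_j(p) = z^j p^{[j]} \in p^{Mj - v_p(j!)} B \subseteq p^{M - v_p(n!)} B
\end{equation*}
for $1 \leq j \leq n$. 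Approximating an arbitrary $x \in J$ by elements $x_k \in T$ with $x - x_k \in p^{M_k}B$ and $M_k \to \infty$, the PD-sum formula $\gamma_n(x) - \gamma_n(x_k) = \sum_{j=1}^n \gamma_{n-j}(x_k)\gamma_j(x - x_k)$ then places $\gamma_n(x)$ in $T + p^{M_k - v_p(n!)} B$ for every $k$, and therefore in the $p$-adic closure $J$.

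The main obstacle is the continuity estimate in the final step. The tempting relation $n!\gamma_n(y) = y^n$ is inadequate because $B$ may carry $p$-torsion and one cannot freely divide by $n!$. The essential move is to exploit the compatibility of $\gamma$ with the standard PD-structure on $(p) \subseteq W$, transferring all divisions by factorials into the explicit elements $p^{[j]} \in W$, whose $p$-adic valuations $j - v_p(j!)$ can be tracked precisely and dominate any fixed power of $p$ as $M \to \infty$.
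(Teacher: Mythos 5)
Your steps 3 and 4 are correct, and your overall structure (reduce to $T \subseteq \overline I$, show $\gamma_n(T) \subseteq T$ by the PD-scaling identity, then push to the $p$-adic closure via the continuity estimate $\gamma_j(y) = z^j p^{[j]} \in p^{Mj - v_p(j!)}B$ and the PD-sum formula) is a genuine reconstruction of the argument, whereas the paper simply invokes \cite[Lemma 5.1]{ogus}. The deduction of the ``in particular'' statements at the end is also fine.

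However, Step 1 — the containment $T \subseteq \overline I$ — is not justified by what you wrote. You assert it holds ``because $R = B/\overline I$ is reduced (so it contains no nonzero $p$-power torsion),'' but $R$ is a $k$-algebra of characteristic $p$, so $pR = 0$: \emph{every} element of $R$ is $p$-power torsion, and reducedness of $R$ says nothing directly about the image of $T$ in $R$. The containment is true, but the argument has to go through the Frobenius lift, which is exactly the datum your proposal never uses. Concretely: by Lemma~\ref{lift_to_witt} the Frobenius lift $\phi_A$ gives a $\delta$-ring map $h \colon A \to W(R)$ inducing PD-morphisms $h_r \colon (A_r, I_r, \brackets) \to (W_r(R), VW_{r-1}(R), \gamma)$; by the universal property of the PD-envelope these extend compatibly to PD-morphisms $B_r \to W_r(R)$, and in the limit to a map $\tilde h \colon B \to W(R)$ lying over the identity of $R$. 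Since $R$ is reduced, $W(R)$ is $p$-torsion-free (Remark~\ref{rmk:V_F_p_formulas}), so $\tilde h$ annihilates $T$, and $\ker(\tilde h) \subseteq \ker(B \to W(R) \to R) = \overline I$, giving $T \subseteq \overline I$. This is where the reducedness hypothesis of \ref{para:general_construction_setup} actually enters, and it is the crux of why $J$ is a \emph{sub}-ideal of $\overline I$ at all; your proof needs this argument (or an equivalent) inserted before the rest can proceed.
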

\begin{proof}
Although our hypotheses are slightly different, the proof of \cite[Lemma 5.1]{ogus} goes through without change.
\end{proof}

\begin{remark} \label{rmk:two_D_tilde_objects}
Lemma \ref{lem:ogus_5.1} shows that for each $r > 0$, we have an object $(X \hookrightarrow \widetilde D_r, \gamma)$ of $\Cris(X/W)$.  Note that the PD-structure $\brackets$ of \ref{para:notation_PD_structures} also makes $\widetilde D_r$ into a PD-thickening of $\widetilde D_1$, leading to an object $\widetilde D_r = (\widetilde D_1 \hookrightarrow \widetilde D_r, \brackets)$ of $\Cris(\widetilde D_1/W_r)$.  Both of these objects will be useful for us; for future reference, we will let $f$ denote the natural closed embedding $X \hookrightarrow \widetilde D_1$, and $f^{\sharp} \colon \widetilde B_1 \to R$ the corresponding surjective ring map.
\end{remark}

\begin{remark} \label{rmk:E_is_a_pullback}
We will ultimately construct our saturated de Rham--Witt complex $\W\Omega^*_{R, \mathcal E}$ by applying Corollary \ref{cor:lifted_construction_as_dRWM} to the $p$-torsionfree lifting $\widetilde B$ of the $\F_p$-algebra $\widetilde B_1$, and then pulling back along $f$.  But before we can do any of this, we must specify what unit-root $F$-crystal on $\widetilde D_1$ we are working with.  This is done in the following three lemmas; the goal is to find a unit-root $F$-crystal $(\mathcal F, \phi_{\mathcal F})$ on $\Cris(\widetilde D_1/W_r)$ such that $f^*_{\cris}(\mathcal F) \simeq \mathcal E$.
\end{remark}

\begin{lemma} \label{lem:ker_fsharp_nilpotent}
For every $x \in \ker f^{\sharp}$, we have $x^p = 0$.  In particular, $f^{\sharp}$ expresses $R$ as the reduction of $\widetilde B_1$.
\end{lemma}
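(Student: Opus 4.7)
The plan is to apply Lemma \ref{lem:ogus_5.1} and exploit the fact that $\widetilde B_1$ is an $\F_p$-algebra, so that the factorial $p!$ appearing in the identity $x^p = p!\,\gamma_p(x)$ vanishes.

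More precisely, Lemma \ref{lem:ogus_5.1} produces a PD-structure $\gamma$ on the ideal $J := \ker(\widetilde B \to R)$ of $\widetilde B$, and also on its quotient $J_1 := \ker(f^{\sharp}) \subset \widetilde B_1$. The map $\widetilde B \twoheadrightarrow R$ factors through $\widetilde B_1 = \widetilde B/p\widetilde B$ (since $R$ is of characteristic $p$), and the induced map is $f^{\sharp}$; so its kernel is exactly $J_1$, the image of $J$ in $\widetilde B_1$.

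Now fix $x \in J_1$. By the defining property of divided powers we have
\[
x^p = p!\,\gamma_p(x).
\]
Since $\widetilde B_1$ is a $k$-algebra and $p \mid p!$, the integer $p!$ acts as zero on $\widetilde B_1$, giving $x^p = 0$ as claimed.

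Finally, to see that $f^{\sharp}$ expresses $R$ as the reduction of $\widetilde B_1$, recall that we have assumed $R$ is reduced (\ref{para:general_construction_setup}). The first part of the lemma shows that $\ker(f^{\sharp})$ consists of nilpotent elements, so it is contained in the nilradical of $\widetilde B_1$. Conversely, since $R$ is reduced, the nilradical of $\widetilde B_1$ maps to $0$ in $R$ and hence is contained in $\ker(f^{\sharp})$. Thus $\ker(f^{\sharp})$ is the nilradical of $\widetilde B_1$, and $f^{\sharp}$ identifies $R$ with $(\widetilde B_1)_{\red}$. The only potential subtlety here is verifying that Lemma \ref{lem:ogus_5.1} really does supply divided powers on $J_1$ itself (rather than merely on $J$), but this is explicitly part of that lemma's statement.
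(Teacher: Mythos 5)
Your proof is correct and takes essentially the same route as the paper: both invoke the PD-structure from Lemma \ref{lem:ogus_5.1} on $\ker f^{\sharp}$, observe $x^p = p!\,\gamma_p(x) = 0$ because $\widetilde B_1$ is an $\F_p$-algebra, and then use reducedness of $R$ to conclude that $\ker f^{\sharp}$ is precisely the nilradical of $\widetilde B_1$.
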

\begin{proof}
Since $\ker f^{\sharp}$ carries a PD-structure $\gamma$, we have for all $x \in \ker f^{\sharp}$ that $x^p = p! \gamma_p(x)$, which vanishes because $\widetilde B_1$ is an $\F_p$-algebra.  So $f^{\sharp}$ is surjective, it kills only nilpotents, and it kills all nilpotents because $R$ is reduced.
\end{proof}

\begin{lemma} \label{lem:construction_of_g}
There exists a unique morphism $g \colon \widetilde D_1 \to X$ such that $f \circ g$ is the absolute Frobenius endomorphism of $\widetilde D_1$.  Moreover, $g \circ f$ is the absolute Frobenius of $X$.
\end{lemma}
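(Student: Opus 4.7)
The plan is to work on the level of rings, where we have the surjection $f^\sharp \colon \widetilde B_1 \twoheadrightarrow R$ with kernel $K = \ker f^\sharp$ consisting of elements $x$ with $x^p = 0$ (by Lemma \ref{lem:ker_fsharp_nilpotent}). A morphism $g \colon \widetilde D_1 \to X$ corresponds to a ring map $g^\sharp \colon R \to \widetilde B_1$. The condition that $f \circ g$ equals the absolute Frobenius of $\widetilde D_1$ becomes: for every $b \in \widetilde B_1$, we must have $g^\sharp(f^\sharp(b)) = b^p$.

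First I would establish existence by setting $g^\sharp(r) := \widetilde r^{\,p}$ for any lift $\widetilde r \in \widetilde B_1$ of $r$. This is well defined because $\widetilde B_1$ is an $\F_p$-algebra (it is a quotient of $B_1$, which contains $k$), so the Frobenius is additive, and if $\widetilde r' = \widetilde r + x$ with $x \in K$, then $\widetilde r'^{\,p} = \widetilde r^{\,p} + x^p = \widetilde r^{\,p}$ by Lemma \ref{lem:ker_fsharp_nilpotent}. The map $g^\sharp$ is a ring homomorphism because the $p$-th power map on $\widetilde B_1$ is one. By construction, $g^\sharp(f^\sharp(b)) = b^p$ for all $b$, which proves $f \circ g = F_{\widetilde D_1}$. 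Evaluating $f^\sharp \circ g^\sharp$ on $r = f^\sharp(\widetilde r)$ gives $f^\sharp(\widetilde r^{\,p}) = r^p$, i.e.\ $g \circ f = F_X$.

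For uniqueness, suppose $g_1^\sharp, g_2^\sharp \colon R \to \widetilde B_1$ both satisfy $g_i^\sharp \circ f^\sharp = F_{\widetilde B_1}$. Since $f^\sharp$ is surjective, every $r \in R$ has the form $f^\sharp(b)$, and the condition forces $g_i^\sharp(r) = b^p$, so $g_1^\sharp = g_2^\sharp$.

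There is no real obstacle here; the only subtlety to flag is that the well-definedness of $g^\sharp$ relies crucially on both the PD-structure (through Lemma \ref{lem:ker_fsharp_nilpotent}, which killed the $p$-th powers in $K$) and the fact that we have already passed to the characteristic-$p$ quotient $\widetilde B_1$ (so that $(a+x)^p = a^p + x^p$), which is what made it necessary to introduce $\widetilde B$ as a substitute for $B$ in the first place.
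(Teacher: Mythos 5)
Your proof is correct and follows essentially the same approach as the paper's: both observe that $f^\sharp$ is surjective and that $\Frob_{\widetilde B_1}$ annihilates $\ker f^\sharp$ by Lemma \ref{lem:ker_fsharp_nilpotent}, so that $\Frob_{\widetilde B_1}$ factors uniquely through $f^\sharp$. The only cosmetic difference is that you verify $g\circ f = F_X$ by direct evaluation on elements, whereas the paper deduces it from the identity $f^\sharp \circ g^\sharp \circ f^\sharp = \Frob_R \circ f^\sharp$ and the surjectivity of $f^\sharp$.
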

\begin{proof}
In ring-theoretic notation, this says that $\Frob_{\widetilde B_1}$ factors uniquely as
$$
\widetilde B_1 \overset{f^{\sharp}}{\to} R \overset{g^{\sharp}}{\to} \widetilde B_1,
$$
and moreover that $f^{\sharp} \circ g^{\sharp} = \Frob_R$.  
For the first statement, we need only observe that $f^{\sharp}$ is surjective and that $\Frob_{\widetilde B_1}$ annihilates $\ker f^{\sharp}$ by Lemma \ref{lem:ker_fsharp_nilpotent}.  For the second statement, we calculate that
\begin{align*}
f^{\sharp} \circ g^{\sharp} \circ f^{\sharp} & = f^{\sharp} \circ \Frob_{\widetilde B_1} \\
& = \Frob_R \circ f^{\sharp},
\end{align*}
which implies $f^{\sharp} \circ g^{\sharp} = \Frob_R$ because $f^{\sharp}$ is surjective.
\end{proof}

\begin{lemma} \label{lem:E_is_a_pullback_of_F}
Let $\mathcal F = g_{\cris}^* \mathcal E$, equipped with the Frobenius endomorphism
$$
\phi_{\mathcal F} = g_{\cris}^* \phi_{\mathcal E} \colon F_{X, \cris}^* \mathcal F \to \mathcal F.
$$
There exists an isomorphism of $F$-crystals $f_{\cris}^*(\mathcal F, \phi_{\mathcal F}) \overset{\sim}{\to} (\mathcal E, \phi_{\mathcal E})$.
\end{lemma}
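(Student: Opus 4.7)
The plan is to construct the candidate isomorphism as an instance of the unit-root condition and then verify Frobenius compatibility by pure functoriality. Specifically, by Lemma \ref{lem:construction_of_g} we have $g \circ f = F_X$ as endomorphisms of $X$, so functoriality of crystal pullback yields a canonical identification
$$
f^*_{\cris} \mathcal F \;=\; f^*_{\cris} g^*_{\cris} \mathcal E \;\simeq\; (g \circ f)^*_{\cris} \mathcal E \;=\; F^*_{X, \cris} \mathcal E.
$$
Composing this with the unit-root isomorphism $\phi_{\mathcal E} \colon F^*_{X, \cris} \mathcal E \overset{\sim}{\to} \mathcal E$ gives a proposed isomorphism $\alpha \colon f^*_{\cris} \mathcal F \overset{\sim}{\to} \mathcal E$ of $\mathcal O_{X/W}$-modules.

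Next I would check that $\alpha$ is compatible with Frobenius, i.e.\ that the square
\[
\begin{tikzcd}
F^*_{X, \cris}(f^*_{\cris} \mathcal F) \arrow[r, "F^*_{X, \cris}(\alpha)"] \arrow[d, "f^*_{\cris}(\phi_{\mathcal F})"'] & F^*_{X, \cris} \mathcal E \arrow[d, "\phi_{\mathcal E}"] \\
f^*_{\cris} \mathcal F \arrow[r, "\alpha"'] & \mathcal E
\end{tikzcd}
\]
commutes. Applying the same functoriality to $\phi_{\mathcal F} = g^*_{\cris} \phi_{\mathcal E}$ identifies the left vertical map $f^*_{\cris}(\phi_{\mathcal F}) = f^*_{\cris} g^*_{\cris} (\phi_{\mathcal E})$ with $(g \circ f)^*_{\cris}(\phi_{\mathcal E}) = F^*_{X, \cris}(\phi_{\mathcal E})$; similarly, $F^*_{X, \cris}(\alpha)$ is $F^*_{X, \cris}(\phi_{\mathcal E})$ (modulo the obvious identification $F^*_{X, \cris}(f^*_{\cris} \mathcal F) \simeq F^*_{X, \cris}(F^*_{X, \cris} \mathcal E)$, which again uses naturality of absolute Frobenius: $F_X \circ f = f \circ F_{\widetilde D_1}$). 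Both compositions around the square then equal $\phi_{\mathcal E} \circ F^*_{X, \cris}(\phi_{\mathcal E})$, so the square commutes.

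There is no real obstacle here; the statement is essentially tautological once one unwinds the definitions of $\mathcal F$ and $\phi_{\mathcal F}$ together with the identity $g \circ f = F_X$. The only point to handle with any care is bookkeeping with the canonical isomorphisms arising from functoriality of pullback (both along a composition and along absolute Frobenius), which is entirely formal.
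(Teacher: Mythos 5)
Your proposal is correct and takes essentially the same approach as the paper: identify $f^*_{\cris}(\mathcal F, \phi_{\mathcal F})$ with $F^*_{X,\cris}(\mathcal E, \phi_{\mathcal E})$ via $g\circ f = F_X$, take $\phi_{\mathcal E}$ (an isomorphism by the unit-root hypothesis) as the comparison map, and observe that Frobenius compatibility reduces to the commutativity of a square in which both composites are $\phi_{\mathcal E} \circ F^*_{X,\cris}(\phi_{\mathcal E})$. You spell out the canonical-isomorphism bookkeeping a bit more explicitly than the paper does, but the argument is the same.
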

\begin{proof}
By definition, we have
$$
f_{\cris}^* (\mathcal F, \phi_{\mathcal F}) = f_{\cris}^* g_{\cris}^* (\mathcal E, \phi_{\mathcal E}) = F_{X, \cris}^* (\mathcal E, \phi_{\mathcal E}),
$$
so we must give an isomorphism of $F$-crystals
$$
F_{X, \cris}^* (\mathcal E, \phi_{\mathcal E}) \simeq (\mathcal E, \phi_{\mathcal E}).
$$
But since $\mathcal E$ is a unit-root $F$-crystal, $\phi_{\mathcal E}$ itself is an isomorphism of crystals $F_{X, \cris}^* \mathcal E \to \mathcal E$, and it is moreover an isomorphism of $F$-crystals because the diagram
\[
\begin{tikzcd}
F_{X, \cris}^* F_{X, \cris}^* \mathcal E \arrow[r, "F_{X, \cris}^* \phi_{\mathcal E}"] \arrow[d, "F_{X, \cris}^* \phi_{\mathcal E}"'] & F_{X, \cris}^* \mathcal E \arrow[d, "\phi_{\mathcal E}"] \\
F_{X, \cris}^* \mathcal E \arrow[r, "\phi_{\mathcal E}"']                                                                                 & \mathcal E                                               
\end{tikzcd}
\]
commutes.
\end{proof}

\begin{theorem} \label{thm:general_construction}
Let $R$ be a reduced $k$-algebra, and let $(\mathcal E, \phi_{\mathcal E})$ be a unit-root $F$-crystal on $\Cris(\Spec R/W)$.
Then there exists a saturated de Rham--Witt complex $\W\Omega^*_{R, \mathcal E}$ associated to $(R, \mathcal E)$.  Moreover, it is isomorphic as a strict $\W\Omega^*_R$-module to the object
$$
\W\Sat(\widehat{\dR}(\mathcal F(\widetilde B)))
$$
of Corollary \ref{cor:initial_dRWLM}, viewed as a $\W\Omega^*_R$-module via the isomorphism $\W\Omega^*_{\widetilde B_1} \simeq \W\Omega^*_R$ of Remark \ref{rmk:BLM_nilpotent_invariance}.
\end{theorem}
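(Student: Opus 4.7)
The plan is to reduce to the Frobenius-lifted situation by replacing the pair $(R, \mathcal E)$ with $(\widetilde B_1, \mathcal F)$, applying Corollary \ref{cor:lifted_construction_as_dRWM} to produce $\W\Omega^*_{\widetilde B_1, \mathcal F}$, and then transporting the answer back to $(R, \mathcal E)$ using the nilpotent-invariance established in \textsection\ref{sec:nilpotent_thickenings}. The key observation enabling this is that although $R$ itself may admit no $p$-torsionfree lift with Frobenius, its nilpotent thickening $\widetilde B_1$ always does, namely $\widetilde B$ itself.

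First, I would verify that $(\widetilde B, \phi_{\widetilde B})$ fits into the Frobenius-lifted situation \ref{sit:lifted_situation} with underlying $\F_p$-algebra $\widetilde B_1$. By construction $\widetilde B$ is $p$-torsionfree with $\widetilde B/p\widetilde B = \widetilde B_1$. The Frobenius $\phi_B$ on $B$ of Situation \ref{sit:embedded_situation}(b) preserves the $p$-power torsion ideal of $B$ (since $p^n \phi_B(x) = \phi_B(p^n x)$) and hence its $p$-adic closure, so $\phi_B$ descends to a Frobenius lift $\phi_{\widetilde B} \colon \widetilde B \to \widetilde B$. With this setup in place, Corollary \ref{cor:lifted_construction_as_dRWM} applied to $(\widetilde B, \mathcal F)$ produces a saturated de Rham--Witt complex $\W\Omega^*_{\widetilde B_1, \mathcal F}$ whose underlying strict $\W\Omega^*_{\widetilde B_1}$-module is exactly the initial object $\W\Sat(\widehat{\dR}(\mathcal F(\widetilde B)))$ of $\dRWLM_{\widetilde B, \mathcal F}$ described in Corollary \ref{cor:initial_dRWLM}.

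Next, I would transport this back to $(R, \mathcal E)$. By Lemma \ref{lem:ker_fsharp_nilpotent}, the map $f^{\sharp} \colon \widetilde B_1 \to R$ identifies $R$ with the reduction of $\widetilde B_1$, so Corollary \ref{cor:R_Rred_same_dRW} (applied with ``$R$'' taken to be $\widetilde B_1$ and ``$R_{\red}$'' taken to be $R$) gives an equivalence
\[
f_* \colon \dRWM_{R, f^*_{\cris} \mathcal F} \overset{\sim}{\to} \dRWM_{\widetilde B_1, \mathcal F}
\]
preserving underlying strict Dieudonn\'e complexes and with $\W\Omega^*_R$-module structure transported via the isomorphism $\W\Omega^*_R \simeq \W\Omega^*_{\widetilde B_1}$ of Remark \ref{rmk:BLM_nilpotent_invariance}. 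Since Lemma \ref{lem:E_is_a_pullback_of_F} supplies a canonical isomorphism of $F$-crystals $f^*_{\cris} \mathcal F \simeq \mathcal E$, functoriality in the crystal argument (Construction \ref{construction:dRWM_functoriality_in_E}) further identifies $\dRWM_{R, f^*_{\cris} \mathcal F}$ with $\dRWM_{R, \mathcal E}$. Chasing the initial object of $\dRWM_{\widetilde B_1, \mathcal F}$ through this chain of equivalences yields a saturated de Rham--Witt complex $\W\Omega^*_{R, \mathcal E}$ whose underlying strict $\W\Omega^*_R$-module is precisely $\W\Sat(\widehat{\dR}(\mathcal F(\widetilde B)))$, viewed as a module over $\W\Omega^*_R$ through the Remark \ref{rmk:BLM_nilpotent_invariance} isomorphism.

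The main conceptual obstacle has been absorbed by the preliminary lemmas \ref{lem:ogus_5.1}--\ref{lem:E_is_a_pullback_of_F}: once one knows that $\widetilde B_1$ is a nilpotent thickening of $R$ over which $\mathcal E$ extends (to $\mathcal F$), and that $\widetilde B$ furnishes a Frobenius lift of $\widetilde B_1$, the remainder is a combination of the lifted construction of Corollary \ref{cor:lifted_construction_as_dRWM} with the nilpotent-invariance of Corollary \ref{cor:R_Rred_same_dRW}. The only mild technical point is the descent of $\phi_B$ to $\widetilde B$, which is straightforward but worth recording explicitly.
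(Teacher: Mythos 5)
Your proposal is correct and follows essentially the same chain of reductions as the paper's proof: identify $\mathcal E$ with $f^*_{\cris}\mathcal F$ via Lemma \ref{lem:E_is_a_pullback_of_F}, transport across the nilpotent thickening $\widetilde B_1 \to R$ using Proposition \ref{prop:dRWM_equiv_reduction} (which you invoke through Corollary \ref{cor:R_Rred_same_dRW}), and apply the lifted construction to $(\widetilde B, \mathcal F)$ (you invoke Corollary \ref{cor:lifted_construction_as_dRWM} where the paper uses Proposition \ref{prop:dRWM_dRWLM_equivalence} and Corollary \ref{cor:initial_dRWLM} directly, but this is the same content). Your explicit note that $\phi_B$ descends to $\phi_{\widetilde B}$ (because the $p$-power torsion and its $p$-adic closure are $\phi_B$-stable) is a small verification the paper leaves implicit, and worth having.
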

\begin{proof}
Putting together Lemmas \ref{lem:E_is_a_pullback_of_F} and \ref{lem:ker_fsharp_nilpotent} and Propositions \ref{prop:dRWM_equiv_reduction} and \ref{prop:dRWM_dRWLM_equivalence}, we have equivalences of categories
$$
\dRWM_{R, \mathcal E} \simeq \dRWM_{R, f_{\cris}^* \mathcal F} \overset{\sim}{\to} \dRWM_{\widetilde B_1, \mathcal F} \overset{\sim}{\to} \dRWLM_{\widetilde B, \mathcal F},
$$
each preserving the underlying strict $\W\Omega^*_R$-modules.  The object we have specified is an initial object in the category on the right, so it corresponds to an initial object of the category on the left with the same underlying strict $\W\Omega^*_R$-module.
\end{proof}

\begin{corollary} \label{cor:general_construction_non-reduced}
Let $R$ be any $k$-algebra and $(\mathcal E, \phi_{\mathcal E})$ a unit-root $F$-crystal on $\Cris(\Spec R/W)$.  Then there exists a saturated de Rham--Witt complex $\W\Omega^*_{R, \mathcal E}$ associated to $(R, \mathcal E)$.
\end{corollary}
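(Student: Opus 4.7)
The plan is to reduce immediately to the reduced case handled by Theorem \ref{thm:general_construction}. Given an arbitrary $k$-algebra $R$ with reduction map $f^{\sharp} \colon R \to R_{\red}$ and corresponding closed embedding $f \colon \Spec R_{\red} \hookrightarrow \Spec R$, I would form the pullback unit-root $F$-crystal $f^*_{\cris} \mathcal E$ on $\Cris(\Spec R_{\red}/W)$. Since $R_{\red}$ is reduced, Theorem \ref{thm:general_construction} applies and produces a saturated de Rham--Witt complex $\W\Omega^*_{R_{\red}, f^*_{\cris} \mathcal E}$, i.e., an initial object of $\dRWM_{R_{\red}, f^*_{\cris} \mathcal E}$.

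Next, I would invoke Corollary \ref{cor:R_Rred_same_dRW} (which in turn rests on Proposition \ref{prop:dRWM_equiv_reduction}) to transport this initial object across the equivalence
\[
f_* \colon \dRWM_{R_{\red}, f^*_{\cris} \mathcal E} \overset{\sim}{\to} \dRWM_{R, \mathcal E}.
\]
Since equivalences of categories preserve initial objects, $f_*(\W\Omega^*_{R_{\red}, f^*_{\cris} \mathcal E})$ is an initial object of $\dRWM_{R, \mathcal E}$, which is by Definition \ref{def:saturated_dRW} a saturated de Rham--Witt complex associated to $(R, \mathcal E)$. The construction of $f_*$ in \ref{construction:dRWM_functoriality_in_R} shows that the underlying strict Dieudonn\'e complex is unchanged, so concretely we may take
\[
\W\Omega^*_{R, \mathcal E} = \W\Omega^*_{R_{\red}, f^*_{\cris} \mathcal E}
\]
as strict Dieudonn\'e complexes, with the $\W\Omega^*_R$-module structure obtained by restriction of scalars along the isomorphism $\W\Omega^*_R \overset{\sim}{\to} \W\Omega^*_{R_{\red}}$ of Remark \ref{rmk:BLM_nilpotent_invariance}, and with the maps $\iota^*_r$ obtained by precomposition with the surjections $\dR(\mathcal E(W_r(R), \gamma)) \twoheadrightarrow \dR(f^*_{\cris} \mathcal E(W_r(R_{\red}), \gamma))$ as in Lemma \ref{lem:reduction_iota_factors}.

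There is no real obstacle here; all the work was already done in Theorem \ref{thm:general_construction} (which uses the embedded lift $\widetilde B$ for the reduced ring) and in Proposition \ref{prop:dRWM_equiv_reduction} (which handles the nilpotent thickening). The corollary is simply the composition of these two results.
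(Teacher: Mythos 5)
Your proof is correct and matches the paper's approach exactly: the paper proves the corollary by the single sentence ``Combine Theorem \ref{thm:general_construction} with Corollary \ref{cor:R_Rred_same_dRW}.'' You have simply spelled out the same two-step reduction in detail, which is fine.
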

\begin{proof}
Combine Theorem \ref{thm:general_construction} with Corollary \ref{cor:R_Rred_same_dRW}.
\end{proof}

%%%%%%%%%%%%%%%%%%

\subsection{Comparison to crystalline cohomology}
\label{sec:cohomology_comparison}

\begin{para}
Our goal in this section is to compare the saturated de Rham--Witt complex $\W\Omega^*_{X, \mathcal E}$ of Remark \ref{rmk:dRW_sheaves} to the cohomology of the $F$-crystal $\mathcal E$.  To prove this, we will first need a slightly different recipe for $\W\Omega^*_{R, \mathcal E}$ when $R$ is a smooth $k$-algebra, in terms of a de Rham complex of $\mathcal E$ rather than our auxiliary $F$-crystal $\mathcal F$.
\end{para}

\begin{para} \label{para:setup_for_smooth_affine_construction}
We maintain the setup and notation of \ref{para:general_construction_setup},
and assume moreover that $R$ is a smooth (and in particular finite-type) $k$-algebra.  In light of Remark \ref{rmk:lift_with_Frobenius_finite_type}, we can and do take $A$ to be smooth over $W$.  It follows by \cite[Corollary 3.35]{B-O} that the completed PD-envelope $B$ is $p$-torsionfree, so that $\widetilde B = B$.
\end{para}

\begin{remark} \label{rmk:three_dR_complexes_of_E}
The comparison will pass through three de Rham complexes associated to the $F$-crystals $\mathcal E$ and $\mathcal F$ (with $\mathcal E \simeq f^*_{\cris} \mathcal F$), which we now recall.  First, we have already discussed the completed PD-de Rham complex
$$
\widehat{\dR}(\mathcal F(B, \brackets)) = \limarrow_r \left( \mathcal F(B_r) \otimes_{B_r} \Omega^*_{B_r, \brackets} \right)
$$
associated to $\mathcal F$ on the formal PD-thickening $(D_1 \hookrightarrow D_r, \brackets)_r$ of $D_1 = \Spec B_1$.  (The reader may recall that we have $\Omega^*_{B_r, \brackets} = \Omega^*_{B_r}$ by Lemma \ref{lem:PD_compatibility_for_free_when_I_equals_p}; however, we will leave the PD-structure in the notation to minimize confusion.)  Closely related to this (cf. Remark \ref{rmk:two_D_tilde_objects}) is
$$
\widehat{\dR}(\mathcal E(B, \gamma)) = \limarrow_r \left( \mathcal E(B_r) \otimes_{B_r} \Omega^*_{B_r, \gamma} \right),
$$
which is associated to $\mathcal E$ on the formal PD-thickening $(X \hookrightarrow D_r, \gamma)_r$ of $X = \Spec R$.  Third, recall from Example \ref{ex:classical_dR_DC_mod} the completed de Rham complex
$$
\widehat{\dR}(\mathcal E(X \hookrightarrow Y_{\bullet})) = \limarrow_r \left( \mathcal E(B_r) \otimes_{A_r} \Omega^*_{A_r} \right),
$$
which is constructed from the PD-envelopes of the smooth embeddings $X \hookrightarrow Y_r$.

These three objects are respectively modules over $\widehat{\Omega}^*_{B, \brackets}$, $\widehat{\Omega}^*_{B, \gamma}$, and $\limarrow_r (B_r \otimes_{A_r} \Omega^*_{A_r})$ in $\DC$.  In fact the latter two are isomorphic:  \cite[\nopp 0, 3.1.6]{illusie} gives an isomorphism of the underlying algebras, and tensoring this with $\id_{\mathcal E}$ gives an isomorphism of the modules (see e.g. \cite[Proposition 2.8.5]{thesis}).
Moreover, the compatibility of the PD-structures $\gamma$ (on $I$) and $\brackets$ (on $(p) \subset I$) gives us a quotient map
$$
\widehat{\Omega}^*_{B, \brackets} \to \widehat{\Omega}^*_{B, \gamma},
$$
which has $p$-torsion kernel by Proposition \ref{prop:completed_dR_PD_only_kills_p-tors}.  This carries over to our chosen coefficients as follows.
\end{remark}

\begin{construction} \label{con:comparison_map_dR_F_to_dR_E}
Consider the PD-morphism
$$
h = \id_{D_r} \colon (X \hookrightarrow D_r, \gamma) \to (D_1 \hookrightarrow D_r, \brackets)
$$
of PD-embeddings over $f \colon X \hookrightarrow D_1$.  Remark \ref{rmk:PD_dR_functorialities} associates to this an isomorphism
$$
\Omega^*_{B_r, \gamma} \otimes_{\Omega^*_{B_r, \brackets}} \dR(\mathcal F(B_r, \brackets)) \simeq \dR(f^*_{\cris} \mathcal F(B_r, \gamma)) \simeq \dR(\mathcal E(B_r, \gamma)).
$$
of dg-$\Omega^*_{B_r, \gamma}$-modules.  Let
$$
\pi_r \colon \dR(\mathcal F(B_r, \brackets)) \to \dR(\mathcal E(B_r, \gamma))
$$
denote the resulting base change map, and
$$
\pi \colon \widehat{\dR}(\mathcal F(B, \brackets)) \to \widehat{\dR}(\mathcal E(B, \gamma))
$$
the map of limits.
\end{construction}

\begin{lemma} \label{lem:comparison_map_dR_F_to_dR_E_in_DC_mod}
The map $\pi$ is a morphism of $\widehat{\Omega}^*_{B, \brackets}$-modules in $\DC$.
\end{lemma}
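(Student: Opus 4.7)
The plan is to verify that $\pi$ satisfies the three conditions required to be a morphism of $\widehat{\Omega}^*_{B, \brackets}$-modules in $\DC$, using the characterization in Lemma \ref{lem:description_of_algebras_and_modules_in_DC_and_DCstr}: namely, it should be compatible with the differentials $\nabla$, linear over $\widehat{\Omega}^*_{B, \brackets}$ (where the target inherits its module structure via the quotient $\widehat{\Omega}^*_{B, \brackets} \twoheadrightarrow \widehat{\Omega}^*_{B, \gamma}$), and compatible with the divided Frobenius $F$. I would work at finite level, checking that each $\pi_r$ has the analogous properties over $\Omega^*_{B_r, \brackets}$, and then obtain the statement for $\pi$ by passing to the inverse limit.

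The first two conditions are immediate from the construction. By Remark \ref{rmk:PD_dR_functorialities}, $\pi_r$ is a morphism of dg-modules over $\Omega^*_{B_r, \brackets}$ (acting on the target via the quotient map to $\Omega^*_{B_r, \gamma}$), so all of the real content lies in checking compatibility with the divided Frobenius.

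For the Frobenius compatibility, I would first reduce to verifying the identity $\pi_r \circ F = F \circ \pi_r$ in degree $0$. This reduction uses two facts: first, that the divided Frobenius on $\dR(\mathcal F(B_r, \brackets))$ and $\dR(\mathcal E(B_r, \gamma))$ is semilinear with respect to the divided Frobenius on $\Omega^*_{B_r, \brackets}$ and $\Omega^*_{B_r, \gamma}$ respectively (by part (2) of Proposition \ref{prop:nablaF=pFnabla_PD}), and second, that the two divided Frobenius endomorphisms on $\Omega^*_{B_r, \brackets}$ and $\Omega^*_{B_r, \gamma}$ are both induced from the single Frobenius lift $\phi_B$ (via Lemma \ref{lem:PD_compatibility_for_free_when_I_equals_p} and Proposition \ref{prop:F_on_PD_dR_I_not_p}), and thus commute with the quotient map between them. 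Together with the fact that each PD-de Rham complex is generated in degree $0$ as a graded module over the corresponding de Rham algebra, this lets me propagate the degree-$0$ identity to all degrees.

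The degree-$0$ statement is the key point, and it is where Lemma \ref{lem:E_is_a_pullback_of_F} earns its keep. In degree $0$, the map $\pi_r^0$ is the canonical isomorphism $\mathcal F(B_r) \simeq f^*_{\cris}\mathcal F(B_r) \simeq \mathcal E(B_r)$, coming from the crystal property of $\mathcal F$ applied to the PD-morphism $h = \id_{D_r}$ together with the identification $f^*_{\cris}\mathcal F \simeq \mathcal E$. The Frobenius on each side, per Example \ref{ex:Frobenius_on_sections}, is the Frobenius of the ambient $F$-crystal evaluated on $\phi_{B_r}$. Since Lemma \ref{lem:E_is_a_pullback_of_F} provides an isomorphism not merely of crystals but of $F$-crystals $f^*_{\cris}(\mathcal F, \phi_{\mathcal F}) \simeq (\mathcal E, \phi_{\mathcal E})$, $\pi_r^0$ intertwines the two Frobenius endomorphisms, completing the argument. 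The main potential obstacle is bookkeeping rather than mathematical depth: one must keep straight the two different sites $\Cris(\widetilde D_1/W)$ and $\Cris(X/W)$ on which $\mathcal F$ and $\mathcal E$ live, and confirm that the Frobenius structures defined separately on each PD-de Rham complex genuinely correspond under the identification $\mathcal F(B_r) \simeq \mathcal E(B_r)$.
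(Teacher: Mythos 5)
Your proof is correct and follows essentially the same route as the paper: reduce to compatibility with $F$, observe that the divided Frobenius on each PD-de Rham complex is $\phi \otimes F$, and exploit the fact that $\pi_r$ is built from a Frobenius-compatible map of crystals tensored with a Frobenius-compatible map of de Rham algebras. Your "reduce to degree 0 and propagate by semilinearity" is the same argument as the paper's "tensor of Frobenius-compatible maps," and you helpfully make explicit the role of Lemma \ref{lem:E_is_a_pullback_of_F} in establishing the degree-0 compatibility, which the paper leaves implicit.
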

\begin{proof}
It is clearly a map of dg-modules over $\widehat{\Omega}^*_{B, \brackets}$, so in light of Lemma \ref{lem:description_of_algebras_and_modules_in_DC_and_DCstr}, it suffices to prove that it is compatible with $F$.  But the Frobenius endomorphisms of $\dR(\mathcal F(B_r, \brackets))$ and $\dR(\mathcal E(B_r, \gamma))$ are defined by $\phi_{\mathcal F} \otimes F$ and $\phi_{\mathcal E} \otimes F$, respectively, where each $F$ is the divided Frobenius endomorphism of Proposition \ref{prop:F_on_PD_dR_I_not_p}.  The maps of tensor factors
\begin{align*}
\mathcal F(B_r) & \to \mathcal E(B_r) \text{ and} \\
\Omega^*_{B_r, \brackets} & \to \Omega^*_{B_r, \gamma}
\end{align*}
are both compatible with Frobenius, so $\pi_r$ and thus $\pi$ are as well.
\end{proof}

\begin{lemma} \label{lem:compare_dR_E_to_dR_F}
The strictification of $\pi$ is an isomorphism
$$
\W\Sat(\pi) \colon \W\Sat(\widehat{\dR}(\mathcal F(B, \brackets))) \stackrel{\sim}{\to} \W\Sat(\widehat{\dR}(\mathcal E(B, \gamma)))
$$
of strict modules over $\W\Omega^*_{B_1} = \W\Omega^*_R$.
\end{lemma}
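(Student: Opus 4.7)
The plan is to show that $\pi$ itself is surjective with $p$-torsion kernel, and then invoke the principle---implicit in the proof of Corollary~\ref{cor:PD_dR_computes_dRW}---that any morphism of Dieudonn\'e complexes of this form becomes an isomorphism after saturation, and hence also after strictification.

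First I would unpack $\pi_r$ concretely. The identity map is a PD-morphism $(X \hookrightarrow D_r, \gamma) \to (D_1 \hookrightarrow D_r, \brackets)$ lying over $f \colon X \hookrightarrow D_1$, so the crystal property of $\mathcal F$ yields a canonical identification $\mathcal E(B_r) = (f^*_{\cris}\mathcal F)(B_r) = \mathcal F(B_r)$ of $B_r$-modules (cf.\ Remark~\ref{rmk:PD_dR_functorialities} and the underlying crystal pullback in Remark~\ref{rmk:Frobenius_on_Zariski_sheaves}). Under this identification, the base change map of Construction~\ref{con:comparison_map_dR_F_to_dR_E} factors as $\pi_r = \id_{\mathcal F(B_r)} \otimes q_r$, where $q_r \colon \Omega^*_{B_r, \brackets} \to \Omega^*_{B_r, \gamma}$ is the quotient map that appears in Proposition~\ref{prop:completed_dR_PD_only_kills_p-tors} and is killed by $p$ in its kernel (Corollary~\ref{cor:PD_dR_exact_p-torsion}). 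Since $\mathcal F$ is a finite locally free crystal (Lemma~\ref{lem:locally_free_tower_nakayama}), $\mathcal F(B_r)$ is flat over $B_r$, so tensoring preserves surjectivity and $p$-torsion. Hence each $\pi_r$ is surjective with kernel $\mathcal F(B_r) \otimes_{B_r} \ker q_r$, which is killed by $p$.

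Next I would pass to the inverse limit via Mittag-Leffler. Because each $\ker q_{r+1}$ is generated, as a (dg-)ideal, by PD-compatibility relations $d\gamma_n(\overline x) - \gamma_{n-1}(\overline x)\, d\overline x$ with $\overline x \in \overline I_{r+1}$, and since $\overline I_{r+1} \twoheadrightarrow \overline I_r$, the transition maps $\ker q_{r+1} \twoheadrightarrow \ker q_r$ are surjective (the same observation appears in the proof of Proposition~\ref{prop:completed_dR_PD_only_kills_p-tors}). Flatness of $\mathcal F(B_r)$ preserves surjectivity on the tensored kernels, so Mittag-Leffler gives the short exact sequence
$$
0 \to \limarrow_r \ker \pi_r \to \widehat{\dR}(\mathcal F(B, \brackets)) \overset{\pi}{\to} \widehat{\dR}(\mathcal E(B, \gamma)) \to 0,
$$
with $\limarrow_r \ker \pi_r$ being $p$-torsion.

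Finally, a morphism of Dieudonn\'e complexes which is surjective with $p$-torsion kernel becomes an isomorphism after saturation (the saturation functor inverts such maps) and therefore also after strictification; this is exactly the argument used to conclude Corollary~\ref{cor:PD_dR_computes_dRW}. The resulting isomorphism $\W\Sat(\pi)$ respects the $\W\Omega^*_R$-module structure---where the identification $\W\Omega^*_R \simeq \W\Omega^*_{B_1} = \W\Sat(\widehat{\Omega}^*_{B, \brackets}) = \W\Sat(\widehat{\Omega}^*_{B, \gamma})$ comes from Remark~\ref{rmk:BLM_nilpotent_invariance}, Construction~\ref{con:BLM_lifted_construction}, Lemma~\ref{lem:PD_compatibility_for_free_when_I_equals_p}, and Corollary~\ref{cor:PD_dR_computes_dRW}---by functoriality of $\W\Sat$ on modules (Lemma~\ref{lem:WSat_module}) applied to Lemma~\ref{lem:comparison_map_dR_F_to_dR_E_in_DC_mod}. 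I do not expect any serious obstacle; the only step that requires care is the Mittag-Leffler verification, which reduces to the routine observation about surjectivity of the transition maps on the $\ker q_r$.
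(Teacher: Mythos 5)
Your proposal is correct and follows essentially the same route as the paper's proof: identify $\pi_r$ (via $\mathcal E(B_r) = \mathcal F(B_r)$ and flatness of $\mathcal F(B_r)$) with $\id \otimes q_r$, use Corollary~\ref{cor:PD_dR_exact_p-torsion} to see each $\pi_r$ is surjective with kernel killed by $p$, pass to the limit by Mittag--Leffler, and conclude that saturation inverts $\pi$; the module structure comes from Lemma~\ref{lem:WSat_module} and Lemma~\ref{lem:comparison_map_dR_F_to_dR_E_in_DC_mod} together with the identifications $\W\Sat(\widehat{\Omega}^*_{B,\brackets}) \simeq \W\Omega^*_{B_1} \simeq \W\Omega^*_R$. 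The only difference is that you spell out the Mittag--Leffler step (surjectivity of $\ker q_{r+1} \twoheadrightarrow \ker q_r$ via lifting the PD-compatibility generators along $\overline I_{r+1} \twoheadrightarrow \overline I_r$), which the paper merely cites as analogous to Proposition~\ref{prop:completed_dR_PD_only_kills_p-tors}.
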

\begin{proof}
By Lemma \ref{lem:WSat_module}, $\W\Sat(\pi)$ is a morphism of strict modules over $\W\Sat(\widehat{\Omega}^*_{B, \brackets})$.  But recall that $\W\Sat(\widehat{\Omega}^*_{B, \brackets})$ is isomorphic to $\W\Omega^*_{B_1}$ by Corollary \ref{cor:PD_dR_computes_dRW}, and thus also to $\W\Omega^*_R$ by Remark \ref{rmk:BLM_nilpotent_invariance}.  To show that $\W\Sat(\pi)$ is an isomorphism, first note that each $\pi_r$ is surjective with exact $p$-torsion kernel, since this is true of the quotient map
$$
\Omega^*_{B_r, \brackets} \to \Omega^*_{B_r, \gamma}
$$
(cf. Corollary \ref{cor:PD_dR_exact_p-torsion}) and we are tensoring this with the flat $B_r$-module $\mathcal F(B_r)$.  It follows by passage to the limit (and a Mittag-Leffler calculation analogous to that of Proposition \ref{prop:completed_dR_PD_only_kills_p-tors}) that $\pi$ itself is surjective with exact $p$-torsion kernel.  Thus $\Sat(\pi)$ is an isomorphism, as is $\W\Sat(\pi)$.
\end{proof}

\begin{corollary} \label{cor:dRW_construction_from_classical_dR_E}
There exist maps $\iota^*_r \colon \dR(\mathcal E(W_r(R), \gamma)) \to \W_r \Sat(\widehat{\dR}(\mathcal E(X \hookrightarrow Y_{\bullet})))$ making $\W\Sat(\widehat{\dR}(\mathcal E(X \hookrightarrow Y_{\bullet})))$ a saturated de Rham--Witt complex of $\mathcal E$ over $R$.
\end{corollary}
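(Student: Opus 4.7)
The plan is to transport the de Rham--Witt module structure from a known model of $\W\Omega^*_{R, \mathcal E}$ through a chain of isomorphisms of strict $\W\Omega^*_R$-modules. Specifically, by Theorem \ref{thm:general_construction}, since $B = \widetilde B$ is $p$-torsionfree under the smoothness hypothesis of \ref{para:setup_for_smooth_affine_construction}, the object $\W\Sat(\widehat{\dR}(\mathcal F(B, \brackets)))$ (equipped with suitable maps $\iota^*_{r,\mathcal F}$ coming from the equivalences of categories $\dRWM_{R, \mathcal E} \simeq \dRWM_{B_1, \mathcal F} \simeq \dRWLM_{B, \mathcal F}$) is a saturated de Rham--Witt complex of $(R, \mathcal E)$.

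First I would combine Corollary \ref{cor:comparison_of_two_dR_complexes_as_dg_mods}(3) and Remark \ref{rmk:comparison_of_two_dR_complexes_as_DC_mods} to obtain an isomorphism
$$
\widehat{\dR}(\mathcal E(X \hookrightarrow Y_{\bullet})) \overset{\sim}{\to} \widehat{\dR}(\mathcal E(B, \gamma))
$$
of modules in $\DC$, where the underlying Dieudonn\'e algebras are identified via Lemma \ref{lem:comparison_of_two_dR_complexes_in_DA}. Applying the functor $\W\Sat$ and invoking Lemma \ref{lem:WSat_module}, this becomes an isomorphism
$$
\W\Sat(\widehat{\dR}(\mathcal E(X \hookrightarrow Y_{\bullet}))) \overset{\sim}{\to} \W\Sat(\widehat{\dR}(\mathcal E(B, \gamma)))
$$
of strict $\W\Omega^*_R$-modules (after identifying $\W\Sat$ of the two Dieudonn\'e algebras, both of which are canonically $\W\Omega^*_R$ by Construction \ref{con:BLM_lifted_construction} and Remark \ref{rmk:BLM_nilpotent_invariance}). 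Composing with the isomorphism of Lemma \ref{lem:compare_dR_E_to_dR_F} yields
$$
\Phi \colon \W\Sat(\widehat{\dR}(\mathcal E(X \hookrightarrow Y_{\bullet}))) \overset{\sim}{\to} \W\Sat(\widehat{\dR}(\mathcal F(B, \brackets)))
$$
in $\W\Omega^*_R \lmodstr$.

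Finally, I would use $\Phi$ to transport the de Rham--Witt module structure: set
$$
\iota^*_r := \W_r(\Phi)^{-1} \circ \iota^*_{r,\mathcal F} \colon \dR(\mathcal E(W_r(R), \gamma)) \to \W_r \Sat(\widehat{\dR}(\mathcal E(X \hookrightarrow Y_{\bullet}))).
$$
These are $\Omega^*_{W_r(R), \gamma}$-linear dg-module maps compatible with quotient and Frobenius, since the same is true of the $\iota^*_{r, \mathcal F}$ and the maps $\W_r(\Phi)^{-1}$ are obtained by applying the functor $\W_r$ to a $\W\Omega^*_R$-module isomorphism. Thus $\W\Sat(\widehat{\dR}(\mathcal E(X \hookrightarrow Y_{\bullet})))$ equipped with the $\iota^*_r$ becomes a de Rham--Witt module, and by construction $\Phi$ is an isomorphism in $\dRWM_{R, \mathcal E}$; since its target is initial, so is its source.

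The only genuine subtlety --- and the only step requiring care --- is ensuring that the chain of identifications of Dieudonn\'e \emph{algebras} is coherent across all three models, so that the module structures over $\W\Omega^*_R$ match up. This is what Lemma \ref{lem:comparison_of_two_dR_complexes_in_DA}, the surjection $\widehat{\Omega}^*_{B, \brackets} \twoheadrightarrow \widehat{\Omega}^*_{B, \gamma}$ of Remark \ref{rmk:three_dR_complexes_of_E}, and the canonical identification $\W\Sat(\widehat{\Omega}^*_{B, \brackets}) \simeq \W\Omega^*_{B_1} \simeq \W\Omega^*_R$ jointly provide. Once this is in place, the transport-of-structure argument is essentially formal.
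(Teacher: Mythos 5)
Your argument matches the paper's own proof: both transport the de Rham--Witt module structure from $\W\Sat(\widehat{\dR}(\mathcal F(B, \brackets)))$ across the same two isomorphisms, namely Lemma \ref{lem:compare_dR_E_to_dR_F} and the strictification of the isomorphism in Remark \ref{rmk:comparison_of_two_dR_complexes_as_DC_mods}. Your version simply spells out the transport-of-structure step and the coherence of the Dieudonn\'e algebra identifications more explicitly, which the paper leaves implicit.
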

\begin{proof}
We showed in Theorem \ref{thm:general_construction} that $\W\Sat(\widehat{\dR}(\mathcal F(B, \brackets)))$ is a saturated de Rham--Witt complex, so we can simply transport this structure across the isomorphisms
$$
\W\Sat(\widehat{\dR}(\mathcal F(B, \brackets))) \simeq \W\Sat(\widehat{\dR}(\mathcal E(B, \gamma))) \simeq \W\Sat(\widehat{\dR}(\mathcal E(X \hookrightarrow Y_{\bullet}))),
$$
where the former is Lemma \ref{lem:compare_dR_E_to_dR_F} and the latter comes from strictifying the isomorphism discussed in Remark \ref{rmk:three_dR_complexes_of_E}.
\end{proof}

\begin{lemma} \label{lem:dR_E_quasi-Cartier}
The strictification map
$$
\rho \colon \widehat{\dR}(\mathcal E(X \hookrightarrow Y_{\bullet})) \to \W\Sat(\widehat{\dR}(\mathcal E(X \hookrightarrow Y_{\bullet})))
$$
and the induced map
$$
\rho_r \colon \widehat{\dR}(\mathcal E(X \hookrightarrow Y_{\bullet}))/p^r = \dR(\mathcal E(X \hookrightarrow Y_r)) \to \W_r\Sat(\widehat{\dR}(\mathcal E(X \hookrightarrow Y_{\bullet})))
$$
are quasi-isomorphisms.
\end{lemma}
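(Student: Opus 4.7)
The plan is to adapt BLM's approach for the trivial-coefficients case, establishing the Cartier isomorphism with unit-root coefficients as the key input. First I would tackle $\rho_1$ — the statement modulo $p$ — since the finite-level statements for $\rho_r$ and the limit statement for $\rho$ then follow by bootstrapping. Concretely, using $p$-torsionfreeness of both sides (Remark \ref{rmk:PD_env_ptf} for the left-hand side, and by construction for strict Dieudonné complexes), the 5-lemma applied to the exact sequences $0 \to M^*/p \to M^*/p^{r+1} \to M^*/p^r \to 0$ yields $\rho_r$ by induction on $r$, and the limit statement for $\rho$ follows from a Mittag--Leffler argument once each $\rho_r$ is controlled.

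For $\rho_1$, the strategy is to factor $\rho$ through the saturation,
$$\widehat{\dR}(\mathcal E(X \hookrightarrow Y_\bullet)) \xrightarrow{a} \Sat\bigl(\widehat{\dR}(\mathcal E(X \hookrightarrow Y_\bullet))\bigr) \xrightarrow{b} \W\Sat\bigl(\widehat{\dR}(\mathcal E(X \hookrightarrow Y_\bullet))\bigr),$$
and invoke BLM's criterion: the map $a$ is a quasi-isomorphism iff $\alpha_F$ induces an isomorphism on cohomology modulo $p$, which is precisely the Cartier isomorphism with coefficients in $\mathcal E$. Since $\mathcal E$ is unit-root, $\phi_{\mathcal E}\colon F^*(\mathcal E|_{B_1}) \to \mathcal E|_{B_1}$ is an isomorphism, and combined with the classical Cartier isomorphism on $\Omega^*_{A_1}$ (using the smoothness of $A_1/k$), this gives the desired isomorphism on cohomology. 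The map $b$ is a quasi-isomorphism by standard $p$-adic completeness arguments for strict Dieudonné complexes arising as saturations of $p$-complete objects. Alternatively, one can trivialize $\mathcal E$ on a pro-étale cover as in Remark \ref{rmk:proetale_trivialization} and reduce locally to the trivial-coefficients case already treated by BLM.

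The main obstacle is verifying the Cartier-type statement with coefficients carefully: the mod-$p$ de Rham complex $\mathcal E(B_1) \otimes_{A_1} \Omega^*_{A_1}$ involves a base change along $A_1 \to B_1$ which is not obviously compatible with cohomology. To circumvent this, I would use the identification with $\mathcal E(B_1) \otimes_{B_1} \Omega^*_{B_1, \gamma}$ provided by Corollary \ref{cor:comparison_of_two_dR_complexes_as_dg_mods} and work there, where the coefficient module is a locally free $B_1$-module and the Cartier-type statement reduces, after unit-root trivialization, to a direct sum of copies of the classical statement. A subsidiary difficulty is ensuring that the 5-lemma bootstrap from $\rho_1$ to $\rho_r$ genuinely applies — one must check that the quotient maps $\W_{r+1}\Sat(-) \to \W_r\Sat(-)$ fit into short exact sequences with kernel $\W_1\Sat(-)$, which follows from the definition of a strict Dieudonné tower but requires some diagram-chasing.
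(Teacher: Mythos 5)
Your overall strategy---factor the strictification map through the saturation, invoke BLM's ``Cartier-type'' criterion to show the first factor is a quasi-isomorphism, and feed in a unit-root Cartier isomorphism as the key input---is indeed the same route the paper takes. The paper simply delegates the packaging to Ogus: it cites \cite[Corollary 5.4]{ogus}, whose proof carries over once \cite[\nopp 8.20]{B-O} is replaced by the unit-root Cartier isomorphism \cite[Corollary 7.3.6]{ogus_F-crystals}. So you are not off-track, but there is one genuine gap in your bootstrap.

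You propose to deduce $\rho_r$ from $\rho_1$ by the 5-lemma applied to $0 \to M^*/p \to M^*/p^{r+1} \to M^*/p^r \to 0$, and you assert that ``the quotient maps $\W_{r+1}\Sat(-) \to \W_r\Sat(-)$ fit into short exact sequences with kernel $\W_1\Sat(-)$, which follows from the definition of a strict Dieudonn\'e tower.'' This is not what the axioms say: per \cite[Definition 2.6.1(8)]{BLM} (cf. the sequences displayed in Proposition \ref{prop:strict_tower}), the kernel of $R\colon \W_{r+1}M^* \to \W_r M^*$ is the \emph{image} of $(V^r, dV^r)\colon \W_1 M^* \oplus \W_1 M^{*-1} \to \W_{r+1}M^*$, which is a nontrivial quotient of that direct sum and in particular is not $\W_1 M^*$. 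Moreover, for a strict Dieudonn\'e complex $N^* = \W\Sat(-)$ one has $p^r N^* \subsetneq V^r N^* + dV^r N^*$ in general, so $\W_r N^*$ is a strict quotient of $N^*/p^r$; the two sides of $\rho_r$ are therefore filtered by genuinely different towers of exact sequences, and a naive 5-lemma comparison does not close. This is exactly the technical point that Ogus's Theorem 1.8 is designed to circumvent---it establishes the finite- and infinite-level quasi-isomorphisms together, under a Cartier-type hypothesis, by working with the correct filtration (whose graded pieces mix $V^r$ and $dV^r$, as in Illusie's description of the standard filtration). If you prefer to avoid citing Ogus, you would need to reproduce that analysis rather than appeal to the $p$-power filtration.

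Two smaller remarks. First, your alternative idea of trivializing $\mathcal E$ on a pro-\'etale cover as in Remark \ref{rmk:proetale_trivialization} is viable, but you would then need the commutation statements of \textsection\ref{sec:etale_localization} and \textsection\ref{sec:compatibility_with_colimits} to descend the quasi-isomorphism, plus the fact that quasi-isomorphisms of quasicoherent complexes can be checked after a faithfully flat pro-\'etale cover; spelling this out would be comparable in length to the Ogus route. Second, the identification with the PD-de Rham complex $\mathcal E(B_1) \otimes_{B_1} \Omega^*_{B_1, \gamma}$ via Corollary \ref{cor:comparison_of_two_dR_complexes_as_dg_mods} is a good move and is consistent with the way Remark \ref{rmk:three_dR_complexes_of_E} sets up the comparison; just be aware that the unit-root Cartier isomorphism you need is not the one for $\Spec B_1$ but the one for $\Spec R$, which the paper obtains from \cite[Corollary 7.3.6]{ogus_F-crystals}.
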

\begin{proof}
This is analogous to \cite[Corollary 5.4]{ogus}, and the proof goes through without change, except that we must replace the reference to \cite[\nopp 8.20]{B-O} with its unit-root generalization, \cite[Corollary 7.3.6]{ogus_F-crystals}.
(Note that \cite[Theorem 1.8]{ogus} applies to the quasi-isomorphisms at both finite and infinite level.)
\end{proof}

\begin{proposition} \label{prop:functoriality_of_non-derived_part_of_comparison}
The morphisms of Corollary \ref{cor:dRW_construction_from_classical_dR_E} and Lemma \ref{lem:dR_E_quasi-Cartier} are functorial in the data $(\Spec R \hookrightarrow (\Spec A_r/W_r(k))_r, \phi_A)$.
That is, suppose we are given a commutative diagram
\[
\begin{tikzcd}
X' \arrow[r, hook] \arrow[d] & (Y'_r)_r \arrow[r] \arrow[d] & \Spec W_r(k') \ar[d] \\
X \arrow[r, hook]   & (Y_r)_r \arrow[r] & \Spec W_r(k),
\end{tikzcd}
\]
where each row is as in \ref{para:setup_for_smooth_affine_construction} and the maps $Y'_r \to Y_r$ are compatible with Frobenius lifts.  Suppose $\mathcal E$ is a unit-root $F$-crystal on $\Cris(X/W(k))$ and $\mathcal E'$ is its pullback to $\Cris(X'/W(k'))$.  Then the resulting diagrams
\[
\begin{tikzcd}
\dR(\mathcal E(X \hookrightarrow Y_r)) \arrow[r, "\rho_r"] \arrow[d]           & \W_r \Sat(\widehat{\dR}(\mathcal E(X \hookrightarrow Y_{\bullet})) \arrow[r, "\sim"] \arrow[d]  & \W_r \Omega^*_{R, \mathcal E} \arrow[d] \\
\dR(\mathcal E'(X' \hookrightarrow Y'_r)) \arrow[r, "\rho'_r"] & \W_r \Sat(\widehat{\dR}(\mathcal E'(X' \hookrightarrow Y'_{\bullet}))) \ar[r, "\sim"] & \W_r \Omega^*_{R', \mathcal E'}
\end{tikzcd}
\]
and
\[
\begin{tikzcd}
\widehat{\dR}(\mathcal E(X \hookrightarrow Y_{\bullet})) \arrow[r, "\rho"] \arrow[d]           & \W\Sat(\widehat{\dR}(\mathcal E(X \hookrightarrow Y_{\bullet}))) \arrow[r, "\sim"] \arrow[d]  & \W\Omega^*_{R, \mathcal E} \arrow[d] \\
\widehat{\dR}(\mathcal E'(X' \hookrightarrow Y'_{\bullet})) \arrow[r, "\rho'"] & \W\Sat(\widehat{\dR}(\mathcal E'(X' \hookrightarrow Y'_{\bullet}))) \ar[r, "\sim"] & \W\Omega^*_{R', \mathcal E'}
\end{tikzcd}
\]
commute, where the rightmost vertical maps are given by Remark \ref{rmk:functorialities_of_WOmega}.
\end{proposition}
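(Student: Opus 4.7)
The plan is to split the verification of the two squares and reduce both to naturality statements we already have in hand. The leftmost square (involving $\rho_r$ and $\rho$) commutes by the universal property of strictification: the Dieudonn\'e complex morphism
$$
\widehat{\dR}(\mathcal E(X \hookrightarrow Y_{\bullet})) \to \widehat{\dR}(\mathcal E'(X' \hookrightarrow Y'_{\bullet}))
$$
coming from Construction \ref{con:undivided_Frobenius_on_dR_E} (together with the functoriality of Remark \ref{rmk:classical_dR_functorialities}) composes with $\rho'$ to give a map from the source Dieudonn\'e complex into a strict Dieudonn\'e complex; this composition factors uniquely through $\rho$. The mod-$p^r$ statement follows by applying $\W_r$.

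For the right square, I will use the fact that $\W\Omega^*_{R, \mathcal E}$ is initial in $\dRWM_{R, \mathcal E}$: by Remark \ref{rmk:functorialities_of_WOmega}, the rightmost vertical map is characterized uniquely as a morphism in $\dRWM_{R, \mathcal E}$ into the pushforward of $\W\Omega^*_{R', \mathcal E'}$. Thus it suffices to verify that the composition going around the other way,
$$
\W\Omega^*_{R, \mathcal E} \xrightarrow{\sim} \W\Sat(\widehat{\dR}(\mathcal E(X \hookrightarrow Y_{\bullet}))) \to \W\Sat(\widehat{\dR}(\mathcal E'(X' \hookrightarrow Y'_{\bullet}))) \xrightarrow{\sim} \W\Omega^*_{R', \mathcal E'},
$$
is a morphism of de Rham--Witt modules over $(R, \mathcal E)$; that is, that it intertwines the universal maps $\iota^*_r$ on each side.

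This reduces to checking that each identification used to build the de Rham--Witt module structure of Corollary \ref{cor:dRW_construction_from_classical_dR_E} is itself functorial in the data $(X \hookrightarrow Y_{\bullet}, \phi_A)$. The identification of $\widehat{\dR}(\mathcal E(X \hookrightarrow Y_{\bullet}))$ with $\widehat{\dR}(\mathcal E(B, \gamma))$ is functorial by Remark \ref{rmk:comparison_of_two_dR_complexes_as_DC_mods} and Proposition \ref{prop:comparison_map_between_two_dR_complexes_of_E} applied relatively. The comparison of $\widehat{\dR}(\mathcal E(B, \gamma))$ with $\widehat{\dR}(\mathcal F(B, \brackets))$ from Construction \ref{con:comparison_map_dR_F_to_dR_E} and Lemma \ref{lem:compare_dR_E_to_dR_F} is natural because both the PD-morphism $\id_{D_r}$ used to define $\pi_r$ and the auxiliary $F$-crystal $\mathcal F = g^*_{\cris}\mathcal E$ are natural in the given data (here one uses that the unique $g$ of Lemma \ref{lem:construction_of_g} is canonical). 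The equivalence $\dRWLM_{B, \mathcal F} \simeq \dRWM_{B_1, \mathcal F}$ of Proposition \ref{prop:dRWM_dRWLM_equivalence} is functorial by inspection of Construction \ref{constr:dRWM_to_dRWLM}, since the map $\theta^*_r$ there is induced by $h_r$ of Lemma \ref{lift_to_witt}, which is natural in $(A, \phi_A)$. Finally, the equivalence $\dRWM_{B_1, \mathcal F} \simeq \dRWM_{R, \mathcal E}$ of Proposition \ref{prop:dRWM_equiv_reduction} (via pushforward along $\Spec R \hookrightarrow \Spec B_1$ and the isomorphism $f^*_{\cris}\mathcal F \simeq \mathcal E$) is similarly functorial; in the present smooth setting $B = \widetilde B$, so no reduction-of-ring step intervenes.

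The main obstacle is organizational rather than technical: each of the naturality statements above is either explicit in the reference or follows by a short diagram chase, but there are several of them and the compatibility of the $\iota^*_r$ maps must be carried through each identification. Once assembled, the universal property of $\W\Omega^*_{R,\mathcal E}$ closes the argument.
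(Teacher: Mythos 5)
Your proposal takes essentially the same approach as the paper: the left square is handled by the universal property of strictification, and the right square is reduced (via the universal property of $\W\Omega^*_{R,\mathcal E}$ as an initial object) to verifying that the horizontal composite is a morphism of de Rham--Witt modules, which you then deduce by tracing functoriality through each identification in Corollary \ref{cor:dRW_construction_from_classical_dR_E} and Theorem \ref{thm:general_construction}. The paper organizes the verification slightly differently---it records the compatibility of $g$, $g'$, and $h_1$ in an explicit commutative diagram and then invokes uniqueness of the factorization of $\lambda^*_r$ through $\iota^*_r$, rather than separately asserting functoriality of each equivalence---but this is the same chase in different clothes.
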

\begin{proof}
We will prove the infinite-level statement, which immediately implies the mod-$p^r$ statement.
The compatibility of the left-hand square follows from the functoriality of the strictification map.  As for the right-hand square, this is a tedious exercise in unraveling the proofs of Corollary \ref{cor:dRW_construction_from_classical_dR_E} and its precursor Theorem \ref{thm:general_construction}, which we will describe now.
By the universal property of the saturated de Rham--Witt complex $\W\Omega^*_{R, \mathcal E}$, it suffices to prove that the given map
\begin{equation} \label{eqn:compatibility_map_of_dRW_construction_from_dR_E}
\W\Sat(\widehat{\dR}(\mathcal E(X \hookrightarrow Y_{\bullet}))) \to \W\Sat(\widehat{\dR}(\mathcal E'(X' \hookrightarrow Y'_{\bullet})))
\end{equation}
is a morphism in $\dRWM_{R, \mathcal E}$ once we use Corollary \ref{cor:dRW_construction_from_classical_dR_E}, Theorem \ref{thm:general_construction}, and Construction \ref{construction:dRWM_functoriality_in_R} to give its source and target this structure.

We are given maps $R \to R'$, $A_r \to A'_r$, and $W_r(k) \to W_r(k')$ of rings, which induce maps $h_r^{\sharp} \colon B_r \to B'_r$ of PD-envelopes for each $r$.  Applying Lemma \ref{lem:construction_of_g} to both PD-envelopes leads to a commutative diagram of rings
\[
\begin{tikzcd}
B_1 \arrow[r, "f^{\sharp}"] \arrow[d, "h_1^{\sharp}"] \arrow[rr, "\Frob_{B_1}", bend left] & R \arrow[r, "g^{\sharp}"] \arrow[d] & B_1 \arrow[d, "h_1^{\sharp}"] \\
B_1' \arrow[r, "f'^{\sharp}"] \arrow[rr, "\Frob_{B_1'}"', bend right]                      & R' \arrow[r, "g'^{\sharp}"]          & B_1',                         
\end{tikzcd}
\]
from which we see that $\mathcal F' = g_{\cris}'^* \mathcal E'$ is the pullback of
$\mathcal F = g_{\cris}^* \mathcal E$ along $h_1 \colon \Spec B_1' \to \Spec B_1$.  This gives us a commutative diagram
\[
\begin{tikzcd}
{\widehat{\dR}(\mathcal F(B, \brackets))} \arrow[d] \arrow[r, "\pi"] & {\widehat{\dR}(\mathcal E(B, \gamma))} \arrow[d] \arrow[r, "\sim"] & \widehat{\dR}(\mathcal E(X \hookrightarrow Y_{\bullet})) \arrow[d] \\
{\widehat{\dR}(\mathcal F'(B', \brackets))} \arrow[r, "\pi'"]        & {\widehat{\dR}(\mathcal E'(B', \gamma))} \arrow[r, "\sim"]          & \widehat{\dR}(\mathcal E'(X' \hookrightarrow Y'_{\bullet}))
\end{tikzcd}
\]
relating the maps of Remark \ref{rmk:three_dR_complexes_of_E} and Construction \ref{con:comparison_map_dR_F_to_dR_E}, where the vertical morphisms are the natural pullback maps.  As $\pi$ and $\pi'$ become isomorphisms after applying $\W\Sat$, we may thus identify the given map \eqref{eqn:compatibility_map_of_dRW_construction_from_dR_E} with the map
\begin{equation} \label{eqn:another_compatibility_map_of_dRW_construction_from_dR_E}
\W\Sat(\widehat{\dR}(\mathcal F(B, \brackets)) \to \W\Sat(\widehat{\dR}(\mathcal F'(B', \brackets)).
\end{equation}
By Corollary \ref{cor:initial_dRWLM}, these two objects are initial objects of $\dRWLM_{B, \mathcal F}$ and $\dRWLM_{B', \mathcal F'}$ respectively when endowed with the obvious maps
\begin{align*}
\lambda^*_r \colon \dR(\mathcal F(B_r, \brackets)) & \to \W_r \Sat(\widehat{\dR}(\mathcal F(B, \brackets)) \\
\lambda'^*_r \colon \dR(\mathcal F'(B'_r, \brackets)) & \to \W_r \Sat(\widehat{\dR}(\mathcal F'(B', \brackets)).
\end{align*}
Next, the proof of Theorem \ref{thm:general_construction} consists of turning these objects into initial objects of the categories $\dRWM_{R, \mathcal E}$ and $\dRWM_{R', \mathcal E'}$ by passing through various equivalences of categories.  Unwinding these functors, this amounts to factoring $\lambda^*_r$ and $\lambda'^*_r$ (uniquely) as the compositions across the rows of the following diagram:
\[
\begin{tikzcd}[column sep=small]
{\dR(\mathcal F(B_r, \brackets))} \arrow[d] \arrow[r] & {\dR(\mathcal F(W_r(B_1), \brackets))} \arrow[d] \arrow[r] & {\dR(\mathcal E(W_r(R), \gamma))} \arrow[d] \arrow[r, "\iota^*_r"] & {\W_r \Sat(\widehat{\dR}(\mathcal F(B, \brackets)))} \arrow[d] \\
{\dR(\mathcal F'(B_r', \brackets))} \arrow[r]         & {\dR(\mathcal F'(W_r(B_1'), \brackets))} \arrow[r]         & {\dR(\mathcal E'(W_r(R'), \gamma))} \arrow[r, "\iota'^*_r"]            & {\W_r \Sat(\widehat{\dR}(\mathcal F'(B', \brackets)))}
\end{tikzcd}
\]
The two leftmost squares and the outer rectangle of this diagram commute, so the uniqueness of the factorization implies that the rightmost square commutes as well.  This is precisely the compatibility of $\iota$ maps that we require in order for \eqref{eqn:another_compatibility_map_of_dRW_construction_from_dR_E} to be a morphism in $\dRWM_{R, \mathcal E}$, where we have implicitly applied the pushforward functor of Construction \ref{construction:dRWM_functoriality_in_R} to its target.
This map is also evidently compatible with module structures, so it is indeed a morphism of de Rham--Witt modules as claimed.
\end{proof}

\begin{para} \label{para:local_model_for_smooth_formal_embeddings}
We now leave the affine setting and assume only that $X/k$ is smooth and comes equipped with a closed embedding into a smooth $p$-adic formal scheme $\mathcal Y/\Spf W(k)$ which is equipped with a Frobenius lift $\phi_{\mathcal Y}$ (over $\sigma \colon W(k) \to W(k)$) of $\mathcal Y \times_{\Spf W} \Spec k$.
(By Remark \ref{rmk:lift_with_Frobenius_finite_type}, such an embedding exists for any smooth \emph{affine} $X/k$, and this relativizes for morphisms $X' \to X$ of smooth affine schemes over $\Spec k' \to \Spec k$.)  For each $r > 0$, we let $Y_r$ denote the scheme $\mathcal Y \times_{\Spf W} \Spec W_r(k)$.
Note that locally on $\mathcal Y$, any such embedding has the form
$$
\Spec R \hookrightarrow \Spf A,
$$
where $R$ is a smooth $k$-algebra and $A$ is a smooth and $p$-adically complete $W$-algebra equipped with a Frobenius lift $\phi_A$.  (In fact there exists a basis of affine open subschemes $\Spec R \subset X$ admitting such an embedding:  given an open neighborhood $U$ of $x \subset X$, one can choose an open formal subscheme $V \subset \mathcal Y$ whose preimage equals $U$, and one can then localize further within $V$.)  Similarly, given a map
\[
\begin{tikzcd}
X' \arrow[r, hook] \arrow[d] & \mathcal Y' \arrow[d] \arrow[r] & \Spf W' \arrow[d] \\
X \arrow[r, hook]   & \mathcal Y \arrow[r] & \Spf W
\end{tikzcd}
\]
of two such embeddings (which we always assume is compatible with Frobenius lifts), we can localize first on $\mathcal Y$ and then on $\mathcal Y'$ to put it in the form
\[
\begin{tikzcd}
\Spec R' \arrow[r, hook] \arrow[d] & \Spf A' \arrow[d] \arrow[r] & \Spf W' \arrow[d] \\
\Spec R \arrow[r, hook]   & \Spf A \arrow[r] & \Spf W.
\end{tikzcd}
\]
\end{para}

\begin{proposition} \label{prop:comparison_map_embeddable_case}
Suppose $X \hookrightarrow \mathcal Y/W$ is as in \ref{para:local_model_for_smooth_formal_embeddings}, and let $\mathcal E$ be a unit-root F-crystal on $\Cris(X/W)$.  Then we have quasi-isomorphisms
\begin{align*}
\dR(\mathcal E_{X \hookrightarrow Y_r}) & \to \W_r \Omega^*_{X, \mathcal E} \\
\widehat{\dR}(\mathcal E_{X \hookrightarrow \mathcal Y}) & \to \W\Omega^*_{X, \mathcal E}
\end{align*}
of Zariski sheaves (of $W_r$-modules and $W$-modules respectively) on $X$.
These are both functorial in the data $(X \hookrightarrow \mathcal Y/W, \phi_{\mathcal Y})$ in the sense that given compatible morphisms $f \colon X' \to X$ and $g \colon \mathcal Y' \to \mathcal Y$ over $\Spf W' \to \Spf W$, the diagram
\[
\begin{tikzcd}
\dR(\mathcal E_{X \hookrightarrow Y_r}) \arrow[r] \arrow[d]           & {\mathcal W_r \Omega^*_{X, \mathcal E}} \arrow[d]          \\
f_* \dR((f^*_{\cris} \mathcal E)_{X' \hookrightarrow Y'_r}) \arrow[r] & {f_* (\mathcal W_r \Omega^*_{X', f^*_{\cris} \mathcal E})}
\end{tikzcd}
\]
commutes for each $r$, and similarly when passing to the limit.
\end{proposition}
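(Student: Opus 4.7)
The plan is to reduce to the affine case already handled by Corollary \ref{cor:dRW_construction_from_classical_dR_E} and Lemma \ref{lem:dR_E_quasi-Cartier}, then sheafify and verify compatibility using the functoriality of Proposition \ref{prop:functoriality_of_non-derived_part_of_comparison}.

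\textbf{Construction of the map.} First I would choose an affine open basis $\{U_i = \Spec R_i\}_{i \in I}$ of $X$ (stable under finite intersections) together with affine open formal subschemes $\mathcal V_i = \Spf A_i \subset \mathcal Y$ such that $U_i \hookrightarrow \mathcal V_i$ is a closed embedding and the restriction of $\mathcal V_i$ to $X$ equals $U_i$, as in \ref{para:local_model_for_smooth_formal_embeddings}; the Frobenius lift $\phi_{\mathcal Y}$ restricts to a Frobenius lift $\phi_{A_i}$. On each $U_i$, Corollary \ref{cor:dRW_construction_from_classical_dR_E} and Lemma \ref{lem:dR_E_quasi-Cartier} provide maps
$$
\rho_{r,i} \colon \dR(\mathcal E(U_i \hookrightarrow \Spec A_i/p^r A_i)) \to \W_r \Omega^*_{R_i, \mathcal E|_{U_i}}
$$
which are quasi-isomorphisms of complexes of sheaves on $U_i$ (both sides are quasicoherent, so the sheafy and presheafy statements agree, and the quasi-isomorphism at the level of sections extends). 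For any inclusion $U_j \hookrightarrow U_i$ between two such affines (which we may choose to be realized by $\Spf A_j \hookrightarrow \Spf A_i$, compatible with Frobenius lifts), Proposition \ref{prop:functoriality_of_non-derived_part_of_comparison} says the $\rho_{r,i}$ are compatible with restriction. Hence the $\rho_{r,i}$ patch to a morphism of Zariski sheaves of complexes on $X$.

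\textbf{Quasi-isomorphism at finite level.} Since being a quasi-isomorphism is Zariski-local and holds on each $U_i$ by Lemma \ref{lem:dR_E_quasi-Cartier}, the globalized map
$$
\dR(\mathcal E_{X \hookrightarrow Y_r}) \to \W_r\Omega^*_{X, \mathcal E}
$$
is a quasi-isomorphism on $X$.

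\textbf{Passage to the limit.} For the infinite-level statement, I would take $\varprojlim_r$ of the finite-level quasi-isomorphisms. Since the transition maps in the tower $(\dR(\mathcal E_{X \hookrightarrow Y_r}))_r$ are surjective in each degree (they are quotients by $p^r$ of a $p$-torsion-free complex in the affine case, by Remark \ref{rmk:PD_env_ptf}), and the transition maps in $(\W_r \Omega^*_{X, \mathcal E})_r$ are also surjective in each degree, the Mittag-Leffler condition holds and $R^1 \varprojlim$ vanishes term-by-term. So the limit map $\widehat{\dR}(\mathcal E_{X \hookrightarrow \mathcal Y}) \to \W\Omega^*_{X, \mathcal E}$ is a quasi-isomorphism.

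\textbf{Functoriality.} Given a morphism of embeddings as in the statement, I would cover $X$ and $X'$ by compatible affine opens as in the end of \ref{para:local_model_for_smooth_formal_embeddings}, so that the comparison is affine-locally given by Proposition \ref{prop:functoriality_of_non-derived_part_of_comparison}. The main obstacle I anticipate is bookkeeping: one needs the affine covers of $X$ and $X'$, and the chosen embeddings, to be compatible enough that the square in the statement can be verified locally. Once the covers are arranged so each affine open of $X'$ maps into an affine open of $X$ with compatible Frobenius lifts, Proposition \ref{prop:functoriality_of_non-derived_part_of_comparison} gives the commutativity at the sheafified level, and the infinite-level functoriality follows by the same $\varprojlim$ argument.
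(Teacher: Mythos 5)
Your proposal follows essentially the same route as the paper: affine-local quasi-isomorphisms from Corollary \ref{cor:dRW_construction_from_classical_dR_E} and Lemma \ref{lem:dR_E_quasi-Cartier}, gluing via Proposition \ref{prop:functoriality_of_non-derived_part_of_comparison}, and passage to the limit. One small imprecision: in the passage to the limit you invoke Mittag-Leffler from surjectivity of the transition maps of the sheaf towers, but for inverse systems of abelian sheaves, surjective transition maps alone do not guarantee $R^1 \varprojlim = 0$ — one needs the quasicoherence of the towers to conclude that $R\varprojlim_r$ coincides with the naive $\varprojlim_r$ (the paper cites \cite[tag \texttt{0BKS}]{stacks} for exactly this); you do note quasicoherence earlier in the argument, but the limit step should explicitly appeal to it rather than to Mittag-Leffler of the sheaf systems directly.
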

\begin{proof}
To construct the isomorphism
$$
\dR(\mathcal E_{X \hookrightarrow Y_r}) \simeq \W_r \Omega^*_{X, \mathcal E},
$$
we work on affine open subschemes $\Spec R \subset X$ on which $X \hookrightarrow \mathcal Y$ restricts to an affine embedding $\Spec R \hookrightarrow \Spf A$.  On each such open subscheme, Corollary \ref{cor:dRW_construction_from_classical_dR_E} and Lemma \ref{lem:dR_E_quasi-Cartier} define a quasi-isomorphism
$$
\dR(\mathcal E(\Spec R \hookrightarrow \Spec A_r)) \to \W_r \Omega^*_{R, \mathcal E|_{\Spec R}}.
$$
By Proposition \ref{prop:functoriality_of_non-derived_part_of_comparison}, these glue to a map of complexes of sheaves
$$
\dR(\mathcal E_{X \hookrightarrow Y_r}) \to \W_r \Omega^*_{X, \mathcal E},
$$
which we claim is also a quasi-isomorphism.  Indeed, the cone of this map is a complex of sheaves which is exact when evaluated on a basis of affines in $X$; thus it is exact on stalks and therefore also at the level of sheaves.
This yields the isomorphism at finite levels.  We then obtain the infinite-level isomorphism by applying $R\limarrow_r$.  Note here that $(\dR(\mathcal E_{X \hookrightarrow Y_r}))_r$ and $(\W_r \Omega^*_{X, \mathcal E})_r$ are towers of quasicoherent sheaves with surjective transition maps, so they satisfy
\begin{align*}
R\limarrow_r \dR(\mathcal E_{X \hookrightarrow Y_r}) & = \limarrow_r \dR(\mathcal E_{X \hookrightarrow Y_r}) = \widehat{\dR}(\mathcal E_{X \hookrightarrow \mathcal Y}) \text{ and} \\
R\limarrow_r \W_r \Omega^*_{X, \mathcal E} & = \limarrow_r \W_r \Omega^*_{X, \mathcal E} = \W\Omega^*_{X, \mathcal E}
\end{align*}
by \cite[tag \texttt{0BKS}]{stacks}.
Functoriality in the data $(X \hookrightarrow \mathcal Y/W, \phi_{\mathcal Y})$ is guaranteed by Proposition \ref{prop:functoriality_of_non-derived_part_of_comparison}.
\end{proof}

\begin{para}
In order to prove Proposition \ref{prop:comparison_map_embeddable_case} without assuming the existence of a smooth embedding with Frobenius, we will need the method of cohomological descent; cf. e.g. \cite[\textsection V, 3.4]{berthelot}.
Our setup is as follows.
\end{para}

\begin{construction} \label{con:hypercover_setup}
Let $X$ be a smooth $k$-scheme and $(\mathcal E, \phi_{\mathcal E})$ a unit-root $F$-crystal on $X$.
Choose an affine open cover $(U_i)_{i \in I}$ of $X$.
For each $i$, fix a smooth formal embedding $U_i \hookrightarrow \mathcal Y_i/W$ with Frobenius lift, as in Remark \ref{rmk:lift_with_Frobenius_finite_type}.
For every nonempty finite subset $J \subset I$, we get a smooth formal embedding
\begin{equation} \label{eqn:U_J_to_Y_J}
U_J := \bigcap_{i \in J} U_i \hookrightarrow \prod_{i \in J} \mathcal Y_i =: \mathcal Y_J,
\end{equation}
where the product is taken over $\Spf W$ and $\mathcal Y_J$ carries a Frobenius lift defined component-wise.
Then we can assemble the open subschemes $U_J$ into a hypercover $X_{\bullet}$ of $X$, with
$$
X_n = \coprod_{|J| = n} U_J,
$$
and \eqref{eqn:U_J_to_Y_J} defines a closed embedding of the semi-simplicial scheme $X_{\bullet}$ into a smooth formal semi-simplicial scheme $\mathcal Y_{\bullet}$ equipped with Frobenius lift.
\end{construction}

\begin{remark}
To prevent any confusion between the semi-simplicial embedding $X_{\bullet} \hookrightarrow \mathcal Y_{\bullet}$ of Construction \ref{con:hypercover_setup} and the $p$-adic towers of lifts $X \hookrightarrow Y_{\bullet}$, we will refrain from using the latter notation for the remainder of this section.  When necessary, we will denote the $p$-adic tower associated to the semi-simplicial formal scheme $\mathcal Y_{\bullet}$ as $(\mathcal Y_{\bullet, r})_r$.
\end{remark}

\begin{theorem} \label{thm:dRW_computes_Ru_*_E}
Let $X$ be a smooth $k$-scheme and $\mathcal E$ a unit-root F-crystal on $\Cris(X/W)$.  Then there are canonical isomorphisms
$$
Ru_{X/W_r *} \mathcal E \simeq \W_r\Omega^*_{X, \mathcal E}
$$
in $D(X, W_r)$ for each $r > 0$, and
$$
Ru_{X/W *} \mathcal E \simeq \W\Omega^*_{X, \mathcal E}
$$
in $D(X, W)$.
\end{theorem}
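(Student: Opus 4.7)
The plan is to combine cohomological descent along a Zariski hypercover with the embedded-case comparison of Proposition \ref{prop:comparison_map_embeddable_case}. First, I would choose an affine open cover $(U_i)_{i \in I}$ of $X$ such that each $U_i$ admits a smooth formal embedding $U_i \hookrightarrow \mathcal Y_i$ over $\Spf W$ equipped with a Frobenius lift, as guaranteed by Remark \ref{rmk:lift_with_Frobenius_finite_type}. Assembling these as in Construction \ref{con:hypercover_setup} yields a Zariski hypercover $\pi \colon X_\bullet \to X$ together with a closed embedding $X_\bullet \hookrightarrow \mathcal Y_\bullet$ into a smooth formal semi-simplicial $W$-scheme carrying a compatible Frobenius lift.

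On each level $X_n$, Proposition \ref{prop:comparison_map_embeddable_case} produces a quasi-isomorphism
$$
\dR(\mathcal E_{X_n \hookrightarrow \mathcal Y_{n,r}}) \overset{\sim}{\to} \W_r \Omega^*_{X_n, \mathcal E|_{X_n}},
$$
functorial in the face maps of $X_\bullet$ by the second half of that proposition. On the other hand, classical crystalline theory (as used in Theorem \ref{thm:illusie_thm}, and whose extension to a unit-root---indeed arbitrary---coefficient crystal is standard) identifies $\dR(\mathcal E_{X_n \hookrightarrow \mathcal Y_{n,r}})$ with $Ru_{X_n/W_r\,*}(\mathcal E|_{X_n})$ in $D(X_n, W_r)$, again functorially in the simplicial structure. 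Combining these via the augmentation $\pi$---and using that $\W_r\Omega^*_{X, \mathcal E}$ is itself a Zariski sheaf by Proposition \ref{prop:sheaf}, so that $R\pi_{\zar *} \W_r\Omega^*_{X_\bullet, \mathcal E|_{X_\bullet}} \simeq \W_r\Omega^*_{X, \mathcal E}$, while $Ru_{X/W_r *}\mathcal E \simeq R\pi_{\zar *} Ru_{X_\bullet/W_r *} \mathcal E|_{X_\bullet}$ by crystalline cohomological descent along the Zariski hypercover $X_\bullet$---produces the finite-level isomorphism $Ru_{X/W_r *}\mathcal E \simeq \W_r\Omega^*_{X, \mathcal E}$ in $D(X, W_r)$.

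For the infinite-level statement, apply $R\limarrow_r$ to the compatible system of finite-level isomorphisms: both towers consist of quasicoherent sheaves with surjective transition maps, so the derived limit reduces to the ordinary limit and computes $\widehat{\dR}$ and $\W$ respectively, yielding $Ru_{X/W *}\mathcal E \simeq \W\Omega^*_{X, \mathcal E}$ in $D(X, W)$.

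The main obstacle I anticipate is the bookkeeping required to keep every identification strictly functorial in the semi-simplicial structure. In particular, the classical comparison $\dR(\mathcal E_{X_n \hookrightarrow \mathcal Y_{n,r}}) \simeq Ru_{X_n/W_r *}(\mathcal E|_{X_n})$ is a priori only an isomorphism in each individual derived category $D(X_n, W_r)$, and one must promote it to a morphism in the derived category of the total topos of $\Cris(X_\bullet/W_r)$ before one can descend along $\pi$. The cleanest way to do this is to construct both the de Rham complex and $Ru_*\mathcal E$ directly on $X_\bullet$, using Berthelot's construction of the comparison map for a smooth embedding (which is itself functorial) and verifying that the comparison map of Proposition \ref{prop:comparison_map_embeddable_case} sheafifies into a morphism of complexes of sheaves on the semi-simplicial Zariski topos; this last point is handled precisely by the functoriality clause in Proposition \ref{prop:comparison_map_embeddable_case}.
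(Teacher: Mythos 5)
Your proposal follows the same route as the paper's proof: build the Zariski hypercover $X_\bullet \hookrightarrow \mathcal Y_\bullet$ from Construction \ref{con:hypercover_setup}, apply the embedded-case comparison of Proposition \ref{prop:comparison_map_embeddable_case} on $X_\bullet$, compose with the classical identification of the de Rham complex of a smooth embedding with $Ru_{X_\bullet/W_r\,*}(\mathcal E|_{X_\bullet})$, descend along $\pi_{\zar}$ by cohomological descent and along $\pi_{\cris}$ using the $\Cris$-version of the hypercover, and finally pass to the limit via $Ru_{X/W\,*}\mathcal E \simeq R\limarrow_r Ru_{X/W_r\,*}\mathcal E$. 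You also correctly identify the key technical point, namely that both comparison maps must be built directly on the semi-simplicial topos rather than level-by-level, which is exactly what the functoriality clause of Proposition \ref{prop:comparison_map_embeddable_case} is for. Two small remarks: the classical comparison you invoke is \cite[Theorem 7.1(2)]{B-O} (the Berthelot--Ogus identification of $Ru_*\mathcal E$ with $\dR(\mathcal E_{X\hookrightarrow Y})$ for a crystal and a smooth embedding), not Theorem \ref{thm:illusie_thm}, which is the de Rham--Witt comparison for the trivial coefficient sheaf and would be circular to invoke here.

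The one genuine gap is that the statement asserts the isomorphism is \emph{canonical}, and your proposal stops once the map is constructed from one choice of open cover and Frobenius-lifted embeddings. One still needs to check that the resulting isomorphism in $D(X, W_r)$ is independent of these choices. The paper handles this by observing that, given two families of embeddings with one refining the other, the functoriality of Proposition \ref{prop:comparison_map_embeddable_case} and of \cite[Theorem 7.1(2)]{B-O} force the resulting isomorphisms to agree, and that an arbitrary pair of choices can be compared via a common refinement (taking intersections of the open covers and products of the formal embeddings). Without this step, you have constructed \emph{an} isomorphism, not a canonical one, and the compatibilities asserted in Proposition \ref{prop:compatibilities_of_cohomology_comparison} would not be meaningful.
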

\begin{proof}
We will prove the finite-level statement; the infinite-level statement follows as in Proposition \ref{prop:comparison_map_embeddable_case}, using the isomorphism
$$
Ru_{X/W *} \mathcal E \simeq R\limarrow_r Ru_{X/W_r *} \mathcal E
$$
of \cite[\nopp 7.22.2]{B-O}.
Choose an affine open cover $\{U_i\}$, and let $X_{\bullet}$ be the resulting hypercover, equipped with a smooth formal embedding $X_{\bullet} \hookrightarrow \mathcal Y_{\bullet}$ with Frobenius lift as in Construction \ref{con:hypercover_setup}.
Viewing $\mathcal E|_{X_{\bullet}}$ as a sheaf in the topos $(X_{\bullet}/W_r)_{\cris}$ (cf. \cite[\textsection V, 3.4.1]{berthelot}), the embedding $X_{\bullet} \hookrightarrow \mathcal Y_{\bullet}$ induces a quasi-isomorphism
$$
\dR(\mathcal E_{X_{\bullet} \hookrightarrow \mathcal Y_{\bullet, r}}) \to \W_r \Omega^*_{X_{\bullet}, \mathcal E|_{X_{\bullet}}}
$$
of sheaves of $W_r$-modules on $X_{\bullet}$ by Proposition \ref{prop:comparison_map_embeddable_case}.
This becomes an isomorphism in the derived category $D(X_{\bullet}, W_r)$.
From \cite[Theorem 7.1(2)]{B-O}, we also have an isomorphism
$$
Ru_{X_{\bullet}/W_r *} \mathcal E|_{X_{\bullet}} \simeq \dR(\mathcal E_{X_{\bullet} \hookrightarrow \mathcal Y_{\bullet, r}})
$$
in $D(X_{\bullet}, W_r)$;
composing this with the above yields an isomorphism
\begin{equation} \label{eqn:cohomology_comparison_on_hypercover}
Ru_{X_{\bullet}/W_r *} \mathcal E|_{X_{\bullet}} \simeq \W_r \Omega^*_{X_{\bullet}, \mathcal E|_{X_{\bullet}}}
\end{equation}
in $D(X_{\bullet}, W_r)$.
Now let $\pi_{\zar}$ and $\pi_{\cris}$ denote the canonical morphisms of topoi
\begin{align*}
\pi_{\zar} \colon X_{\bullet, \zar} & \to X_{\zar} \\
\pi_{\cris} \colon (X_{\bullet}/W_r)_{\cris} & \to (X/W_r)_{\cris},
\end{align*}
whose pullback functors are the usual pullbacks of sheaves.
Combining \eqref{eqn:cohomology_comparison_on_hypercover} with the theorem of cohomological descent (\cite[\textsection V, Th\'eor\`eme 3.4.8]{berthelot}), we obtain isomorphisms
\begin{align*}
Ru_{X/W_r *} \mathcal E & \simeq Ru_{X/W_r *} R \pi_{\cris *} (\pi^*_{\cris} \mathcal E) \\
& \simeq R\pi_{\zar *} Ru_{X_{\bullet}/W_r *}(\mathcal E|_{X_{\bullet}}) \\
& \simeq R\pi_{\zar *} \W_r \Omega^*_{X_{\bullet}, \mathcal E|_{X_{\bullet}}} \\
& \simeq R\pi_{\zar *} \pi^*_{\zar} \W_r \Omega^*_{X, \mathcal E} \\
& \simeq \W_r \Omega^*_{X, \mathcal E}
\end{align*}
in $D(X_{\zar}, W_r)$.

We claim that this isomorphism does not depend on the choice of the open cover $\{U_i\}$ and the embeddings with Frobenius $(U_i \hookrightarrow \mathcal Y_i/W, \phi_{\mathcal Y_i})$.  Indeed, if we are given two choices $\{U_i \hookrightarrow \mathcal Y_i/W, \phi_{\mathcal Y_i}\}$ and $\{U'_j \hookrightarrow \mathcal Y'_j/W, \phi_{\mathcal Y'_j}\}$ of such data with one factoring through the other, the functorialities of Proposition \ref{prop:comparison_map_embeddable_case} and \cite[Theorem 7.1(2)]{B-O} ensure that the resulting isomorphisms agree.  In general, given only $(U_i \hookrightarrow \mathcal Y_i/W, \phi_{\mathcal Y_i})$ and $\{U'_j \hookrightarrow \mathcal Y'_j/W, \phi_{\mathcal Y'_j}\}$, we may reduce to the case above by forming a common refinement of the open covers and taking the product of the embeddings and Frobenius lifts.
\end{proof}

\begin{corollary} \label{cor:cohomology_comparison}
Under the same hypotheses as Theorem \ref{thm:dRW_computes_Ru_*_E}, there are canonical isomorphisms
$$
\mathbb H^i(X_{\zar}, \W_r \Omega^*_{X, \mathcal E}) \simeq H^i((X/W_r)_{\cris}, \mathcal E)
$$
for each $r > 0$, and
$$
\mathbb H^i(X_{\zar}, \W\Omega^*_{X, \mathcal E}) \simeq H^i((X/W)_{\cris}, \mathcal E).
$$
\end{corollary}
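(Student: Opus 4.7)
The plan is to deduce this corollary directly from Theorem \ref{thm:dRW_computes_Ru_*_E} by applying the functor $R\Gamma(X_{\zar}, -)$ to both sides of the derived-category isomorphism and interpreting each side as the appropriate hypercohomology.

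First I would recall that by definition the hypercohomology of a complex of sheaves on $X_{\zar}$ is $\mathbb{H}^i(X_{\zar}, \W_r\Omega^*_{X,\mathcal{E}}) = H^i R\Gamma(X_{\zar}, \W_r\Omega^*_{X,\mathcal{E}})$, and similarly for the infinite level. Thus applying $R\Gamma(X_{\zar}, -)$ to the right-hand side of Theorem \ref{thm:dRW_computes_Ru_*_E} and taking cohomology immediately yields the hypercohomology groups on the left side of the corollary.

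For the crystalline side, I would use the factorization of global sections through the projection $u_{X/W_r}\colon ((X/W_r)_{\cris}, \mathcal{O}_{X/W_r}) \to (X_{\zar}, W_r)$. Since $R\Gamma((X/W_r)_{\cris}, \mathcal{E}) = R\Gamma(X_{\zar}, Ru_{X/W_r *} \mathcal{E})$ by the composition of derived functors (the Leray-type computation for the structural morphism), applying $R\Gamma(X_{\zar}, -)$ to the left-hand side of Theorem \ref{thm:dRW_computes_Ru_*_E} and passing to cohomology recovers $H^i((X/W_r)_{\cris}, \mathcal{E})$. Combining the two sides gives the first isomorphism of the corollary for each $r > 0$.

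For the infinite-level statement, I would proceed analogously using the second isomorphism of Theorem \ref{thm:dRW_computes_Ru_*_E} in $D(X, W)$. The crystalline interpretation again follows from the factorization through $u_{X/W}$, giving $R\Gamma((X/W)_{\cris}, \mathcal{E}) = R\Gamma(X_{\zar}, Ru_{X/W *}\mathcal{E})$. Since no genuinely new work is required once Theorem \ref{thm:dRW_computes_Ru_*_E} is in hand, I expect no real obstacle here; the only minor point to verify is that the composition-of-derived-functors identities hold in our setting, which follows because $u_{X/W_r *}$ and $u_{X/W *}$ are defined as pushforward along the evident morphisms of ringed topoi and $\Gamma(X_{\zar}, -)$ is the global sections functor on the target. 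The proof therefore amounts to one or two sentences invoking Theorem \ref{thm:dRW_computes_Ru_*_E} and passing to hypercohomology.
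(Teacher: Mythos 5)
Your proposal is correct and matches the paper's proof, which simply applies $R^i\Gamma(X_{\zar}, -)$ to both sides of Theorem \ref{thm:dRW_computes_Ru_*_E}; you have just spelled out the implicit composition-of-derived-functors identity $R\Gamma((X/W_r)_{\cris}, \mathcal E) = R\Gamma(X_{\zar}, Ru_{X/W_r *}\mathcal E)$ that the paper leaves tacit.
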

\begin{proof}
Apply $R^i \Gamma(X_{\zar}, -)$ to each part of Theorem \ref{thm:dRW_computes_Ru_*_E}.
\end{proof}

\begin{remark} \label{rmk:compatibilities_of_cohomology_comparison}
The isomorphisms of Theorem \ref{thm:dRW_computes_Ru_*_E} enjoy various compatibilities.  We will state them
at the level of $Ru_{X/W_r *} \mathcal E$; of course, all of the compatibilities readily pass to the limit and to cohomology.
\end{remark}

\begin{proposition} \label{prop:compatibilities_of_cohomology_comparison}
Fix $r$, and let
$$
\tau_{X, \mathcal E} \colon Ru_{X/W_r *} \mathcal E \simeq \W_r \Omega^*_{X, \mathcal E}
$$
denote the isomorphism of Theorem \ref{thm:dRW_computes_Ru_*_E} in $D(X, W_r)$.  Then:

\begin{enumerate}
\item The isomorphism
$$
\tau_{X, \mathcal O} \colon Ru_{X/W_r *} \mathcal O_{X/S} \simeq \W_r \Omega^*_{X, \mathcal O_{X/S}} = \W_r \Omega^*_X
$$
is compatible with multiplicative structures.

\item $\tau_{X, \mathcal E}$ is compatible with the module structures of the two sides over $Ru_{X/W_r *} \mathcal O_{X/W}$ resp. $\W_r \Omega^*_X$ (via the isomorphism $\tau_{X, \mathcal O}$).

\item $\tau_{X, \mathcal E}$ is functorial in the $F$-crystal $\mathcal E$.

\item $\tau_{X, \mathcal E}$ is functorial in $X$ in the sense that given a morphism $f \colon X' \to X$ over $\Spec k' \to \Spec k$, the square
\[
\begin{tikzcd}
Ru_{X/W_r *} \mathcal E \arrow[rr, "\sim", "{\tau_{X, \mathcal E}}"'] \arrow[d]                             &  & {\W_r \Omega^*_{X, \mathcal E}} \arrow[d]         \\
Rf_* Ru_{X'/W_r *} (f^*_{\cris} \mathcal E) \arrow[rr, "\sim", "{Rf_* (\tau_{X', f^*_{\cris} \mathcal E})}"'] &  & {Rf_* \W_r \Omega^*_{X', f^*_{\cris} \mathcal E}}
\end{tikzcd}
\]
commutes, where the left vertical arrow is the functoriality map of \cite[0, 3.2.5(a)]{illusie}, and the right one is the derived version of the morphism defined on affines by Remark \ref{rmk:functorialities_of_WOmega}.

\item (Compatibility with Frobenius) The diagram
\[
\begin{tikzcd}
Ru_{X/W_r *} \mathcal E \arrow[d] \arrow[rr, "\sim", "\tau_{X, \mathcal E}"'] &  & \W_r \Omega^*_{X, \mathcal E} \arrow[dd, "\W_r(\alpha_F)"] \\
R(F_X)_* Ru_{X/W_r *} (F_X)^*_{\cris} \mathcal E \arrow[d, "\sim", "\phi_{\mathcal E}"'] &  &  \\
R(F_X)_* Ru_{X/W_r *} \mathcal E \arrow[rr, "\sim", "R(F_X)_* \tau_{X, \mathcal E}"'] &  & R(F_X)_* \W_r \Omega^*_{X, \mathcal E}
\end{tikzcd}
\]
commutes, where the upper left vertical morphism is the functoriality map of \cite[0, 3.2.5(a)]{illusie} and $\alpha_F$ is the map defined by $p^i F$ in degree $i$.

\end{enumerate}
\end{proposition}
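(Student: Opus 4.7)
The plan is to verify each compatibility by reducing it, via the construction of $\tau_{X,\mathcal E}$ in Theorem \ref{thm:dRW_computes_Ru_*_E}, to the corresponding property of the intermediate isomorphism
$$
\dR(\mathcal E_{X_\bullet \hookrightarrow \mathcal Y_{\bullet, r}}) \overset{\sim}{\to} \W_r \Omega^*_{X_\bullet, \mathcal E|_{X_\bullet}}
$$
of Proposition \ref{prop:comparison_map_embeddable_case} on a hypercover $X_\bullet \hookrightarrow \mathcal Y_\bullet$ as in Construction \ref{con:hypercover_setup}. Since both the Berthelot--Ogus comparison $Ru_{X_\bullet/W_r *}\mathcal E|_{X_\bullet} \simeq \dR(\mathcal E_{X_\bullet \hookrightarrow \mathcal Y_{\bullet,r}})$ and cohomological descent are compatible with tensor products, functorial in the coefficient crystal, and functorial in $X$ (the last via refinement of hypercovers), the content of each item reduces to properties of this middle isomorphism, which is built affine-locally from the strictification map $\rho_r$ of Lemma \ref{lem:dR_E_quasi-Cartier} and the identification of Corollary \ref{cor:dRW_construction_from_classical_dR_E}.

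For (1), (2), and (3), I would use that strictification is symmetric monoidal (Definition \ref{def:tensor_product}) and that the dg-module structure on $\widehat{\dR}(\mathcal E_{X \hookrightarrow Y_\bullet})$ over $\widehat{\dR}(\mathcal O_{X/W, X \hookrightarrow Y_\bullet})$ from Remark \ref{rmk:classical_dR_DC_mod} is compatible with the divided Frobenius; together with the functoriality of the whole construction in $\mathcal E$, these imply the three compatibilities once one chases the structures through the composition. For (4), I would choose compatible affine covers and formal embeddings on $X$ and $X'$ as in \ref{para:local_model_for_smooth_formal_embeddings}, apply the functoriality established in Proposition \ref{prop:functoriality_of_non-derived_part_of_comparison} and Proposition \ref{prop:comparison_map_embeddable_case}, and handle arbitrary pairs of choices of hypercovers on both sides by passing to a common refinement, as in the end of the proof of Theorem \ref{thm:dRW_computes_Ru_*_E}.

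The main obstacle will be (5), the compatibility with Frobenius, as it requires matching up two rather different descriptions of Frobenius on the two sides. My approach is to deduce (5) from (3) and (4) applied to the absolute Frobenius $F_X \colon X \to X$ over $\sigma \colon \Spf W \to \Spf W$ together with $\phi_{\mathcal E} \colon (F_X)^*_{\cris} \mathcal E \to \mathcal E$: this chain of morphisms identifies the composite $\W_r\Omega^*_{X, \mathcal E} \to R(F_X)_* \W_r\Omega^*_{X, \mathcal E}$ coming from the left-hand column of the diagram with the functoriality map of Remark \ref{rmk:functorialities_of_WOmega}, which by Proposition \ref{prop:alpha_F_on_WOmega_R,E} equals $\W_r(\alpha_F)$ affine-locally and thus globally. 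What remains is to check that the corresponding composite on the crystalline side is genuinely the Frobenius structure map of $\mathcal E$; at the level of the intermediate de Rham complex this is the identity $\phi = p^i F$ in degree $i$ built into Construction \ref{con:divided_Frobenius_on_classical_dR_E}, combined with the fact that the Berthelot--Ogus comparison intertwines Frobenius pullback on $Ru_{X/W_r *}\mathcal E$ with the functorial (undivided) Frobenius on the de Rham complex. The only real subtlety will be carefully tracking how $\phi_{\mathcal E}$ interacts with the connections on both sides so as to pin down that the composite of the two compatibilities in (3) and (4) produces exactly the morphism in the right-hand column of the diagram in (5).
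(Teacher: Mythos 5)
Your proposal is correct and follows essentially the same route as the paper: items (1)--(3) by tracing the structures through the chain of quasi-isomorphisms in Theorem \ref{thm:dRW_computes_Ru_*_E}, (4) by choosing compatible affine covers and smooth formal embeddings on $X$ and $X'$ and invoking the functorialities of Proposition \ref{prop:comparison_map_embeddable_case} and the Berthelot--Ogus comparison, and (5) by chaining (3) applied to $\phi_{\mathcal E}$ with (4) applied to $F_X$ and then invoking Proposition \ref{prop:alpha_F_on_WOmega_R,E}. The only small inefficiency is the last paragraph on (5): once you apply (4) to $F_X$ and (3) to $\phi_{\mathcal E}$, the left column of the diagram in (5) is, by definition of the vertical arrows in (3) and (4), literally the crystalline Frobenius, so there is nothing further to verify on the crystalline side; the remaining content (identifying the composite of the corresponding right-hand arrows with $\W_r(\alpha_F)$) is exactly what Proposition \ref{prop:alpha_F_on_WOmega_R,E} supplies, and the identity $\phi = p^i F$ and the Frobenius-compatibility of the Berthelot--Ogus map that you invoke are ingredients of that proposition and of (4), not separate checks here.
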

\begin{proof}
The first three compatibilities follow from stepping through all of the quasi-isomorphisms we composed, for any choice of open cover and embeddings with Frobenius.
Statement (4) follows from choosing affine open covers and embeddings $\{(U_i \hookrightarrow \mathcal Y_i/W, \phi_{\mathcal Y_i})\}$ of $X$ and $\{(U'_j \hookrightarrow \mathcal Y'_j/W', \phi_{\mathcal Y'_j})\}$ of $X'$ as in \ref{para:local_model_for_smooth_formal_embeddings}, and then tracing through the proof of Theorem \ref{thm:dRW_computes_Ru_*_E} and using the functorialities of Proposition \ref{prop:comparison_map_embeddable_case} and \cite[Theorem 7.1(2)]{B-O}.

As for (5), by applying part (4) to the absolute Frobenius morphism $F_X \colon X \to X$ (over $F_k$) and part (3) to $\phi_{\mathcal E}$, we see that the diagram
\[
\begin{tikzcd}
Ru_{X/W_r *} \mathcal E \arrow[d] \arrow[rrr, "\sim", "\tau_{X, \mathcal E}"'] & & & \W_r \Omega^*_{X, \mathcal E} \arrow[d] \\
R(F_X)_* Ru_{X/W_r *} (F_X)^*_{\cris} \mathcal E \arrow[d, "\sim", "\phi_{\mathcal E}"'] \arrow[rrr, "\sim", "R(F_X)_* \tau_{X, (F_X)^*_{\cris} \mathcal E}"'] & & & R(F_X)_* \W_r \Omega^*_{X, (F_X)^*_{\cris} \mathcal E} \arrow[d] \\
R(F_X)_* Ru_{X/W_r *} \mathcal E \arrow[rrr, "\sim", "R(F_X)_* \tau_{X, \mathcal E}"'] & & & R(F_X)_* \W_r \Omega^*_{X, \mathcal E}
\end{tikzcd}
\]
commutes, where the right-hand vertical arrows are induced by applying Remark \ref{rmk:functorialities_of_WOmega} to $F_X$ and $\phi_{\mathcal E}$ respectively.  On affine opens $\Spec R \subset X$, the composition of these two morphisms is given by $\W_r$ of the de Rham--Witt module map
$$
\W\Omega^*_{R, \mathcal E} \to (F_{\Spec R})_* \phi_{\mathcal E}^* \W\Omega^*_{R, \mathcal E},
$$
which equals $\alpha_F$ by Proposition \ref{prop:alpha_F_on_WOmega_R,E}.
\end{proof}

%%%%%%%%%%%%%%%%%%

\subsection{Comparison to the classical de Rham--Witt complex with coefficients}
\label{sec:comparison_to_etesse}

\begin{para}
Fix, as usual, a $k$-algebra $R$ and a unit-root $F$-crystal $\mathcal E$ on $\Spec R$.  Then \'Etesse (\cite[\textsection II, D\'efinition 1.1.7]{etesse}) defines the \emph{classical de Rham--Witt complex with coefficients}
$$
W\Omega^*_{R, \mathcal E} = \limarrow_r W_r \Omega^*_{R, \mathcal E},
$$
where
$$
W_r \Omega^*_{R, \mathcal E} = \dR(\mathcal E(W_r(R), \gamma)) \otimes_{\Omega^*_{W_r(R), \gamma}} W_r \Omega^*_R.
$$
Recall from Theorem \ref{thm:BLM_comparison} that Bhatt--Lurie--Mathew constructs maps
\begin{align*}
\zeta_r \colon W_r \Omega^*_R & \to \W_r \Omega^*_R \text{ for each } r, \text{ and} \\
\zeta \colon W\Omega^*_R & \to \W\Omega^*_R,
\end{align*}
for any $R$, which are isomorphisms if $R$ is regular Noetherian.
Our goal in this section is to prove an analogous result in the case of unit-root coefficients.  Namely, given a unit-root $F$-crystal $\mathcal E = (\mathcal E, \phi_{\mathcal E})$ on $\Cris(\Spec R/W(k))$, we will construct maps
\begin{align*}
\zeta_{r, \mathcal E} \colon W_r \Omega^*_{R, \mathcal E} & \to \W_r \Omega^*_{R, \mathcal E} \text{ for each } r, \text{ and} \\
\zeta_{\mathcal E} \colon W\Omega^*_{R, \mathcal E} & \to \W\Omega^*_{R, \mathcal E},
\end{align*}
and we will show that these maps are isomorphisms if $R$ is a smooth $k$-algebra.
\end{para}

\begin{remark}
In view of \'Etesse's comparison (\cite[\textsection II, Th\'eor\`eme 2.1.1]{etesse}) between $Ru_{X/W_r *} \mathcal E$ and the classical de Rham--Witt complex $W_r \Omega^*_{X, \mathcal E}$, this gives an alternative proof of Theorem \ref{thm:dRW_computes_Ru_*_E}.
\end{remark}

The proof of the comparison will rely on the following classical fact about unit-root $F$-crystals on a smooth $k$-scheme:

\begin{remark} \label{rmk:proetale_trivialization}
For $X/k$ smooth, it follows from a theorem of Katz (\cite[Proposition 4.1.1]{katz}) that unit-root $F$-crystals on $X$ can be classified in terms of \'etale $\Z_p$-local systems.\footnote{The connection between this and Katz's original proposition is well-known to the experts but poorly documented in the literature.  The interested reader may consult \cite[\textsection 2.5]{thesis} for an exposition.}
It follows that every unit-root $F$-crystal $(\mathcal E, \phi_{\mathcal E})$ on $\Cris(X/W(k))$ admits a pro-\'etale trivialization in the following sense:  there exists an affine \'etale cover
$$
(\Spec S_{i, 1} \to X)_{i \in I}
$$
trivializing the restriction of $(\mathcal E, \phi_{\mathcal E})$ to $\Cris(X/W_1(k))$, and each $\Spec S_{i, 1}$ admits a tower of \emph{finite} \'etale covers $\Spec S_{i, r}$ (for $r \geq 1$) trivializing the restriction of $(\mathcal E, \phi_{\mathcal E})$ to $\Cris(X/W_r(k))$.  Thus, setting $S_{i, \infty} = \colim_r S_{i, r}$, the pullback of $(\mathcal E, \phi_{\mathcal E})$ to the affine scheme $\Spec S_{i, \infty}$ is isomorphic to a finite direct power of the trivial crystal
$$
(\mathcal O_{\Spec S_{i, \infty}/W}, F).
$$
If $X$ is quasicompact, then we can take the index set $I$ to be finite, and then the cover
$$
\coprod_i \Spec S_{i, \infty} \to X
$$
trivializing $(\mathcal E, \phi_{\mathcal E})$ is a pro-\'etale affine cover in the sense of \cite[Definition 4.2.1]{BS}.
\end{remark}

\begin{construction} \label{con:map:from_etesse}
Given a $k$-algebra $R$ and a unit-root $F$-crystal $(\mathcal E, \phi_{\mathcal E})$ on $\Spec R$, the map
$$
\iota_r^* \colon \dR(\mathcal E(W_r(R), \gamma)) \to \W_r \Omega^*_{R, \mathcal E}
$$
of dg-$\Omega^*_{W_r(R), \gamma}$-modules induces a map
$$
\zeta_{r, \mathcal E} \colon W_r \Omega^*_{R, \mathcal E} := \dR(\mathcal E(W_r(R), \gamma)) \otimes_{\Omega^*_{W_r(R), \gamma}} W_r \Omega^*_R \to \W_r \Omega^*_{R, \mathcal E}
$$
of dg-$W_r \Omega^*_R$-modules, where the target is a dg-$W_r \Omega^*_R$-module by Lemma \ref{lem:Vr+dVr_submodule} and the comparison map $\zeta_r$.  These maps are compatible as $r$ varies, so passing to the limit yields a map
$$
\zeta_{\mathcal E} \colon W\Omega^*_{R, \mathcal E} \to \W\Omega^*_{R, \mathcal E}
$$
of dg-$W\Omega^*_R$-modules.
\end{construction}

\begin{lemma} \label{lem:etale_base_change_of_comparison_maps}
Suppose we have an \'etale map $f \colon \Spec S \to \Spec R$ and a unit-root $F$-crystal $\mathcal E$ on $\Cris(\Spec R/W)$.  Then the map
$$
\zeta_{r, f^*_{\cris} \mathcal E} \colon W_r \Omega^*_{S, f^*_{\cris} \mathcal E} \to \W_r \Omega^*_{S, f^*_{\cris} \mathcal E}
$$
is the base change of
$$
\zeta_{r, \mathcal E} \colon W_r \Omega^*_{R, \mathcal E} \to \W_r \Omega^*_{R, \mathcal E}
$$
along $W_r(R) \to W_r(S)$, where we have identified the source and target objects using Remark \ref{rmk:etale_base_change_of_etesse} and Proposition \ref{prop:dRW_is_qcoh} respectively.
\end{lemma}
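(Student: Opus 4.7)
The plan for the proof is to exhibit both $\zeta_{r, f^*_{\cris}\mathcal E}$ and the base change of $\zeta_{r, \mathcal E}$ along $W_r(R) \to W_r(S)$ as solutions to the same universal extension problem, so they must coincide. Recall from Construction \ref{con:map:from_etesse} that $\zeta_{r, \mathcal E}$ is the unique morphism of dg-$W_r\Omega^*_R$-modules
\[
\dR(\mathcal E(W_r(R), \gamma)) \otimes_{\Omega^*_{W_r(R), \gamma}} W_r\Omega^*_R \to \W_r\Omega^*_{R, \mathcal E}
\]
extending the $\Omega^*_{W_r(R), \gamma}$-linear map $\iota^*_{r, R} \colon \dR(\mathcal E(W_r(R), \gamma)) \to \W_r\Omega^*_{R, \mathcal E}$, and analogously for $\zeta_{r, f^*_{\cris}\mathcal E}$.

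First, I would collect the base-change isomorphisms needed for each ingredient. Combining Remark \ref{rmk:PD_dR_functorialities} with Lemma \ref{lem:PD_dR_etale_base_change} yields
\[
\dR(\mathcal E(W_r(R), \gamma)) \otimes_{W_r(R)} W_r(S) \simeq \dR(f^*_{\cris}\mathcal E(W_r(S), \gamma)).
\]
The analogous statements for $W_r\Omega^*_R$ and $\W_r\Omega^*_R$ are given by Illusie's result (cf.\ Remark \ref{rmk:etale_base_change_of_etesse}) and by \cite[Corollary 5.3.5]{BLM} (cf.\ Remark \ref{rmk:PD_dR_dRW_etale_base_change}) respectively, while Proposition \ref{prop:dRW_is_qcoh} supplies the base-change isomorphism for $\W_r\Omega^*_{R, \mathcal E}$. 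Using the standard compatibility of nested tensor products, these identifications together show that applying $-\otimes_{W_r(R)} W_r(S)$ to the source and target of $\zeta_{r, \mathcal E}$ recovers those of $\zeta_{r, f^*_{\cris}\mathcal E}$.

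Next, I would verify that the constituent maps $\iota^*_{r, R}$ and $\zeta_r$ are themselves compatible with \'etale base change. For $\iota^*_{r, R}$ this is essentially built into the proof of Proposition \ref{prop:dRW_is_qcoh}: the map $\iota^*_{r, S}$ is defined there as the composition of the base-change isomorphism on $\dR$ with $\iota^*_{r, R} \otimes \id$. For $\zeta_r$ it is part of the functoriality of the comparison map in Theorem \ref{thm:BLM_comparison}. Applying $-\otimes_{W_r(R)} W_r(S)$ to $\zeta_{r, \mathcal E}$ then yields a morphism of dg-$W_r\Omega^*_S$-modules that extends $\iota^*_{r, S}$, which by the universal property must coincide with $\zeta_{r, f^*_{\cris}\mathcal E}$.

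The main obstacle I expect is bookkeeping rather than anything conceptual: one must track base rings at three different levels ($W_r(R)$, $\Omega^*_{W_r(R), \gamma}$, and $W_r\Omega^*_R$) and verify that the successive base-change isomorphisms compose coherently. Once these compatibilities are in place, the identification of the two maps follows formally from the universal property of the tensor product.
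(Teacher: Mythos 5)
Your proof is correct and takes essentially the same approach as the paper's: the key input in both is that the $\iota^*_r$ maps are compatible with étale base change (which you source from the proof of Proposition \ref{prop:dRW_is_qcoh}, just as the paper does via the observation that the right vertical map in its diagram is a morphism in $\dRWM_{R,\mathcal E}$). The paper packages the conclusion as a diagram chase using surjectivity of $\dR(\mathcal E(W_r(R),\gamma)) \twoheadrightarrow W_r\Omega^*_{R,\mathcal E}$, whereas you invoke the uniqueness clause of the tensor-product universal property; these are equivalent formulations of the same final step.
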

\begin{proof}
We must show that the right-hand square of the diagram
\[
\begin{tikzcd}
\dR(\mathcal E(W_r(R), \gamma)) \arrow[r, two heads]   \ar[d]          & \dR(\mathcal E(W_r(R), \gamma)) \otimes_{\Omega^*_{W_r(R), \gamma}} W_r \Omega^*_R \arrow[d] \arrow[r, "\zeta_{r, \mathcal E}"]   & \W_r \Omega^*_{R, \mathcal E} \arrow[d]   \\
\dR(f^*_{\cris} \mathcal E(W_r(S), \gamma)) \arrow[r, two heads] & \dR(f^*_{\cris} \mathcal E(W_r(S), \gamma)) \otimes_{\Omega^*_{W_r(S), \gamma}} W_r \Omega^*_S \arrow[r, "\zeta_{r, f^*_{\cris} \mathcal E}"] & \W_r \Omega^*_{S, f^*_{\cris} \mathcal E},
\end{tikzcd}
\]
commutes, where the vertical maps are induced by Remarks \ref{rmk:PD_dR_functorialities} and \ref{rmk:functorialities_of_WOmega}, and are all base change maps along $W_r(R) \to W_r(S)$.
In fact the right vertical map defines a morphism
$$
\W\Omega^*_{R, \mathcal E} \to f_* \W\Omega^*_{S, f^*_{\cris} \mathcal E}
$$
in $\dRWM_{R, \mathcal E}$; that is, the maps $\iota^*_{r, \mathcal E}$ are compatible with $\iota^*_{r, f^*_{\cris} \mathcal E}$.  This is precisely the commutativity of the outer rectangle, which implies that of the square in question.
\end{proof}

\begin{lemma} \label{lem:classical_to_saturated_comparisons_agree_for_O}
For the trivial $F$-crystal $(\mathcal O, F)$, we have equalities of comparison maps
\begin{align*}
\zeta_{r, \mathcal O} & = \zeta_r \colon W_r \Omega^*_R \to \W_r \Omega^*_R \text{ and} \\
\zeta_{\mathcal O} & = \zeta \colon W\Omega^*_R \to \W\Omega^*_R,
\end{align*}
where we have identified $W\Omega^*_{R, \mathcal O}$ with $W\Omega^*_R$ and $\W\Omega^*_{R, \mathcal O}$ with $\W\Omega^*_R$, using Proposition \ref{prop:dRW_with_trivial_coefficients} for the latter.
\end{lemma}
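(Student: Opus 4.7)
The strategy is to unpack the definition of $\zeta_{r,\mathcal O}$ via Construction \ref{con:map:from_etesse} and compare it directly to $\zeta_r$, making essential use of the universal property of the PD-de Rham complex. First, for the trivial crystal one has the canonical identification $\dR(\mathcal O(W_r(R),\gamma)) = \Omega^*_{W_r(R),\gamma}$, under which the map $\iota^*_r$ of Construction \ref{con:dRW_with_trivial_coefficients} is precisely the dg-algebra map $\Omega^*_{W_r(R),\gamma} \to \W_r\Omega^*_R$ of Lemma \ref{lem:PD_dR_to_WrOmega}. Consequently $W_r\Omega^*_{R,\mathcal O} = \Omega^*_{W_r(R),\gamma} \otimes_{\Omega^*_{W_r(R),\gamma}} W_r\Omega^*_R \simeq W_r\Omega^*_R$, and the map $\zeta_{r,\mathcal O}$ is the unique dg-$W_r\Omega^*_R$-module map sending $\omega \otimes \eta$ to $\iota^*_r(\omega) \cdot \zeta_r(\eta)$.

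The key intermediate step is to verify the factorization
\[
\iota^*_r \;=\; \zeta_r \circ \psi_r \colon \Omega^*_{W_r(R),\gamma} \to W_r\Omega^*_R \to \W_r\Omega^*_R,
\]
where $\psi_r$ is the canonical dg-algebra map given by the universal property of the PD-de Rham complex applied to the cdga $W_r\Omega^*_R$ (which is a $W_r(R)$-dg-algebra with the required PD-compatibility, by the classical construction). Both $\iota^*_r$ and $\zeta_r \circ \psi_r$ are dg-algebra maps out of $\Omega^*_{W_r(R),\gamma}$ into $\W_r\Omega^*_R$, so by the universal property it suffices to check they agree in degree $0$ on $W_r(R)$. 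On degree $0$ both maps coincide with the composition $W_r(R) \to W_r\Omega^0_R \to \W_r\Omega^0_R$: for $\iota^*_r$ this follows from its construction (via the strictification of $\Omega^*_{W(R_\red)}$ and the factorization through $W_r(R)$), while for $\zeta_r \circ \psi_r$ it is the defining compatibility of $\zeta_r$ on degree $0$ in Theorem \ref{thm:BLM_comparison}.

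Granted this factorization, the computation concludes formally. Under the isomorphism $\Omega^*_{W_r(R),\gamma} \otimes_{\Omega^*_{W_r(R),\gamma}} W_r\Omega^*_R \simeq W_r\Omega^*_R$ (using $\psi_r$ to define the left action), a class $\eta \in W_r\Omega^*_R$ corresponds to $1 \otimes \eta$, and
\[
\zeta_{r,\mathcal O}(\eta) \;=\; \iota^*_r(1)\cdot \zeta_r(\eta) \;=\; \zeta_r(\eta),
\]
while more generally $\omega \cdot \eta \in W_r\Omega^*_R$ corresponds to $\omega \otimes \eta$ and maps to $\iota^*_r(\omega) \cdot \zeta_r(\eta) = \zeta_r(\psi_r(\omega)) \cdot \zeta_r(\eta) = \zeta_r(\omega \cdot \eta)$, since $\zeta_r$ is a map of dg-algebras. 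This gives $\zeta_{r,\mathcal O} = \zeta_r$ for each $r$. The infinite-level identity $\zeta_{\mathcal O} = \zeta$ then follows by taking the inverse limit, since both sides are defined as the limit of their finite-level analogues and the comparison at finite level is compatible with the transition maps by construction.

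The only nontrivial ingredient is the factorization $\iota^*_r = \zeta_r \circ \psi_r$; everything else is formal manipulation of the tensor product and the universal properties. I expect no serious obstacle, since both maps are pinned down by their behavior in degree $0$ on $W_r(R)$, where they are visibly equal.
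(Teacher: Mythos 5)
Your proof is correct and matches the paper's argument essentially exactly: both reduce to verifying the factorization $\iota^*_r = \zeta_r \circ \psi_r$ of dg-algebra maps out of $\Omega^*_{W_r(R),\gamma}$, and both establish this by the observation that such maps are determined by their restriction to $W_r(R)$ in degree $0$ (the paper phrases this as "$W_r(R)$ generates the PD-de Rham complex as a differential graded algebra," you phrase it via the universal property — same content). The remaining tensor-product manipulation and passage to the limit are likewise the same; your write-up is simply more explicit about how the base-change identification $\Omega^*_{W_r(R),\gamma} \otimes_{\Omega^*_{W_r(R),\gamma}} W_r\Omega^*_R \simeq W_r\Omega^*_R$ interacts with $\zeta_{r,\mathcal O}$.
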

\begin{proof}
We will prove the finite-level statement, which of course implies the statement on limits.
By the construction of $\zeta_{r, \mathcal O}$, it suffices to prove that the diagram
\[
\begin{tikzcd}
{\Omega^*_{W_r(R), \gamma}} \arrow[d, two heads] \arrow[rd] &                 \\
W_r \Omega^*_R \arrow[r, "\zeta_r"]                       & \W_r \Omega^*_R
\end{tikzcd}
\]
of differential graded algebras commutes.  But this diagram commutes on the image of $W_r(R) = \Omega^0_{W_r(R), \gamma}$, which generates the PD-de Rham complex as a differential graded algebra.
\end{proof}

\begin{theorem} \label{thm:etesse_comparison}
Let $R$ be a smooth $k$-algebra, and let $(\mathcal E, \phi_{\mathcal E})$ be a unit-root $F$-crystal on $\Cris(\Spec R/W(k))$.
Then the comparison maps $\zeta_{r, \mathcal E}$ and $\zeta_{\mathcal E}$ are isomorphisms.
\end{theorem}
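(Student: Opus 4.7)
The plan is to trivialize $\mathcal E$ on a pro-étale cover, reduce to the case of trivial coefficients (where the result is known by Bhatt--Lurie--Mathew), and descend back to $R$. By Remark \ref{rmk:proetale_trivialization}, since $R$ is smooth we may choose an affine étale cover $\Spec S_1 \to \Spec R$ together with a tower of finite étale covers $\Spec S_r \to \Spec S_1$ such that the pullback of $(\mathcal E, \phi_{\mathcal E})$ to $\Spec S_\infty := \Spec \colim_r S_r$ is isomorphic to $(\mathcal O, F)^{\oplus n}$. The induced map $W_r(R) \to W_r(S_\infty)$ is faithfully flat, factoring as a faithfully flat étale map $W_r(R) \to W_r(S_1)$ followed by the ind-étale map $W_r(S_1) \to W_r(S_\infty) = \colim_i W_r(S_i)$.

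By Proposition \ref{prop:dRW_is_qcoh} and Remark \ref{rmk:etale_base_change_of_etesse}, both sides of $\zeta_{r, \mathcal E}$ are quasicoherent sheaves on the affine étale site of $\Spec R$, and by Lemma \ref{lem:etale_base_change_of_comparison_maps} the comparison map itself is compatible with étale base change. Moreover, both $W_r\Omega^*_{-, \mathcal E}$ and $\W_r\Omega^*_{-, \mathcal E}$ commute with filtered colimits in the base ring (for the saturated side by Proposition \ref{prop:colimit_of_dRW_complexes_is_dRW_complex}; for the classical side by inspection of its construction from Witt vectors, differentials, and tensor products) and are additive in the $F$-crystal coefficient (for the saturated side from the universal property of initial objects in $\dRWM_{R, \mathcal E}$; for the classical side directly from the formula). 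Combining these reductions, it suffices to show that on each $S_i$ the comparison map $\zeta_{r, \mathcal O}$ is an isomorphism: for then so is $\zeta_{r, \mathcal O|_{S_\infty}}$ by passage to the colimit, hence $\zeta_{r, \mathcal E|_{S_\infty}}$ by additivity, and finally $\zeta_{r, \mathcal E}$ by faithfully flat descent along $W_r(R) \to W_r(S_\infty)$.

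Each $S_i$ is smooth over $k$, being finite étale over the smooth $k$-algebra $S_1$, and hence regular Noetherian; so by Lemma \ref{lem:classical_to_saturated_comparisons_agree_for_O} and Theorem \ref{thm:BLM_comparison}, $\zeta_{r, \mathcal O}$ on $S_i$ coincides with the Bhatt--Lurie--Mathew map $\zeta_r$ and is an isomorphism. The infinite-level claim then follows from the finite-level one by passing to the limit along the surjective transition maps of the towers $(W_r\Omega^*_{R, \mathcal E})_r$ and $(\W_r\Omega^*_{R, \mathcal E})_r$. The main technical obstacle is verifying the colimit- and direct-sum-compatibility claims for both sides together with the compatibility of $\zeta_{r, \mathcal E}$ itself with these structures; each follows from general principles, but the universal-property characterization requires some care. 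The conceptual key is that, although (as anticipated in Section \ref{sec:future_directions}) no étale cover of $\Spec R$ trivializes the full tower $(\mathcal L_r)_r$ simultaneously, the pro-étale cover $\Spec S_\infty$ does so, and this is enough to reduce the present comparison to trivial coefficients.
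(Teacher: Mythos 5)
Your proposal is correct and follows essentially the same route as the paper's own proof: trivialize $(\mathcal E,\phi_{\mathcal E})$ on the pro-\'etale cover $\Spec S_\infty = \Spec\colim_i S_i$ of Remark \ref{rmk:proetale_trivialization}, use Lemma \ref{lem:etale_base_change_of_comparison_maps} together with Proposition \ref{prop:colimit_of_dRW_complexes_is_dRW_complex} to identify $\zeta_{r,\mathcal E|_{S_\infty}}$ with the base change of $\zeta_{r,\mathcal E}$ along $W_r(R) \to W_r(S_\infty)$, reduce to the trivial crystal via finite direct sums and Lemma \ref{lem:classical_to_saturated_comparisons_agree_for_O}, and finish by faithfully flat descent and passage to the limit. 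The only difference is presentational: you spell out more explicitly which ingredients (filtered-colimit compatibility of both sides, additivity in $\mathcal E$ of the construction and of $\zeta_{r,\mathcal E}$) are being combined, and you explicitly observe that each $S_i$ is regular Noetherian so that Theorem \ref{thm:BLM_comparison} applies at each finite stage before passing to the colimit; the paper compresses these into the phrase ``taking finite direct sums.'' Both treatments leave the additivity-in-$\mathcal E$ check as a routine verification, which is fair but worth noting.
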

\begin{proof}
Remark \ref{rmk:proetale_trivialization} allows us to choose an \'etale cover $\Spec S_1 \to R$ and finite \'etale covers
$$
\cdots \to \Spec S_2 \to \Spec S_1
$$
such that for each $n$, the restriction of $(\mathcal E, \phi_{\mathcal E})$ to $\Cris(\Spec R/W_n)$ is trivialized on $\Spec S_n$.  Let $S_{\infty}$ denote the colimit of the $R$-algebras $S_n$, and let $f_n \colon \Spec S_n \to \Spec R$ and $f_{\infty} \colon \Spec S_{\infty} \to \Spec R$ denote the various covers.

For each $r$ and $n$, Lemma \ref{lem:etale_base_change_of_comparison_maps} tells us that the maps
$$
\zeta_{r, f^*_{n, \cris} \mathcal E} \colon W_r \Omega^*_{S_n, f^*_{n, \cris} \mathcal E} \to \W_r \Omega^*_{S_n, f^*_{n, \cris} \mathcal E}
$$
are given by base-changing
$$
\zeta_{r, \mathcal E} \colon W_r \Omega^*_{R, \mathcal E} \to \W_r \Omega^*_{R, \mathcal E}
$$
along $W_r(R) \to W_r(S_n)$.  Passing to colimits as $n \to \infty$ and using Proposition \ref{prop:colimit_of_dRW_complexes_is_dRW_complex}, it follows that
$$
\zeta_{r, f^*_{\infty, \cris} \mathcal E} \colon W_r \Omega^*_{S_{\infty}, f^*_{\infty, \cris} \mathcal E} \to \W_r \Omega^*_{S_{\infty}, f^*_{\infty, \cris} \mathcal E}
$$
is the base-change of $\zeta_{r, \mathcal E}$ along $W_r(R) \to W_r(S_{\infty})$.

Since the $F$-crystal $f^*_{\infty, \cris} \mathcal E$ is trivial, it follows from Lemma \ref{lem:classical_to_saturated_comparisons_agree_for_O} and taking finite direct sums that $\zeta_{r, f^*_{\infty, \cris} \mathcal E}$ is an isomorphism.  But the pro-\'etale cover $\Spec S_{\infty} \to \Spec R$ is faithfully flat and we have
$$
\zeta_{r, f^*_{\infty, \cris} \mathcal E} = \zeta_{r, \mathcal E} \otimes_{W_r(R)} W_r(S_{\infty}),
$$
so faithfully flat descent tells us that $\zeta_{r, \mathcal E}$ must also be an isomorphism.  It follows by passage to the limit that $\zeta_{\mathcal E}$ is as well.
\end{proof}

\begin{corollary} \label{cor:etesse_comparison_non-affine}
Suppose $X$ is a $k$-scheme and $(\mathcal E, \phi_{\mathcal E})$ is a unit-root $F$-crystal on $\Cris(X/W)$.  Then we have compatible maps
\begin{align*}
W_r \Omega^*_{X, \mathcal E} & \to \W_r \Omega^*_{X, \mathcal E} \text{ for each } r, \text{ and} \\
W\Omega^*_{X, \mathcal E} & \to \W\Omega^*_{X, \mathcal E},
\end{align*}
which are isomorphisms if $X/k$ is smooth.
\end{corollary}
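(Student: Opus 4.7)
The plan is to assemble the affine comparison maps of Construction \ref{con:map:from_etesse} into global sheaf morphisms, and then deduce the smooth case from the affine smooth case by working locally.

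First, I would observe that both $(W_r \Omega^*_{X, \mathcal E})_r$ and $(\W_r \Omega^*_{X, \mathcal E})_r$ are defined as \'etale sheaves on $X$ whose sections on an affine open $\Spec R \subset X$ are $W_r \Omega^*_{R, \mathcal E|_{\Spec R}}$ and $\W_r \Omega^*_{R, \mathcal E|_{\Spec R}}$ respectively. For the classical side, this is a consequence of Remark \ref{rmk:etale_base_change_of_etesse}, which shows that the presheaf $\Spec R \mapsto W_r \Omega^*_{R, \mathcal E|_{\Spec R}}$ is in fact already a quasicoherent sheaf of $W_r \mathcal O_X$-modules. For the saturated side, this is Remark \ref{rmk:dRW_sheaves}, which applies because our saturated de Rham--Witt complexes exist by Corollary \ref{cor:general_construction_non-reduced}.

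Next, for each affine open $\Spec R \subset X$, Construction \ref{con:map:from_etesse} provides a map $\zeta_{r, \mathcal E|_{\Spec R}} \colon W_r \Omega^*_{R, \mathcal E|_{\Spec R}} \to \W_r \Omega^*_{R, \mathcal E|_{\Spec R}}$, and by Lemma \ref{lem:etale_base_change_of_comparison_maps} these maps are compatible with \'etale restriction. Thus they glue to a morphism of sheaves $\zeta_{r, \mathcal E} \colon W_r \Omega^*_{X, \mathcal E} \to \W_r \Omega^*_{X, \mathcal E}$ on $X$ for each $r$, compatible as $r$ varies via the construction. Passing to the inverse limit yields the infinite-level map $\zeta_{\mathcal E} \colon W\Omega^*_{X, \mathcal E} \to \W\Omega^*_{X, \mathcal E}$.

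Finally, suppose $X/k$ is smooth. Then $X$ admits an affine open cover by spectra $\Spec R_i$ of smooth $k$-algebras, and Theorem \ref{thm:etesse_comparison} says that each affine restriction $\zeta_{r, \mathcal E|_{\Spec R_i}}$ is an isomorphism. Therefore $\zeta_{r, \mathcal E}$ is an isomorphism of sheaves, and $\zeta_{\mathcal E}$ is an isomorphism by passage to the limit. There is no real obstacle here: once the sheaf-theoretic framework is in place, the corollary is a formal consequence of the affine comparison and its \'etale functoriality, so all the hard work has been done in Theorem \ref{thm:etesse_comparison} and Lemma \ref{lem:etale_base_change_of_comparison_maps}.
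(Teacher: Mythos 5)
Your proposal matches the paper's own proof: the maps come from Construction \ref{con:map:from_etesse} on affines, glue via Lemma \ref{lem:etale_base_change_of_comparison_maps}, and in the smooth case are isomorphisms affine-locally by Theorem \ref{thm:etesse_comparison}. You spell out the sheaf-theoretic bookkeeping a bit more explicitly, but the structure and ingredients are identical.
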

\begin{proof}
The maps are defined on affines by Construction \ref{con:map:from_etesse}, and they glue by Lemma \ref{lem:etale_base_change_of_comparison_maps}.  If $X/k$ is smooth, then they are isomorphisms by working affine-locally and applying Theorem \ref{thm:etesse_comparison}.
\end{proof}

\begin{remark}
Suppose $X/k$ is smooth.  Since the classical de Rham--Witt complex $W\Omega^*_{X, \mathcal E}$ is defined without reference to a Frobenius endomorphism on $\mathcal E$, it follows from Corollary \ref{cor:etesse_comparison_non-affine} that $\W\Omega^*_{X, \mathcal E}$ is determined as a complex (and even as a dg-$\W\Omega^*_X$-module) by the crystal $\mathcal E$ without its Frobenius endomorphism.
Moreover, the finite-level comparison implies that $\W_r \Omega^*_{X, \mathcal E}$ is determined by the restriction of $\mathcal E$ to $\Cris(X/W_r)$.
We are not aware of a direct proof of either of these observations, or whether they hold true for $X/k$ not smooth.
\end{remark}

\newpage

\printbibliography

\end{document}